\numberwithin{equation}{section}
\newenvironment{red}{\relax\color{red}}{\relax}
\newenvironment{blue}{\relax\color{blue}}{\hspace*{.5ex}\relax}
\newcommand{\ber}{\begin{red}}
\newcommand{\er}{\end{red}}
\newcommand{\beb}{\begin{blue}}
\newcommand{\eb}{\end{blue}}
\theoremstyle{plain}
\newtheorem{lemma}{Lemma}[section]
\newtheorem{proposition}[lemma]{Proposition}
\newtheorem{theorem}[lemma]{Theorem}
\newtheorem{algorithm}[lemma]{Algorithm}
\newtheorem{corollary}[lemma]{Corollary}
\theoremstyle{definition}
\newtheorem{remark}[lemma]{Remark}
\newtheorem{example}[lemma]{Example}
\newtheorem{definition}[lemma]{Definition}
\newtheorem{question}[lemma]{Question}
\newtheorem{convention}[lemma]{Convention}
\newcommand{\soc}{{\operatorname{soc}}}
\newcommand{\srt}[2]{{}_{\langle #1,#2 \rangle}}
\newcommand{\g}{\mathfrak{g}}
\newcommand{\C}{\mathbb{C}}
\newcommand{\Z}{\mathbb{Z}}
\newcommand{\N}{\mathsf{N}}
\newcommand{\n}{\mathfrak{n}}
\newcommand{\W}{\mathsf{W}}
\newcommand{\cmA}{\mathsf{A}}
\newcommand{\seteq}{\mathbin{:=}}
\newcommand{\al}{\alpha}
\newcommand{\bi}{\mathbf{i}}
\newcommand{\lan}{\langle}
\newcommand{\ran}{\rangle}
\newcommand{\ko}{ \textbf{k}}
\newcommand{\rev}{{\rm rev}}
\newcommand{\ii}{ \textbf{\textit{i}}}
\newcommand{\jj}{ \textbf{\textit{j}}}
\newcommand{\kk}{ \textbf{\textit{k}}}
\newcommand{\be}{\beta}
\newcommand{\ga}{\gamma}
\newcommand{\ve}{\varepsilon}
\newcommand{\PR}{\Phi^+}
\newcommand{\redex}{{\widetilde{w}}}
\newcommand{\redez}{{\widetilde{w}_0}}
\newcommand{\um}{\underline{m}}
\newcommand{\us}{\underline{s}}
\newcommand{\up}{\underline{p}}
\newcommand{\tb}{\mathtt{b}}
\newcommand{\rl}{\mathsf{Q}}
\newcommand{\wt}{{\rm wt}}
\newcommand{\rds}{{\rm rds}}
\newcommand{\gdist}{{\rm gdist}}
\newcommand{\lf}{[\hspace{-0.3ex}[}
\newcommand{\rf}{]\hspace{-0.3ex}]}
\newcommand{\Qg}{{Q^{\gets}}}
\newcommand{\Qn}{{Q^{\gets n}}}
\newcommand{\Qnn}{{Q^{\gets n+1}}}
\newcommand{\wUp}{ \widehat{\Upsilon}}
\newcommand*\ov[1]{\overline{#1}}
\newlength{\mylength}
\title[Twisted Coxeter elements and folded AR-quivers: II]{Twisted Coxeter elements and folded AR-quivers via Dynkin diagram automorphisms: II}
\author[S.-j. Oh, U. Suh]{Se-jin Oh$^\dagger$,  Uhi Rinn  Suh$^\ddagger$}
\address{Department of Mathematics, Ewha Woman's University, Seoul 120-750, Korea}
\email{sejin092@gmail.com}
\address{Department of Mathematical Sciences and Research Institute of Mathematics, Seoul National University,  Seoul 151-747, Korea}
\email{uhrisu1@snu.ac.kr}
\thanks{$^\dagger$ This work was supported by  NRF Grant \# 2016R1C1B2013135.}
\thanks{$^\ddagger$ This work was supported by BK21 PLUS SNU University Mathematical Sciences Division and NRF Grant \# 2016R1C1B1010721.}
\keywords{ Dorey's rule, twisted Coxeter elements, twisted adapted classes,  twisted Auslander-Reiten quivers,  folded Auslander-Reiten quivers, twisted Dynkin quiver }
\date{\today}
\begin{document}

\begin{abstract}
As a continuation of the paper \cite{OS16}, we find a combinatorial interpretation of Dorey's rule for type $C_n$ via twisted Auslander-Reiten quivers (AR-quivers) of type $D_{n+1}$, which are combinatorial
AR-quivers related to certain Dynkin diagram automorphisms.
 Combinatorial properties of
twisted AR-quivers are useful to understand not only Dorey's rule but also other notions in the representation
theory of the quantum affine algebra $U_q'(C_n^{(1)})$ such as
denominator formulas. In addition, unlike twisted adapted classes of type $A_{2n-1}$ in \cite{OS16}, we show twisted AR-quivers of type $D_{n+1}$ consist of the  cluster point called twisted adapted cluster point. Hence, by introducing  new combinatorial objects called
twisted Dynkin quivers of type $D_{n+1}$, we give one to one correspondences between
twisted Coxeter elements, twisted adapted classes and twisted AR-quivers.
\end{abstract}

\maketitle


\section{Introduction}

 In \cite{Dor91}, Dorey  described relations between
three-point couplings in the simply-laced  affine Toda field theories  (ATFTs) and Lie theories. More precisely, simply-laced ATFTs on untwisted affine Lie algebras  are related to the same type of Lie algebras  and simply-laced ATFTs on twisted affine Lie algebras are related to non-simply laced Lie algebras obtained by corresponding Dynkin diagram automorphisms.

Generally, in ATFTs,  quantum affine algebras appear
as quantum symmetry groups and the fundamental representations of quantum affine algebras
correspond to the quantum particles in the theories \cite{Ber91}. In particular,  the Dorey's rule was  interpreted by  Chari-Pressley \cite{CP96}  in the language of representations of quantum affine algebras. For type $A_n$ and  $D_n$, they used representations associated to Coxeter elements and, for type $B_n$ and $C_n$, they used representations associated to twisted Coxeter elements (\cite{Spr74}) of type $A_{2n-1}$ and $D_{n+1}$. Recently, the first author \cite{Oh14A, Oh14D} found combinatorial interpretation of Dorey's rule for type $A_{n}$ and $D_{n}$ using Auslader-Reiten quivers (\cite{ARS}) and, in \cite{OS16}, we generalized his work for type $B_n$ using twisted  Auslander-Reiten quivers (AR-quivers). In this paper, we introduce and  investigate twisted AR-quivers of type $D_{n+1}$ to find a combinatorial statement of Dorey's rule for type $C_n.$

\vskip 2mm


A twisted AR-quiver is in the center of this paper and it is a generalization of an AR-quiver which gives an answer to Question \ref{Que:1}.  For any given Dynkin quiver $Q$ of finite type $ADE_n$, the AR-quiver $\Gamma_Q$
can be understood as a realization of the convex partial order $\prec_Q$ on the set of positive roots $\Phi^+$ (\cite{B99}): For $\alpha,\beta\in \Phi^+$ ,
\[   \alpha \prec_Q \beta \Leftrightarrow \text{ there is a path from $\beta$ to $\alpha$ in $\Gamma_Q.$}\]

The set of all AR-quivers $\Gamma_Q$ has one to one correspondences with
 the set of convex partial orders $\prec_Q$,  Dynkin quivers $Q$, Coxeter elements $\phi_Q$ and classes $[Q]$ of reduced expressions of the
longest element $w_0\in\W$ adapted to $Q$ whose cardinalities are $2^{n-1}$. Moreover, the set of $[Q]$'s can be grouped into the {\it adapted cluster point}
$\lf Q \rf$; that is
\begin{align}\label{Eqn:0_1-1}
 \scalebox{0.8}{\raisebox{3.6em}{\xymatrix@C=8ex@R=4ex{
& \{ \prec_Q \} \ar@{<->}[dl]_{1-1} \ar@{<->}[d]_{1-1} \ar@{<->}[dr]^{1-1}  \\
\{ [Q] \} \ar@{<->}[r]^{1-1} & \{ \phi_Q \} \ar@{<->}[r]^{1-1} & \{ Q \} \\
& \{ \Gamma_Q \} \ar@{<->}[ul]^{1-1}\ar@{<->}[ur]_{1-1}
\ar@{<->}[u]^{1-1}}}} \qquad \text{for $2^{n-1}$-many Dynkin quivers $Q$}.
\end{align}

For a Weyl group $\W$ of finite type, a combinatorial AR-quiver $\Upsilon_{[\redex]}$ associated to a class of reduced expressions $[\redex]$ of
$w\in \W$ was introduced in \cite{OS15} as a generalization of AR-quiver in the sense that $\Upsilon_{[\redex]}$ is a realization of the partial order
$\prec_{[\redex]}$ on $\Phi(w)= \{  \alpha\in \Phi^+| w^{-1} \alpha\in -\Phi^+\}$
(see Remark \ref{Rem:2.17_0419} for the definition of $\prec_{[\redex]}$).

Since Dorey's rules for type $C_n$  are described via {\it twisted Coxeter elements} of  $D_{n+1}$,  we reach to the following natural question.


\begin{question}\label{Que:1}
Is there any non-adapted cluster point which is associated to twisted Coxeter elements? For the cluster point, can we generalize the correspondences in \eqref{Eqn:0_1-1}?
\end{question}

Twisted Coxeter elements of type  $D_{n+1}$  depend on the automorphisms \eqref{eq: C_n} and \eqref{eq: G_2} of Dynkin diagrams. In Section \ref{Sec:twistedDynkin} and Appendix \ref{Appen_A}, we give an affirmative answer to Question \ref{Que:1} for the both types of twisted Coxeter elements associated to \eqref{eq: C_n} and \eqref{eq: G_2}.
\begin{align}
 C_n \ (n \ge 3)    \longleftrightarrow &
\left( D_{n+1}: \raisebox{1em}{\xymatrix@R=0.5ex@C=4ex{
& & &  *{\circ}<3pt>\ar@{-}[dl]^<{ \  n} \\
*{\circ}<3pt> \ar@{-}[r]_<{1 \ \ }  &*{\circ}<3pt>
\ar@{.}[r]_<{2 \ \ } & *{\circ}<3pt> \ar@{.}[l]^<{ \ \ n-1}  \\
& & &   *{\circ}<3pt>\ar@{-}[ul]^<{\quad \ \  n+1} \\
}}, \ i^{\vee(n+1,2)} = \begin{cases} i & \text{ if } i \le n-1, \\ n+1 & \text{ if } i = n, \\ n & \text{ if } i = n+1. \end{cases} \right) \label{eq: C_n} \\
 G_2   \longleftrightarrow &
\left( D_{4}: \raisebox{1em}{\xymatrix@R=0.5ex@C=4ex{
& &   *{\circ}<3pt>\ar@{-}[dl]^<{ \ 2} \\
*{\circ}<3pt> \ar@{-}[r]_<{1 \ \ }  &*{\circ}<3pt>
\ar@{-}[l]^<{4 \ \ }   \\
& &    *{\circ}<3pt>\ar@{-}[ul]^<{\quad \ \  3} \\
}}, \ \begin{cases} 1^{\vee(4,3)}=2, \ 2^{\vee(4,3)}=3, \ 3^{\vee(4,3)}=1, \\ 4^{\vee(4,3)}=4. \end{cases} \right) \label{eq: G_2}
\end{align}

In Section \ref{Sec:Twisted_adapted_CP}, we construct a commutation class of reduced expressions $[\redez]$ of $w_0$ for each twisted Coxeter element 
and show such classes form a cluster point $\lf \Qg \rf$, called {\it the twisted cluster point}.
Moreover,  in Section \ref{Sec:twistedDynkin}, we introduce new combinatorial model $\Qg$, called a {\it twisted Dynkin quiver of type $D_{n+1}$} and prove that
$$\text{a reduced expression $\redez$ is in $[\Qg] \in \lf \Qg \rf$ if and only if $\redez$ is {\it adapted} to $\Qg$}$$
(see Definition \ref{def: adapted to twisted DQ}). Note that twisted Dynkin quivers of type $D_{n+1}$
can be understood as {\it oriented Dynkin diagrams of type $C_n$} (see Remark \ref{rem: C_n Dynkin}). As a consequence, the twisted analogue of \eqref{Eqn:0_1-1} holds:
\begin{align}\label{Eqn:0_1-1 t}
 \scalebox{0.8}{\raisebox{3.6em}{\xymatrix@C=8ex@R=4ex{
& \{ \prec_{[\Qg]} \} \ar@{<->}[dl]_{1-1} \ar@{<->}[d]_{1-1} \ar@{<->}[dr]^{1-1}  \\
\{ [\Qg] \} \ar@{<->}[r]^{1-1} & \{ \phi_{\Qg} \} \ar@{<->}[r]^{1-1} & \{ \Qg \} \\
& \{ \Upsilon_{[\Qg]} \} \ar@{<->}[ul]^{1-1}\ar@{<->}[ur]_{1-1}
\ar@{<->}[u]^{1-1}}}} \qquad \text{for $2^{n}$-many twisted Dynkin quivers $\Qg$'s}.
\end{align}
Here, $\Upsilon_{[\Qg]}$ is a {\it twisted AR-quiver} equipped with a coordinate system. 
Remark that
\begin{equation*}
\text{ for the twisted adapted classes of $A_{2n-1}$ type, \eqref{Eqn:0_1-1 t} is not true.}
\end{equation*}

\vskip 2mm
Another goal of this paper is answering to the following question.

\begin{question}
Is there a combinatorial way to find labels of twisted AR-quiver $\Upsilon_{[\Qg]}$?
\end{question}

 In \cite{Oh14A, Oh14D},  the first named author  showed a purely combinatorial way to find labels of $\Gamma_Q$ in $\Phi^+$.
Analogous to the adapted cases, in Section \ref{Section:twisted_AR_D_label} and \ref{Sec:twisted_AR_D_label_shape},
we describe purely combinatorial ways to find labels of $\Upsilon_{[\Qg]}.$
In order to do this, we introduce new quivers with coordinate systems, called {\it folded AR-quivers} $\widehat{\Upsilon}_{[\Qg]}$.

An interesting result in Section \ref{Section:twisted_AR_D_label} is that each twisted adapted class  $[\Qg]$ of $D_{n+1}$
is closely related to the adapted class $[Q]$ where $Q$ is the associated Dynkin quiver of type $A_n$. Moreover,
a folded AR-quiver $\widehat{\Upsilon}_{[\Qg]}$ of $D_{n+1}$ can be obtained by {\it gluing} two copies of  AR quiver $\Gamma_{[Q]}$ of $A_n$.

On the other hand, in Section \ref{Sec:twisted_AR_D_label_shape}, we introduce another method to find labels of
twisted AR-quiver $\widehat{\Upsilon}_{[\Qg]}$ using {\it swings} (see Definition \ref{def: rename} for swings.) This result implies 
 a folded AR-quiver $\widehat{\Upsilon}_{[\Qg]}$ of type $D_{n+1}$ has similar properties with a full subquiver of $\Gamma_{[Q]}$ of {\it type $D_{n+2}$}
consisting of vertices with residues $1$ to $n$ (see \cite[Theorem 1.20]{Oh14D}  and \eqref{eq: CDCD} also).

\vskip 2mm
Finally, using the combinatorial properties of folded AR-quivers, we give answers to the following question.

\begin{question}\label{Qes3:application}
Are there applications of twisted and folded AR-quivers to the  representation theory of $U'_q(C_n^{(1)})$? Can we derive Dorey's rule for type $C_n$ from $\widehat{\Upsilon}_{[\Qg]}$?
\end{question}


In \cite{Oh15E}, using the properties of $\Gamma_Q$ discovered in \cite{Oh14A, Oh14D},
 the first named author  introduced {\it generalized distances} $\gdist_{[\redez]}$ of a sequence of positive root lattice elements,
{\it radiuses} $\rds_{[\redez]}$ of roots and {\it socles} $\soc_{[\redez]}$ of pairs in $\Phi^+$ associated to any $[\redez]$. He proved that
{\rm (i)} the Dorey's rule for $U_q'(A_n^{(1)})$ (resp. $U_q'(D_n^{(1)})$) can be interpreted as the coordinates of $(\al,\be,\ga) \in (\PR)^3$ in $\Gamma_Q$
where $\al+\be = \ga\in \PR$, {\rm (ii)} denominator formulas $d_{k,l}(z)$ for $U_q'(A_n^{(1)})$ (resp. $U_q'(D_n^{(1)})$)
can be read from {\it any} $\Gamma_Q$, {\rm (iii)} $\rds_{[Q]}(\gamma)= \mathsf{m}(\gamma)$, $\gdist_{[Q]}(\alpha,\beta)\leq \text{max} \{\mathsf{m}(\alpha),\mathsf{m}(\beta)\}$
and the notion $\soc_{[Q]}$ is well-defined. Here,
$\mathsf{m}(\alpha)$ denotes the multiplicity of $\alpha$ (see Definition \ref{def: multiplicity} for the multiplicity).

As an answer to Question \ref{Qes3:application}, in Section \ref{Sec:dist_rds} and Section \ref{Sec:distancePoly},
we generalize {\rm (i)}$\sim${\rm (iii)} by introducing
{\it folded multiplicities} and applying the combinatorial properties of $\widehat{\Upsilon}_{[\Qg]}$:
{\rm (i$'$)} the Dorey's rule for $U_q'(C_n^{(1)})$ can be interpreted as the coordinates of $(\al,\be,\ga) \in (\PR_{D_{n+1}})^3$ in $\widehat{\Upsilon}_{[\Qg]}$
where $(\al,\be)$ is a {\it $[\Qg]$-minimal pair} for $\ga$, {\rm (ii$'$)} denominator formulas $d_{k,l}(z)$ for $U_q'(C_n^{(1)})$
can be read from any $\widehat{\Upsilon}_{[\Qg]}$, {\rm (iii$'$)} $\rds_{[\Qg]}(\gamma)= \overline{\mathsf{m}}(\gamma)$,
$\gdist_{[\Qg]}(\alpha,\beta)\leq \text{max} \{\overline{\mathsf{m}}(\alpha),\overline{\mathsf{m}}(\beta)\}$
and the notion $\soc_{[\Qg]}$ is well-defined. Here $\overline{\mathsf{m}}(\alpha)$  denotes the {\it folded multiplicity} of $\alpha$
(see Definition \ref{def: folded multiplicity}).

\vskip 2mm

Note that generalized Cartan matrices of $D_{n+1}^{(2)}$ and $C_n^{(1)}$ are transpose to each other. As we mentioned in \cite{OS16}, such relation was investigated in the theory of representations of quantum groups via $q$-characters in \cite{FH11,H10}. Moreover, in the forthcoming paper by Kashiwara-Oh  \cite{KO16}, the categorical relations between representations of $U_q(D_{n+1}^{(2)})$ and $U_q(C_{n}^{(1)})$ will be described using the results in this paper.

\vskip 5mm

\section{Backgrounds}

\subsection{Foldable $r$-cluster point and related reduced expressions}
In this subsection, we recall the definition of  foldable $r$-cluster points of type ADE and related  notions and properties. For more detail, see \cite{OS16}.

Let $I_n=\{1, 2, \cdots, n\}$ be the index set of Dynkin diagram $\Delta_n^X$ of finite type $X_n$.
Denote by $W^X_n$  and ${}_n w_0^X$ the Weyl group and the longest element.
The Weyl group $W^X_n$ is generated by simple reflections $\{ \, s_i \, |\, i\in I_n\, \}.$
We sometimes omit the type and the rank if there is no danger of confusion.

For $w\in W$, let  $i_1, i_2, \cdots, i_l$ be indices satisfying
$w=s_{i_1} s_{i_2}\cdots s_{i_l}$. If there is no sequence $j_1,
j_1, \cdots, j_{l'}$ of indices such that $w=s_{j_1} s_{j_2}\cdots
s_{j_{l'}}$ and $l'<l$ then we say $l$ is the {\it length } of $w$
and denote by $\ell(w)$. The sequences $\ii:= i_1\, i_2\, \cdots
i_l$ (resp. $s_{i_1}\, s_{i_2}\, \cdots s_{i_l}$) is a {\it reduced
word} (resp. {\it reduced expression}) of $w$. We often abuse the
notation $\ii$, instead of the reduced expression $s_{i_1}\, s_{i_2}\, \cdots s_{i_l}$.
Also, note that we denote by $\ii_0$ a reduced word of the longest
element $w_0\in W$ and by $\ell$ the length of $w_0$.

\begin{definition}
Let $\ii$ and $\jj$ be reduced expressions of $w\in W$. If $\jj$ can be obtained by applying commutation relations, $s_is_j=s_js_i$, to
$\ii$ then we say $\ii$ and $\jj$ are {\it commutation equivalent} and denote $\ii \sim \jj.$ The {\it commutation class} of $\ii$ is denoted by $[\ii].$
\end{definition}

It can be easily shown that if $\jj\sim \ii$ and $\ii$ is a reduced expression of $w$ then $\jj$ is also a reduced word of $w\in W$.
Now, if $w=w_0$, there is another way to find reduced expressions from a given reduced expression $\ii_0$ by Proposition \ref{Prop: reflection}.
(See \cite{OS15}, for example.)

\begin{proposition} \label{Prop: reflection}
Let $*: I \to I$, $i\mapsto i^*$, be the involution
induced by $w_0$ on $I$ $($See \cite{Bour}$)$.
For a reduced word  $\ii_0= i_1\, i_2\, \cdots\, i_\ell$ of $w_0$, two other words $\ii'_0= i_2\, i_3\, \cdots\, i_\ell i_1^*$
and $\ii''_0=i_\ell^*\, i_1\, i_2\, \cdots\, i_{\ell-1}$ are also reduced words of $w_0.$ Moreover, we have $[\ii_0] \neq [\ii'_0],\, [\ii''_0].$
\end{proposition}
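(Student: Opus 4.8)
The plan is to treat the two assertions separately: first that $\ii'_0$ and $\ii''_0$ are again reduced words of $w_0$, and then that they fall in commutation classes distinct from $[\ii_0]$ (assuming, as everywhere in the paper, that the rank is at least $2$).

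\textbf{Step 1: $\ii'_0,\ii''_0$ are reduced words of $w_0$.} I would use the standard identity tying $*$ to conjugation by $w_0$: since $w_0$ is an involution with $w_0(\al_i)=-\al_{i^*}$, one gets $w_0 s_i w_0 = s_{i^*}$, hence $s_i w_0 = w_0 s_{i^*}$ and $w_0 s_i = s_{i^*} w_0$. Factoring $w_0 = s_{i_1}\,(s_{i_2}\cdots s_{i_\ell})$ yields $s_{i_2}\cdots s_{i_\ell} = s_{i_1} w_0 = w_0 s_{i_1^*}$, so $w_0 = s_{i_2}\cdots s_{i_\ell}\, s_{i_1^*}$; this is a product of $\ell=\ell(w_0)$ simple reflections equal to $w_0$, hence automatically reduced, and $\ii'_0$ is a reduced word of $w_0$. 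Symmetrically, factoring $s_{i_\ell}$ off on the right and using $w_0 s_{i_\ell}=s_{i_\ell^*}w_0$ gives $w_0 = s_{i_\ell^*}\, s_{i_1}\cdots s_{i_{\ell-1}}$, so $\ii''_0$ is reduced as well.

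\textbf{Step 2: a commutation invariant.} For a reduced word $\ii$ let $L([\ii])$ be the set of indices that can occur as the first letter of some reduced word commutation-equivalent to $\ii$. I would check the elementary characterization: $j\in L([\ii])$ iff $j$ occurs in $\ii$ and, in $\ii$, the first occurrence of $j$ precedes every occurrence of every index adjacent to $j$ in the Dynkin diagram (call such $j$ \emph{initial} in $\ii$). Indeed, if $j$ is initial the letters before its first occurrence are neither $j$ nor adjacent to $j$, so $s_j$ slides to the front through commuting generators; conversely ``initial'' is preserved by commutation moves, because such a move never exchanges two occurrences of $j$, nor an occurrence of $j$ with an occurrence of an adjacent index. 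Clearly $i_1\in L([\ii_0])$, so it suffices to show $i_1\notin L([\ii'_0])$.

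\textbf{Step 3: $i_1\notin L([\ii'_0])$, and the mirror statement.} If $i_1$ does not occur in $\ii'_0 = i_2 i_3\cdots i_\ell i_1^*$ there is nothing to prove. Otherwise I would locate the first occurrence of $i_1$ in $\ii'_0$: it is either the first \emph{reoccurrence} of $i_1$ inside $i_1 i_2\cdots i_\ell$ (when $i_1$ appears at least twice in $\ii_0$), or the final letter $i_1^*=i_1$ (when $i_1$ appears exactly once in $\ii_0$ and $i_1=i_1^*$). In the first case, if no index strictly between the first two occurrences of $i_1$ in $\ii_0$ were adjacent to $i_1$, the corresponding subword $s_{i_1}\cdots s_{i_1}$ of $\ii_0$ would collapse (slide the second $s_{i_1}$ leftward through commuting generators to cancel the first), contradicting reducedness of that subword; so some adjacent index occurs before the first $i_1$ of $\ii'_0$. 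In the second case, some neighbour of $i_1$ occurs in $\ii_0$ at all — every simple reflection appears in every reduced word of $w_0$ — and it necessarily precedes the unique $i_1$, which sits at the very end of $\ii'_0$. Either way $i_1$ is not initial in $\ii'_0$, so $[\ii_0]\neq[\ii'_0]$. The inequality $[\ii_0]\neq[\ii''_0]$ then follows by the mirror argument: reversing words interchanges ``initial'' with ``terminal'' (last occurrence after all adjacent occurrences), $i_\ell$ is terminal in $\ii_0$, and the same reducedness/occurrence argument shows $i_\ell$ is not terminal in $\ii''_0=i_\ell^* i_1\cdots i_{\ell-1}$. The only delicate point — the main obstacle — is the second case above, where an index equals its $*$-image and occurs only once; there one must genuinely invoke that every generator appears in a reduced word of $w_0$, while the rest is bookkeeping with commutation moves and reducedness of subwords.
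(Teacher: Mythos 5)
Your proof is correct. Note first that the paper itself does not prove this proposition but defers to \cite{OS15}, so there is no in-paper argument to compare with; judged on its own, your write-up is a complete and self-contained justification. Step 1 is the standard manipulation with $s_i w_0 = w_0 s_{i^*}$ and the length count $\ell(w_0)=\ell$, which is exactly what one expects. For the second assertion, your invariant $L([\ii])$ (the set of possible first letters of words in the commutation class, characterized by the ``initial'' condition) is a clean and correct commutation invariant: commutation moves never transpose an occurrence of $j$ with an occurrence of an index adjacent to $j$, so the relative order of $j$'s against its neighbours is preserved, and your case analysis (first letter reoccurring in $\ii_0$, versus occurring once with $i_1^*=i_1$, versus not occurring in $\ii'_0$ at all) is exhaustive; the collapsing argument via reducedness of contiguous factors and the full-support fact for $w_0$ are both sound, and the mirror argument with ``terminal'' letters handles $\ii''_0$. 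Two small points you should make explicit: the existence of a neighbour of $i_1$ uses that the Dynkin diagram is \emph{connected} of rank at least $2$ (not just rank at least $2$), and these hypotheses are genuinely necessary, since the statement fails for $A_1$ (where $\ii'_0=\ii_0$) and for $A_1\times A_1$ (where $\ii_0=1\,2$ and $\ii'_0=2\,1$ are commutation equivalent); in the paper's setting of irreducible finite types of rank at least $2$ this is harmless.
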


\begin{definition} \cite{OS15} Let $\ii_0$ be a reduced expression of $w_0$.
\begin{enumerate}
\item  If there is a reduced expression $\ii'_0\in [\ii_0]$ (resp. $\ii''_0\in [\ii_0]$) such that $\ii'_0= i \, i_2\, i_3\, \cdots\, i_\ell$ (resp. $\ii''_0= i_1 \, i_2\, i_3\, \cdots\, i_{\ell-1}\, i$) then the index $i$ is called a {\it sink } (resp. {\it source}) of $[\ii_0]$.
\item The right and left {\it reflection functors} $r_i$ for $i\in I$ are defined as follows:
\[ \, [\ii_0] \, r_i = \left\{
\begin{array}{ll}\,  [\, i_2\, i_3\, \cdots\, i_\ell\, i_1^*\,] & \text{if $i=i_1$ is a sink of $[\ii_0]$ and $i_1\, i_2\, \cdots \, i_\ell \in [\ii_0]$,}\\ \, [\ii_0] & \text{otherwise; }
 \end{array}
\right. \]
 \[ \, r_i\,  [\ii_0] = \left\{
\begin{array}{ll}\,  [\ i_\ell^* \,i_1\, i_2\, i_3\, \cdots\, i_{\ell-1}\,] & \text{if $i=i_l$ is a source of $[\ii_0]$ and $i_1\, i_2\, \cdots \, i_\ell \in [\ii_0]$,}\\ \, [\ii_0] & \text{otherwise. }
 \end{array}
\right.
\]
For a sequence of indices $\bi:= j_1\, j_2\, \cdots\, j_t$, we denote by $[\ii_0] \, r_\bi:= [\ii_0]\, r_{j_1} r_{j_2}\cdots r_{j_t}$ and $r_\bi \, [\ii_0]:= r_{j_t} r_{j_{t-1}}\cdots r_{j_1} \, [\ii_0].$
\item If $[\ii'_0]$ can be obtained by applying reflection functors to $[\ii_0]$ then we say $[\ii_0]$ and $[\ii'_0]$ are
{\it reflection equivalent} and denote $[\ii_0] \overset{r}{\sim} [\ii'_0]$. The family of commutation classes
\[ \lf \ii_0 \rf := \{ \, [\ii'_0] \, | \, [\ii_0]\overset{r}{\sim}[\ii'_0]\, \} \]
is called an {\it r-cluster point}.
\end{enumerate}
\end{definition}

By the definition of reflection functors, we can see that if $[\ii_0] \overset{r}{\sim} [\ii'_0]$ for $\ii_0=i_1\, i_2 \cdots \, i_\ell$ and $\ii'_0= i'_1\, i'_2\, \cdots\, i'_\ell$ then
\[ \# \{ \, i_s\, | \, s=1, \cdots, \ell\, \text{ such that } i_s=i \text{ or } i^* \,  \} =\# \{ \, i'_s\, |  s=1, \cdots, \ell\, \text{ such that }  \, i'_s=i \text{ or } i^* \, \}  \]
for any $i\in I.$ Hence we consider the following notion introduced in \cite{OS16}.

\begin{definition} \label{Def: Coxeter Composition}\cite{OS16}
\begin{enumerate}
\item Let $\vee:I \to I$ be an automorphism such that $i \mapsto i^\vee$. We say $\vee$ is {\it compatible} with the involution $*$ if $i$ and $i^*$ are in the same orbit class $\bar{i}$ determined by $\vee$.
\item Let $\vee$ be an automorphism on $I$ compatible with $*$ and $\ii_0= i_1\, i_2\, \cdots \, i_\ell$ be a reduced expression of $w_0$. Denote by $\overline{I}=\{ \, \bar{i} \, | \, i\in I\, \}$ the set of orbits determined by $\vee$. Then the {\it $\vee$-Coxeter composition } is
\[ \mathsf{C}_{[\ii_0]}:= ( \mathsf{C}_{[\ii_0]}^\vee(\bar{j_1}),  \mathsf{C}_{[\ii_0]}^\vee(\bar{j_2}), \cdots,  \mathsf{C}_{[\ii_0]}^\vee(\bar{j_p})), \quad  p=|\overline{I}|,  \]
where the smallest representative of $\bar{j_r}$ is less than that of $\bar{j_s}$ if and only if $r<s$ and
\[ \mathsf{C}_{[\ii_0]}^\vee(\bar{k}): =|\, \{\, i_s\, |\, i_s \in \bar{k},\, 1\leq s\leq \ell\, \} \, |.\]
\end{enumerate}
\end{definition}

Due to the compatibility of $\vee$ in Definition \ref{Def: Coxeter Composition}, if $[\ii_0]$ and $[\ii'_0]$ are in the same r-cluster point then they have the same $\vee$-Coxeter composition. Hence we can denote the $\vee$-Coxeter composition associated to $[\ii_0]$ by $\mathsf{C}_{\lf \ii_0\rf}^\vee.$

\begin{definition} \cite[Definition 1.11]{OS16}
For a given automorphism $\vee$, the r-cluster point $\lf \ii_0\rf$ is said to be {\it $\vee$-foldable} if
\[  \mathsf{C}_{\lf\ii_0\rf}^\vee(\bar{j_r})=  \mathsf{C}_{\lf\ii_0\rf}^\vee(\bar{j_s}) \]
for any $\bar{j_r}, \bar{j_s} \in \overline{I}.$
\end{definition}

\begin{convention}
If there is no danger of confusion, we identify  the index $\bar{j}_s$ in $\bar{I}$  with $s\in \Z.$ Hence the sum (resp. subtraction) $\bar{j}_s + \bar{j}_r$
(resp. $\bar{j}_s-\bar{j}_r)$ is considered as that in $\Z$ so that it is $s+r$ (resp. $s-r$) and, similarly, $\bar{j}_s \pm t:= s\pm t$ for any $t\in \Z.$
\end{convention}

In this paper, we deals with $\vee$-foldable cluster points for $\vee$ in \eqref{eq: C_n}. The following remark shows the $\vee$-Coxeter composition of such cluster points.

\begin{remark} Let $\vee$ be defined in \eqref{eq: C_n}. If $\lf \ii_0\rf$ is $\vee$-foldable, then
\begin{align} \label{eq: Foldable Coxeter D}
\mathsf{C}^{\vee}_{\lf \ii_0 \rf}=
( \underbrace{n+1, \ldots ,n+1}_{ n\text{-times}} ).
\end{align}
Let $\vee$ be defined in \eqref{eq: G_2}. If $\lf \ii_0\rf$ is $\vee$-foldable or $\vee^2$-foldable, then
\begin{align} \label{eq: Foldable Coxeter D4}
\mathsf{C}^{\vee}_{\lf \ii_0 \rf}=
(6,6).
\end{align}
\end{remark}

\vskip 3mm

Now let us recall another important notion called a {\it twisted Coxeter element} (resp.
a {\it triply twisted Coxeter element}).
Let $\sigma \in GL(\C\Phi)$ be a linear transformation of finite order
which preserves the set of simple roots $\Pi \subset \Phi$ and consider  $W\sigma\subset GL(\C\Phi).$ Note that the Weyl group $W$ acts on $W\sigma$ by conjugations.

\begin{definition} \label{def: twisted Coxeter} \hfill
\begin{enumerate}
\item Let $\Pi:=\bigsqcup_{t=1}^k \Pi_{i_t}$, where $\Pi_{i_t}$ are orbits by the automorphism $\sigma$. Take $s_{i_t}\in \Pi_{i_t}$ for $t=1, \cdots, k$ and consider the element $w=s_{i_{\tau(1)}}s_{i_{\tau(2)}}\cdots s_{i_{\tau(k)}}\in W$ for a permutation $\tau \in \mathfrak{S}_k.$ Then the element $w\sigma \in W\sigma$ is called a {\it $\sigma$-Coxeter element.}
\item If $\sigma$ in (1) is $\vee$ in \eqref{eq: C_n} then  {\it
$\sigma$-Coxeter element} is also called a {\it twisted Coxeter
element} of type $D_{n+1}$.
\item If $\sigma$ in (1) is $\vee$ in \eqref{eq: G_2} then  {\it
$\sigma$-Coxeter element} is also called a {\it triply twisted Coxeter
element} of type $D_4$.
\item If $\sigma$ is the identity map, a $\sigma$-Coxeter element is called a {\it Coxeter element}.
\end{enumerate}
\end{definition}

We remark some properties which are useful in the following sections.

\begin{remark} \label{rem: basic prop twist}
Note that $w$ in ${\rm (1)}$ of Definition \ref{def: twisted Coxeter} is {\it fully commutative}, i.e. reduced expressions of $w$ consist
of the unique commutation class.
\end{remark}

\begin{proposition}\label{prop: number tCox elts}  \cite[Proposition 2.6]{OS16}
 The number of twisted Coxeter elements associated to \eqref{eq: C_n} is $2^n.$
\end{proposition}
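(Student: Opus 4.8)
The plan is to reduce the count to counting ordinary Coxeter elements of two parabolic subgroups of type $A_n$. Write $\sigma=\vee$ for the automorphism in \eqref{eq: C_n}. Its orbits on $I_{n+1}=\{1,\dots,n+1\}$ are $\{1\},\dots,\{n-1\}$ and $\{n,n+1\}$, so there are exactly $n$ of them; in the notation of Definition~\ref{def: twisted Coxeter}(1) this is the number $k$. A choice of orbit representatives $s_{i_1},\dots,s_{i_n}$ therefore forces $i_1=1,\dots,i_{n-1}=n-1$ and leaves only the binary choice $i_n\in\{n,n+1\}$; accordingly the support $J:=\{i_1,\dots,i_n\}$ of the resulting $w$ is either $J_n:=\{1,\dots,n\}$ or $J_{n+1}:=\{1,\dots,n-1,n+1\}$. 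Since $w\mapsto w\sigma$ is a bijection $W\to W\sigma$, counting $\sigma$-Coxeter elements amounts to counting the elements $w\in W$ produced this way.

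First I would fix one of the two supports, say $J=J_n$ (the case $J_{n+1}$ is identical after relabelling $n+1\leftrightarrow n$). As $\tau$ ranges over $\mathfrak{S}_n$, the element $w=s_{i_{\tau(1)}}\cdots s_{i_{\tau(n)}}$ ranges over exactly the Coxeter elements of the parabolic subgroup $W_J$, each of its $n$ generators occurring once. From the picture in \eqref{eq: C_n}, the full subdiagram of $D_{n+1}$ on $J_n$ is the path $1-2-\cdots-(n-1)-n$, i.e.\ of type $A_n$; likewise the subdiagram on $J_{n+1}$ is a path of type $A_n$. Hence, for each of the two supports, the number of $w$'s obtained equals the number of Coxeter elements of a Weyl group of type $A_n$.

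Next I would show that this number is $2^{n-1}$. By Remark~\ref{rem: basic prop twist} such a $w$ is fully commutative, so its set of reduced words forms a single commutation class; and because each generator of $W_{A_n}$ appears exactly once in any such word, no braid move can ever be applied, so two such words represent the same element of $W_{A_n}$ if and only if they lie in the same commutation class. Thus the Coxeter elements of $W_{A_n}$ are in bijection with the commutation classes of ``each generator once'' words on the path $A_n$, and the latter are in bijection with the $2^{n-1}$ orientations of that path: to a commutation class attach, for each edge $\{j,j+1\}$, the bit recording whether $j$ precedes $j+1$, which is well defined since $s_j$ and $s_{j+1}$ do not commute. Finally, elements with support $J_n$ involve $s_n$ but not $s_{n+1}$, while those with support $J_{n+1}$ involve $s_{n+1}$ but not $s_n$, and the support is an invariant of $w$; so the two families are disjoint and the total is $2^{n-1}+2^{n-1}=2^n$.

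The hard part is the middle step: verifying carefully that, with the support fixed, $\tau\mapsto w$ realizes all and only the Coxeter elements of $W_J$ (this uses that the standard product of all simple reflections of $W_J$ in any order is reduced), together with the fact that type $A_n$ has exactly $2^{n-1}$ Coxeter elements — equivalently, that distinct commutation classes of every-generator-once words on a path give distinct group elements. The orbit and support bookkeeping at the start, and the disjointness of the two families at the end, are routine.
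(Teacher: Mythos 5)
Your argument is correct, and it follows the same skeleton that this paper (which only cites [OS16, Proposition 2.6] rather than reproving the statement) uses around it: the split according to whether the orbit representative is $s_n$ or $s_{n+1}$ is exactly the two-to-one projection $\mathfrak{p}^{D_{n+1}}_{A_n}$ of Proposition \ref{prop: 2 to 1 Dt to A}, and the count $2^{n-1}$ for each branch is the standard fact that Coxeter elements of type $A_n$ are in bijection with the $2^{n-1}$ orientations of the Dynkin diagram, cf.\ \eqref{1-1_adapted}. The only difference is that you re-derive this last fact from scratch (reducedness of every-generator-once words, Matsumoto plus the absence of applicable braid moves, and the edge-orientation invariant of commutation classes), whereas the paper simply invokes the known correspondence between Coxeter elements and Dynkin quivers; this makes your write-up self-contained at the cost of a little extra bookkeeping, and your flagged "hard part" is indeed where the real content sits, but it is handled correctly.
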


\subsection{ (Combinatorial) Auslander-Reiten quivers}
 For type $ADE$, there are one-to-one correspondences between the set of  commutation classes of  adapted reduced expressions of $w_0$, Dynkin quivers,
 Coxeter elements and  Auslander-Reiten quivers. In this section, we recall the correspondences and introduce the notion of combinatorial Auslander-Reiten quivers,
 which is a generalization of  Auslander-Reiten quivers in a combinatorial aspect.
Details can be found in \cite{ARS,ASS,Gab80,Oh14A,Oh15E,OS15}.

Let $\Delta$ be a Dynkin diagram of type $ADE_n$ and $Q$ be a Dynkin quiver,
which is obtained by assigning direction to every edge in $\Delta.$ The set of positive roots is denoted by $\Phi^+$ and
$\Pi=\{ \alpha_i \ | \ i \in I \}$ is the set of simple roots. Gabriel theorem (Theorem \ref{Gabriel theorem}) and the construction of Auslander-Reiten quiver
(Definition \ref{AR-quiver}) shows the relation between quiver representations and positive roots.

\begin{theorem}[Gabriel's theorem]\cite{Gab80} \label{Gabriel theorem}
There is the bijection between the set ${\rm Ind}Q$ of isomorphism classes of finite dimensional indecomposable modules over the path algebra
$\C Q$ and $\Phi^+$ which takes a class  to the corresponding dimension vector.
\end{theorem}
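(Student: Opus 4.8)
The plan is to follow Gabriel's original approach, combined with the standard computation of indecomposable modules over a Dynkin quiver via Tits' quadratic form and the covering by Auslander–Reiten theory. First I would recall that for a quiver $Q$ whose underlying graph is a Dynkin diagram $\Delta$ of type $ADE_n$, the Tits form $q_Q(\ub) = \sum_{i} b_i^2 - \sum_{(i\to j)} b_ib_j$ depends only on $\Delta$ and coincides with the quadratic form attached to the Cartan matrix; hence it is positive definite, and its set of positive roots (integral vectors $\ub$ with $q_Q(\ub)=1$ and $\ub\ge 0$) is exactly $\Phi^+$. The key homological input is the Euler form: for finite-dimensional $\C Q$-modules $M,N$ one has $\dim\operatorname{Hom}(M,N) - \dim\operatorname{Ext}^1(M,N) = \langle \Dim M, \Dim N\rangle_Q$, and $q_Q(\Dim M) = \langle \Dim M,\Dim M\rangle_Q$. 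This already forces any indecomposable $M$ to satisfy $q_Q(\Dim M)\le \dim\operatorname{End}(M)$, and by Fitting's lemma plus a short argument that $\operatorname{End}(M)$ is local with no nonzero nilpotents-to-units obstruction one gets $q_Q(\Dim M)=1$, so $\Dim M\in\Phi^+$. That gives one direction: the dimension-vector map ${\rm Ind}\,Q \to \Phi^+$ is well defined.

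Next I would establish injectivity and surjectivity of this map. For injectivity, if $M,N$ are indecomposable with $\Dim M=\Dim N=\ub$, then $\langle\ub,\ub\rangle_Q=1$ gives $\dim\operatorname{Hom}(M,N)\ge 1$, so there is a nonzero $f:M\to N$; a dimension count on $\operatorname{im} f$ using positive-definiteness of $q_Q$ shows $f$ must be an isomorphism (the image, having a dimension vector $\le\ub$ componentwise and admitting both a mono from a quotient of $M$ and an epi to a submodule of $N$, is forced to equal both). For surjectivity, the cleanest route is induction via reflection functors: for a source (or sink) $i$ of $Q$, the BGP reflection functor $S_i^\pm$ gives an equivalence between the subcategory of $\C Q$-modules not isomorphic to the simple $S_i$ and the corresponding subcategory over $s_iQ$, and on dimension vectors it realizes the simple reflection $s_i$. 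Since $w_0$ has a reduced expression adapted to $Q$ and every positive root is of the form $w S_i$-image of a simple root along such a sequence of reflections, one builds every indecomposable from the simples $S_i$ (whose dimension vectors are the $\alpha_i$) by applying reflection functors. This simultaneously shows every $\alpha\in\Phi^+$ is hit and re-proves that ${\rm Ind}\,Q$ is finite with exactly $|\Phi^+|$ elements.

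The main obstacle, and the step deserving the most care, is the passage from "$q_Q(\Dim M)\le \dim\operatorname{End}(M)$ for indecomposable $M$" to the sharp statement "$q_Q(\Dim M)=1$ and hence $\operatorname{End}(M)=\C$", i.e. showing there are no indecomposables with $q_Q(\Dim M)\le 0$ and that $\operatorname{Ext}^1(M,M)=0$. In the Dynkin case this follows from positive-definiteness of $q_Q$ (there are simply no vectors with $q_Q\le 0$ other than $0$), so the real content is ruling out $\operatorname{Ext}^1(M,M)\ne 0$ while $q_Q(\Dim M)=1$: this is where one invokes that $\operatorname{End}(M)$ is local (indecomposable plus finite length), writes $\operatorname{End}(M) = \C\oplus\operatorname{rad}$, and uses the genericity/"general position" argument — a nonzero self-extension would produce a module of the same dimension vector with strictly larger endomorphism ring, contradicting upper-semicontinuity of $\dim\operatorname{Hom}$ on the representation variety $\Rep(Q,\ub)$, whose relevant orbit is dense because $q_Q(\ub)=1$ forces the orbit to be open. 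I would present this via the representation-variety dimension count: $\dim\Rep(Q,\ub) - \dim\operatorname{GL}(\ub) = -q_Q(\ub)$, so an indecomposable with $q_Q(\ub)=1$ has $\dim\operatorname{GL}(\ub)\cdot M$-orbit $=\dim\Rep(Q,\ub)+$ (something), forcing the orbit open, hence unique, hence $\operatorname{Ext}^1(M,M)=0$ and injectivity both fall out at once. Everything else — the reflection-functor bookkeeping and the image-splitting argument — is routine given the results already recalled (Definition of AR-quivers, the adapted reduced expressions of $w_0$) and the standard theory of path algebras referenced in \cite{ARS,ASS,Gab80}.
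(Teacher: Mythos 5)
The paper itself offers no proof of this statement: it is quoted verbatim from \cite{Gab80}, so there is nothing internal to compare your sketch with, and it has to stand on its own. Its overall architecture (Tits/Euler form, BGP reflection functors along an adapted sequence, representation varieties) is the standard one, and the surjectivity route via reflection functors is sound in outline.

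There is, however, a genuine gap exactly at the step you yourself single out as the main obstacle: passing from $q_Q(\Dim M)\le \dim\operatorname{End}(M)$ to $q_Q(\Dim M)=1$ for an indecomposable $M$. As written, your argument is circular and partly incorrect. You justify density/openness of the orbit of $M$ in $\Rep(Q,\ub)$ ``because $q_Q(\ub)=1$ forces the orbit to be open'', but $q_Q(\ub)=1$ (equivalently $\ub\in\PR$) is precisely what is being proved at that point; moreover openness of the orbit is equivalent to $\operatorname{Ext}^1(M,M)=0$, which is also the statement to be proved, so the semicontinuity appeal gains nothing. In addition, the claim that ``a nonzero self-extension would produce a module of the same dimension vector with strictly larger endomorphism ring'' is false as stated: a nontrivial self-extension of $M$ by $M$ has dimension vector $2\,\Dim M$, not $\Dim M$, so it cannot be compared with $M$ inside $\Rep(Q,\ub)$ by upper semicontinuity. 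The standard ways to close the hole are: (a) the brick argument (Happel--Ringel style) --- show by induction on length that an indecomposable which is not a brick admits an indecomposable subquotient $N$ that is a brick with $\operatorname{Ext}^1(N,N)\ne 0$, and then $q_Q(\Dim N)=1-\dim\operatorname{Ext}^1(N,N)\le 0$ contradicts positive definiteness; or (b) run the BGP/Coxeter-functor argument in full, which you already half-invoke for surjectivity: since the Coxeter transformation attached to an adapted ordering has finite order and no nonzero vector remains positive under all its powers, every indecomposable is annihilated after finitely many reflection functors, hence is obtained from a simple by an adapted sequence of reflections; this yields $\Dim M\in\PR$, injectivity and surjectivity simultaneously and makes the geometric detour unnecessary. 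Relatedly, your injectivity argument via a ``dimension count on $\operatorname{im} f$'' is not a proof as it stands (a nonzero map $f:M\to N$ between indecomposables with equal dimension vectors is not forced to be injective or surjective by dimension bookkeeping alone); injectivity follows either from (b), or from the open-orbit argument once $\operatorname{Ext}^1(M,M)=0$ has genuinely been established, since two dense open orbits in the irreducible affine space $\Rep(Q,\ub)$ must intersect.
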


\begin{definition}
\begin{enumerate}
\item An index $i\in I$ is called a {\it sink} (resp. {\it source}) of $Q$ if there is no arrow exiting from (resp. entering to) the vertex $i$.
\item Denote by $s_i Q$ or $i\, Q$ the quiver obtained by reversing every arrow connecting $i$ and another vertex if $i$ is a sink or a source. Otherwise, we let $s_i Q=  i\, Q= Q.$
\item Let $\ii=i_1\, i_2\, \cdots \, i_{\ell(w)}$ be a reduced expression of $w$. We say $\ii$ is {\it adapted to} the Dynkin quiver $Q$ if
$ i_k \text{ is a sink of } i_{k-1} i_{k-2}\cdots i_2 i_1 Q$ for $k=1, \cdots, \ell(w).$
\end{enumerate}
\end{definition}

For adapted reduced expressions of $w_0$, there are well-known facts which is stated below.

\begin{theorem} \hfill
\begin{enumerate}
\item[{\rm (1)}] There is the natural one-to-one correspondence between the set of commutation classes of adapted reduced expressions of $w_0$  and the set of Dynkin quivers. Hence we denote by $[Q]$ the class of reduced expressions of $w_0$ which are adapted to the quiver $Q$.
\item[{\rm (2)}] There exists a unique Coxeter element $\phi_Q$ which is adapted to $Q$. Conversely, any Coxeter element is adapted to some Dynkin quiver $Q$.
\end{enumerate}
\end{theorem}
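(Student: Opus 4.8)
The plan is to prove statement (2) first, since it is self‑contained, and then statement (1), invoking Gabriel's theorem only for the existence of an adapted reduced expression.

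\emph{Proof of (2).} Given a Dynkin quiver $Q$, I would attach to it the partial order $<_Q$ on $I$ generated by the relations $i<_Q j$ whenever $Q$ contains an arrow $j\to i$; with the conventions of the excerpt the sinks of $Q$ are exactly the $<_Q$-minimal elements. The first claim is that a word $i_1\,i_2\cdots i_n$ in which each element of $I$ occurs exactly once is adapted to $Q$ if and only if it is a linear extension of $<_Q$ (that is, $i_a<_Q i_b$ forces $a<b$). This follows by induction on $n$: $i_1$ is a sink of $Q$ iff it is $<_Q$-minimal, and reversing the arrows at a sink $i_1$ alters only those relations of $<_Q$ that involve $i_1$ while leaving the order induced on $I\setminus\{i_1\}$ unchanged, so $i_2\cdots i_n$ is adapted to $s_{i_1}Q$ iff it is a linear extension of $<_Q$ restricted to $I\setminus\{i_1\}$. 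Now two $<_Q$-incomparable indices are in particular not joined by an edge of $\Delta$ (an edge produces a direct relation in $<_Q$), hence their simple reflections commute; since any two linear extensions of a finite poset are connected by successive transpositions of consecutive incomparable elements, all linear extensions of $<_Q$ represent one and the same element of $W$. This element is a Coxeter element $\phi_Q$, and by the claim it is the unique Coxeter element adapted to $Q$. For the converse, a Coxeter element $c$, being a product of all $n$ simple reflections each taken once, has $\ell(c)=n$ (the defining expression is reduced by the usual inversion‑sequence computation); since the set of letters occurring in a reduced word is an invariant of $c$, every reduced word of $c$ uses each generator exactly once and so admits no braid move, whence the reduced words of $c$ form a single commutation class. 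Orienting each edge $\{i,j\}$ of $\Delta$ as $j\to i$ whenever $i$ precedes $j$ in some (equivalently any) reduced word of $c$ produces a quiver $Q_c$, necessarily acyclic because $\Delta$ is a tree; the linear‑extension criterion gives $c=\phi_{Q_c}$, and $Q_c$ is clearly the only quiver to which $c$ can be adapted. This is the bijection in (2).

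\emph{Existence of an adapted reduced expression of $w_0$.} For each Dynkin quiver $Q$ I would invoke Gabriel's theorem (Theorem \ref{Gabriel theorem}) together with the standard construction of the Auslander--Reiten quiver $\Gamma_Q$ (\cite{ARS,ASS,Gab80}): by Gabriel's theorem $\Gamma_Q$ has exactly $|\Phi^+|=\ell(w_0)$ vertices, and listing their residues along any total order in which a vertex is taken only after every vertex with an arrow to it produces a word $i_1\cdots i_\ell$ which is adapted to $Q$; it is a reduced expression of $w_0$ because the associated sequence $\beta_k:=s_{i_1}\cdots s_{i_{k-1}}(\alpha_{i_k})$ runs through the dimension vectors of the indecomposable $\C Q$-modules, hence through $\Phi^+$ without repetition by Gabriel's theorem, which is the defining property of reduced expressions of $w_0$. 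This is the step that genuinely uses representation theory, and it is the main obstacle: producing even one adapted reduced expression and knowing it has the full length $\ell(w_0)$ is tantamount to the finite‑type fact that every indecomposable is preprojective — the naive "greedily reflect at a sink while staying reduced" procedure is easy to run, but showing it never stalls before length $\ell(w_0)$ is essentially this same input. Everything that follows is bookkeeping with commutation relations.

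\emph{The correspondence in (1).} The combinatorial core is the lemma: if $j_1\,j_2\cdots j_m$ is any word adapted to $Q$ and $i$ is a sink of $Q$ occurring in it, then every letter preceding the first occurrence of $i$ is non-adjacent to $i$ in $\Delta$. To see this, let $j_t$ be the first letter before that occurrence which is adjacent to $i$; then neither $i$ nor $j_t$ appears among $j_1,\dots,j_{t-1}$, so the edge $\{i,j_t\}$ is untouched in $s_{j_{t-1}}\cdots s_{j_1}Q$ and still points into $i$ as in $Q$; hence $j_t$ has an outgoing arrow there and is not a sink, contradicting adaptedness. From this I would extract three consequences. (a) A commutation move swaps two non-adjacent indices, so it never interchanges two adjacent ones and therefore preserves adaptedness to a fixed $Q$ (the sink conditions are unaffected); thus a commutation class is adapted to $Q$ either entirely or not at all. (b) For any $w\in W$ and any Dynkin quiver $Q$, all reduced words of $w$ adapted to $Q$ lie in one commutation class: by induction on $\ell(w)$, letting $i$ be the first letter of one such word $\ii_0$ — a sink of $Q$, hence in the support of $w$ and so occurring in any other such word $\jj_0$ — the lemma lets one commute the first occurrence of $i$ in $\jj_0$ to the front through non-adjacent letters, and stripping $i$ leaves reduced words of $s_iw$ adapted to $s_iQ$, to which the inductive hypothesis applies. (c) If $\{i,j\}$ is oriented $j\to i$ in $Q$, the lemma shows the first occurrence of $i$ precedes that of $j$ in every word adapted to $Q$; since a commutation move never swaps the adjacent pair $i,j$, this order is a commutation‑class invariant, so $Q$ is reconstructed from the class. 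Combining these: by (a), (b) and the existence step each Dynkin quiver $Q$ yields a single nonempty commutation class $[Q]$, and by (c) the assignment $[\ii_0]\mapsto Q$ is a well-defined two‑sided inverse — in particular an adapted commutation class is adapted to exactly one quiver. This gives the bijection in (1).
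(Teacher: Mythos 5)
Your proposal is correct, but it cannot be "the same as the paper's proof" for the simple reason that the paper gives none: this theorem is quoted as a well-known fact, with the details delegated to the quiver-representation literature (\cite{ARS,ASS,Gab80,B99,R96}). What you do differently is to make almost everything self-contained Coxeter combinatorics: part (2) via the characterization of adapted words of Coxeter type as linear extensions of the order generated by the arrows of $Q$ (plus connectivity of linear extensions by transpositions of consecutive incomparable, hence commuting, letters, and the fact that a Coxeter element has a single commutation class of reduced words), and part (1) via the ``first occurrence'' lemma, which yields that adaptedness is a commutation-class property, that all $Q$-adapted reduced words of a fixed $w$ form one class, and that the orientation of each edge of $\Delta$ is recovered from the class. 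You correctly isolate the one genuinely representation-theoretic input, namely the existence of a $Q$-adapted reduced expression of $w_0$ of full length $\ell(w_0)$, which you obtain from Gabriel's theorem and the standard knitting construction of $\Gamma_Q$; this is exactly the point where the literature (or, alternatively, a purely Coxeter-theoretic argument using powers of $\phi_Q$) must be invoked, and your honesty about it is appropriate. Two places deserve tightening. First, in consequence (c) you apply the lemma to an index $i$ that receives the arrow $j\to i$ but need not be a sink of $Q$; what you need is the edge-local version (if the arrow on $\{i,j\}$ points into $i$ in $Q$, then in any $Q$-adapted word the first occurrence of $j$ cannot precede the first occurrence of $i$), which your proof already gives verbatim, since it only ever inspects the single edge $\{i,j_t\}$, untouched by reflections at other vertices. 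Second, in consequence (a) the assertion that a commutation move ``preserves adaptedness'' should be spelled out: reflection at a vertex only alters arrows incident to it, so for two letters non-adjacent in $\Delta$ each is still a sink at its exchanged step and the quiver after both reflections is unchanged, whence all later sink conditions are untouched. With these routine additions the argument is complete and, in exchange for some length, gives a proof in which only the existence step relies on \cite{Gab80,ARS,ASS}.
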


Hence there are canonical one-to-one correspondences between rank $2^{n-1}$ sets:
\begin{equation} \label{1-1_adapted}
\xymatrix@C=8ex@R=4ex{
\{ [Q] \} \ar@{<->}[r]^{1-1} & \{ \phi_Q \} \ar@{<->}[r]^{1-1} & \{ Q \}}
\end{equation}
Note that there exists a unique cluster point consisting of all adapted classes $[Q]$. we call it the {\it adapted cluster point } and denote by $\lf Q \rf.$

\begin{definition}\label{AR-quiver}
For a Dynkin quiver $Q$ of finite type $ADE$, let us take a reduced word $\ii_0$ in $[Q]$. The quiver
$\Gamma_Q=(\Gamma_Q^0,\Gamma_Q^1)$ is called the {\it Auslander-Reiten quiver} (AR-quiver) if
\begin{enumerate}
\item each vertex in $\Gamma_Q^0$ corresponds to an isomorphism class $[M]$ in ${\rm Ind}Q$,
\item an arrow $[M] \to [M']$ in $\Gamma_Q^1$ represents an irreducible morphism $M \to M'$.
\end{enumerate}
\end{definition}

By Gabriel's theorem, every vertex of an AR-quiver can be  labeled by a positive root and there is a well-known combinatorial construction of an AR-quiver. In order to introduce its simpler construction and properties,  let us consider the subset
\[ \Phi(w):= \{ \, \beta \in \Phi^+\, | \, w^{-1}(\beta)\in -\Phi^+\, \} =\{ \, \beta_k^\ii\, |\,\beta_k^\ii= s_{i_1}s_{i_2}\cdots s_{i_{k-1}}(\alpha_{i_k}),\ k=1, \cdots, \ell(w)\, \}.\]
Note that $|\Phi(w)|=\ell(w)$ and  $\Phi(w_0)=\Phi^+.$

\begin{definition} \cite[\S 5.3]{Kac}
For a positive root $\alpha=\sum_{i\in I} m_i \alpha_i \in \PR$, the {\it support of $\alpha$} is denoted by $\text{supp}(\alpha)$ and is defined by
\[ \text{supp}(\alpha):=\{  \, \alpha_k \, |\,  m_k \neq 0,\, k\in I\,  \}.\]
Also, if $\alpha_k\in \text{supp}(\alpha)$ then we say $\alpha_k$ is a support of $\alpha.$
\end{definition}

\begin{definition} \label{def: multiplicity}
For any $\gamma \in \PR$,
{\it the multiplicity} of $\gamma$, denoted by $\mathsf{m}(\gamma)$ is the positive integer defined as follows:
$$\mathsf{m}(\gamma) = \max \{\, m_i\, | \, \sum_{i\in I} m_i\alpha_i=\ga\, \}.$$
If $\mathsf{m}(\gamma)=1$, we say that $\gamma$ is {\it multiplicity free}.
\end{definition}

\begin{remark} \label{Rem:2.17_0419}
Let $\ii$ be a reduced word of $w$.  The {\it total order $<_{\ii}$} on $\Phi(w) $ is defined by
\[ \beta_k^{\ii} <_{\ii} \beta_l^{\ii} \text{ if and only if } k<l.\]
Then $<_{\ii}$ is  {\it convex}, i.e.,  if   $\alpha, \beta, \alpha+\beta\in \Phi^+$  and $\alpha<_{\ii}\beta$ then $\alpha<_{\ii} \alpha+\beta <_{\ii}\beta$.
Using the convex total orders $<_{\ii}$, the {\it convex partial order $\prec_{[\ii]}$} can be defined as follows:
\[  \alpha\prec_{[\ii]} \beta \quad  \Leftrightarrow \quad \alpha <_{\ii'} \beta \text{ for any } \ii'\in [\ii].\]
 If $\ii_0$ is adapted to $Q$ then we denote $\prec_{[\ii_0]}$ by $\prec_Q$ and $\prec_Q$ is closely related to the AR-quiver.
\end{remark}

\begin{definition} \label{def: Gamma_Q}
A combinatorial way to construct the AR-quiver $\Gamma_Q$ with a Coxeter element $\phi_Q$ is given as follows (\cite[\S 2.2]{HL11}):
\begin{enumerate}
\item Let us denote $\phi_Q=s_{i_1}s_{i_2}\cdots s_{i_n}$ and consider a {\it height function} $\xi: I \to \Z$ satisfying $\xi(i)= \xi(j)+1$ if there is an arrow from $j$ to $i$ in $Q$.  Take the injection $\widetilde{\phi}_Q: \Phi(\phi_Q) \to I \times \Z$ such that $\beta_k^{\phi_Q}:= s_{i_1}s_{i_2}
\cdots s_{i_{k-1}}(\alpha_{i_k})\mapsto (i_k, \xi(i_k))$.
\item Inductively, we extend $\widetilde{\phi}_Q$ to the map on $\Phi^+$ satisfying that if $\beta,\phi_Q(\beta)\in \Phi^+$ and $\widetilde{\phi}_Q(\beta)= (i, p)$ then $\widetilde{\phi}_Q(\phi_Q(\beta))= (i, p-2).$
\item If  $(i,p)$ and $(j,q)$ are in {\rm Im}($\widetilde{\phi}_Q$), two indices $i$ and $j$ are connected in $\Delta$ and $q=p+1$  then there is an arrow $(i,p)\to (j,q).$
\end{enumerate}
\end{definition}

Note that if $\widetilde{\phi}_Q(\alpha) = (i,p)$ for $\alpha\in \Phi^+,$ then we say that $i$ is the {\it residue of $\al$ with respect
to $[Q]$} or $\al$ is in the $i$-th residue in $\Gamma_Q$. Also, $\Gamma_Q$ has the {\it additive property}, i.e. if $\widetilde{\phi}_Q(\be)=(i,p)$ then
\begin{align}\label{eq: addtive property}
\be+\phi_Q(\be) = \sum_{\widetilde{\phi}_Q(\ga)=(j,p-1)} \ga,
\end{align}
where $i$ and $j$ are adjacent in $Q$.
Also, the followings are useful properties of $\Gamma_Q.$

\begin{proposition}  \cite{B99, Gab80, R80} \label{Prop:AR}
Let us denote by $\mathsf{h}^\vee$ the  dual Coxeter number associated to $Q$. In $\Gamma_Q$, there are $r^Q_i=(\mathsf{h}^\vee+a^Q_i-b^Q_i)/2$ vertices in the $i$-th residue, where $a^Q_i$ $($resp. $b^Q_i)$ is the number of arrows in $Q$ between $i$ and $i^*$ directed toward $i$ $($resp. $i^*)$.
\end{proposition}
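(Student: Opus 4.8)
The plan is to prove the formula by induction along sequences of sink mutations: I would first pin down $r^Q_i$ combinatorially, then establish a transformation law relating $\Gamma_Q$ and $\Gamma_{s_iQ}$, and finally reduce to a single explicitly known orientation.

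The first step is to reformulate $r^Q_i$. By Definition~\ref{def: Gamma_Q} the map $\widetilde\phi_Q$ is a bijection from $\Phi^+$ onto the vertex set of $\Gamma_Q$, and the residue of $\beta^{\ii_0}_k$ is $i_k$ for every $\ii_0=i_1\,i_2\cdots i_\ell\in[Q]$; hence $r^Q_i$ equals the number of occurrences of the letter $i$ in any $\ii_0\in[Q]$. (Equivalently, the roots of residue $i$ are the positive members of a single $\phi_Q$-orbit, so $r^Q_i=\#\{\,m\ge 0\mid \phi_Q^m(\gamma_i)\in\Phi^+\,\}$ for the residue-$i$ root $\gamma_i$ in $\Phi(\phi_Q)$; I record this only as an aside.)

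The heart of the argument is a transformation law under a sink mutation. Let $i$ be a sink of $Q$; then $i$ is a sink of $[Q]$ and $[\,s_iQ\,]=[Q]\,r_i$. Passing from $[Q]$ to $[Q]\,r_i=[\,i_2\cdots i_\ell\,i^*\,]$ deletes one occurrence of the letter $i$ and creates one of the letter $i^*$, so when $i\neq i^*$ one gets $r^{s_iQ}_i=r^Q_i-1$, $r^{s_iQ}_{i^*}=r^Q_{i^*}+1$, and $r^{s_iQ}_j=r^Q_j$ otherwise, while all residue counts are unchanged if $i=i^*$. I then want $\rho^Q_j:=(\mathsf h^\vee+a^Q_j-b^Q_j)/2$ to obey the same law. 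Here the key observation is that $a^Q_j$ (resp.\ $b^Q_j$) counts the arrows on the unique path $\mathsf P_j$ from $j$ to $j^*$ in the tree $\Delta$ that point toward $j$ (resp.\ toward $j^*$), and that reversing the arrows at the sink $i$ affects only the arrows of $\mathsf P_j$ incident to $i$. One checks: if $i$ lies in the interior of $\mathsf P_j$ or off it, the contributions to $a^Q_j$ and $b^Q_j$ either swap or stay put, so $\rho^Q_j$ is unchanged; if $i$ is an endpoint of $\mathsf P_j$ — precisely the cases $j=i$ and $j=i^*$ (with $i\neq i^*$) — exactly one arrow of $\mathsf P_j$ reverses, and tracking its direction gives $\rho^{s_iQ}_i=\rho^Q_i-1$ and $\rho^{s_iQ}_{i^*}=\rho^Q_{i^*}+1$; and if $i=i^*$ then $\mathsf P_i$ is empty and nothing changes. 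Hence $\rho^Q_\bullet$ transforms exactly like $r^Q_\bullet$. Getting this comparison exactly right — keeping straight the involution $*$ and the orientations encountered along $\mathsf P_j$ — is the step I expect to be the most delicate.

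Since any two acyclic orientations of the tree $\Delta$ are joined by a sequence of sink mutations, it remains to verify $r^Q_i=\rho^Q_i$ for one orientation. I would take a monotone orientation: for type $A_n$ the linear quiver $1\to 2\to\cdots\to n$, for which $\Gamma_Q$ is the familiar triangular AR-quiver and $r^Q_i=i=\rho^Q_i$ directly; and the analogous monotone orientations for types $D$ and $E$, whose AR-quivers are classical (cf.\ \cite{B99,Gab80,R80}). One economy worth recording: the transformation law already makes $r^Q_i$ orientation-independent when $i=i^*$ and makes $r^Q_i+r^Q_{i^*}$ orientation-independent for a $*$-pair $\{i,i^*\}$, so a single base case fixes these at $\mathsf h^\vee/2$ and $\mathsf h^\vee$, after which only the ``difference'' identity $r^Q_i-r^Q_{i^*}=a^Q_i-b^Q_i$ remains. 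Beyond this (routine but type-dependent) base case, the one genuine obstacle is keeping the transformation-law bookkeeping free of sign and orientation errors.
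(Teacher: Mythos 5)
The paper offers no proof of Proposition \ref{Prop:AR}: it is quoted with the citations \cite{B99,Gab80,R80}, so there is no internal argument to compare yours against, and your mutation-induction is a perfectly legitimate way to establish it. Its core mechanism checks out. Identifying $r^Q_i$ with the number of occurrences of the letter $i$ in any reduced word of $[Q]$ is exactly what the residue conventions of Definition \ref{def: Gamma_Q} and Algorithm \ref{Alg_AbsAR} give; the effect of a sink reflection on the class (delete one $i$, append one $i^*$) is Proposition \ref{Prop: reflection} together with the standard fact $[s_iQ]=[Q]\,r_i$, which is consistent with Algorithm \ref{alg: Ref Q}; and your bookkeeping for $\rho^Q_j=(\mathsf{h}^\vee+a^Q_j-b^Q_j)/2$ is correct: if the sink $i$ is interior to the path from $j$ to $j^*$, the two path-arrows at $i$ (both entering the sink) simply exchange their contributions to $a^Q_j$ and $b^Q_j$; if $i$ is off the path nothing changes; and $i$ can be an endpoint only for $j=i$ or $j=i^*$, where exactly one path-arrow flips, giving $\rho_i\mapsto\rho_i-1$ and $\rho_{i^*}\mapsto\rho_{i^*}+1$, matching the letter counts (and everything is inert when $i=i^*$ since then the path is empty). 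Connectivity of all orientations of the tree under sink reflections is standard and is implicit in the paper's remark that there is a unique adapted cluster point.

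The one place your argument is not yet self-contained is the base case. For type $A_n$ the linear orientation is fine (e.g.\ the adapted word $(n)(n-1\ n)\cdots(1\,2\cdots n)$ shows $r^Q_i=i$, agreeing with $\rho^Q_i$). For types $D$ and $E$, however, you defer to the "classical" AR-quivers of \cite{B99,Gab80,R80} — the very sources the proposition is quoted from — so as an independent proof that step is still owed. Your own reduction helps here: for $D_{\mathrm{even}}$, $E_7$, $E_8$ (where $*=\mathrm{id}$) the transformation law makes the formula orientation-free and the base case is exactly the classical statement that every $\tau$-orbit has $\mathsf{h}^\vee/2$ vertices, which still needs an argument; for $D_{\mathrm{odd}}$ and $E_6$ one must in addition verify the counts for one explicit orientation (an explicit adapted reduced word and a letter count suffice). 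This is finite, routine work, but until it is written out the proof is an outline at that point rather than complete.
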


Thus, for all $i \in I$, we have
\begin{align} \label{eq: r_i A_n}
 r^Q_i +r^Q_{i^*}  = n+1 \qquad \text{ if $Q$ is of type $A_n$}.
\end{align}

\begin{proposition} \label{Prop:adapted}\cite{B99,OS15,R96}
\begin{enumerate}
\item For distinct Dynkin quivers $Q$ and $Q'$, we have $\Gamma_Q \not\simeq \Gamma_{Q'}$ as quivers.
\item For $\alpha, \beta\in \Phi^+$, we have  $\alpha \prec_Q \beta$ if and only if  there is a path from $\beta$ to $\alpha$ in $\Gamma_Q.$
\end{enumerate}
\end{proposition}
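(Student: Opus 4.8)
I would derive both parts from a single structural input: the commutation class $[Q]$ is exactly the set of \emph{linear extensions} of the partial order on $\Phi^+$ defined by reachability in $\Gamma_Q$. For adapted classes this is B\'edard's description \cite{B99} (see also \cite{OS15, HL11}); one inclusion --- that each total order $<_{\ii'}$ with $\ii'\in[Q]$ refines this reachability order --- follows from the additive relations \eqref{eq: addtive property} together with the convexity of $<_{\ii'}$ (Remark \ref{Rem:2.17_0419}), while the reverse inclusion is the substantive part and I would simply quote it. Granting this, part (2) becomes elementary order theory and part (1) follows from \eqref{1-1_adapted}.

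\textbf{Proof of (2).} Every arrow of $\Gamma_Q$ strictly changes the second coordinate in Definition \ref{def: Gamma_Q}, so $\Gamma_Q$ has no oriented cycle and
\[ \beta \trianglerighteq_{\Gamma}\alpha \ :\Longleftrightarrow\ \text{there is an oriented path from }\beta\text{ to }\alpha\text{ in }\Gamma_Q \]
is a partial order on $\Phi^+$. For each arrow $\gamma\to\gamma'$ of $\Gamma_Q$ one has $\gamma'<_{\ii'}\gamma$ for all $\ii'\in[Q]$ --- the standard compatibility of the height construction of Definition \ref{def: Gamma_Q} with adapted reduced words, obtained from \eqref{eq: addtive property} and the convexity of $<_{\ii'}$ --- and chaining along paths shows that each $<_{\ii'}$, $\ii'\in[Q]$, is a linear extension of $\trianglerighteq_\Gamma$. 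By the structural input, conversely \emph{every} linear extension of $\trianglerighteq_\Gamma$ equals $<_{\ii'}$ for some $\ii'\in[Q]$. Hence, for distinct $\alpha,\beta\in\Phi^+$,
\[ \alpha\prec_Q\beta \ \Longleftrightarrow\ \alpha<_{\ii'}\beta\ \text{ for all }\ii'\in[Q] \ \Longleftrightarrow\ \alpha<_{L}\beta\ \text{ for every linear extension }L\text{ of }\trianglerighteq_\Gamma \ \Longleftrightarrow\ \beta\trianglerighteq_\Gamma\alpha, \]
where the first equivalence is the definition of $\prec_Q$ and the last is the standard fact that a finite partial order is the intersection of its linear extensions: if $\beta\not\trianglerighteq_\Gamma\alpha$ then either $\alpha\trianglerighteq_\Gamma\beta$, so $\beta<_L\alpha$ in every $L$, or $\alpha$ and $\beta$ are $\trianglerighteq_\Gamma$-incomparable, so some $L$ has $\beta<_L\alpha$; either way $\alpha\prec_Q\beta$ fails. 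The rightmost condition is precisely the existence of a path from $\beta$ to $\alpha$, which proves (2).

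\textbf{Proof of (1); the main obstacle.} It suffices to recover the Dynkin quiver $Q$ from the labelled quiver $\Gamma_Q$ and invoke \eqref{1-1_adapted}. By (2), reading reachability in $\Gamma_Q$ recovers $\prec_Q$; since for adapted classes $[Q]$ is the full set of linear extensions of $\prec_Q$ (the structural input again), $\prec_Q$ recovers $[Q]$, which recovers $Q$ by \eqref{1-1_adapted}. Thus $\Gamma_Q\simeq\Gamma_{Q'}$ forces $Q=Q'$. (Equivalently, one may reconstruct $Q$ inside $\Gamma_Q$ directly: the mesh relations pin down the Auslander--Reiten translate $\tau$; its orbits are indexed by $I$ with mutual adjacencies giving $\Delta$; the vertices not lying in $\mathrm{im}\,\tau$ are the $\dim P_i$, which by Definition \ref{def: Gamma_Q} occupy the extremal height of each orbit; and the full subquiver on these vertices is $Q$ itself.) In either route the only genuine work is the structural input --- that $[Q]$ is the \emph{full} set of linear extensions of $\Gamma_Q$ --- which is exactly B\'edard's theorem \cite{B99} and its combinatorial reformulation in \cite{OS15}; the rest is bookkeeping with \eqref{eq: addtive property} and elementary poset theory.
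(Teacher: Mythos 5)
The paper does not actually prove this proposition—it is quoted with the citations \cite{B99,OS15,R96}, and inside the paper's own framework it is an immediate specialization of Theorem \ref{thm: OS14}: part (2) of that theorem gives ``$\al \prec_{[\ii_0]} \be$ iff there is a path from $\be$ to $\al$ in $\Upsilon_{[\ii_0]}$'' for \emph{any} commutation class, and part (4) identifies $\Upsilon_{[Q]}$ with $\Gamma_Q$, which is exactly statement (2); statement (1) then follows from the correspondences \eqref{1-1_adapted} together with the fact that the labelled quiver determines $\prec_Q$ and hence $[Q]$. Your argument is correct modulo your ``structural input'', but note that this input (the commutation class equals the set of compatible readings/linear extensions of the path order) is essentially the same theorem of \cite{B99,OS15} that already contains (2); so the detour through ``a finite poset is the intersection of its linear extensions'' is valid but re-derives what the quoted result gives directly. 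Your main route for (1) is also fine, with the caveat that it uses the labelling: ``$\Gamma_Q \not\simeq \Gamma_{Q'}$ as quivers'' must be read as an isomorphism respecting the identification of vertices with positive roots (or at least the residues), since as abstract unlabelled quivers the statement is false in small rank—for $A_2$ both orientations give a linearly ordered quiver with three vertices, and more generally $\Gamma_{Q^{\rm rev}}$ is $\Gamma_Q$ with all arrows reversed (Corollary \ref{Cor:rev Ga rev Q}), which is often abstractly isomorphic to $\Gamma_Q$ via a point reflection. For the same reason your parenthetical ``label-free'' reconstruction does not work as stated: the translate $\tau$ and the mesh relations are extra data of the translation quiver, not recoverable from the underlying abstract quiver (in the $A_2$ example nothing distinguishes the two ends), and the vertices outside ${\rm im}\,\tau$ are the injectives rather than the projectives. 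Since your principal argument explicitly uses the labelled quiver, this side remark does not affect the correctness of the proof, but the parenthetical alternative should be dropped or corrected.
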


Hence we get the following diagram \cite{OS16}:
\begin{align}\label{eq: 2n-1many 2}
\raisebox{3.6em}{\xymatrix@C=8ex@R=4ex{
& \{ \prec_Q \} \ar@{<->}[dl]_{1-1} \ar@{<->}[d]_{1-1} \ar@{<->}[dr]^{1-1}  \\
\{ [Q] \} \ar@{<->}[r]^{1-1} & \{ \phi_Q \} \ar@{<->}[r]^{1-1} & \{ Q \} \\
& \{ \Gamma_Q \} \ar@{<->}[ul]^{1-1}\ar@{<->}[ur]_{1-1} \ar@{<->}[u]^{1-1}}} \text{for $[Q] \in \lf Q \rf$}.
\end{align}

We close this subsection introducing the algorithm to get $\Gamma_{i\, Q}$ from  $\Gamma_Q$ for a sink $i$ of $Q$.
\begin{algorithm} \label{alg: Ref Q}
Let $\mathsf{h}^\vee$ be the dual  Coxeter number  associated to $Q$.
\begin{enumerate}
\item[{\rm (A1)}] Remove the vertex $(i,p)$ such that $\widetilde{\phi}_Q(\al_i)=(i,p)$ and the arrows entering into $(i,p)$.
\item[{\rm (A2)}] Add a vertex $(i^*,p-\mathsf{h}^\vee)$ and arrows to $(j,p-\mathsf{h}^\vee+1)$ for all $j$ adjacent to $i^*$ in $\Delta$.
\item[{\rm (A3)}] Label the vertex $(i^*,p-\mathsf{h}^\vee)$ with $\al_i$ and substitute the labels $\be$ with $s_i(\be)$ for all $\be \in \Phi^+ \setminus \{\al_i\}$.
\end{enumerate}
\end{algorithm}

\vskip 3mm

In \cite{OS15}, the authors constructed the combinatorial AR-quivers $\Upsilon_{[\ii_0]}$
for any commutation class $[\ii_0]$ of any finite type, which can be understood as a generalization of AR-quivers.

\begin{algorithm} \label{Alg_AbsAR}
Let $\ii_0=(i_1 i_2 i_3 \cdots i_{\N})$ be
a reduced expression of an element $w_0\in W$. The quiver $\Upsilon_{\ii_0}=(\Upsilon^0_{\ii_0},
\Upsilon^1_{\ii_0})$ associated to $\ii_0$ is constructed in the
following algorithm:
\begin{enumerate}
\item[{\rm (Q1)}] $\Upsilon_{\ii_0}^0$ consists of $\N$ vertices labeled by $\beta^{\ii_0}_1, \cdots, \beta^{\ii_0}_{\N}$.
\item[{\rm (Q2)}] The quiver $\Upsilon_{\ii_0}$ consists of $|I|$ residues and each vertex $\beta^{\ii_0}_k\in \Upsilon^0_{\ii_0}$ lies in the $i_k$-th residue.
\item[{\rm (Q3)}] There is an arrow from $\beta^{\ii_0}_k$ to $\beta^{\ii_0}_j$ if the followings hold:
\begin{enumerate}
\item[{\rm (Ar1)}] two vertices $i_k$ and $i_j$ are connected in the Dynkin diagram,
\item[{\rm (Ar2)}] $ j= \max \{ j'\, |\, j' <k, \, i_{j'}=i_j \} $,
\item[{\rm (Ar3)}] $ k= \min \{ k'\, |\, k' >j, \, i_{k'}=i_k \} $.
\end{enumerate}
\item[{\rm (Q4)}] Assign the color $m_{jk}=-(\alpha_{i_j}, \alpha_{i_k})$ to each arrow $\beta^{\ii_0}_k\to \beta^{\ii_0}_j$ in {\rm (Q3)}; that is,
$\beta^\redex_k \xrightarrow{m_{jk}} \beta^\redex_j$.  Replace
$\xrightarrow{1}$ by $\rightarrow$,  $\xrightarrow{2}$ by
$\Rightarrow$ and  $\xrightarrow{3}$ by  $\Rrightarrow$.
\end{enumerate}
\end{algorithm}

By the following theorem,  a combinatorial AR-quiver can be understood as a generalization of an AR-quiver  which proposes the generalization of  Proposition \ref{Prop:adapted}. (See \cite{OS15}.) Also, we have a simple algorithm to get $\Upsilon_{[\ii_0]\, r_i}$ from $\Upsilon_{[\ii_0]}$ for a sink $i$  of $[\ii_0].$

\begin{algorithm} \label{alg: Ref Q re}
\begin{enumerate}
\item[{\rm ($A'$1)}] Remove the vertex $v_0$  of  $\Upsilon_{[\ii_0]}$ in the $i$-th residue which does not have arrows exiting from $v_0$ and remove every arrow
entering  into  $v_0$.
\item[{\rm ($A'$2)}] Add a vertex $v_1$ in the $i^*$-th residue and add arrows from $v_1$ to a vertex $v$ such that
{\rm (i)} $v$ is in the $j$-th residue for an adjacent vertex  $j$ to $i^*$ in $\Delta$,
{\rm (ii)} $v'\prec_{[\ii_0]} v$ for any other vertex $v'$ in the $i^*$-th or $j$-th residue.
\item[{\rm ($A'$3)}] The label of  $v_1$ is $\alpha_{i}$. For the other vertices, substitute the label $\alpha$ by $s_i(\alpha)$.
\end{enumerate}
\end{algorithm}

\begin{theorem} \cite{OS15} \label{thm: OS14}
Let us choose any commutation class $[\ii_0]$ of $w_0$ and a reduced word $\ii_0$ in $[\ii_0]$.
\begin{enumerate}
\item[{\rm (1)}] The construction of $\Upsilon_{\ii_0}$ does depend only on its commutation class $[\ii_0]$ and hence $\Upsilon_{[\ii_0]}$
is well-defined.
\item[{\rm (2)}] $\al \prec_{[\ii_0]} \be$ if and only if there exists a path from $\be$ to $\al$ in $\Upsilon_{[\ii_0]}$.
\item[{\rm (3)}] By defining the notion, standard tableaux of shape $\Upsilon_{[\ii_0]}$, every reduced word $\ii'_0 \in [\ii_0]$
corresponds to a standard tableau of shape $\Upsilon_{[\ii_0]}$ and can be obtained by reading residues in a way compatible with the tableaux.
\item[{\rm (4)}] When $[\ii_0]=[Q]$, $\Upsilon_{[Q]}$ is isomorphic to $\Gamma_Q$ as quivers.
\end{enumerate}
\end{theorem}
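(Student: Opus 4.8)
The four claims all concern the combinatorics of a single commutation class, so the strategy is to identify $\Upsilon_{[\ii_0]}$ with (a labelled version of) the Hasse diagram of the poset underlying $[\ii_0]$ and then read everything off from the standard fact that the reduced words in $[\ii_0]$ are precisely the linear extensions of that poset. I would begin with (1): it suffices to show $\Upsilon_{\ii_0}$ is unaffected by one commutation move, i.e.\ by replacing $i_k\, i_{k+1}$ with $i_{k+1}\, i_k$ for $i_k\neq i_{k+1}$ non-adjacent in $\Delta$. Since $s_{i_{k+1}}(\al_{i_k})=\al_{i_k}$, a one-line computation shows the two roots $\beta^{\ii_0}_k,\beta^{\ii_0}_{k+1}$ and their residues are merely interchanged, so $\Upsilon^0_{\ii_0}$ is unchanged; and because the arrow conditions (Ar1)--(Ar3) only involve nearest earlier/later occurrences in the same or an adjacent residue, interchanging two non-adjacent distinct letters changes none of them, so $\Upsilon^1_{\ii_0}$ is unchanged.

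For (2), the easy inclusion is that an arrow $\beta^{\ii_0}_k\to\beta^{\ii_0}_j$ forces $\beta^{\ii_0}_j$ to precede $\beta^{\ii_0}_k$ in every $\ii_0'\in[\ii_0]$: by (Ar1)--(Ar3) the residues $i_j$ and $i_k$ are adjacent and the positions $j<k$ are consecutive occurrences of those two residues, so no chain of commutations can move one past the other. Composing along a path gives $\al\prec_{[\ii_0]}\be$ whenever there is a path from $\be$ to $\al$. The reverse inclusion --- that a path exists whenever $\al\prec_{[\ii_0]}\be$ --- is the heart of the matter: one must show the relation generated by the arrows already equals $\bigcap_{\ii_0'\in[\ii_0]}{<_{\ii_0'}}$. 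I would prove this by induction on $\N$, peeling off a sink: if $\al$ is not above $\be$ in the arrow-generated order one constructs an explicit $\ii_0'\in[\ii_0]$ with $\be<_{\ii_0'}\al$ by sliding the relevant letters apart, which is possible exactly because no forced (arrow) relation obstructs it. Equivalently, this is the standard identification of $\Upsilon_{[\ii_0]}$ with the Hasse diagram of the heap of $[\ii_0]$.

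Part (3) is then essentially formal. Define a standard tableau of shape $\Upsilon_{[\ii_0]}$ to be a bijection from $\Upsilon^0_{\ii_0}$ to $\{1,\dots,\N\}$ that increases along every arrow, i.e.\ a linear extension of $\prec_{[\ii_0]}$. Sending $\ii_0'\in[\ii_0]$ to the filling assigning to $\beta^{\ii_0}_k$ its position in $\ii_0'$ is well defined and injective by (2); surjectivity --- every linear extension is realized by a word in $[\ii_0]$ --- is again the heap fact, and reading off residues in the order dictated by a tableau recovers the corresponding reduced word. For (4) I would compare Algorithm~\ref{Alg_AbsAR} applied to an adapted $\ii_0\in[Q]$ with Definition~\ref{def: Gamma_Q}: both vertex sets are $\Phi^+$ with the same residues, all colours equal $1$ in type $ADE$, and, using a height function $\xi$ for $Q$ together with the layer-by-layer (powers of $\phi_Q$) structure of an adapted word, the nearest-occurrence conditions (Ar2)--(Ar3) between adjacent residues $i\sim j$ translate exactly into the condition $q=p+1$ of Definition~\ref{def: Gamma_Q}(3). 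Hence $\Upsilon_{[Q]}\cong\Gamma_Q$.

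The main obstacle is the reverse inclusion in (2): showing the arrow-generated order is not merely contained in, but equal to, $\prec_{[\ii_0]}$ --- equivalently, that $\Upsilon_{[\ii_0]}$ really is the Hasse diagram of the commutation class and that its linear extensions exhaust the class. Once that structural identification is secured, (1), (3) and (4) follow with only bookkeeping.
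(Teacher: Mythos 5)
This theorem is imported verbatim from \cite{OS15} and is not reproved in the present paper, so there is no in-paper argument to compare against; your sketch — invariance of Algorithm \ref{Alg_AbsAR} under a single commutation move, identification of the reduced words in $[\ii_0]$ with the linear extensions (compatible readings) of the arrow-generated order, and translation of {\rm (Ar1)}--{\rm (Ar3)} into the height-function construction of $\Gamma_Q$ for an adapted class — is correct in outline and follows essentially the same route as the cited reference. The only point you compress, and rightly flag as the crux, is that the arrows really generate the whole heap order (in particular that consecutive occurrences of the same residue are joined by a path), which is exactly what \cite{OS15} verifies.
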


\section{Twisted adapted cluster point of type $D_{n+1}$} \label{Sec:Twisted_adapted_CP}
\subsection{Twisted adapted cluster point and twisted Coxeter element}
In this section, we denote by $\Delta$,  $W$ and $\Phi^+$  the Dynkin diagram, the Weyl group and the set of positive roots of type $D_{n+1}$.
Also, we let $\vee$ be the automorphism in \eqref{eq: C_n}. By identifying $\al_i=\ve_i-\ve_{i+1}$ $(1 \le i \le n)$ and $\al_{n+1}=\ve_n+\ve_{n+1}$, positive roots in $\Phi^+$
can be denoted as follows:
 \begin{equation*}
 \Phi^+_{D_{n+1}}  =\{ \lan a_1, -a_2 \ran, \ \lan b_1, b_2 \ran, \ \lan c, n+1 \ran \, | 1\leq a_1 < a_2 \leq n+1 , \ 1\leq b_1 < b_2 \leq n, \ 1\leq c \leq n \}
 \end{equation*}
 where
 \begin{equation}
 \left\{
 \begin{aligned}
& \textstyle  \lan a_1, -a_2 \ran  = \sum_{i=a_1}^{a_2-1} \alpha_i =\ve_{a_1}-\ve_{a_2}, \\
 & \textstyle \lan b_1, b_2 \ran   =\sum_{i=b_1}^{b_2-1} \alpha_i+ 2\sum_{j=b_2}^{n-1}\alpha_j+\alpha_{n}+\alpha_{n+1}=\ve_{b_1}+\ve_{b_2}, \\
  &\textstyle \lan c, n+1\ran= \sum_{i=c}^{n+1} \alpha_i -\alpha_n=\ve_{c}+\ve_{n+1}.
 \end{aligned}
 \right.
  \end{equation}

Recall that
$$\Phi^+_{A_{n}}  = \{ [a,b] \seteq \sum_{k=a}^{b} \al_k=\ve_a-\ve_{b+1} \ | \ 1 \le a \le b \}.$$

In order to distinguish positive roots of type $D_m$ and $A_m$, we use the following definition.

 \begin{definition}\
 \begin{enumerate}
 \item
For $\left< a,b \right>\in \Phi_{D_m}^+,$ we denote $a$ and $b$ by the {\it first and second summands} of $\alpha= \left< a,b \right>$, respectively.
\item
For $\beta=[a,b]\in \Phi_{A_m}^+$, we denote $a$ and $b$ by the {\it first and second components} of $\beta$, respectively. If $\beta=\alpha_a$ then we write $\beta$ as $[a]$.
\end{enumerate}
\end{definition}

Consider the twisted Coxeter element $(s_1 s_2 \cdots s_n) \vee$ of $D_{n+1}$. In this subsection, we denote by the reduced expression of $w_0$
(see Remark \ref{Rem:i_0} below)
$$ \ii_0=\prod_{k=0}^{n} (1\ 2\ \cdots \ n)^{k\vee}$$
where
\begin{align} \label{eq: vee def}
& (j_1 \cdots j_n)^\vee \seteq j^\vee_1 \cdots j^\vee_n \text{ and }
(j_1 \cdots j_n)^{k \vee} \seteq  ( \cdots ((j_1 \cdots j_n \underbrace{ )^\vee )^\vee \cdots )^\vee}_{ \text{ $k$-times} }.
\end{align}

Note that
\begin{enumerate}
\item $(1\ 2\ \cdots \ n)^{k\vee}= \left\{ \begin{array}{ll} 1\ 2\ \cdots n-1\ n & \text{ if } k \text{ is even, } \\ 1\ 2\ \cdots n-1\ n+1 & \text{ if } k \text{ is odd, } \end{array} \right.$
\item $\N \seteq n(n+1) \in 2 \Z_{\ge 1}$ is the cardinality of $|\PR_{D_{n+1}}|$ and coincides with the 2 times of $\mathfrak{n} \seteq |\PR_{A_{n}}|=n(n+1)/2$.
\item $\lf \ii_0 \rf$ is the $r$-cluster point of $\ii_0$.
\end{enumerate}
We can check that $\ii_0$ is a reduced expression of $w_0$ by direct computations.

\begin{remark} \label{Rem:i_0}
By the definition of $\vee$, if we denote
\[ \beta_{p,q}^{\ii_0}= \prod_{k=0}^{p-2}(s_1\ s_2\ \cdots \ s_n)^{k\vee} (s_1 s_2 \cdots s_{q-1})^{(p-1)\vee} (\alpha_{q^{(p-1)\vee}}) \text{ for } p\in \{ 1, \cdots, n+1\},\ q=\{1, \cdots, n\}\]
then $\beta_{1, q}^{\ii_0}= \lan 1, -q-1\ran$, $ \beta_{n+1, q}^{\ii_0}= \lan q, n+1 \ran$
and for  $2\leq p \leq n$
\[ \beta_{p,q}^{\ii_0}=\left\{ \begin{array}{ll}  \lan p, -q-p \ran & \text{ if }p+q \leq n+1, \\
\lan p+q-n-1, p \ran & \text{ if } p+q>n+1.
\end{array}\right. \]
Since $\{\beta_{p,q}^{\ii_0}\}= \Phi^+$, the word $\ii_0$ is a reduced expression of $w_0$. Note that $\lf \ii_0 \rf$ is $\vee$-foldable and
is not adapted to any Dynkin quiver $Q$ of type $D_{n+1}$. We also note that
\begin{align} \label{eq: red 2}
\prod_{k=0}^{n} (1\ 2\ \cdots \ n-1 \ n+1)^{k\vee}
\end{align}
is a reduced expression of $w_0$.
\end{remark}

\begin{example}
The following is the combinatorial AR-quiver of $[\ii_0]$ of type $D_5.$
\begin{equation*}
 \scalebox{0.8}{\xymatrix@C=0.9ex@R=1ex{
&&& \srt{1}{5}\ar@{->}[dr]  && \srt{4}{-5}\ar@{->}[dr]  && \srt{3}{-4}\ar@{->}[dr]  && \srt{2}{-3}\ar@{->}[dr] && \srt{1}{-2} \\
&& \srt{2}{5}\ar@{->}[dr]\ar@{->}[ur]  && \srt{1}{4}\ar@{->}[dr]\ar@{->}[ur] && \srt{3}{-5}\ar@{->}[dr]\ar@{->}[ur] && \srt{2}{-4}\ar@{->}[dr]\ar@{->}[ur] && \srt{1}{-3}\ar@{->}[ur] \\
& \srt{3}{5}\ar@{->}[ddr]\ar@{->}[ur] && \srt{2}{4}\ar@{->}[dr]\ar@{->}[ur] && \srt{1}{3}\ar@{->}[ddr]\ar@{->}[ur] && \srt{2}{-5} \ar@{->}[dr]\ar@{->}[ur] && \srt{1}{-4}\ar@{->}[ur] \\
\srt{4}{5}\ar@{->}[ur] &&&& \srt{2}{3}\ar@{->}[ur] &&&& \srt{1}{-5}\ar@{->}[ur] \\
&& \srt{3}{4}\ar@{->}[uur] &&&& \srt{1}{2} \ar@{->}[uur]
}}
\end{equation*}
\end{example}

\begin{definition} \hfill
\begin{enumerate}
\item The cluster point $\lf \ii_0 \rf$ is called {\it the twisted adapted cluster point} of type $D_{n+1}.$
\item A class $[\ii'_0]\in \lf \ii_0 \rf$ is called a {\it twisted adapted class} of type $D_{n+1}.$
\end{enumerate}
\end{definition}

Consider the map
\[ \mathfrak{p}^{D_{n+1}}_{A_n}: \{ \text{ twisted Coxeter elements of $D_{n+1}$ }\} \to \{ \text{ Coxeter elements of $A_n$ }\} \]
such that $ [i_1 \ i_2 \ \cdots\ i_n]\vee \mapsto  \left\{ \begin{array}{ll}  i_1 \ i_2 \ \cdots\ i_n & \text{ if } i_t=n, \\  (i_1 \ i_2 \ \cdots\ i_n)^\vee & \text{ if } i_t=n+1, \end{array} \right.$
for $t$ such that $i_t\in \{n, n+1\}.$

\begin{proposition} \label{prop: 2 to 1 Dt to A}
The map $ \mathfrak{p}^{D_{n+1}}_{A_n}$ is a two-to-one and onto map.
\end{proposition}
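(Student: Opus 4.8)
The plan is to identify both sides with sets of commutation classes of reduced words. By Remark~\ref{rem: basic prop twist} the Weyl group element underlying a twisted Coxeter element $w\vee$ is fully commutative, and a product of pairwise distinct simple reflections is always reduced (a standard consequence of the deletion condition); hence all reduced words of $w$ form a single commutation class, consisting of words $\mathbf{i}=i_1\,\cdots\,i_n$ that use exactly one index from each $\vee$-orbit, the orbits being $\{1\},\,\{2\},\,\dots,\,\{n-1\},\,\{n,n+1\}$. This gives a bijection between twisted Coxeter elements of $D_{n+1}$ and such commutation classes; likewise Coxeter elements of $A_n$ correspond to commutation classes of words on $I_n=\{1,\dots,n\}$ in which each index occurs exactly once. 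Under these identifications, the defining formula of $\mathfrak{p}^{D_{n+1}}_{A_n}$ reads $[\mathbf{i}]\mapsto[\mathbf{i}]$ if the index $n$ occurs in $\mathbf{i}$, and $[\mathbf{i}]\mapsto[\mathbf{i}^\vee]$ if $n+1$ occurs in $\mathbf{i}$; in either case the resulting word uses each of $1,\dots,n$ exactly once.

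First I would verify that this rule is well defined, i.e.\ descends to commutation classes. Since commutations never alter the multiset of indices, whether $n$ or $n+1$ occurs is constant on $[\mathbf{i}]$, so the two cases do not interfere. In the first case $\mathbf{i}$ is a word on $\{1,\dots,n\}$, and the full subdiagram of the $D_{n+1}$ Dynkin diagram on these vertices is precisely the type $A_n$ path (the fork vertices $n$ and $n+1$ are non-adjacent, both attached to $n-1$); thus the commutation relations among $\{1,\dots,n\}$ agree with those of $A_n$ and the map is the identity on classes. In the second case the full subdiagram on $\{1,\dots,n-1,n+1\}$ is again a type $A_n$ path, and $\vee$ in \eqref{eq: C_n} is a Dynkin diagram automorphism of $D_{n+1}$ carrying this subdiagram isomorphically onto the $A_n$ path on $\{1,\dots,n\}$; since automorphisms send commutation classes to commutation classes, the rule descends here too. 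This bookkeeping is the only step with genuine content.

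Next, surjectivity is immediate: for a Coxeter element $\phi=[\mathbf{j}]$ of $A_n$, the word $\mathbf{j}$ uses every index of $I_n$, in particular $n$, so $[\mathbf{j}]$ is a legitimate twisted-Coxeter commutation class and $\mathfrak{p}^{D_{n+1}}_{A_n}\big([\mathbf{j}]\vee\big)=[\mathbf{j}]=\phi$. To finish, I would compute the fibre over $\phi$. If $[\mathbf{i}]\vee$ maps to $\phi$, then either $n$ occurs in $\mathbf{i}$, which forces $[\mathbf{i}]=[\mathbf{j}]$, or $n+1$ occurs, which forces $[\mathbf{i}^\vee]=[\mathbf{j}]$, i.e.\ $[\mathbf{i}]=[\mathbf{j}^\vee]$ (as $\vee^2=\mathrm{id}$). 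Conversely both $[\mathbf{j}]\vee$ and $[\mathbf{j}^\vee]\vee$ are genuine twisted Coxeter elements mapping to $\phi$ (note $\mathbf{j}^\vee$ uses $n+1$ because $\mathbf{j}$ uses $n$), and they are distinct because the underlying reduced words use the distinct index multisets $\{1,\dots,n\}$ and $\{1,\dots,n-1,n+1\}$, so the two Weyl group elements differ. Hence the fibre over every $\phi$ has exactly two elements, which is the assertion. As a cross-check, one could instead combine surjectivity with the bound ``at most two preimages'' and the counts $2^n$ for twisted Coxeter elements of $D_{n+1}$ (Proposition~\ref{prop: number tCox elts}) and $2^{n-1}$ for Coxeter elements of $A_n$ (as many as Dynkin quivers of type $A_n$, cf.\ \eqref{1-1_adapted}), together with $2^n=2\cdot 2^{n-1}$, to force every fibre to have size two.

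The main obstacle is the well-definedness check in the second paragraph: one must confirm that interposing $\vee$ does not change which words are commutation-equivalent, and this is exactly where the shape of the $D_{n+1}$ Dynkin diagram — the two fork nodes being non-adjacent — is used. Everything else is formal once the translation into commutation classes is in place.
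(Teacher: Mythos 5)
Your argument is correct and takes essentially the same route as the paper: the paper's proof simply observes that for a Coxeter element $s_{i_1}\cdots s_{i_n}$ of $A_n$, both $[i_1\,\cdots\,i_n]\vee$ and $[(i_1\,\cdots\,i_n)^\vee]\vee$ are twisted Coxeter elements of $D_{n+1}$ mapping to it. The extra verifications you carry out (well-definedness on commutation classes via the shape of the fork, distinctness of the two lifts, and that they exhaust the fibre) are precisely the details the paper leaves implicit.
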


\begin{proof}
Suppose $i_1 \ i_2 \ \cdots\ i_n $ is a Coxeter element of $A_n$. Then both $[i_1 \ i_2 \ \cdots\ i_n]\vee $ and  $[(i_1 \ i_2 \ \cdots\ i_n)^\vee]\vee$ are twisted Coxeter elements of $D_{n+1}.$ Hence we proved the proposition.
\end{proof}

\begin{proposition} \label{prop: one direction for fullfiling}
For a twisted Coxeter element $[s_{i_1}s_{i_2}  \cdots s_{i_n}]\vee$ of $D_{n+1}$, the word  \[ \prod_{i=0}^{n} (i_1\ i_2\ \cdots \ i_n)^{k\vee} \]
 is a reduced expression of $w_0$.
\end{proposition}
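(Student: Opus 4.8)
The plan is to reduce the general statement to the already-verified case of the distinguished twisted Coxeter element $(s_1 s_2 \cdots s_n)\vee$, whose associated word $\ii_0=\prod_{k=0}^n (1\,2\,\cdots\,n)^{k\vee}$ was shown in Remark \ref{Rem:i_0} to be a reduced expression of $w_0$. The bridge between an arbitrary twisted Coxeter element and this one is the reflection-functor / sink-removal machinery: I want to argue that applying the first letter $i_1$ as a reflection functor (i.e. moving it from the front to the back, replacing it by $i_1^*$) transports the word for $[s_{i_1}\cdots s_{i_n}]\vee$ to the word for the twisted Coxeter element $[s_{i_2}\cdots s_{i_n} s_{i_1^*}]\vee$ (here one must check, using the explicit form of $\vee$ in \eqref{eq: C_n} and the fact that $i_1^*$ lies in the same $\vee$-orbit, that the latter really is again a twisted Coxeter element and that the shifted product is again of the prescribed form $\prod_{k=0}^n(\cdots)^{k\vee}$). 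Since Proposition \ref{prop: reflection} tells us that such an operation preserves the property of being a reduced expression of $w_0$, and since any Coxeter element of $A_n$ — hence, via the two-to-one map $\mathfrak{p}^{D_{n+1}}_{A_n}$ of Proposition \ref{prop: 2 to 1 Dt to A}, any twisted Coxeter element of $D_{n+1}$ — can be obtained from the standard one $s_1 s_2 \cdots s_n$ by a sequence of such "rotations'' (equivalently, commutation moves combined with the cyclic $r_i$-operations), the claim follows by induction on the number of moves.

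Concretely the key steps are: (1) recall that $w$ in (1) of Definition \ref{def: twisted Coxeter} is fully commutative (Remark \ref{rem: basic prop twist}), so the word $[i_1\,i_2\,\cdots\,i_n]$ is determined up to commutation by the twisted Coxeter element and every letter of $I_n$ appears exactly once among the $i_j$; (2) observe that the full word $\prod_{k=0}^n(i_1\cdots i_n)^{k\vee}$ has the right length $\N=n(n+1)=|\PR_{D_{n+1}}|$, so it suffices to prove it is reduced (a word of this length that is reduced is automatically a reduced expression of $w_0$); (3) set up the induction: if $i_1$ is a sink — which it is, being the first letter — then removing/rotating it via $r_{i_1}$ as in Proposition \ref{prop: reflection} sends our word to $\prod_{k=0}^n(i_2\,\cdots\,i_n\,i_1^\ast)^{k\vee}$ up to re-bracketing, and one checks this is exactly the word attached to the twisted Coxeter element $[s_{i_2}\cdots s_{i_n}s_{i_1^*}]\vee$; (4) note that the standard word $\ii_0$ is the base case, already known to be reduced, and that every twisted Coxeter element is reachable from $(s_1\cdots s_n)\vee$ by finitely many such rotations plus harmless commutations (this is where one uses that $\{i_1,\dots,i_n\}=I_n$ and that cyclic rotation of a Coxeter word of $A_n$ together with commutations generates all Coxeter elements of $A_n$, then lifts along $\mathfrak{p}^{D_{n+1}}_{A_n}$).

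The main obstacle I anticipate is step (3): verifying that the single reflection move at the front of $\prod_{k=0}^n(i_1\cdots i_n)^{k\vee}$ genuinely produces the analogously-structured word $\prod_{k=0}^n(i_2\cdots i_n i_1^*)^{k\vee}$ rather than merely \emph{some} reduced expression of $w_0$. This requires carefully tracking how the alternating application of $\vee$ interacts with the involution $*$ (note $*$ on $D_{n+1}$ is nontrivial only when $n+1$ is odd, where it swaps $n$ and $n+1$, i.e. agrees with $\vee$ on that orbit), and confirming that the $n$ copies $(i_1\cdots i_n), (i_1\cdots i_n)^\vee, \dots$ rotate coherently when the leading letter is excised and reappended. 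Once this structural compatibility is pinned down, the rest is a clean induction invoking Proposition \ref{prop: reflection}, Proposition \ref{prop: 2 to 1 Dt to A} and the base case from Remark \ref{Rem:i_0}. An alternative, possibly cleaner route that avoids the bookkeeping in (3) is to prove directly, by the same kind of explicit positive-root computation as in Remark \ref{Rem:i_0}, that the $\N$ elements $\beta_{p,q}$ defined from the word $\prod_{k=0}^n(i_1\cdots i_n)^{k\vee}$ are pairwise distinct and exhaust $\PR_{D_{n+1}}$; I would keep this as a fallback if the reflection-functor argument becomes unwieldy.
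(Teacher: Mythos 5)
Your plan is essentially the paper's proof: it too starts from the base case of Remark \ref{Rem:i_0}, shows that rotating the leading letter yields the structured word $\prod_{k=0}^{n}(i_2\,\cdots\,i_n\,i_1^{\vee})^{k\vee}$ attached to another twisted Coxeter element (your step (3); note the letter appended to the first block is $i_1^{\vee}$, while $i_1^{*}=i_1^{(n+1)\vee}$ only appears at the very end of the rotated word), and concludes via Proposition \ref{prop: 2 to 1 Dt to A} together with the fact that every Dynkin quiver of type $A_n$ is reached from the linearly oriented one by successive sink reflections. So your argument coincides with the paper's, with your flagged verification in step (3) being exactly the check the paper performs in \eqref{ex: reflection red}.
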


\begin{proof}
Assume that $\ii'_0=\prod_{i=0}^{n} (i_1\ i_2\ \cdots \ i_n)^{k\vee}$ is reduced. Then it is easy to check that
\begin{align}\label{ex: reflection red}
\text{ $\big[\prod_{i=0}^{n} ( i_2\ \cdots \ i_n\ i_1^\vee)^{k\vee}\big]$ is also reduced.}
\end{align}
By Proposition \ref{prop: 2 to 1 Dt to A}, our assertion follows from the fact that any Dynkin quiver $Q$ of type $A_n$ can be written as follows:
\begin{itemize}
\item Let us denote by $\overset{\gets}{Q} : \ \xymatrix@R=3ex{ *{ \circ }<3pt> \ar@{-}[r]_<{1}  &*{\circ}<3pt>
\ar@{->}[l]^<{2} &\cdots\ar@{->}[l] &*{\circ}<3pt>
\ar@{->}[l]^<{n-1} &*{\circ}<3pt>
\ar@{->}[l]^<{\ \ n}}$.
\item $Q=i_k \cdots i_1 \overset{\gets}{Q}$ for some $k \in \Z_{\ge 0}$ such that $i_k$ is a sink of the quiver $i_{s-1} \cdots i_{1} \overset{\gets}{Q}$ $(1 \le s \le k)$.
\end{itemize}

\end{proof}

\begin{lemma} \label{Lem: reflection sink i for comclass}
Let $\ii'_0=\prod_{i=0}^{n} (i_1\ i_2\ \cdots \ i_n)^{k\vee}$ where $[s_{i_1}s_{i_2}  \cdots s_{i_n}]\vee$ is a twisted Coxeter element of $D_{n+1}$.
If $i$ is a sink of $[\ii'_0]$ then there is  a reduced expression $\jj=j_1\,  j_2\, \cdots\,  j_n$ such that
\begin{itemize}
\item $[j_1 \ j_2\ \cdots\  j_n]=[i_1\ i_2\ \cdots \ i_n],$
\item $j_1=i,$
\item $ [\ii'_0]= \big[\prod_{i=0}^{n} (j_1 \ j_2\ \cdots \ j_n)^{k\vee}\big]$
\end{itemize}
\end{lemma}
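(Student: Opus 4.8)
The plan is to reduce the statement about the twisted setting of type $D_{n+1}$ to the corresponding (and known) statement for Coxeter elements of type $A_n$, using the $2:1$ map $\mathfrak{p}^{D_{n+1}}_{A_n}$ of Proposition~\ref{prop: 2 to 1 Dt to A} and the structure of the reduced expression $\prod_{k=0}^n (i_1\,i_2\,\cdots\,i_n)^{k\vee}$. The key observation is that the commutation class $[\ii_0']$ is built by iterating the block $(i_1\,i_2\,\cdots\,i_n)$ (with alternating $\vee$-twist), so identifying a sink of $[\ii_0']$ amounts to understanding which indices can be moved to the front of this product of blocks purely by commutation moves — and a commutation move inside such a concatenation is either a move entirely within one block or a move across the boundary of two consecutive blocks.

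First I would recall the classical $A_n$-analogue: for a Coxeter element $\phi_Q = s_{i_1}\cdots s_{i_n}$ of type $A_n$ and the adapted word $\prod_{k\ge 0}(i_1\,\cdots\,i_n)$ truncated to length $\mathfrak{n}$, an index $i$ is a sink of $[Q]$ exactly when $i$ is a sink of the quiver $Q$, and in that case $Q = s_i Q'$ for a Dynkin quiver $Q'$; equivalently there is a reordering $\jj = j_1\cdots j_n$ of the letters with $[j_1\cdots j_n] = [i_1\cdots i_n]$, $j_1 = i$, and $\prod_{k}(j_1\cdots j_n) \sim \prod_k(i_1\cdots i_n)$. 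This is essentially the content of (the proof of) Proposition~\ref{prop: one direction for fullfiling} together with the sink/source description of adapted classes in Section~2. Next, I would transfer this via $\mathfrak{p}^{D_{n+1}}_{A_n}$: given that $i$ is a sink of $[\ii_0']$, I claim $i \in \{1,\dots,n-1\}$ forces $i$ to be a sink of the associated $A_n$-Coxeter element $\mathfrak{p}^{D_{n+1}}_{A_n}([s_{i_1}\cdots s_{i_n}]\vee)$, because a commutation move bringing $i$ to the front of $\prod_k(i_1\cdots i_n)^{k\vee}$ does not involve the letters $n, n+1$ (which are the only letters affected by $\vee$, and $i\ne i^\vee$ only for $i\in\{n,n+1\}$), so it descends to a commutation move in $A_n$. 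The cases $i = n$ and $i = n+1$ are symmetric under $\vee$ and correspond to the unique letter of the block lying in $\{n,n+1\}$; here one uses that $n$ and $n+1$ are not adjacent in $\Delta_{D_{n+1}}$, so $n$ commutes past $n+1$, and a sink at $n$ (resp. $n+1$) is again detected at the $A_n$-level after applying $\vee$ appropriately.

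Having found, at the $A_n$-level, a word $\jj = j_1\cdots j_n$ with $[j_1\cdots j_n] = [i_1\cdots i_n]$ and $j_1 = i$, I would lift it back: form $\prod_{k=0}^n (j_1\,j_2\,\cdots\,j_n)^{k\vee}$. This is reduced by Proposition~\ref{prop: one direction for fullfiling} (applied to the twisted Coxeter element $[s_{j_1}\cdots s_{j_n}]\vee$, which equals $[s_{i_1}\cdots s_{i_n}]\vee$ since the letters agree up to commutation). It remains to verify $[\ii_0'] = \big[\prod_{k=0}^n (j_1\cdots j_n)^{k\vee}\big]$: since $[j_1\cdots j_n]=[i_1\cdots i_n]$ as commutation classes of the single block, the commutation moves realizing this equality can be performed block-by-block in the concatenation (the $\vee$-twist is applied uniformly to each block, and commutations are preserved by the diagram automorphism $\vee$), possibly together with boundary commutations of the same type already used above; this shows the two products lie in the same commutation class.

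The main obstacle I anticipate is the boundary analysis — controlling commutation moves that straddle two consecutive blocks $(i_1\cdots i_n)^{k\vee}(i_1\cdots i_n)^{(k+1)\vee}$ — and in particular making precise the claim that a sink of $[\ii_0']$ can be realized by moves that restrict cleanly to the $A_n$-picture. This requires knowing exactly which letters at the end of block $k$ commute past which letters at the start of block $k+1$; the relevant fact is that after one round the configuration of residues stabilizes (the "additive property" and the height-function description of $\Upsilon_{[\ii_0]}$ from Algorithm~\ref{Alg_AbsAR} and Remark~\ref{Rem:i_0}), so that the sink structure of $[\ii_0']$ is governed entirely by the first block together with its interaction with the second, exactly as in type $A_n$. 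Once this local picture is pinned down, the rest is bookkeeping with the automorphism $\vee$.
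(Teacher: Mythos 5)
Your overall plan can be made to work, but as written it has a genuine gap precisely at the step you yourself flag as ``the main obstacle'': you never actually prove that a sink $i$ of the full class $[\ii'_0]$ must be realizable by commutation moves inside the first block $(i_1\cdots i_n)$. Appealing to stabilization of residues, the additive property, or the height-function picture is circular here, since those descriptions presuppose the very control over $[\ii'_0]$ you are trying to establish. The clean argument is elementary: a letter $i$ can be brought to the front of a word by commutations if and only if its first occurrence is preceded only by letters commuting with $s_i$ (commutations never reorder two non-commuting letters). Each block contains every index $1,\dots,n-1$ and exactly one of $\{n,n+1\}$; so for $i\le n-1$ the relevant occurrence lies in the first block, and for $i\in\{n,n+1\}$ an occurrence in the second block is blocked by the letter $n-1$ sitting in the first block. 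Hence the sink is witnessed inside the first block. Note also that your assertion that the commutation moves bringing $i$ to the front ``do not involve the letters $n,n+1$'' is false as stated (e.g.\ for the block $n\,1\,2\cdots(n-1)$ one must commute $1$ past $n$); what is true, and what you need, is only that such moves descend under $\mathfrak{p}^{D_{n+1}}_{A_n}$ because $n$ and $n+1$ have identical adjacency with $1,\dots,n-2$.

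Once the first-block claim is in place, the detour through type $A_n$ is unnecessary, and this is where your route diverges from the paper. The paper's proof is a one-line appeal to Remark \ref{rem: basic prop twist}: the element $w=s_{i_1}\cdots s_{i_n}$ is fully commutative, so any reduced expression $\jj$ of $w$ beginning with $i$ automatically satisfies $[\jj]=[i_1\cdots i_n]$, and applying the same commutations blockwise (they are preserved by the diagram automorphism $\vee$) gives $[\ii'_0]=\big[\prod_{k=0}^{n}(j_1\cdots j_n)^{k\vee}\big]$. Your reduction to Coxeter elements of $A_n$ via $\mathfrak{p}^{D_{n+1}}_{A_n}$ can reproduce the first bullet, but it imports the machinery of adapted classes and quivers to prove something that full commutativity gives for free, while leaving the genuinely nontrivial point (the boundary/first-block analysis) unproved. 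Fix that step as above and invoke full commutativity, and the proof closes.
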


\begin{proof}
The assertion follows from the fact that $s_{i_1}s_{i_2}  \cdots s_{i_n}$ is fully commutative.
\end{proof}

\begin{proposition} \label{prop: reverse direction for fullfiling}
Let $[\ii'_0]$ be a twisted adapted class of type $D_{n+1}$. Then there is a twisted Coxeter element $[i_1\, i_2\,  \cdots \, i_n] \vee$ such that
\[ [\ii'_0]=\big[ \prod_{i=0}^{n} (i_1\ i_2\ \cdots \ i_n)^{k\vee}\big]. \]
\end{proposition}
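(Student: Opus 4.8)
The plan is to prove the statement by induction on the length of a shortest chain of reflection functors linking $[\ii_0]$ to $[\ii'_0]$; such a chain exists because $[\ii'_0]\in\lf\ii_0\rf$ means precisely $[\ii_0]\overset{r}{\sim}[\ii'_0]$. For the base case, $\prod_{k=0}^{n}(1\,2\,\cdots\,n)^{k\vee}$ represents $[\ii_0]$ and $(s_1 s_2\cdots s_n)\vee$ is a twisted Coxeter element. For the inductive step, write $[\ii'_0]$ as the image of a class $[\ii''_0]$ under a single reflection functor, where by the induction hypothesis $[\ii''_0]=\bigl[\prod_{k=0}^{n}(j_1\,j_2\,\cdots\,j_n)^{k\vee}\bigr]$ for some twisted Coxeter element $[j_1\,j_2\,\cdots\,j_n]\vee$; there are two cases, a right reflection functor $[\ii'_0]=[\ii''_0]\,r_i$ at a sink $i$ of $[\ii''_0]$, and a left reflection functor $[\ii'_0]=r_i\,[\ii''_0]$ at a source $i$ of $[\ii''_0]$.

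Consider the sink case, which is the heart of the argument. By Lemma \ref{Lem: reflection sink i for comclass} we may replace $j_1\,\cdots\,j_n$ by a commutation equivalent word with $j_1=i$, so that $[\ii''_0]$ is represented by
\[
\prod_{k=0}^{n}(i\,j_2\,\cdots\,j_n)^{k\vee}
= i\ j_2\cdots j_n\ i^\vee\ j_2^\vee\cdots j_n^\vee\ i^{2\vee}\ j_2^{2\vee}\cdots\ i^{n\vee}\ j_2^{n\vee}\cdots j_n^{n\vee},
\]
which begins with the sink $i$. By the definition of $r_i$, the class $[\ii''_0]\,r_i$ is then represented by the word obtained by deleting this leading $i$ and appending $i^{*}$, namely
\[
j_2\cdots j_n\ i^\vee\ j_2^\vee\cdots j_n^\vee\ i^{2\vee}\ j_2^{2\vee}\cdots\ i^{n\vee}\ j_2^{n\vee}\cdots j_n^{n\vee}\ i^{*}.
\]
Since the $k$-th block of $\prod_{k=0}^{n}(j_2\,\cdots\,j_n\,i^\vee)^{k\vee}$ equals $j_2^{k\vee}\cdots j_n^{k\vee}\,i^{(k+1)\vee}$, this last word is \emph{literally} $\prod_{k=0}^{n}(j_2\,\cdots\,j_n\,i^\vee)^{k\vee}$ provided $i^{*}=i^{(n+1)\vee}$. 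This identity holds for type $D_{n+1}$: the automorphism $\vee$ has order $2$, so $i^{(n+1)\vee}$ is $i^\vee$ when $n+1$ is odd and $i$ when $n+1$ is even; meanwhile $*$ equals the fork-swapping automorphism $\vee$ when $n+1$ is odd (where $-w_0$ is nontrivial) and the identity when $n+1$ is even (where $w_0=-1$), matching $i^{(n+1)\vee}$ in both cases. Finally $[j_2\,\cdots\,j_n\,i^\vee]\vee$ is again a twisted Coxeter element: if $i\le n-1$ then $i^\vee=i$ and the underlying letter set is unchanged, whereas if $i\in\{n,n+1\}$ then $\{j_2,\dots,j_n\}$ avoids the orbit $\{n,n+1\}$ and adjoining $i^\vee$ inserts its other representative, so in either case the set meets every $\vee$-orbit exactly once, and the corresponding element of $W$ is fully commutative by Remark \ref{rem: basic prop twist}. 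The source case is entirely parallel: one arranges (by the mirror version of Lemma \ref{Lem: reflection sink i for comclass}) a representative whose last letter is $i$, and the left reflection functor then deletes the trailing letter and prepends the $*$-image of the leading letter, producing a product expression of the same shape by the same coincidence $*=\vee^{\,n+1}$ on $I$; alternatively, since a left reflection functor at a source is the inverse of a right reflection functor at a sink, one may invoke the standard connectivity of the cluster point $\lf\ii_0\rf$ under right reflection functors to reduce to the sink case. This completes the induction.

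I expect the delicate point to be the sink computation itself: one must check that the purely mechanical operation of $r_i$ — delete the leading sink letter, append $i^{*}$ — really does carry the explicit word for $\prod_{k=0}^{n}(i\,j_2\,\cdots\,j_n)^{k\vee}$ onto the explicit word for $\prod_{k=0}^{n}(j_2\,\cdots\,j_n\,i^\vee)^{k\vee}$, with the $n{+}1$ blocks lining up and no auxiliary commutations needed. This works only because of the type-specific identity $*=\vee^{\,n+1}$ on $I$, which is exactly what makes the appended letter $i^{*}=i^{(n+1)\vee}$ fall into the last position required to close up the product. A secondary point needing care is the source case: establishing the mirror of Lemma \ref{Lem: reflection sink i for comclass} (equivalently, checking that whenever $i$ is a source of $[\ii''_0]$ the relevant letter lies in the last block, so that it can be commuted to the end) is the remaining routine but somewhat fiddly verification.
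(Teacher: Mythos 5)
Your argument is correct and is essentially the paper's own (very terse) proof written out in full: induct along chains of reflection functors, use Lemma \ref{Lem: reflection sink i for comclass} to bring the sink to the front of the product word, and verify via the identity $*=\vee^{\,n+1}$ on $I$ that applying $r_i$ yields literally the product word of the cycled twisted Coxeter element, exactly the mechanism behind \eqref{ex: reflection red}. One small slip worth fixing: in the source case the left reflection functor prepends the $*$-image of the \emph{deleted trailing} letter (not of the leading letter); with that correction your mirror computation (using $(i^*)^\vee=i^{\,n\vee}$) goes through, and the vaguer fallback via ``connectivity under right reflection functors'' is not needed.
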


\begin{proof}
The proof is an immediate consequence of previous lemma.
\end{proof}

\begin{remark}
By Proposition \ref{prop: one direction for fullfiling} and  Proposition \ref{prop: reverse direction for fullfiling},
we can consider $\mathfrak{p}_{A_n}^{D_{n+1}}$ as a two-to-one map between twisted adapted classes of type
$D_{n+1}$ and adapted classes of type $A_n$, i.e., $\lf \ii_0\rf \twoheadrightarrow \lf Q\rf$. Thus, from now on, we abuse the notation
$\mathfrak{p}_{A_n}^{D_{n+1}}$ for the twisted adapted classes of $D_{n+1}$.
\end{remark}

\begin{theorem} \label{Thm:D_twisted adapted class} \hfill
\begin{enumerate}
\item[{\rm (1)}] There is the natural one-to-one correspondence between twisted Coxeter elements and twisted adapted classes of type $D_{n+1}$.
\item[{\rm (2)}] Since the number of adapted classes of type A$_n$ is $2^{n-1}$, the number of classes in the twisted adapted cluster point of type $D_{n+1}$ is $2^{n}.$
\end{enumerate}
\end{theorem}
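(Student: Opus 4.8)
The two claims of Theorem~\ref{Thm:D_twisted adapted class} can be handled almost entirely by assembling the propositions already proved in this subsection, so the plan is primarily to organize those pieces into a clean bijection.

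For part~(1), I would produce explicitly a pair of mutually inverse maps between the set of twisted Coxeter elements of $D_{n+1}$ and the set of twisted adapted classes. In one direction, send a twisted Coxeter element $[s_{i_1}s_{i_2}\cdots s_{i_n}]\vee$ to the commutation class $\big[\prod_{k=0}^{n}(i_1\,i_2\,\cdots\,i_n)^{k\vee}\big]$; Proposition~\ref{prop: one direction for fullfiling} guarantees this is a reduced expression of $w_0$, and since all the letters in the product repeat the pattern $i_1\cdots i_n$, the resulting class lies in $\lf\ii_0\rf$ (it has the right $\vee$-Coxeter composition and is reflection-equivalent to $[\ii_0]$ via the reduction steps of \eqref{ex: reflection red}, which is exactly the content of the argument in Proposition~\ref{prop: one direction for fullfiling}). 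In the other direction, Proposition~\ref{prop: reverse direction for fullfiling} shows every twisted adapted class $[\ii'_0]$ arises this way, i.e.\ the map is onto. Injectivity is the remaining point: I must check that distinct twisted Coxeter elements give distinct classes. Here I would read off the first $n$ letters of any reduced word in the class that is written in the standard ``periodic'' form $\prod_{k=0}^{n}(i_1\cdots i_n)^{k\vee}$; by Lemma~\ref{Lem: reflection sink i for comclass} every sink of the class can be brought to the front while preserving this periodic shape, and by Remark~\ref{rem: basic prop twist} the word $i_1\cdots i_n$ is fully commutative, so the first ``block'' of letters determines $s_{i_1}\cdots s_{i_n}$ up to commutation, hence determines the twisted Coxeter element $[s_{i_1}\cdots s_{i_n}]\vee$ uniquely. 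Thus the two maps are mutually inverse, giving the bijection.

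For part~(2), the count is immediate once part~(1) is in hand together with the factorization $\mathfrak{p}^{D_{n+1}}_{A_n}$: by Theorem~1.x (the standard $ADE$ fact recalled around \eqref{1-1_adapted}) there are $2^{n-1}$ adapted classes of type $A_n$, equivalently $2^{n-1}$ Coxeter elements of $A_n$; by Proposition~\ref{prop: 2 to 1 Dt to A} the map $\mathfrak{p}^{D_{n+1}}_{A_n}$ from twisted Coxeter elements of $D_{n+1}$ to Coxeter elements of $A_n$ is two-to-one and onto, so there are $2\cdot 2^{n-1}=2^{n}$ twisted Coxeter elements (this also matches Proposition~\ref{prop: number tCox elts}). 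Applying the bijection of part~(1) transports this count to the twisted adapted classes, giving $2^{n}$ classes in $\lf\ii_0\rf$.

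The only genuinely non-routine step is the injectivity argument in part~(1): one has to be sure that the periodic reduced word $\prod_{k=0}^{n}(i_1\cdots i_n)^{k\vee}$ cannot be commutation-equivalent to the analogous word built from a genuinely different twisted Coxeter element. I expect this to follow cleanly from Lemma~\ref{Lem: reflection sink i for comclass} and full commutativity (Remark~\ref{rem: basic prop twist}), perhaps supplemented by Theorem~\ref{thm: OS14}(3) to argue at the level of standard tableaux of shape $\Upsilon_{[\ii_0]}$ --- the initial segment of any residue-reading of the quiver is governed by its sinks, which are exactly the letters that can appear first, and these recover $[i_1\cdots i_n]$. Everything else is bookkeeping with the maps $\mathfrak{p}^{D_{n+1}}_{A_n}$ and the known $A_n$ count.
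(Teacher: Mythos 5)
Your overall assembly is the same as the paper's: surjectivity of the map $[s_{i_1}\cdots s_{i_n}]\vee \mapsto \big[\prod_{k=0}^{n}(i_1\,i_2\,\cdots\,i_n)^{k\vee}\big]$ via Propositions \ref{prop: one direction for fullfiling} and \ref{prop: reverse direction for fullfiling}, and the count $2^{n}=2\cdot 2^{n-1}$ via the two-to-one map of Proposition \ref{prop: 2 to 1 Dt to A} (equivalently Proposition \ref{prop: number tCox elts}); the paper states the theorem as a direct consequence of exactly these results. The problem is the step you yourself flag as the crux, injectivity, where your justification does not hold as written. Knowing the sinks of a twisted adapted class does not recover the commutation class of the first block: for the class of $\ii_0=\prod_{k=0}^{n}(1\,2\,\cdots\,n)^{k\vee}$ the only sink is $1$ (for every other residue, the $\prec_{[\ii_0]}$-minimal vertex of that residue has an arrow to an earlier vertex, so it cannot be read first), while the block is $1\,2\,\cdots\,n$. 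So the claim that "the letters that can appear first \ldots recover $[i_1\cdots i_n]$" fails in one step. Moreover the implication you do state, that the first block determines $s_{i_1}\cdots s_{i_n}$ up to commutation, is the easy direction; what injectivity needs is that the commutation class of the whole periodic word determines the block. Counting does not substitute for this: $2^{n}$ twisted Coxeter elements together with surjectivity only bounds the number of twisted adapted classes above by $2^{n}$.

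An argument in your spirit can be completed, but it needs more than one application of Lemma \ref{Lem: reflection sink i for comclass}: for instance, if $\big[\prod_{k}(i_1\cdots i_n)^{k\vee}\big]=\big[\prod_{k}(j_1\cdots j_n)^{k\vee}\big]$, rewrite the $i$-block to start with $j_1$, apply $r_{j_1}$ (using that $\big[\prod_{k}(i_1\cdots i_n)^{k\vee}\big]r_{i_1}=\big[\prod_{k}(i_2\cdots i_n\, i_1^{\vee})^{k\vee}\big]$, which rests on $i_1^{(n+1)\vee}=i_1^{*}$), and iterate, checking at the $m$-th step that $j_m$ is still an original letter of the block and that its being a sink of the current block translates into commutations inside the original block $[i_1\cdots i_n]$ --- that is where full commutativity genuinely enters. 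Alternatively, note that the roots of the first block are precisely the $\prec$-minimal roots in the residues it uses, so the inversion set $\Phi(s_{i_1}\cdots s_{i_n})$, hence the element, is determined by the class once one rules out that a single class contains both an "$n$-type" and an "$(n+1)$-type" periodic word (a letter-count parity argument when $n$ is even; a comparison of the minimal residue-$n$ and residue-$(n+1)$ roots in general). Either way, this is a real argument that your sketch currently lacks, even though the paper itself also leaves it implicit.
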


\begin{remark}
In the twisted adapted cluster point of type $A_{2n+1}$ \cite{OS16}, there are some classes which are not associated to twisted Coxeter elements.
The number of twisted adapted classes of type $A_{2n+1}$ is $2^{2n}$ and the number of twisted Coxeter element of type $A_{2n+1}$ is $3^{n-1}\cdot 4.$
Hence Theorem \ref{Thm:D_twisted adapted class} is a special property for the type $D_{n+1}$ case.
\end{remark}

\section{Twisted and folded AR-quivers and their labeling via AR-quivers of type $A_{n}$} \label{Section:twisted_AR_D_label}

\subsection{ AR-quivers of type $A_{n}$ } In this subsection, we briefly review the combinatorial properties of AR-quiver for a Dynkin quiver $Q$ of type
$A_n$, which were studied in \cite{Oh14A}.

\begin{definition} \cite[Definition 1.6]{Oh14A} Fix any class $[\jj_0]$ of $w_0$ of any finite type.
\begin{enumerate}
\item[{\rm (a)}] A path in $\Upsilon_{[\jj_0]}$ is {\it $N$-sectional} (resp. {\it $S$-sectional})
if it is a concatenation of upward arrows (resp. downward arrows).
\item[{\rm (b)}] An $N$-sectional (resp. $S$-sectional) path $\rho$ is {\it maximal} if there is no longer
$N$-sectional (resp. $S$-sectional) path containing $\rho$.
\item[{\rm (c)}] For a sectional path $\rho$, the {\it length} of $\rho$ is the number of all arrows in $\rho$.
\end{enumerate}
\end{definition}

\begin{proposition} \label{pro: section shares} \cite[Proposition 4.5]{OS15}
Fix any class $[\jj_0]$ of $w_0$ of type $A_n$.
Let $\rho$ be an $N$-sectional $($resp. $S$-sectional$)$ path in $\Upsilon_{[\jj_0]}$. Then every positive roots contained in
$\rho$ has the same first $($resp. second$)$ component.
\end{proposition}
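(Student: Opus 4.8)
The plan is to prove that along any $N$-sectional path in $\Upsilon_{[\jj_0]}$ the first component of the labeling positive root stays constant, and dually the second component is constant along $S$-sectional paths. By the combinatorial construction of $\Upsilon_{[\jj_0]}$ in Algorithm~\ref{Alg_AbsAR} and Theorem~\ref{thm: OS14}, it suffices to analyze a single arrow $\beta^{\ii_0}_k \to \beta^{\ii_0}_j$ of the quiver, where $j = \max\{j' < k \mid i_{j'} = i_j\}$, $k = \min\{k' > j \mid i_{k'} = i_k\}$, and $i_j, i_k$ are adjacent in the type $A_n$ Dynkin diagram, so $|i_j - i_k| = 1$. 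The key local fact I would establish is the \emph{additive property} \eqref{eq: addtive property} in the form: if $\beta^{\ii_0}_k \to \beta^{\ii_0}_j$ is an upward (resp.\ downward) arrow, then $\beta^{\ii_0}_j$ is obtained from $\beta^{\ii_0}_k$ by the natural ``shift'' that increments/decrements the appropriate endpoint of the interval $[a,b] = \ve_a - \ve_{b+1}$; concretely, an arrow between residues $i_k$ and $i_j = i_k - 1$ (an upward step) should send $[a, b]$ with $b = i_k$ to $[a, b-1]$, keeping the first component $a$ fixed, while an arrow between residues $i_k$ and $i_j = i_k + 1$ (a downward step) should send $[a,b]$ with $a = i_k+1$ to $[a-1, b]$, keeping the second component $b$ fixed. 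Granting this, the proposition follows immediately by induction along the path, since an $N$-sectional path is a concatenation of upward arrows and hence preserves the first component at every step, and symmetrically for $S$-sectional paths.

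First I would set up coordinates: using Definition~\ref{def: Gamma_Q}, fix a height function $\xi$ and recall that $\widetilde{\phi}_Q$ places $\beta$ at $(i,p)$ where $i$ is the residue; an upward arrow goes from $(i,p)$ to $(i-1, p+1)$-type positions and a downward arrow from $(i,p)$ to $(i+1,p+1)$-type positions (up to relabeling residues by the shape of $Q$). The second step is to prove the local shift claim. For adapted classes $[\jj_0] = [Q]$ this is essentially the classical description of the AR-quiver of type $A_n$ in terms of intervals, which can be extracted from \cite{Oh14A} or proved directly from the additive property \eqref{eq: addtive property}: writing the additive relation $\be + \phi_Q(\be) = \sum_{\gamma} \gamma$ over the neighbors $\gamma$ at residue levels $p-1$, and using $\ve_a - \ve_{b+1} + \ve_{a'} - \ve_{b'+1}$-type identities, one checks that the unique way to write an interval root as such a sum forces the endpoint-shift pattern above. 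For a general commutation class $[\jj_0]$, I would reduce to the adapted case via reflection functors (Algorithm~\ref{alg: Ref Q re}) and Theorem~\ref{thm: OS14}: every commutation class of $w_0$ of type $A_n$ is reflection-equivalent to some $[Q]$ (all classes of $A_n$ lie in the single adapted cluster point $\lf Q \rf$ — this is the content of \eqref{1-1_adapted}), and reflection functors act on labels by $\be \mapsto s_i(\be)$, which on an interval root $[a,b] = \ve_a - \ve_{b+1}$ is again an endpoint operation, hence compatible with sectional paths.

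Actually, the cleanest route avoids case-chasing entirely: I would argue purely combinatorially using \eqref{eq: addtive property}. Suppose $\rho$ is an $N$-sectional path and $\beta \to \beta'$ is a single upward arrow in it, with $\be$ at residue $i$ and $\be'$ at residue $i-1$ (in the Dynkin diagram labeling, so $i, i-1$ adjacent). The additive property says $\be + \phi(\be) = \be' + (\text{other neighbors at the lower level})$. Since all roots involved are intervals and the residues pin down which simple roots can appear, a short support argument shows $\be'$ and $\be$ share an endpoint, and the fact that the arrow is \emph{upward} rather than downward selects the \emph{first} component as the shared one. The main obstacle I anticipate is making the ``residue pins down the simple-root content'' step rigorous for an arbitrary commutation class $[\jj_0]$ rather than for $[Q]$, since then we no longer have the clean geometric coordinate picture of $\Gamma_Q$; I expect this is handled by invoking the reflection-functor reduction together with part (4) of Theorem~\ref{thm: OS14} and the fact, implicit in the $A_n$ theory, that every $A_n$ commutation class is adapted, so that no genuinely new case arises. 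Once the single-arrow statement is in hand, concatenation gives the full proposition.
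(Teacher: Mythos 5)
The paper offers no proof of this statement at all---it is quoted from \cite[Proposition 4.5]{OS15}---so your argument can only be judged on its own terms, and as written it contains a genuine gap. Your reduction of the general case to the adapted case rests on the claim that every commutation class of $w_0$ in type $A_n$ is adapted, or at least reflection-equivalent to an adapted class $[Q]$; this is false for $n\ge 3$. The diagram \eqref{1-1_adapted} only records bijections among the $2^{n-1}$ adapted classes, Coxeter elements and Dynkin quivers; it does not say these exhaust all commutation classes (already in type $A_3$ there are $8$ commutation classes and only $4$ adapted ones), and the adapted cluster point $\lf Q \rf$ is not the only $r$-cluster point: the twisted adapted cluster point of type $A_{2n-1}$ studied in \cite{OS16} consists precisely of non-adapted classes lying in a different cluster point. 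So the cases the proposition is actually about---arbitrary $[\jj_0]$---are exactly the cases your reduction never reaches.

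Relatedly, both local mechanisms you invoke are only available in the adapted setting. The additive property \eqref{eq: addtive property} and the height-function coordinate picture of Definition \ref{def: Gamma_Q} are statements about $\Gamma_Q$; for a general commutation class no such additive relation has been established (compare Theorem \ref{twisted additive property}, where even for twisted adapted classes the additive property takes a different shape), so your ``cleanest route'' is unjustified, indeed circular, for general $[\jj_0]$. Even if the cluster-point claim were repaired, the reflection-functor step would still require an argument that Algorithm \ref{alg: Ref Q re} carries sectional paths to sectional paths and that the global relabeling $\be \mapsto s_i(\be)$ preserves the property ``all labels on the path share the same first (resp.\ second) component,'' including at the deleted and newly created vertices; you do not supply this. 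What the statement really needs is a direct argument from Algorithm \ref{Alg_AbsAR}, valid for every class: for a single arrow $\beta^{\jj_0}_k \to \beta^{\jj_0}_j$, conditions {\rm (Ar2)}--{\rm (Ar3)} guarantee that no letter equal to $i_j$ or $i_k$ occurs strictly between positions $j$ and $k$, and a computation with the reflections $s_{i_j}, s_{i_{j+1}}, \ldots, s_{i_{k-1}}$ acting on interval roots then shows the two labels share the appropriate endpoint, with the direction of the arrow deciding whether it is the first or the second component; induction along the path finishes. That uniform computation, with no appeal to adaptedness, is the missing core of your proof.
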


\begin{theorem} \label{thm: labeling GammaQ}
\cite[Corollary 1.12]{Oh14A} Fix any Dynkin quiver $Q$ of type $A_n$.
For $1 \le i \le n$, the AR quiver $\Gamma_Q$ contains a maximal $N$-sectional path of length $n-i$ once and exactly once whose vertices share $i$ as the first component.
At the same time, $\Gamma_Q$ contains a maximal $S$-sectional path of length $i-1$ once and exactly once whose vertices share $i$ as the second component.
\end{theorem}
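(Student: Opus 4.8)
The plan is to work with the combinatorial construction of $\Gamma_Q$ from Definition \ref{def: Gamma_Q} and the sectional-path machinery of Proposition \ref{pro: section shares}. First I would fix a Coxeter element $\phi_Q = s_{i_1} \cdots s_{i_n}$ adapted to $Q$ and a height function $\xi$, so that each positive root $\beta \in \Phi^+_{A_n}$ acquires a coordinate $\widetilde{\phi}_Q(\beta) = (j, p) \in I \times \Z$. By Proposition \ref{pro: section shares}, any $N$-sectional path has all its vertices sharing a common first component, and any $S$-sectional path has all its vertices sharing a common second component; so the two assertions about first and second components are already guaranteed once I locate the relevant maximal sectional paths. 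The real content is the count: exactly one maximal $N$-sectional path of length $n-i$ with first component $i$, and exactly one maximal $S$-sectional path of length $i-1$ with second component $i$.

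The key step is to pin down, for each $i$, how many positive roots of type $A_n$ have first component $i$, namely the roots $[i,b] = \ve_i - \ve_{b+1}$ for $i \le b \le n$, which is exactly $n-i+1$ roots; and likewise exactly $i$ roots $[a,i]$ with second component $i$. I would then argue that within a single maximal $N$-sectional path the roots sharing first component $i$ are totally ordered and occur consecutively along the path, so that a maximal $N$-sectional path carrying first component $i$ must contain \emph{all} $n-i+1$ such roots (using convexity of $\prec_Q$ and the additive property \eqref{eq: addtive property} to see that consecutive roots on the path differ by a simple root, hence the first component is constant along a maximal such path and the path cannot be extended past the extreme roots $[i,i]=\al_i$ and $[i,n]$). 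That forces the length to be $n-i$ and forces uniqueness: two distinct maximal $N$-sectional paths with first component $i$ would have to partition the $n-i+1$ roots into two shorter paths, contradicting that they are totally ordered and consecutive. The $S$-sectional statement is entirely symmetric, reading second components instead of first and using $[1,i], \ldots, [i,i]$.

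The main obstacle I anticipate is verifying cleanly that the roots with a fixed first component $i$ really do sit on a single maximal $N$-sectional path rather than being scattered across several sectional segments of $\Gamma_Q$ — i.e., that the arrow structure of $\Gamma_Q$ connects $[i,b]$ to $[i,b+1]$ (or $[i,b-1]$) by an upward arrow for all intermediate $b$. I would handle this by induction on the reflection functors of Algorithm \ref{alg: Ref Q}: the statement is immediate for the "standard" sink-source orientation (where the AR-quiver is the familiar staircase and the sectional paths are visible by inspection), and each application of Algorithm \ref{alg: Ref Q}, (A1)–(A3) removes the vertex $\al_i$ at one end and reattaches $\al_i$ at the far end of the corresponding residue while relabeling $\beta \mapsto s_i(\beta)$; one checks that this operation carries a maximal $N$-sectional path of length $n-i$ to another such path (possibly for a different value of $i$, tracked by the permutation action of $s_i$ on components) and preserves the "length $n-i \leftrightarrow$ first component $i$" pairing, and dually for $S$-sectional paths. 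Once the induction step is in place, Proposition \ref{Prop:adapted}(1) guarantees we have covered every Dynkin quiver $Q$ of type $A_n$.
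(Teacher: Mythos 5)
First, a caveat: the paper does not prove Theorem \ref{thm: labeling GammaQ} at all; it is imported verbatim from \cite[Corollary 1.12]{Oh14A}, so there is no internal argument to compare yours against, and I can only assess your proposal on its own terms. Your overall strategy --- Proposition \ref{pro: section shares} (constancy of the first, resp.\ second, component along $N$-, resp.\ $S$-, sectional paths), the count of the $n-i+1$ roots $[i,b]$ and the $i$ roots $[a,i]$, and then an induction over sink reflections (Algorithm \ref{alg: Ref Q}) starting from a linearly oriented quiver to show that all roots with first component $i$ lie on a single maximal $N$-sectional path --- is a viable route to the statement.

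However, as written there are concrete gaps. (1) The induction step, which carries all the content, is only asserted, and your description of it is off at the decisive point: when $i$ is a sink, Algorithm \ref{alg: Ref Q} deletes the sink vertex $\al_i$ (truncating by one the paths carrying first component $i$ and second component $i$), relabels by $s_i$ (which exchanges first components $i \leftrightarrow i+1$ and second components $i-1 \leftrightarrow i$), and re-inserts $\al_i$ at residue $i^{*}=n+1-i$, \emph{not} at the far end of the same residue. What must then be verified is that the upward arrow out of this new vertex lands precisely on the bottom vertex of the path that formerly carried first component $i+1$ (so that, after relabeling, $[i,i],[i,i+1],\dots,[i,n]$ form one path of length $n-i$), and dually that its downward arrow lands on the top of the path formerly carrying second component $i-1$. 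Proposition \ref{pro: section shares} alone does not exclude the degenerate possibility that the re-inserted $\al_i$ forms a length-$0$ maximal path of its own, in which case uniqueness would fail; this attachment check is exactly the missing verification. (2) Proposition \ref{Prop:adapted}(1) does not do the job you assign to it: it only says distinct quivers have non-isomorphic AR-quivers, whereas you need that every orientation of the $A_n$ diagram is reachable from your base orientation by a sequence of reflections at sinks (a true and standard fact, used implicitly in the proof of Proposition \ref{prop: one direction for fullfiling}, but it must be invoked as such). Finally, the preliminary claim that convexity plus \eqref{eq: addtive property} forces consecutive roots on such a path to differ by a simple root is not justified as stated; fortunately it is also not needed, since Proposition \ref{pro: section shares} together with the root count already yields the length and uniqueness once the induction step is actually established.
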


Thus, for any AR-quiver $\Gamma_Q$ of type $A_n$, we say that an $N$-sectional path $\rho$ is the {\it maximal
$N$-sectional path of length $k$} $(0 \le k \le n-1)$ if all positive roots contained in $\rho$ have $n-k$ as first components.
Similarly, we can define a notion of the {\it maximal $S$-sectional path of length $k$}  $(0 \le k \le n-1)$ whose positive roots
contain $k+1$ as second components.
Note that, in this paper, we only need maximal sectional paths (see Section \ref{Sec:twisted_AR_D_label_shape}).

\begin{lemma} \cite[Lemma 3.2.3]{KKK13b} \label{lem: Leclerc}
For any Dynkin quiver $Q$ of type $AD_m$, the positions of simple roots inside of $\Gamma_Q$ are on the boundary of $\Gamma_Q$;
that is, either
\begin{itemize}
\item[{\rm (i)}] $\al_i$ is a sink or a source of $\Gamma_Q$, or
\item[{\rm (ii)}] residue of $\al_i$ is
$\begin{cases}
\text{$1$ or $n$} & \text{ if $Q$ is of type $A_n$,} \\
\text{$1$, $n$ or $n+1$} & \text{ if $Q$ is of type $D_{n+1}$.}
\end{cases}$
\end{itemize}
Furthermore,
\begin{enumerate}
\item[{\rm (a)}] all sinks and sources of $\Gamma_Q$ have their labels as simple roots,
\item[{\rm (b)}] $i$ is a sink $($resp. source$)$ of $Q$ if and only if $\al_i$ is a sink $($resp. source$)$ of $\Gamma_Q$
\end{enumerate}
\end{lemma}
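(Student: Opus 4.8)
The plan is to obtain (a) and (b) first from the coordinate model of $\Gamma_Q$ in Definition~\ref{def: Gamma_Q}, and then to combine that with the sectional‑path structure of $\Gamma_Q$ to pin down the residue of $\al_i$ when $i$ is neither a sink nor a source of $Q$. For (a) and (b) I would argue as follows. In $\Gamma_Q$ the vertices of residue $i$ sit at heights $\xi(i),\xi(i)-2,\xi(i)-4,\dots$; the highest of them is $(i,\xi(i))$, and by Gabriel's theorem its attached root is the dimension vector of the indecomposable projective at $i$, so it equals $\al_i$ precisely when $i$ is a sink of $Q$. Using the additive property~\eqref{eq: addtive property}: if $p<\xi(i)$ then $(i,p+2)$ is also a vertex, and \eqref{eq: addtive property} at $(i,p+2)$ expresses the sum of the two positive roots attached to $(i,p+2)$ and $(i,p)$ as a sum, over the neighbours $l$ of $i$, of the roots attached to $(l,p+1)$; being nonzero this sum is nonempty, which produces an outgoing arrow from $(i,p)$. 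On the other hand $(i,\xi(i))$ has an outgoing arrow iff some neighbour $l$ has $\xi(l)=\xi(i)+1$, i.e.\ iff some edge at $i$ points away from $i$ in $Q$, i.e.\ iff $i$ is not a sink of $Q$. Hence the sinks of $\Gamma_Q$ are exactly the vertices $(i,\xi(i))$ with $i$ a sink of $Q$, and they carry the labels $\al_i$; the statement for sources follows by the evident symmetry (the AR‑quiver of $Q^{\mathrm{op}}$ is $\Gamma_Q$ with all arrows reversed and the same labels). This proves (a) and (b), and in particular shows $\al_i$ is neither a sink nor a source of $\Gamma_Q$ exactly when $i$ is neither a sink nor a source of $Q$.

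It remains to show that, when $i$ is neither a sink nor a source of $Q$, the residue $r$ of $\al_i$ is $1$ or $n$ in type $A_n$, and $1$, $n$ or $n+1$ in type $D_{n+1}$. Take type $A_n$ and write $\al_i=[i,i]$. By Theorem~\ref{thm: labeling GammaQ} and Proposition~\ref{pro: section shares}, $\al_i$ is an endpoint of the unique maximal $N$‑sectional path with first component $i$ and, at the same time, of the unique maximal $S$‑sectional path with second component $i$; so two maximal sectional paths of opposite types meet at $\al_i$. Suppose $2\le r\le n-1$. Every arrow at $\al_i$ then runs between residues $r$ and $r{\pm}1$; each of the two paths uses exactly one of them at $\al_i$, and their maximality together with Proposition~\ref{pro: section shares} (no further arrow into or out of $\al_i$ may share the first, resp.\ the second, component of $\al_i$) leaves only these possibilities: either $\al_i$ has no incoming arrow, or no outgoing arrow, which contradicts the previous paragraph; or else $\al_i$ has exactly one incoming and one outgoing arrow, both meeting residue $r-1$, while residue $r+1$ has no vertex adjacent to $\al_i$.

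This last, \emph{degenerate} configuration is the main obstacle, since excluding it needs the global shape of $\Gamma_Q$ rather than purely local mesh relations. The idea is to feed \eqref{eq: addtive property} back in: at the two residue‑$r$ heights flanking $\al_i$ it forces $\al_{i-1}$ and $\al_{i+1}$ into residue $r$ as well, with $\phi_Q(\al_{i+1})=\al_i$; pushing this up residue $r$ one reaches its top vertex $(r,\xi(r))$, which is a simple root only if $r$ is a sink of $Q$. But if $r$ is a sink then both $(r{-}1,\xi(r){-}1)$ and $(r{+}1,\xi(r){-}1)$ are vertices of $\Gamma_Q$, and \eqref{eq: addtive property} at $(r,\xi(r))$ would write $\al_r+\al_{r-1}=[r{-}1,r]$ as a sum of the two positive roots sitting at those positions, forcing one of the two to be $\al_r$ once more — impossible, since $r<n$ keeps residue $r+1$ genuinely present. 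Hence $r\in\{1,n\}$. The real work lies precisely in verifying that the degeneracy does propagate up residue $r$ in every residual sub‑case (where it might instead run against a source of $\Gamma_Q$ or against the bottom of residue $r$).

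Type $D_{n+1}$ is treated in the same spirit: $\al_i$ is once more the common endpoint of a maximal $N$‑ and a maximal $S$‑sectional path, with the combinatorial descriptions of these paths and of the global shape of $\Gamma_Q$ of type $D$ obtained in \cite{Oh14D} standing in for Theorem~\ref{thm: labeling GammaQ} and Proposition~\ref{pro: section shares}; the one genuinely new feature is the trivalent vertex $n-1$, adjacent to both $n$ and $n+1$, so that there are now three residues — $1$, $n$, $n+1$ — at which a sectional path can only turn, which is exactly why (ii) lists these three. Finally, one can sidestep the degenerate‑case analysis altogether by inducting on reflections $Q\rightsquigarrow s_iQ$ at sinks via Algorithm~\ref{alg: Ref Q}: the sole bookkeeping there is that in $\Gamma_{s_iQ}$ the simple root $\al_k$, for a neighbour $k$ of $i$, sits at the position that carried the non‑simple label $s_i(\al_k)=\al_k+\al_i$ in $\Gamma_Q$ — or one may simply invoke \cite[Lemma 3.2.3]{KKK13b}.
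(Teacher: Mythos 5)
The paper itself gives no proof of this lemma: it is imported verbatim from \cite[Lemma 3.2.3]{KKK13b}, so the only argument the paper actually relies on is the citation you offer in your closing sentence. Your opening treatment of (a) and (b) is a reasonable self-contained sketch and is consistent with the conventions of Definition \ref{def: Gamma_Q}: non-extremal vertices of each residue acquire incoming/outgoing arrows from the additive property \eqref{eq: addtive property}, the top vertex $(i,\xi(i))$ carries a simple label exactly when $i$ is a sink of $Q$, and sources are handled through $\Gamma_{Q^{\rm rev}}$ (cf.\ Corollary \ref{Cor:rev Ga rev Q}).

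The residue statement (i)/(ii), however, is not established by what you wrote. First, you use as an ingredient that $\al_i$ is an \emph{endpoint} of both the maximal $N$-sectional path with first component $i$ and the maximal $S$-sectional path with second component $i$; Theorem \ref{thm: labeling GammaQ} and Proposition \ref{pro: section shares} only give that $\al_i$ \emph{lies on} these paths. Endpoint-ness at a residue $2\le r\le n-1$ is essentially the boundary statement being proved (a vertex interior to either path at such a residue already has the forbidden pattern of arrows), so assuming it is close to circular. Second, even granting it, your case analysis omits the symmetric configuration where both arrows at $\al_i$ meet residue $r+1$, and the exclusion of the ``degenerate'' case is explicitly left unfinished by your own admission; moreover the sketch offered for it is incorrect in detail: the mesh relations at the residue-$r$ neighbours of $\al_i$ produce the roots $[a,i-1]$ and $[i+1,c]$, where $[a,i]$ and $[i,c]$ are the two arrows' endpoints, not the simple roots $\al_{i-1}$ and $\al_{i+1}$, and the subsequent contradiction argument conflates the index $i$ with the residue $r$. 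Third, type $D_{n+1}$ is not treated at all beyond an appeal to unstated results of \cite{Oh14D}, and the proposed induction on reflections via Algorithm \ref{alg: Ref Q} is likewise only a sketch: the inductive step must locate \emph{all} simple roots in $\Gamma_{s_iQ}$, must know where $s_i(\al_k)=\al_i+\al_k$ sits in $\Gamma_Q$, and must re-verify (i), (ii), (a), (b) after the move, together with a base case; none of this is carried out. As it stands, the only complete justification in your proposal is the final invocation of \cite[Lemma 3.2.3]{KKK13b}, which coincides with what the paper does.
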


\subsection{ Twisted and folded AR-quivers of type $D_{n+1}$ }
In this subsection, we describe a coordinate to a combinatorial AR-quiver of a twisted adapted class.
For the purpose, we denote by $\ii_0=\Pi_{k=0}^n (i_1\, i_2\, \cdots\, i_n)^{k\vee}$ a reduced expression in a
twisted adapted class.

\begin{definition}
If $[\ii_0]$ is a twisted adapted class then the combinatorial AR-quiver $\Upsilon_{[\ii_0]}$ is called a {\it twisted AR-quiver}.
\end{definition}

Let us denote the twisted Coxeter element of $[\ii_0]$ by $ \phi_{[\ii_0]}\vee=(s_{i_1}\cdots s_{i_n})\vee$ and
\begin{align} \label{eq: MB}
\text{ $\mathfrak{B}_{[\ii_0]} \seteq\Phi(\phi_{[\ii_0]})= \{ \beta^{\phi_{[\ii_0]}}_k=s_{i_1}s_{i_2}\cdots s_{i_{k-1}}(\alpha_{i_k})$ for $k=1,2, \cdots, n \}$}
\end{align}
Consider the Dynkin quiver $Q$ of type $A_n$ such that $\mathfrak{p}^{D_{n+1}}_{A_n}([\ii_0])=[Q].$ Now we suggest an algorithm for
the twisted Auslander-Reiten quiver $\Upsilon_{[\ii_0]}$ with the coordinate system as follows:

\begin{algorithm}  \label{Alg_twAR} \hfill
\begin{enumerate}
\item[{\rm (a)}] Let us define a {\it height function} $\xi: \overline{I}_{n+1}^D\to \Z$ where $I^D_{n+1}$ is the index set of $D_{n+1}$ and
$\overline{I}^D_{n+1}=\{\bar{1}, \bar{2}, \cdots, \bar{n}\}$ as follows : $\xi(\bar{i})=\xi(\bar{j})+1$ if there is an arrow from $j$ to $i$ in $Q$.
Note that $\xi$ is unique up to constant.
\item[{\rm (b)}] Take the injection $\widetilde{\phi}_{[\ii_0]}:\mathfrak{B}_{[\ii_0]}\to I\times \Z$ such that $\beta_k^{\phi_{[\ii_0]}}\mapsto ( i_k , \xi(\bar{i_k})).$
\item[{\rm (c)}] Let us denote $\psi_{[\ii_0]}= s_{i_1}s_{i_2}\cdots s_{i_n}  s_{i_1^\vee}s_{i_2^\vee}\cdots s_{i_n^\vee}$ and
$\beta^{\psi_{[\ii_0]}}_{n+k}=\phi_{[\ii_0]} s_{i_1^\vee}s_{i_2^\vee}\cdots s_{i_{k-1}^\vee}(\alpha_{i_k^\vee})$.
We extend the map $\widetilde{\phi}_{[\ii_0]}$ to the map on $\Phi(\Psi_{[\ii_0]})$ satisfying $\beta^{\psi_{[\ii_0]}}_{n+k}\mapsto (i_k^\vee, \xi(\bar{i_k})-2)$.
\item[{\rm (d)}] Extend the map $\widetilde{\phi}_{[\ii_0]}$ to the map on $\Phi^+$ satisfying if  both $\beta$ and $ \psi_{[\ii_0]}(\beta)$
are positive roots and $\widetilde{\phi}_{[\ii_0]}(\beta)=(i,p)$ then $\widetilde{\phi}_{[\ii_0]}(\psi_{[\ii_0]}(\beta))=(i, p-4).$
\item[{\rm (e)}] If {\rm (i)} $(i,p),(j,q)\in {\rm Im}(\widetilde{\phi}_{[\ii_0]})$,
{\rm (ii)} $|j-i|=1$ or $(i,j)\in \{(n-1,n+1), (n+1, n-1)\}$, {\rm (iii)} $q=p+1$ then there is an arrow $(i,p)\to (j,q).$
\end{enumerate}
\end{algorithm}

\begin{remark} The above algorithm is well-defined by \eqref{ex: reflection red} and
\begin{eqnarray*} &&
\parbox{70ex}{ for each $\be \in \PR$ and $[\ii_0]$ of $w_0$, {\it the residue} $i$ of $\be$ with respect to $[\ii_0]$ is well-assigned.}
\end{eqnarray*}
Also, the algorithm can be understood as a generalization of \cite[\S 2.2]{HL11} given in Definition \ref{def: Gamma_Q}.
\end{remark}

\begin{example} \label{Exam: D5 easy tAr}
The twisted AR-quiver $\Upsilon_{[\ii_0]}$ for $\ii_0=\prod_{k=0}^4 ( 4\, 3\, 2\, 1)^{k\vee}$ with coordinates  can be depicted as follows:
\begin{equation*}
 \scalebox{0.8}{\xymatrix@C=0.9ex@R=1ex{
 i \backslash p & 1& 2& 3& 4& 5& 6& 7& 8 & 9 & 10 & 11 & 12\\
1&&&& \srt{1}{5}\ar@{->}[dr]  && \srt{4}{-5}\ar@{->}[dr]  && \srt{3}{-4}\ar@{->}[dr]  && \srt{2}{-3}\ar@{->}[dr] && \srt{1}{-2} \\
2&&& \srt{2}{5}\ar@{->}[dr]\ar@{->}[ur]  && \srt{1}{4}\ar@{->}[dr]\ar@{->}[ur] && \srt{3}{-5}\ar@{->}[dr]\ar@{->}[ur] && \srt{2}{-4}\ar@{->}[dr]\ar@{->}[ur] && \srt{1}{-3}\ar@{->}[ur] \\
3 && \srt{3}{5}\ar@{->}[ddr]\ar@{->}[ur] && \srt{2}{4}\ar@{->}[dr]\ar@{->}[ur] && \srt{1}{3}\ar@{->}[ddr]\ar@{->}[ur] && \srt{2}{-5} \ar@{->}[dr]\ar@{->}[ur] && \srt{1}{-4}\ar@{->}[ur] \\
4&\srt{4}{5}\ar@{->}[ur] &&&& \srt{2}{3}\ar@{->}[ur] &&&& \srt{1}{-5}\ar@{->}[ur] \\
5&&& \srt{3}{4}\ar@{->}[uur] &&&& \srt{1}{2} \ar@{->}[uur]
}}
\end{equation*}
Here $\xi$ is defined by $\xi(\bar{1})= 12$, $\xi(\bar{2})=11$ , $\xi(\bar{3})= 10$ and $\xi(\bar{4})= 9,$ and $\mathfrak{p}^{D_{n+1}}_{A_n}([\ii_0])=[Q]$ where
\begin{equation*}
 Q= \xymatrix@R=4ex{
*{\circ}<3pt> \ar@{<-}[r]_<{1 \ \ }  &*{\circ}<3pt>
\ar@{<-}[r]_<{2 \ \ } & *{\circ}<3pt>
\ar@{<-}[r]_<{3 \ \ }  &*{\circ}<3pt> \ar@{-}[l]^<{\  \ 4  }}.
\end{equation*}
The labels on $(k, \xi(\bar{k}))$ and $(k^\vee, \xi(\bar{k})-2)$ for $k=1,2,3,4$ are determined by (b) and (c), respectively, in Algorithm \ref{Alg_twAR} and the rest of labels  are determined by (d).
\end{example}

In the sense of \cite[Section 7]{OS16}, we introduce {\it the folded Auslander-Reiten quiver} $\widehat{\Upsilon}_{[\ii_0]}$ of a twisted adapted class
$[\ii_0]$:

\begin{enumerate}
\item In the folded AR-quiver,  we assign {\it folded coordinate} $(\bar{i}, p, (-1)^{\delta_{i, n+1}})\in \overline{I} \times \Z \times \{\pm 1\}$ for the positive root $\beta$ such that $\widetilde{\phi}_{[\ii_0]}(\beta)=(i, p).$
\item There is an arrow from $(\bar{i}, p, (-1)^{\delta_{i, n+1}})\to(\bar{j}, q, (-1)^{\delta_{j, n+1}})$ if and only if there is an arrow $(i,p)\to (j,q)$ in $\Upsilon_{[\ii_0]}.$
\end{enumerate}

In $\widehat{\Upsilon}_{[\ii_0]}$, we often omit the third coordinate and use only the first and second coordinates, $(\bar{i}, p).$ It is not hard to see that the set of coordinates  which are assigned to positive roots is
\begin{align} \label{eq: r_i for twisted D_n+1}
\mathcal{\bar{I}}:=\{\, (\, \bar{i}, \xi(\bar{i})-2t\, )\, | \, t=0, 1, \cdots, n\, \}.
\end{align}
Also, there is an arrow $(\bar{i}, p) \to (\bar{j}, q)$ for $(\bar{i}, p), (\bar{j}, q)\in \mathcal{\bar{I}}$ if and only if  there are $i\in \bar{i}$ and $j\in \bar{j}$ such that $|\bar{i}-\bar{j}|=1$ and
$q=p+1$. We call $\overline{i}$ {\it the folded residue of $\be$ with respect to $[\ii_0]$}
when $\be$ is located at $(\overline{i},p)$ in $\widehat{\Upsilon}_{[\ii_0]}$.

\begin{example} \label{ex: folded}
Let $[\ii'_0]$ be the one in Example \ref{Exam: D5 easy tAr}. Then the folded AR-quiver $\widehat{\Upsilon}_{[\ii'_0]}$ with coordinates is
\begin{equation*}
 \scalebox{0.8}{\xymatrix@C=0.9ex@R=1ex{
 \bar{i} \ \backslash\ p & 1& 2& 3& 4& 5& 6& 7& 8 & 9 & 10 & 11 & 12\\
\bar{1}&&&& \srt{1}{5}\ar@{->}[dr]  && \srt{4}{-5}\ar@{->}[dr]  && \srt{3}{-4}\ar@{->}[dr]  && \srt{2}{-3}\ar@{->}[dr] && \srt{1}{-2} \\
\bar{2}&&& \srt{2}{5}\ar@{->}[dr]\ar@{->}[ur]  && \srt{1}{4}\ar@{->}[dr]\ar@{->}[ur] && \srt{3}{-5}\ar@{->}[dr]\ar@{->}[ur] && \srt{2}{-4}\ar@{->}[dr]\ar@{->}[ur] && \srt{1}{-3}\ar@{->}[ur] \\
\bar{3} && \srt{3}{5}\ar@{->}[dr]\ar@{->}[ur] && \srt{2}{4}\ar@{->}[dr]\ar@{->}[ur] && \srt{1}{3}\ar@{->}[dr]\ar@{->}[ur] && \srt{2}{-5} \ar@{->}[dr]\ar@{->}[ur] && \srt{1}{-4}\ar@{->}[ur] \\
\bar{4}&\srt{4}{5}\ar@{->}[ur] && \srt{3}{4}\ar@{->}[ur]&& \srt{2}{3}\ar@{->}[ur] &&\srt{1}{2} \ar@{->}[ur] && \srt{1}{-5}\ar@{->}[ur]
}}
\end{equation*}
\end{example}

\begin{remark} \label{rem: decompose into 2-comp}
Let $[\ii_0]$ be a twisted adapted class with $\mathfrak{p}^{D_{n+1}}_{A_n}([\ii_0])=[Q]$. Then one can easily check that
\begin{eqnarray*} &&
\parbox{80ex}{ {\rm (i)} the subquiver consisting of $\mathfrak{B}_{[\ii_0]}$ inside $\widehat{\Upsilon}_{[\ii_0]}$
is isomorphic to $Q$ as a Dynkin quiver of type $A_n$.}
\end{eqnarray*}
By \eqref{eq: r_i A_n} and \eqref{eq: r_i for twisted D_n+1}, we can take
\begin{eqnarray*} &&
\parbox{80ex}{ {\rm (ii)} the full subquiver ${}_1\Gamma_Q$ inside $\Upsilon_{[\ii_0]}$ such that
it contains $\mathfrak{B}_{[\ii_0]}$ and is isomorphic to $\Gamma_Q$ as quivers.
}
\end{eqnarray*}
Since the subquiver consisting of
leftmost vertices inside of $\Gamma_Q$ is isomorphic to $Q^{*}$,
\begin{eqnarray*} &&
\parbox{80ex}{ {\rm (iii)} the full subquiver ${}_2\Gamma_{Q^*} \seteq \Upsilon_{[\ii_0]} \setminus {}_1\Gamma_Q$
is isomorphic to $\Gamma_{Q^{*}}$,}
\end{eqnarray*}
where $Q^{*}$ is a Dynkin quiver obtained by swapping vertices $i \leftrightarrow n+1-i$ and $ \Upsilon_{[\ii_0]} = {}_1\Gamma_Q \sqcup {}_2\Gamma_{Q^*}$
as sets of vertices.
\end{remark}

\subsection{Labeling of twisted and folded AR-quivers of $D_{n+1}$ via AR-quivers of $A_{n}$}

\begin{proposition} \label{Prop:label come from 1GaQ}
Let $[\ii_0]$ be a twisted adapted class. Suppose $\jj_0= j_1 j_2 \cdots j_\N\in [\ii_0]$ and let $t_1$ and $t$ be the smallest and second smallest integers such that
$j_{t_1}, j_t\in \bar{n}$. If $j_{t_1}=n$ $($resp. $j_{t_1}=n+1)$  and
$ \mathfrak{p}_{A_n}^{D_{n+1}}([\ii_0])= [Q] $
then there is a reduced expression $\kk_0\in [Q]$ of $A_n$ starting with $j_1\, j_2\, \cdots\, j_{t-1}$ $($resp. $(j_1\, j_2\, \cdots\, j_{t-1})^\vee$$)$
\end{proposition}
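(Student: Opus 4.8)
The plan is to use the two-to-one map $\mathfrak{p}_{A_n}^{D_{n+1}}$ together with the explicit form of twisted adapted classes established in Proposition \ref{prop: reverse direction for fullfiling}. By that proposition, there is a twisted Coxeter element $[i_1\, i_2\, \cdots\, i_n]\vee$ with $[\ii_0] = \big[\prod_{k=0}^n (i_1\, i_2\, \cdots\, i_n)^{k\vee}\big]$, and $\mathfrak{p}_{A_n}^{D_{n+1}}([\ii_0]) = [Q]$ where (up to the $\vee$-twist on the Coxeter word according to whether $n$ or $n+1$ occurs) $Q$ is the Dynkin quiver of $A_n$ adapted to $s_{i_1}\cdots s_{i_n}$ read off from $i_1\, i_2\, \cdots\, i_n$. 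The point is that the first $n$ letters $j_1\, j_2\, \cdots\, j_n$ of any $\jj_0 \in [\ii_0]$ form a reduced word of the twisted Coxeter part $\phi_{[\ii_0]}$, which is fully commutative by Remark \ref{rem: basic prop twist}; hence $[j_1\, j_2\, \cdots\, j_n] = [i_1\, i_2\, \cdots\, i_n]$ as commutation classes.

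First I would show that the letters $j_1\, j_2\, \cdots\, j_{t-1}$ (where $t$ is the second occurrence of an index in $\bar n$, i.e.\ of $n$ or $n+1$) are precisely the letters of a reduced word for $\phi_{[\ii_0]}$ read in order up to and including its unique letter in $\bar n$, possibly followed by nothing more; more precisely, since $\jj_0 \in [\ii_0] = \big[\prod_{k} (i_1 \cdots i_n)^{k\vee}\big]$ and the first ``block'' $\phi_{[\ii_0]}$ is fully commutative, the prefix $j_1\, j_2\, \cdots\, j_n$ is exactly one of the commutation-equivalent orderings of $i_1\, i_2\, \cdots\, i_n$, and $j_t$ is the first letter of the \emph{second} block $(i_1 \cdots i_n)^\vee$. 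Because each block $(i_1 \cdots i_n)^{k\vee}$ has a unique letter lying in $\bar n$ (namely $n$ if $k$ is even, $n+1$ if $k$ is odd), the letter $j_{t_1}$ is the $\bar n$-letter of the first block and $j_t$ is the $\bar n$-letter of the second block. Then $j_1\, j_2\, \cdots\, j_{t-1}$ consists of: all letters of the first block that commute past its $\bar n$-letter to appear before $j_t$, together with the initial segment of the second block that has been shuffled in front of $j_t$. A cleaner way to organize this: $j_1\, j_2\, \cdots\, j_{t-1}$ is a reduced word, and every letter of the first block $\phi_{[\ii_0]}$ (in particular its $\bar n$-letter $j_{t_1}$) appears among $j_1,\dots,j_{t-1}$, while the letters coming from the second block that precede $j_t$ are exactly those that commute with $j_t = (i\text{-letter of }\bar n)^\vee$ and with everything after it — but since $s_{i_1}\cdots s_{i_n}$ is fully commutative, such a prefix of the second block is again (a prefix of) a reduced word of $\phi_{[\ii_0]}^\vee$, which by construction of $Q$ is adapted to $Q$ (case $j_{t_1}=n$) or to $Q$ after the twist (case $j_{t_1}=n+1$).

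The construction of $\kk_0$ then goes as follows: in the case $j_{t_1} = n$, I would take $\kk_0$ to be the unique element of $[Q]$ obtained by starting with the word $j_1\, j_2\, \cdots\, j_{t-1}$ and completing it to a reduced word of $w_0$ in $A_n$ adapted to $Q$; the content of the claim is precisely that $j_1\, j_2\, \cdots\, j_{t-1}$ is an \emph{initial} segment of some reduced word adapted to $Q$, which follows because, by Lemma \ref{Lem: reflection sink i for comclass} applied repeatedly, $j_1$ is a sink of $[\ii_0]$, $j_2$ is a sink of $[\ii_0]\, r_{j_1}$, and so on; under $\mathfrak{p}_{A_n}^{D_{n+1}}$ (which intertwines reflection functors through at least the first $t-1$ steps, since none of $j_1,\dots,j_{t-1}$ except $j_{t_1}$ lies in $\bar n$, and the one that does, $j_{t_1}=n$, maps to the sink $n$ of the corresponding quiver) each $j_s$ for $s < t$ is a sink of the successively reflected quiver of type $A_n$. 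This is exactly the statement that $j_1\, j_2\, \cdots\, j_{t-1}$ is adapted as an initial word to $Q$. In the case $j_{t_1}=n+1$, the same argument applies after conjugating by $\vee$, so one starts $\kk_0$ with $(j_1\, j_2\, \cdots\, j_{t-1})^\vee$.

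The main obstacle I anticipate is making rigorous the commutation bookkeeping in the middle paragraph: namely, controlling exactly which letters of the \emph{second} block $(i_1\cdots i_n)^\vee$ can slide in front of the $\bar n$-letter $j_t$, and checking that the resulting prefix, under $\mathfrak{p}_{A_n}^{D_{n+1}}$, is still a legitimate adapted initial word of $Q$ rather than getting obstructed by the interaction between residues $n-1$, $n$, $n+1$ of $D_{n+1}$ collapsing to residue $n-1$, $n$ of $A_n$. To handle this I would lean on Remark \ref{rem: decompose into 2-comp}: the subquiver of $\widehat\Upsilon_{[\ii_0]}$ on $\mathfrak{B}_{[\ii_0]}$ is isomorphic to $Q$, and ${}_1\Gamma_Q$ sits inside $\Upsilon_{[\ii_0]}$ isomorphically to $\Gamma_Q$; reading residues compatibly with the standard-tableau description in Theorem \ref{thm: OS14}(3), a reduced word $\jj_0 \in [\ii_0]$ restricted to the vertices of ${}_1\Gamma_Q$ before the second $\bar n$-vertex gives exactly a tableau-compatible reading of an initial segment of $\Gamma_Q$, i.e.\ an initial adapted word of $Q$ — which is $\kk_0$'s prefix. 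The $\vee$ in the $j_{t_1}=n+1$ case is then just the relabeling that identifies ${}_1\Gamma_Q$ correctly inside $\Upsilon_{[\ii_0]}$.
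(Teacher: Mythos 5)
Your middle paragraph rests on two structural claims about the prefix that are simply false. First, the first $n$ letters of $\jj_0$ need not be a reduced word of $\phi_{[\ii_0]}$: in the class $\big[\prod_{k}(1\,2\cdots n)^{k\vee}\big]$ the letter $1$ of the second block commutes with $3,4,\ldots,n$ of the first block, so there is a reduced word in the class beginning $1\,2\,1\,3\cdots$. Second, it is not true that every letter of the first block appears among $j_1,\ldots,j_{t-1}$: for the twisted Coxeter element $(s_ns_{n-1}\cdots s_1)\vee$ with $n\ge 4$, the second block's letter $n+1$ commutes with $1,2,\ldots,n-2$, so some reduced word in the class begins $n\,(n-1)\,(n+1)\cdots$, i.e.\ the second $\bar n$-letter occurs while the first-block letters $n-2,\ldots,1$ are still unread. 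So the picture you use to organize the prefix (all of block one, plus a commuting piece of block two) does not hold.

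More importantly, the step that would actually prove the statement in your third paragraph --- that $\mathfrak{p}^{D_{n+1}}_{A_n}$ ``intertwines reflection functors'' so that each $j_s$, $s<t$, is a sink of the successively reflected quiver of type $A_n$ --- is asserted, not proved, and it is essentially equivalent to the proposition itself (adaptedness of the prefix to $Q$). The delicate point is precisely the step $s=t_1$: reflecting the twisted class at $n$ (resp.\ $n+1$) toggles which of $n,n+1$ occurs in the twisted Coxeter element and hence toggles the $\vee$-twist built into the definition of $\mathfrak{p}^{D_{n+1}}_{A_n}$, and one must check this matches reflecting $Q$ at the vertex $n$; moreover, passing from ``$j_s$ is a sink of the reflected commutation class'' to ``$j_s$ is a sink of the corresponding twisted quiver'' is only established later in the paper (Proposition \ref{Prop:twisted_redEX-quiv}). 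The paper argues differently, via Theorem \ref{thm: OS14}(3) and the decomposition $\Upsilon_{[\ii_0]}={}_1\Gamma_Q\sqcup{}_2\Gamma_{Q^*}$ of Remark \ref{rem: decompose into 2-comp}: the vertices $\beta^{\jj_0}_1,\ldots,\beta^{\jj_0}_{t-1}$ lie in ${}_1\Gamma_Q\cong\Gamma_Q$, and reading them first and then the remaining vertices of ${}_1\Gamma_Q$ compatibly, with residues $n$ and $n+1$ both read as $n$, produces $\kk_0\in[Q]$. Your final paragraph gestures at exactly this, but the fact it hinges on --- that none of the first $t-1$ vertices of $\jj_0$ lies in ${}_2\Gamma_{Q^*}$, equivalently that every vertex of ${}_2\Gamma_{Q^*}$ has at least two vertices of folded residue $\bar n$ below it with respect to $\prec_{[\ii_0]}$ --- is left unverified; without it, ``restricting $\jj_0$ to ${}_1\Gamma_Q$'' need not produce a word beginning with the full prefix $j_1\cdots j_{t-1}$, so the proposal as written has a genuine gap.
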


\begin{proof}
Note that if $j_{t_1}=n$ (resp. $j_{t_1}=n+1$), then $j_t=n+1$ (resp. $j_t=n$).
By Theorem \ref{thm: OS14} and Remark \ref{rem: decompose into 2-comp},
the assertion comes from the compatible readings of (i) $\jj_0$ inside of ${}_1\Gamma_Q$ first, and then (ii) ${}_1\Gamma_Q \setminus \jj_0$ inside of ${}_1\Gamma_Q$.
Here, all vertices in the $n$-th and the $n+1$-th-resides have to be read as $n$.
\end{proof}

\begin{remark} \label{rem: jj_0}
In the above proposition, we can assume that
$\jj_0$ is a reduced expression in $[\ii_0]$ with the largest $t$ without any loss of generality.
By Theorem \ref{thm: OS14} and Theorem \ref{thm: labeling GammaQ}, such $\jj_0$ can be obtained by the following way:
\begin{eqnarray} &&
\parbox{80ex}{
(i) Read the residues of all vertices in the right part of the length $n-1$ $S$-sectional path $\rho$  inside ${}_1\Gamma_Q$. (ii) Read residues on the path $\rho$. (iii) Read the remained.
}\label{eq:jj1 reading}
\end{eqnarray}
(See the labeled vertices in \eqref{eq:jj_1}.)
We denote by $\Upsilon^R_{[\ii_0]}$ the full subquiver consisting of the vertices  in (i) and (ii) in  \eqref{eq:jj1 reading}
\end{remark}

\begin{corollary}  \label{Cor:label come from 1GaQ}
Let $\ii_0$, $\jj_0$, $Q$ and $t$ be in {\rm Proposition \ref{Prop:label come from 1GaQ}} and $j_{t_1}=n$ $($resp. $j_{t_1}=n+1$$)$.
The subquiver $\Upsilon^R_{[\ii_0]}$ of twisted AR-quiver $\Upsilon_{[\ii_0]}$
is isomorphic to a subquiver $\Gamma^R_{Q}$ of the AR-quiver $\Gamma_Q$. Moreover, labels in
$\Upsilon^R_{[\ii_0]}$ direct follow from those in $\Gamma^R_{Q}$. Precisely, correspondences between the labels of $\Gamma^R_{Q}$ and $\Upsilon^R_{[\ii_0]}$ are as follows:
\[ \, [i,j] \mapsto \left\{ \begin{array}{ll}\left<i,-j-1\right> & \text{ if $i,j\neq n$ } \\
\left<i, -n-1\right> \ ( \text{resp.}\left<i, n+1\right>)  & \text{ if $j=n$.}  \end{array}\right.\]
\end{corollary}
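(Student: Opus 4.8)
The plan is to reduce the whole statement to the combinatorics of one word together with a short computation inside the ambient space $\bigoplus_{m=1}^{n+1}\R\ve_m$. For the quiver–isomorphism part I would simply invoke Remark~\ref{rem: decompose into 2-comp}: the full subquiver ${}_1\Gamma_Q$ of $\Upsilon_{[\ii_0]}$ is isomorphic to $\Gamma_Q$ as quivers, and by construction (Remark~\ref{rem: jj_0}) $\Upsilon^R_{[\ii_0]}$ is a full subquiver of ${}_1\Gamma_Q$; we then \emph{define} $\Gamma^R_Q$ to be the image of $\Upsilon^R_{[\ii_0]}$ under this isomorphism. So the first assertion is immediate, and the entire problem is to match labels.

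For the labels I would first fix the bookkeeping. By Remark~\ref{rem: jj_0} and Theorem~\ref{thm: OS14}, the vertices of $\Upsilon^R_{[\ii_0]}$, read in order, are $\beta^{\jj_0}_1,\dots,\beta^{\jj_0}_{t-1}$; by Proposition~\ref{Prop:label come from 1GaQ} there is a reduced word $\kk_0=k_1k_2\cdots k_{\mathfrak n}\in[Q]$ with $(k_1,\dots,k_{t-1})=(j_1,\dots,j_{t-1})$ when $j_{t_1}=n$ and $(k_1,\dots,k_{t-1})=(j_1^\vee,\dots,j_{t-1}^\vee)$ when $j_{t_1}=n+1$; and the isomorphism ${}_1\Gamma_Q\xrightarrow{\sim}\Gamma_Q$ carries $\beta^{\jj_0}_m$ to $\gamma_m\seteq s_{k_1}\cdots s_{k_{m-1}}(\al_{k_m})$ for $1\le m\le t-1$ — this is exactly the compatibility of readings built into the proof of Proposition~\ref{Prop:label come from 1GaQ} through Theorem~\ref{thm: OS14}(3). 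Thus it suffices to show: if $\gamma_m=[i,j]\in\PR_{A_n}$, then $\beta^{\jj_0}_m=\langle i,-j-1\rangle$ for $j\le n-1$, while $\beta^{\jj_0}_m=\langle i,-n-1\rangle$ (resp. $\langle i,n+1\rangle$) for $j=n$, according to whether $j_{t_1}=n$ (resp. $j_{t_1}=n+1$).

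Then I would carry out the root computation. Realize both root systems inside $V=\bigoplus_{m=1}^{n+1}\R\ve_m$ so that $\al^{A_n}_m=\al^{D_{n+1}}_m=\ve_m-\ve_{m+1}$ for $m\le n$ and $\al^{D_{n+1}}_{n+1}=\ve_n+\ve_{n+1}$; the reflections $s_1,\dots,s_n$ of $W(D_{n+1})$ act on $V$ exactly as those of $W(A_n)$ (both generate the group of permutations of $\ve_1,\dots,\ve_{n+1}$), so any word in $\{1,\dots,n\}$ produces the \emph{same} vector whether read in $\PR_{A_n}$ or in $\PR_{D_{n+1}}$. When $j_{t_1}=n$ the word $(j_1,\dots,j_{t-1})$ lies in $\{1,\dots,n\}$, hence $\beta^{\jj_0}_m=\gamma_m=\ve_i-\ve_{j+1}$, which is $\langle i,-j-1\rangle$ for $j\le n-1$ and $\langle i,-n-1\rangle$ for $j=n$. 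When $j_{t_1}=n+1$, observe that $\vee$ acts on $V$ as the involution fixing $\ve_1,\dots,\ve_n$ and sending $\ve_{n+1}\mapsto-\ve_{n+1}$ (this is forced by $\vee(\al_i)=\al_{i^\vee}$), and satisfies $\vee s_m\vee^{-1}=s_{m^\vee}$; therefore $\vee(\beta^{\jj_0}_m)=s_{j_1^\vee}\cdots s_{j_{m-1}^\vee}(\al_{j_m^\vee})$ is the $D_{n+1}$-root of the word $(j_1^\vee,\dots,j_{t-1}^\vee)=(k_1,\dots,k_{t-1})$, which now lies in $\{1,\dots,n\}$ and so equals $\gamma_m=\ve_i-\ve_{j+1}$. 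Applying $\vee$ once more, $\beta^{\jj_0}_m=\vee(\ve_i-\ve_{j+1})$, i.e. $\ve_i-\ve_{j+1}=\langle i,-j-1\rangle$ for $j\le n-1$ and $\ve_i+\ve_{n+1}=\langle i,n+1\rangle$ for $j=n$. This is precisely the asserted correspondence.

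The step I expect to require the most care is the compatibility claim in the second paragraph — that the abstract quiver isomorphism ${}_1\Gamma_Q\cong\Gamma_Q$ sends the vertex reached by the prefix $j_1\cdots j_m$ of $\jj_0$ to the vertex reached by the prefix $k_1\cdots k_m$ of $\kk_0$. This needs no new idea: it is exactly how $\kk_0$ is produced from $\jj_0$ in the proof of Proposition~\ref{Prop:label come from 1GaQ} (transporting a standard tableau of ${}_1\Gamma_Q$ through the isomorphism, Theorem~\ref{thm: OS14}(3)). Everything else is the elementary computation in $V$ above; in particular the two potentially worrying ``negative-root'' situations in the $j_{t_1}=n+1$ analysis never arise, since $\beta^{\jj_0}_m$ and $\gamma_m$ are genuine positive roots.
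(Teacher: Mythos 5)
Your proposal is correct and takes essentially the same route as the paper: reduce everything to Proposition \ref{Prop:label come from 1GaQ} together with the identification ${}_1\Gamma_Q\cong\Gamma_Q$, treating the case $j_{t_1}=n+1$ by the automorphism $\vee$ (the paper's ``swapping the roles of $\al_n$ and $\al_{n+1}$''). The only difference is that you spell out the $\ve$-coordinate computation and the compatibility of readings that the paper leaves implicit, which is a faithful filling-in of details rather than a different argument.
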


\begin{proof}
If $j_{t_1}=n$, our assertion directly follows from Proposition \ref{Prop:label come from 1GaQ}.
If $j_{t_1}=n+1$, our assertion can be obtained by swapping the roles of $\al_n$ and $\al_{n+1}$.
\end{proof}

\begin{example}\label{Ex:label come from 1GaQ}
Let $\ii_0= \prod_{k=0}^4 (2\ 1\ 3\ 5)^{k\vee}$. Then we can find labels using Corollary \ref{Cor:label come from 1GaQ} as follows:
\[  \scalebox{0.8}{\xymatrix@C=2ex@R=1ex{
(i \  \backslash \ z)&-4 & -3 & -2 & -1 & 0 & 1 & 2& 3& 4& 5& 6 \\
1&&\bullet\ar@{->}[dr]  && \bullet\ar@{->}[dr]  &&  \srt{4}{5}\ar@{->}[dr]  && \srt{3}{-4}\ar@{->}[dr] && \srt{1}{-3} \ar@{->}[dr]\\
2&&& \bullet\ar@{->}[dr]\ar@{->}[ur]  &&\bullet\ar@{->}[dr]\ar@{->}[ur]  && \srt{3}{5}\ar@{->}[dr]\ar@{->}[ur]  && \srt{1}{-4}\ar@{->}[dr]\ar@{->}[ur] && \srt{2}{-3} \\
3& & \bullet\ar@{->}[dr]\ar@{->}[ur]  &&\bullet\ar@{->}[ddr]\ar@{->}[ur]  &&\bullet\ar@{->}[dr]\ar@{->}[ur]  && \srt{1}{5}\ar@{->}[ddr]\ar@{->}[ur] && \srt{2}{-4}\ar@{->}[ur] \\
4&&& \bullet\ar@{->}[ur]  &&&& \bullet\ar@{->}[ur] \\
5&\bullet\ar@{->}[uur]  &&&& \bullet\ar@{->}[uur]  &&&& \srt{2}{5}\ar@{->}[uur]
}}\]
The labels of ${}_1\Gamma_Q$ can be obtained from $\Gamma_Q$ where
\begin{align} \label{eq:jj_1}
Q = {\xymatrix@R=3ex{ \circ
\ar@{->}[r]_<{  1} &  \circ
\ar@{<-}[r]_<{  2}  &  \circ
\ar@{<-}[r]_<{  3}
& \circ \ar@{-}[l]^<{\ \ \ \ 4} }}  \text{ and }
 \Gamma_Q=    \scalebox{0.8}{\raisebox{3.6em}{\xymatrix@C=2ex@R=1ex{
(i \  \backslash \ z)  & 1 & 2& 3& 4& 5& 6 \\
1 &  [4]\ar@{->}[dr]  && [3]\ar@{->}[dr] && [1,2] \ar@{->}[dr]\\
2 && [3,4]\ar@{->}[dr]\ar@{->}[ur]  && [1,3]\ar@{->}[dr]\ar@{->}[ur] &&[2] \\
3 &  && [1,4]\ar@{->}[dr]\ar@{->}[ur] && [2,3]\ar@{->}[ur] \\
4 && \bullet \ar@{->}[ur] && [2,4]\ar@{->}[ur]
}}}.
\end{align}
(Here the path consisting of $\{ [4],[3,4],[1,4],[2,4]\}$ is the $\rho$ in \eqref{eq:jj1 reading}.) Hence
\[ Q^{*} = {\xymatrix@R=3ex{ \circ
\ar@{->}[r]_<{ 1} &  \circ
\ar@{->}[r]_<{  2}  &  \circ
\ar@{-}[l]^<{ \ \ \  3}
& \circ \ar@{->}[l]^<{\ \ \  4} }} \text{ and }
 \Gamma_{Q^*} \simeq {}_2\Gamma_{Q^*} \simeq   \scalebox{0.8}{\raisebox{2.5em}{  \xymatrix@C=2ex@R=1ex{
&\bullet\ar@{->}[dr]  && \bullet\ar@{->}[dr]  \\
&& \bullet\ar@{->}[dr]\ar@{->}[ur]  &&\bullet\ar@{->}[dr] \\
 & \bullet\ar@{->}[dr]\ar@{->}[ur]  &&\bullet\ar@{->}[dr]\ar@{->}[ur]&&\bullet\\
\bullet\ar@{->}[ur]&& \bullet\ar@{->}[ur] && \bullet\ar@{->}[ur]
}}}\]
as quivers.
\end{example}

\begin{remark} \label{rem: reflection w.r.t x-axis}
By taking reflection with respect to $x$-axis to ${}_2\Gamma_{Q^*}$, one can easily check that
${}_2\Gamma_{Q^*}$ is isomorphic to $\Gamma_Q$ as quivers. Then the $S$-sectional path (resp. $N$-sectional path) in ${}_2\Gamma_{Q^*}$ of length $k$
corresponds to the $N$-sectional path (resp. $S$-sectional path) in ${}_1\Gamma_Q$ of length $k$.
\end{remark}

\begin{eqnarray} &&
\parbox{80ex}{
Recall that there is an involution $*$ on the index set $I^{D}_{n+1}$ such that  $ w_0(\alpha_i)=-\alpha_{i^*}.$
If $n+1$ is odd then $* : i\mapsto \left\{ \begin{array}{ll} i & \text{ if } i\neq n, n+1, \\ i+(-1)^{\delta_{n+1, i}} & \text{ if } i=n, n+1. \end{array} \right.$ On the other hand,
if $n+1$ is even then $ i^*=i , \text{ for } i\in I.$
}\label{eq: involution D}
\end{eqnarray}

For any reduced expression $\ii_0 =i_1\ i_2\ \cdots i_{\N}$ of $w_0$ of type $D_{n+1}$,
\[ s_{i_1}s_{i_2}  \cdots s_{i_p}= w_0 s_{i_\N} s_{i_{\N-1}} \cdots s_{i_{p+1}}.\]
Since $s_{i_p}(\alpha_{i_p})= -\alpha_{i_p}$, we have $s_{i_1}s_{i_2}  \cdots s_{i_{p}}(\alpha_{i_p})=-\beta^{\ii_0}_p$  and
 \[ -\beta^{\ii_0}_p=w_0 s_{i_\N} s_{i_{\N-1}} \cdots s_{i_{p+1}} (\alpha_p)= - ( s_{i_\N} s_{i_{\N-1}} \cdots s_{i_{p+1}} (\alpha_{i_p}))^*,\]
where $(\alpha+\beta)^*:= \alpha^* +\beta^*$ for $\alpha, \beta\in \Phi^+$ and $-w_0(\al_i)=(\alpha_i)^*= \alpha_{i^*}.$
Hence
\begin{equation} \label{Eqn:beta_rev}
 s_{i_1}s_{i_2}  \cdots s_{i_{p-1}}(\alpha_{i_p})= (s_{i_\N} s_{i_{\N-1}} \cdots s_{i_{p+1}} (\alpha_{i_p}))^*=
 s_{{i_\N}^*} s_{{i_{\N-1}}^*}\cdots s_{i_{{p+1}}^*}(\alpha_{{i_p^*}}) .
 \end{equation}

 \begin{proposition} \label{Prop:label come from 2GaQ}
Let $[\ii_0]$ be a twisted adapted class. For $\jj'_0= j_1 j_2 \cdots j_\N\in [\ii_0]$, suppose  $t_1$ and $t$  are the largest integer and  second largest integers such that $j_{t_1}, j_t\in\bar{n}$. Let $j_{t_1}=n$ $($resp. $j_{t_1}=n+1)$. Consider the Dynkin quiver $Q$ of $A_n$ satisfying
$\mathfrak{p}_{A_n}^{D_{n+1}}([\ii_0])= [Q]$
and $Q^{{\rm rev}}$, the Dynkin quiver which is obtained by reversing every arrow in $Q$.
Then there is a reduced expression $\kk_0\in [Q^{{\rm rev}}]$ of $A_n$ starting with $j_\N j_{\N-1} \cdots j_{t+1}$ $($resp. $(j_\N j_{\N-1} \cdots j_{t+1})^\vee$ $).$
\end{proposition}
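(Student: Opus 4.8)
The plan is to reduce the statement to Proposition \ref{Prop:label come from 1GaQ} by passing to the ``reversed--dual'' reduced word. For a reduced word $\ii_0=i_1\,i_2\,\cdots\,i_{\N}$ of $w_0$ of type $D_{n+1}$, set $\ii_0^{\rev,*}\seteq i_{\N}^{*}\,i_{\N-1}^{*}\,\cdots\,i_1^{*}$. By \eqref{Eqn:beta_rev} (equivalently, by iterating Proposition \ref{Prop: reflection}) this is again a reduced word of $w_0$, with $\beta^{\ii_0^{\rev,*}}_{m}=\beta^{\ii_0}_{\N+1-m}$; moreover, since both commutation relations and the diagram automorphism $*$ preserve adjacency, $\jj'_0\in[\ii_0]$ implies $(\jj'_0)^{\rev,*}\in[\ii_0^{\rev,*}]$, and $[\ii_0]\mapsto[\ii_0^{\rev,*}]$ is an involution on commutation classes. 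Thus reading $\jj'_0$ from its last letter corresponds to reading $(\jj'_0)^{\rev,*}$ from its first letter.

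Next I would verify that $[\ii_0^{\rev,*}]$ is again a twisted adapted class and compute its image under $\mathfrak{p}_{A_n}^{D_{n+1}}$. By Proposition \ref{prop: reverse direction for fullfiling} write $[\ii_0]=\big[\prod_{k=0}^{n}(i_1\,i_2\,\cdots\,i_n)^{k\vee}\big]$ for a twisted Coxeter element $[i_1\,\cdots\,i_n]\vee$. Since $*$ and $\vee$ commute on $I^{D}_{n+1}$ — by \eqref{eq: involution D} the map $*$ is either the identity or the transposition $(n,n+1)$, i.e. the restriction of $\vee$ to $\bar n$ — distributing the reversal and $*$ over the $n+1$ blocks (using $(x^{k\vee})^{\rev}=(x^{\rev})^{k\vee}$) gives
\[ \ii_0^{\rev,*}=\prod_{k=0}^{n}(i_n^{\vee}\,i_{n-1}^{\vee}\,\cdots\,i_1^{\vee})^{k\vee}, \]
and $[i_n^{\vee}\,\cdots\,i_1^{\vee}]\vee$ is again a twisted Coxeter element (exactly one simple reflection from each $\vee$-orbit, in some order), so $[\ii_0^{\rev,*}]$ is a twisted adapted class by Proposition \ref{prop: one direction for fullfiling}. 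Comparing the defining formula of $\mathfrak{p}_{A_n}^{D_{n+1}}$ on $[i_1\,\cdots\,i_n]\vee$ and on $[i_n^{\vee}\,\cdots\,i_1^{\vee}]\vee$ — in both cases it amounts to replacing the orbit-$\bar n$ letter by $n$, an operation insensitive to $\vee$ — the associated $A_n$-Coxeter elements have reduced words that are reverses of one another. Since reversing an admissible ordering of a Dynkin quiver $Q$ yields an admissible ordering of $Q^{\rev}$, this proves $\mathfrak{p}_{A_n}^{D_{n+1}}([\ii_0^{\rev,*}])=[Q^{\rev}]$.

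Now apply Proposition \ref{Prop:label come from 1GaQ} to the twisted adapted class $[\ii_0^{\rev,*}]$ and the reduced expression $(\jj'_0)^{\rev,*}=\tilde j_1\,\tilde j_2\,\cdots\,\tilde j_{\N}$ (so $\tilde j_m=j_{\N+1-m}^{*}$). Because $*$ preserves $\bar n=\{n,n+1\}$, the smallest and second smallest indices $s$ with $\tilde j_s\in\bar n$ are $\N+1-t_1$ and $\N+1-t$, and the initial segment $\tilde j_1\,\cdots\,\tilde j_{\N-t}$ equals $(j_{\N}\,j_{\N-1}\,\cdots\,j_{t+1})^{*}$. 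Proposition \ref{Prop:label come from 1GaQ} then produces a reduced expression in $[Q^{\rev}]$ of type $A_n$ starting with $(j_{\N}\,\cdots\,j_{t+1})^{*}$ or with $\big((j_{\N}\,\cdots\,j_{t+1})^{*}\big)^{\vee}$, according as $j_{t_1}^{*}=n$ or $n+1$.

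It remains to absorb the $*$ and reconcile the two cases with the parity of $n+1$. The only letter among $j_{t+1},\ldots,j_{\N}$ lying in $\bar n$ is $j_{t_1}$, so applying $*$, resp. $*$ then $\vee$, to $j_{\N}\,\cdots\,j_{t+1}$ changes only that one letter. Checking the four possibilities for $j_{t_1}\in\{n,n+1\}$ against $*\in\{\mathrm{id},\vee|_{\bar n}\}$, one finds that the composite operation prescribed by Proposition \ref{Prop:label come from 1GaQ} collapses to the identity when $j_{t_1}=n$ and to $\vee$ when $j_{t_1}=n+1$; this is exactly a reduced expression $\kk_0\in[Q^{\rev}]$ starting with $j_{\N}\,j_{\N-1}\,\cdots\,j_{t+1}$, resp. $(j_{\N}\,\cdots\,j_{t+1})^{\vee}$. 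The main obstacle is the bookkeeping in these last two steps — tracking the $*$-twist and the position shift $p\leftrightarrow\N+1-p$ — together with the identity $\mathfrak{p}_{A_n}^{D_{n+1}}([\ii_0^{\rev,*}])=[Q^{\rev}]$; conceptually the whole argument is a transport of structure along the involution $\ii_0\mapsto\ii_0^{\rev,*}$.
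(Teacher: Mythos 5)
Your proof is correct and follows essentially the same route as the paper: reduce to Proposition \ref{Prop:label come from 1GaQ} by reading the word backwards, after checking that the commutation class of the reversed word is again twisted adapted and projects to $[Q^{{\rm rev}}]$ under $\mathfrak{p}_{A_n}^{D_{n+1}}$. The only difference is that the paper works with the plain reversal $j_\N j_{\N-1}\cdots j_1$ (legitimate since $w_0^{-1}=w_0$) rather than the reversed-and-$*$-twisted word, so the final bookkeeping reconciling $*$ with $\vee$ that you carry out is not needed there.
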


\begin{proof}
Consider $[\ii'_0]$ such that $j_\N j_{\N-1} \cdots j_1 \in [\ii'_0]$. Note the following facts:
\begin{itemize}
\item If  $[i_1 i_2 \cdots i_n]\vee$ is a twisted Coxeter element then so is $[i_n i_{n-1} \cdots i_1]\vee$.
\item If $\mathfrak{p}_{A_n}^{D_{n+1}}([i_1 i_2 \cdots i_n]\vee)=\phi_Q$, then $\mathfrak{p}_{A_n}^{D_{n+1}}([i_n i_{n-1} \cdots i_1]\vee)=\phi_{Q^{{\rm rev}}}$.
\end{itemize}
Thus, by Theorem \ref{Thm:D_twisted adapted class}, we have $[\ii'_0]$ is twisted adapted and $\mathfrak{p}_{A_n}^{D_{n+1}}([\ii'_0])=[Q^{{\rm rev}}]$.
By the same argument as Proposition \ref{Prop:label come from 1GaQ}, we can prove the proposition.
\end{proof}

\begin{remark} \label{rem: jj_0 prime}
In the above proposition, we can assume that
$\jj'_0$ is a reduced expression in $[\ii_0]$ with the smallest $t$ without any loss of generality as in Remark \ref{rem: jj_0}.
By Theorem \ref{thm: OS14} and Theorem \ref{thm: labeling GammaQ}, such $\jj_0$ can be obtained by the following way:
\begin{eqnarray} &&
\parbox{80ex}{
(i) Read the residues of vertices except the residues of all vertices in the left part of length $n-1$ $N$-sectional path $\rho'$
inside ${}_2\Gamma_{Q^*}$ (ii) Read vertices in $\rho'$, (iii) Read the remained.
}\label{eq:jj2 reading}
\end{eqnarray}
(See the vertices $\{ \langle 1 ,-2 \rangle, \langle 1,-5 \rangle, \langle 2 ,-5\rangle,
\langle 3,-5\rangle, \langle 4, -5 \rangle \}$ in Example \ref{Ex:label 1+2}.)
We denote by $\Upsilon^L_{[\ii_0]}$ the full subquiver consisting of the vertices described in (i) and (ii) of \eqref{eq:jj2 reading}
\end{remark}

\begin{example}
For $\ii_0=\prod_{k=0}^4 (2\ 1\ 3\ 5)^{k\vee} \in [\ii_0]$ in Example \ref{Ex:label come from 1GaQ},
$\Upsilon_{[\ii_0']}$ for $\ii_0'=\prod_{k=0}^4 (5\ 3\ 1\ 2)^{k\vee}$ can be described as follows:
\[  \scalebox{0.8}{\xymatrix@C=2ex@R=1ex{
1& & \bullet\ar@{->}[dr] && \bullet\ar@{->}[dr]&& \bullet\ar@{->}[dr]&& \bullet\ar@{->}[dr]&& \bullet\\
2& \bullet \ar@{->}[dr]\ar@{->}[ur]&& \bullet\ar@{->}[dr]\ar@{->}[ur]&& \bullet\ar@{->}[dr]\ar@{->}[ur]&& \bullet\ar@{->}[dr]\ar@{->}[ur]&& \bullet\ar@{->}[dr]\ar@{->}[ur]\\
3& & \bullet\ar@{->}[ur]\ar@{->}[ddr] && \bullet\ar@{->}[ur]\ar@{->}[dr]&& \bullet\ar@{->}[ur]\ar@{->}[ddr]&& \bullet\ar@{->}[ur]\ar@{->}[dr]&& \bullet\ar@{->}[ddr]\\
4& &&&& \bullet\ar@{->}[ur] &&&& \bullet\ar@{->}[ur]\\
5& && \bullet\ar@{->}[uur] &&&& \bullet\ar@{->}[uur] &&&& \bullet
}}\]
where
\begin{align} \label{eq:jj_2}
Q^{{\rm rev}} = {\xymatrix@R=3ex{ \circ
\ar@{-}[r]_<{  1} &  \circ
\ar@{->}[l] \ar@{->}[r]_<{  2} &  \circ
\ar@{->}[r]_<{  3}
& \circ \ar@{-}[l]^<{\ \ \ \ 4} }}  \text{ and }
\Gamma_{Q^{{\rm rev}}} \scalebox{0.8}{\raisebox{3.6em}{\xymatrix@C=2ex@R=1ex{
(i \  \backslash \ z)&0& 1& 2& 3& 4& 5 \\
1  &  && [2,4]\ar@{->}[dr] && [1]\\
2 && \bullet \ar@{->}[dr]\ar@{->}[ur]  && [1,4]\ar@{->}[dr]\ar@{->}[ur] &&\\
3  &\bullet \ar@{->}[dr] \ar@{->}[ur]&& \bullet\ar@{->}[dr]\ar@{->}[ur] && [3,4]\ar@{->}[dr]&&\\
4 &&\bullet \ar@{->}[ur] && \bullet \ar@{->}[ur]   && [4] &&
}}}
\end{align}
\end{example}

\begin{corollary}\label{Cor:label come from 2GaQ}
Let $\ii_0$, $\jj_0$, $Q$ and $t$ be in {\rm Proposition \ref{Prop:label come from 2GaQ}} and $j_{t_1}=n^*$. The subquiver $\Upsilon^{L}_{[\ii_0]}$ of twisted AR-quiver $\Upsilon_{[\ii_0]}$
 is isomorphic to a subquiver $\Gamma^L_{Q^{{\rm rev}}}$  inside of $\Gamma_{Q^{{\rm rev}}}$.
 Moreover, labels in $\Upsilon^L_{[\ii_0]}$ directly follow from those in $\Gamma^L_{Q^{{\rm rev}}}$ in the same way of
{\rm  Corollary \ref{Cor:label come from 1GaQ}}.
\end{corollary}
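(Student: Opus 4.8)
The plan is to obtain this corollary as the mirror image of Corollary \ref{Cor:label come from 1GaQ}, with Proposition \ref{Prop:label come from 2GaQ} playing the role of Proposition \ref{Prop:label come from 1GaQ}, exactly as Proposition \ref{Prop:label come from 2GaQ} was itself deduced from Proposition \ref{Prop:label come from 1GaQ} by reversing the reduced word. Write $\jj'_0=j_1\,j_2\,\cdots\,j_\N$ for the reduced word of Proposition \ref{Prop:label come from 2GaQ} and let $[\ii'_0]$ be the commutation class of $j_\N\,j_{\N-1}\,\cdots\,j_1$; as recorded in the proof of Proposition \ref{Prop:label come from 2GaQ}, $[\ii'_0]$ is a twisted adapted class of $D_{n+1}$ with $\mathfrak{p}^{D_{n+1}}_{A_n}([\ii'_0])=[Q^{\rm rev}]$. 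The key extra input is the reversal identity \eqref{Eqn:beta_rev}, which here says that the position reversal $p\leftrightarrow\N-p+1$ carries the root $\beta^{\jj'_0}_p$ of $\Upsilon_{[\ii_0]}$ to the root $\big(\beta^{(\jj'_0)^{\rm rev}}_{\N-p+1}\big)^{*}$ of $\Upsilon_{[\ii'_0]}$, where $*$ is the involution of \eqref{eq: involution D} extended additively to $\PR_{D_{n+1}}$. In particular this bijection sends the last-read $\bar n$-residue vertex of $\jj'_0$ (whose residue is $j_{t_1}=n^*$) to the first-read $\bar n$-residue vertex of $(\jj'_0)^{\rm rev}$, so that the datum ``$j_{t_1}=n^*$'' for $[\ii_0]$ becomes the datum ``first $\bar n$-residue index $=n^*$'' for $[\ii'_0]$.

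I would then identify the two subquivers. Reversing a reduced word reflects a combinatorial AR-quiver horizontally, turning an $N$-sectional path of length $k$ into an $S$-sectional path of length $k$ and interchanging ``left part'' and ``right part''; combined with the decomposition $\Upsilon_{[\ii_0]}={}_1\Gamma_Q\sqcup{}_2\Gamma_{Q^*}$ of Remark \ref{rem: decompose into 2-comp} and the $x$-axis reflection of Remark \ref{rem: reflection w.r.t x-axis} (used to reconcile arrow orientations, the position reversal being orientation reversing), the bijection above carries ${}_2\Gamma_{Q^*}$ onto ${}_1\Gamma_{Q^{\rm rev}}$, the length-$(n-1)$ $N$-sectional path $\rho'$ of \eqref{eq:jj2 reading} onto the length-$(n-1)$ $S$-sectional path $\rho$ of \eqref{eq:jj1 reading}, and hence $\Upsilon^L_{[\ii_0]}$ onto $\Upsilon^R_{[\ii'_0]}$. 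Applying Corollary \ref{Cor:label come from 1GaQ} to $[\ii'_0]$ (Dynkin quiver $Q^{\rm rev}$), $\Upsilon^R_{[\ii'_0]}$ is isomorphic to a subquiver $\Gamma^R_{Q^{\rm rev}}$ of $\Gamma_{Q^{\rm rev}}$, with labels read off there by $[i,j]\mapsto\langle i,-j-1\rangle$ for $i,j\neq n$ and $[i,n]\mapsto\langle i,-n-1\rangle$ or $\langle i,n+1\rangle$ according to the first $\bar n$-residue index; setting $\Gamma^L_{Q^{\rm rev}}$ to be this subquiver yields the asserted quiver isomorphism $\Upsilon^L_{[\ii_0]}\simeq\Gamma^L_{Q^{\rm rev}}$.

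The remaining task — transporting the labels through the $*$-twist — is where I expect the only genuine bookkeeping, and it is sensitive to the parity of $n+1$. By \eqref{eq: involution D}, $*$ is the identity when $n+1$ is even and, when $n+1$ is odd, fixes every $\langle a_1,-a_2\rangle$ and $\langle b_1,b_2\rangle$ with summands $\le n$ while interchanging $\langle c,n+1\rangle$ with $\langle c,-n-1\rangle$; hence composing Corollary \ref{Cor:label come from 1GaQ}'s recipe with $*$ leaves $[i,j]\mapsto\langle i,-j-1\rangle$ ($i,j\neq n$) untouched and only possibly toggles the ``$j=n$'' branch. A short case check then shows that for $j_{t_1}=n^*$ the toggle produced by $*$ exactly cancels the one produced by the first $\bar n$-residue index being $n^*$ rather than $n$: in both parities the labels of $\Upsilon^L_{[\ii_0]}$ are obtained from $\Gamma^L_{Q^{\rm rev}}$ by $[i,j]\mapsto\langle i,-j-1\rangle$ and $[i,n]\mapsto\langle i,-n-1\rangle$, and the opposite choice of $j_{t_1}$ gives the ``resp.'' rule $[i,n]\mapsto\langle i,n+1\rangle$ — i.e. precisely the labelling rule of Corollary \ref{Cor:label come from 1GaQ}, the two cases being related to one another, as there, by swapping the roles of $\al_n$ and $\al_{n+1}$. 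Beyond this parity bookkeeping, the points needing care are the two orientation issues flagged above: that the reversal bijection is arrow-reversing and must be composed with the reflection of Remark \ref{rem: reflection w.r.t x-axis}, and that ${}_2\Gamma_{Q^*}$ is matched with ${}_1\Gamma_{Q^{\rm rev}}$ rather than with ${}_1\Gamma_{Q}$.
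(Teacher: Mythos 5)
Your proposal is correct and follows essentially the route the paper intends: the paper's proof is the one-line ``it directly follows from Proposition \ref{Prop:label come from 2GaQ}'', whose own proof is exactly the reversal argument you spell out — pass to the class $[\ii'_0]$ of the reversed word with $\mathfrak{p}^{D_{n+1}}_{A_n}([\ii'_0])=[Q^{\rm rev}]$, apply the Corollary \ref{Cor:label come from 1GaQ} mechanism there, and transport labels back through \eqref{Eqn:beta_rev}, with the $*$-parity cancellation you verify being precisely what makes the hypothesis $j_{t_1}=n^*$ give the stated rule (cf. Corollary \ref{Cor:label 2} and Lemma \ref{Lem:A longest}). The only cosmetic slip is the remark that composing with the $x$-axis reflection of Remark \ref{rem: reflection w.r.t x-axis} ``reconciles arrow orientations'': the reversal bijection is intrinsically arrow-reversing (as the paper itself acknowledges in Corollary \ref{Cor:rev Ga rev Q}), but this does not affect the vertex-level identification or the labelling, so the argument stands.
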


\begin{proof}
It directly follows from Proposition \ref{Prop:label come from 2GaQ}.
\end{proof}

The following lemma is easy to check, we leave the proof for readers.

\begin{lemma} \label{Lem:A longest} Let $\ii^A_0=i_1\, i_2\, \cdots \, i_{\n-1} \, i_{\n}$ be a reduced expression of type $A_n$.
\begin{enumerate}
\item[{\rm (1)}] We have $\beta^{\ii^A_0}_p=s_{i^*_{\n}} s_{i^*_{\n-1}}\cdots s_{i^*_{p+1}} (\alpha_{i^*_p}).$
\item[{\rm (2)}] If $\ii^A_0$ is adapted to $Q$ with the Coxeter element  $\phi_{Q}=s_{i'_1}s_{i'_2}\cdots s_{i'_{n-1}} s_{i'_n}$
then  $(\ii^A_0)^{-1}:= i_{\n}\, i_{\n-1}\, \cdots\, i_2\, i_1$ is adapted to $Q^{{\rm rev}*}$ with the Coxeter element
$\phi_{Q^{{\rm rev}*}}=s_{i'^*_n}s_{i'^*_{n-1}}\cdots s_{i'^*_2} s_{i'^*_1}.$
\item[{\rm (3)}] If $\ii^A_0$ is adapted to the Dynkin quiver $Q$ then $(\ii^{A}_0)^{-1*}=i_{\n}^*\, i_{\n-1}^*\, \cdots\, i_2^* \, i_1^*$, for $*: I_n\to I_n$
such that $i \mapsto n+1-i,$  adapted to the Dynkin quiver $Q^{{\rm rev}}$.
\item[{\rm (4)}]  Let $\ii_0$, $\jj_0$, $Q$ and  $t$ be in {\rm Proposition \ref{Prop:label come from 2GaQ}}
and let $j_{t_1}=n$. There is an adapted reduced expression $\jj^A_0$ to $Q$ which ends with
$j_{t+1}^* \, j_{t+2}^*\, \cdots\, j_{l-1}^* \, j_l^*$, where $*: I_n\to I_n$, $i \mapsto n+1-i.$
\end{enumerate}
\end{lemma}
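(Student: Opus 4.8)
The plan is to establish the four items in order, bootstrapping each from the earlier ones and from the dictionary already built in this section. \emph{Item~(1)} is just the type-$A_n$ reincarnation of \eqref{Eqn:beta_rev}: the chain of identities leading to \eqref{Eqn:beta_rev} uses only $s_{i_1}\cdots s_{i_\n}=w_0$, $s_{i_p}(\al_{i_p})=-\al_{i_p}$, and the fact that $*$ is the involution induced by $w_0$, so that $-w_0(\ga)=\ga^{*}$ and $(w\ga)^{*}=w^{*}(\ga^{*})$ for $w\in W$ and $\ga\in\Phi$. None of this is particular to type $D$, so I would simply rerun that computation with the length $\N$ of the longest element replaced by $\n=\ell({}_nw^A_0)$ and with $*\colon I_n\to I_n$ the involution $i\mapsto n+1-i$ of \eqref{eq: involution D}; this gives $\beta^{\ii^A_0}_p=s_{i^{*}_\n}s_{i^{*}_{\n-1}}\cdots s_{i^{*}_{p+1}}(\al_{i^{*}_p})$.

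\emph{Items~(2) and~(3)} are equivalent: applying the diagram automorphism $*$ letter by letter carries a reduced word adapted to a Dynkin quiver $P$ to one adapted to the vertex-relabelling $P^{*}$ and carries $\phi_P$ to $\phi_{P^{*}}$, and since $*$ and $P\mapsto P^{*}$ are involutions while $(\ii^A_0)^{-1*}=\bigl((\ii^A_0)^{-1}\bigr)^{*}$ and $(Q^{{\rm rev}})^{*}=Q^{{\rm rev}*}$, either of (2),~(3) yields the other. I would prove (2). Reversing the word $\ii^A_0$ and using part~(1) gives $\beta^{(\ii^A_0)^{-1}}_k=\bigl(\beta^{\ii^A_0}_{\n-k+1}\bigr)^{*}$ for all $k$, so the convex partial order $\prec_{[(\ii^A_0)^{-1}]}$ is the opposite of $\prec_Q$, with the positive roots relabelled through $*$. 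On the other hand, the standard duality between finite-dimensional $\C Q$-modules and $\C Q^{{\rm rev}}$-modules identifies $\Gamma_{Q^{{\rm rev}}}$ with the arrow-reversal of $\Gamma_Q$, the labels by positive roots being unchanged; hence $\prec_{Q^{{\rm rev}}}$ is exactly the opposite of $\prec_Q$, and $\prec_{Q^{{\rm rev}*}}$ is this order relabelled through $*$. Comparing, $\prec_{[(\ii^A_0)^{-1}]}=\prec_{Q^{{\rm rev}*}}$, so $(\ii^A_0)^{-1}$ is adapted to $Q^{{\rm rev}*}$ by Proposition~\ref{Prop:adapted}(2) and the correspondence~\eqref{1-1_adapted}. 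The same duality gives $\phi_{Q^{{\rm rev}}}=\phi_Q^{-1}$, so $\phi_{Q^{{\rm rev}*}}=(\phi_Q^{-1})^{*}=s_{i'^{*}_n}s_{i'^{*}_{n-1}}\cdots s_{i'^{*}_1}$, which is the Coxeter element claimed in~(2).

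\emph{Item~(4)} I would read off from item~(3) and Proposition~\ref{Prop:label come from 2GaQ}. With the notation there and $j_{t_1}=n$, that proposition supplies a reduced expression $\kk_0\in[Q^{{\rm rev}}]$ of type $A_n$ beginning with the block $j_\N\,j_{\N-1}\cdots j_{t+1}$. Applying item~(3) with the $A_n$-quiver $Q^{{\rm rev}}$ in the role of $Q$, the word $\jj^A_0\seteq(\kk_0)^{-1*}$ is adapted to $(Q^{{\rm rev}})^{{\rm rev}}=Q$, i.e.\ $\jj^A_0\in[Q]$. Reversing $\kk_0$ moves its initial block to the tail in the order $j_{t+1}\,j_{t+2}\cdots j_\N$, so after the relabelling $i\mapsto n+1-i$ the expression $\jj^A_0$ ends with $j^{*}_{t+1}\,j^{*}_{t+2}\cdots j^{*}_\N$, which is the assertion.

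The only step with genuine content is the input used in item~(2): the identification of $\Gamma_{Q^{{\rm rev}}}$ with the arrow-reversal of $\Gamma_Q$ (equivalently $\prec_{Q^{{\rm rev}}}=(\prec_Q)^{\mathrm{op}}$) together with $\phi_{Q^{{\rm rev}}}=\phi_Q^{-1}$. These are classical facts about path algebras and AR-quivers; everything else — items~(1),~(3),~(4) — is then formal manipulation of reduced words via \eqref{Eqn:beta_rev}, the correspondences \eqref{1-1_adapted}, and the propositions already proved in this section, so I expect the write-up to be short, in line with the authors' remark.
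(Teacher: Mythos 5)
Your proposal is correct; note that the paper itself offers no argument here (the authors explicitly "leave the proof for readers"), so there is no written proof to compare against. Your route is sound: item (1) is indeed just the type-$A_n$ instance of the computation \eqref{Eqn:beta_rev}, which uses nothing beyond $s_{i_1}\cdots s_{i_{\n}}=w_0$ and $-w_0(\ga)=\ga^*$ (only note that the relevant involution $i\mapsto n+1-i$ is the one induced by $w_0$ of type $A_n$, not the one in \eqref{eq: involution D}); the identity $\beta^{(\ii^A_0)^{-1}}_k=\bigl(\beta^{\ii^A_0}_{\n-k+1}\bigr)^{*}$ and the transport of adaptedness under the diagram automorphism make (2) and (3) interchangeable; and (4) is exactly the combination of (3) with Proposition \ref{Prop:label come from 2GaQ} that you describe. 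Two remarks on the middle step, which is the only place with real content. First, your external input $\Gamma_{Q^{\rm rev}}\cong\Gamma_Q^{\rm op}$ (equivalently $\prec_{Q^{\rm rev}}=(\prec_Q)^{\rm op}$) and $\phi_{Q^{\rm rev}}=\phi_Q^{-1}$, obtained from the classical duality of $\C Q$- and $\C Q^{\rm rev}$-modules, is legitimate and, importantly, avoids circularity: in the paper this arrow-reversal statement appears only afterwards, as Corollary \ref{Cor:rev Ga rev Q}, deduced \emph{from} the present lemma, so you must not quote that corollary — and you do not. Second, the inference "$\prec_{[(\ii^A_0)^{-1}]}=\prec_{Q^{\rm rev *}}$ implies $[(\ii^A_0)^{-1}]=[Q^{\rm rev *}]$" is not literally covered by Proposition \ref{Prop:adapted}(2) together with \eqref{1-1_adapted}, since those statements concern adapted classes only; the clean justification is Theorem \ref{thm: OS14}: the reduced words of a commutation class are exactly the compatible readings of $\Upsilon_{[\,\cdot\,]}$, and both the linear extensions and the residues read off from them are determined by the labelled convex partial order, so two classes of $w_0$ with the same convex order share a reduced word and hence coincide. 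With that citation repaired, your write-up is complete and, as you anticipate, short.
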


\begin{corollary} \label{Cor:rev Ga rev Q}
Let $Q$ be in {\rm Proposition \ref{Prop:label come from 2GaQ}}. We obtain $\Gamma_{Q^{{\rm rev}}}$ from $\Gamma_Q$ by reversing all the arrow in $\Gamma_Q$.
Hence $\Gamma^{{\rm rev}}_{Q^{{\rm rev}}}$ is isomorphic to $\Gamma_Q$ via the map $\alpha\mapsto\alpha$ for
$\alpha\in \Phi^+.$ Also, if $\alpha$ is in the $i$-th residue in $\Gamma_Q$ then $\alpha$ is in the $(n+1-i)$-th residue in $\Gamma_{Q^{{\rm rev}}}$.
\end{corollary}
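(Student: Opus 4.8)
The statement to prove is Corollary \ref{Cor:rev Ga rev Q}: for $Q$ as in Proposition \ref{Prop:label come from 2GaQ} (a Dynkin quiver of type $A_n$), the AR-quiver $\Gamma_{Q^{\rm rev}}$ is obtained from $\Gamma_Q$ by reversing all arrows, $\Gamma^{\rm rev}_{Q^{\rm rev}}$ is isomorphic to $\Gamma_Q$ via $\alpha\mapsto\alpha$, and residue $i$ in $\Gamma_Q$ becomes residue $n+1-i$ in $\Gamma_{Q^{\rm rev}}$.

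The plan is to read everything off the combinatorial construction of the AR-quiver in Definition \ref{def: Gamma_Q} together with the reversal identities collected in Lemma \ref{Lem:A longest}. First I would recall that a reduced expression $\ii^A_0 = i_1\,i_2\,\cdots\,i_\n$ adapted to $Q$ gives, by Lemma \ref{Lem:A longest}(1), the identity $\beta^{\ii^A_0}_p = s_{i^*_\n}s_{i^*_{\n-1}}\cdots s_{i^*_{p+1}}(\alpha_{i^*_p})$; combined with Lemma \ref{Lem:A longest}(3), the word $(\ii^A_0)^{-1*} = i^*_\n\,i^*_{n-1}\,\cdots\,i^*_1$ (with $*\colon I_n\to I_n$, $i\mapsto n+1-i$) is adapted to $Q^{\rm rev}$ and enumerates exactly the same set $\Phi^+$ of positive roots, but in the opposite order and with each residue $i$ replaced by $n+1-i$. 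Since $\prec_Q$ and $\prec_{Q^{\rm rev}}$ are the convex partial orders realized by $\Gamma_Q$ and $\Gamma_{Q^{\rm rev}}$ (Remark \ref{Rem:2.17_0419}, Proposition \ref{Prop:adapted}), and since reversing the total order $<_{\ii^A_0}$ gives precisely $<_{(\ii^A_0)^{-1*}}$, one gets $\alpha\prec_{Q^{\rm rev}}\beta \iff \beta\prec_Q\alpha$. By Proposition \ref{Prop:adapted}(2) this is equivalent to: there is a path $\beta\to\alpha$ in $\Gamma_{Q^{\rm rev}}$ iff there is a path $\alpha\to\beta$ in $\Gamma_Q$, i.e. $\Gamma_{Q^{\rm rev}}$ is $\Gamma_Q$ with all arrows reversed and vertices relabeled by the identity on $\Phi^+$.

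Next I would pin down the residue statement directly from Definition \ref{def: Gamma_Q}: the residue of $\alpha$ with respect to $[Q]$ is the first coordinate of $\widetilde{\phi}_Q(\alpha)$, which for $\alpha = \beta^{\ii^A_0}_k$ is $i_k$. For $Q^{\rm rev}$ one uses the adapted expression $(\ii^A_0)^{-1*}$, whose $k$-th letter is $i^*_{\n+1-k} = n+1-i_{\n+1-k}$; matching up which root sits in which spot (again via Lemma \ref{Lem:A longest}(1),(3)), the root that had residue $i$ in $\Gamma_Q$ acquires residue $n+1-i$ in $\Gamma_{Q^{\rm rev}}$, as claimed. Finally, applying the arrow-reversal statement twice gives $\Gamma^{\rm rev}_{Q^{\rm rev}}\simeq \Gamma_Q$ via $\alpha\mapsto\alpha$, which is the remaining assertion; alternatively this follows from $(Q^{\rm rev})^{\rm rev} = Q$ and Proposition \ref{Prop:adapted}(1).

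The only genuinely delicate point is bookkeeping the interaction between the three operations in play — taking $(\ii^A_0)^{-1}$, applying the diagram automorphism $*\colon i\mapsto n+1-i$ on $A_n$, and the $w_0$-involution that appears in Proposition \ref{Prop: reflection} and \eqref{Eqn:beta_rev} — so that the relabeling map on $\Phi^+$ comes out to be the identity rather than the nontrivial involution $\beta\mapsto\beta^*$; Lemma \ref{Lem:A longest} is precisely engineered so that these compose correctly, so the proof reduces to invoking parts (1)–(3) of that lemma in the right order. I expect the write-up to be short: cite Lemma \ref{Lem:A longest}, invoke Proposition \ref{Prop:adapted}, and read the residue shift off Definition \ref{def: Gamma_Q}.
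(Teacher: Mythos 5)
Your proposal is correct and takes essentially the same route as the paper, whose entire proof is the one-line citation of Lemma \ref{Lem:A longest}(1) and (3); you simply make the bookkeeping explicit. One minor refinement: deducing arrow-reversal from $\alpha\prec_{Q^{\rm rev}}\beta\Leftrightarrow\beta\prec_Q\alpha$ via Proposition \ref{Prop:adapted}(2) only reverses the partial order a priori, so it is cleaner to observe that $(\ii^A_0)^{-1*}$ reverses the reading order of the same roots (Lemma \ref{Lem:A longest}(1),(3)), whence conditions {\rm (Ar2)}–{\rm (Ar3)} in Algorithm \ref{Alg_AbsAR} swap and the arrow set of $\Gamma_{Q^{\rm rev}}$ is literally that of $\Gamma_Q$ reversed, with residues exchanged by $i\mapsto n+1-i$.
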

\begin{proof}
It follows from (1) and (3) in Lemma \ref{Lem:A longest}.
\end{proof}

\begin{corollary}\label{Cor:label 2}
Let $t_1$ and $Q$ be in {\rm Proposition \ref{Prop:label come from 2GaQ}}. We obtain labels of $\Upsilon^L_{[\ii_0]}$ from the labels of $\Gamma_{Q}^L$.   Precisely,
correspondences between the labels of $\Gamma^L_{Q}$ and $\Upsilon^L_{[\ii_0]}$ are as follows: If $j_{t_1}=n^*$, we have
\[ \, [i,j] \mapsto \left\{ \begin{array}{ll}\left<i,-j-1\right> & \text{ if $i,j\neq n$ } \\
\left<i, -n-1\right> \ ( \text{resp.}\left<i, n+1\right>)  & \text{ if $j=n$.}  \end{array}\right.\]   If $j_{t_1}=(n+1)^*$ then we get the subquiver  $\Gamma^L_{[Q]}$ by substituting every $\alpha_{n+1}$ in  the subquiver $\Upsilon^L_{[\ii_0]}$ with $\alpha_{n}.$
\end{corollary}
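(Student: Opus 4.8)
The plan is to obtain Corollary~\ref{Cor:label 2} simply by composing two results already at hand: Corollary~\ref{Cor:label come from 2GaQ}, which identifies the subquiver $\Upsilon^L_{[\ii_0]}$ with $\Gamma^L_{Q^{\rm rev}}$ inside $\Gamma_{Q^{\rm rev}}$ and transports labels along the rule $[i,j]\mapsto\langle i,-j-1\rangle$ (with the indicated modification when $j=n$), and Corollary~\ref{Cor:rev Ga rev Q}, which says that $\Gamma_{Q^{\rm rev}}$ and $\Gamma_Q$ carry the same positive roots as vertex labels, differing only in the indexing of residues ($i\leftrightarrow n+1-i$) and in the orientation of the arrows. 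The point is that the label rule above is a function of the $A_n$-root carried by a vertex alone, independent of its residue and of arrow directions; hence the $A_n$-root attached to a given vertex of $\Gamma^L_{Q^{\rm rev}}$ equals the one attached to the corresponding vertex of $\Gamma_Q$, and $\Gamma^L_{Q^{\rm rev}}$ may be replaced, without altering any label, by the full subquiver $\Gamma^L_Q$ ($=\Gamma^L_{[Q]}$) of $\Gamma_Q$ supported on the same set of roots.

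Concretely, I would first fix a reduced expression $\jj'_0=j_1\cdots j_\N\in[\ii_0]$ with $j_{t_1}=n^*$ realizing the smallest possible $t$, as in Proposition~\ref{Prop:label come from 2GaQ} and Remark~\ref{rem: jj_0 prime}, so that $\Upsilon^L_{[\ii_0]}$ is the full subquiver on the vertices read off in steps (i)--(ii) of \eqref{eq:jj2 reading}. By Corollary~\ref{Cor:label come from 2GaQ} there is then an isomorphism $\Upsilon^L_{[\ii_0]}\simeq\Gamma^L_{Q^{\rm rev}}$ pulling back labels by $[i,j]\mapsto\langle i,-j-1\rangle$. Next I would invoke Corollary~\ref{Cor:rev Ga rev Q}: the identity on $\Phi^+$ realizes $\Gamma^{\rm rev}_{Q^{\rm rev}}\simeq\Gamma_Q$, so the vertices of $\Gamma^L_{Q^{\rm rev}}$ are literally a set of $A_n$-positive roots inside $\Gamma_Q$; taking $\Gamma^L_Q$ to be the full subquiver of $\Gamma_Q$ on that set, its vertex labels coincide, as roots of $A_n$, with those of $\Gamma^L_{Q^{\rm rev}}$. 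Composing the two identifications yields the label correspondence between $\Gamma^L_Q$ and $\Upsilon^L_{[\ii_0]}$ stated in the corollary.

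Finally, the case $j_{t_1}=(n+1)^*$ reduces to the previous one by interchanging the roles of $\al_n$ and $\al_{n+1}$ throughout, exactly as in the proofs of Corollaries~\ref{Cor:label come from 1GaQ} and \ref{Cor:label come from 2GaQ}; on $\Upsilon^L_{[\ii_0]}$ this interchange is precisely the substitution of $\al_{n+1}$ by $\al_n$, after which the labelled subquiver becomes $\Gamma^L_{[Q]}$. I do not expect any genuinely new computation here: the only point requiring care is the bookkeeping of the three operations ${\rm rev}$, the involution $*$ (which by \eqref{eq: involution D} is nontrivial only when $n$ is even), and the residue relabelling $i\leftrightarrow n+1-i$ of Corollary~\ref{Cor:rev Ga rev Q}, together with the check that they compose so as to reproduce exactly the displayed formula rather than a twisted variant of it. That verification is where essentially all of the (modest) work lies, and it is the main obstacle.
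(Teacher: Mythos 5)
Your proposal is correct and is essentially the paper's own route: the paper's one-line proof cites Lemma \ref{Lem:A longest}, whose parts (1) and (3) are precisely what Corollary \ref{Cor:rev Ga rev Q} packages, so composing Corollary \ref{Cor:label come from 2GaQ} with Corollary \ref{Cor:rev Ga rev Q} (noting the label rule depends only on the $A_n$-root, not on residue or arrow orientation) is the same argument made explicit. The parity bookkeeping with $*$ and the $n\leftrightarrow n+1$ interchange that you flag as remaining work is likewise left implicit in the paper, so no genuinely new content is missing.
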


\begin{proof}
It directly follows  from Lemma \ref{Lem:A longest}.
\end{proof}

\begin{remark}
In Corollary \ref{Cor:label 2}, the assumption $j_{t_1}=n^*$ can be replaced by {\it the twisted Coxeter element $\phi_{[\ii_0]}$ contains $n+1.$} On the other hand, the assumption  $j_{t_1}=(n+1)^*$ can be replaced by {\it the twisted Coxeter element $\phi_{[\ii_0]}$ contains $n.$}
\end{remark}

\begin{example}  \label{Ex:label 1+2}
Let $\ii_0= \prod_{k=0}^4 (2\ 1\ 3\ 5)^{k\vee}$. Then we can add labels to Example \ref{Ex:label come from 1GaQ} using Corollary
\ref{Cor:label come from 2GaQ} and Corollary \ref{Cor:label 2} as follows:
\[  \scalebox{0.8}{\xymatrix@C=2ex@R=1ex{
1&&\srt{1}{-2}\ar@{->}[dr]  && \srt{2}{-5}\ar@{->}[dr]  &&  \srt{4}{5}\ar@{->}[dr]  && \srt{3}{-4}\ar@{->}[dr] && \srt{1}{-3} \ar@{->}[dr]\\
2&&& \srt{1}{-5}\ar@{->}[dr]\ar@{->}[ur]  &&\bullet\ar@{->}[dr]\ar@{->}[ur]  && \srt{3}{5}\ar@{->}[dr]\ar@{->}[ur]  && \srt{1}{-4}\ar@{->}[dr]\ar@{->}[ur] && \srt{2}{-3} \\
3& &\srt{3}{-5}\ar@{->}[dr]\ar@{->}[ur]  &&\bullet\ar@{->}[ddr]\ar@{->}[ur]  &&\bullet\ar@{->}[dr]\ar@{->}[ur]  && \srt{1}{5}\ar@{->}[ddr]\ar@{->}[ur] && \srt{2}{-4}\ar@{->}[ur] \\
4&&& \bullet\ar@{->}[ur]  &&&& \bullet\ar@{->}[ur] \\
5&\srt{4}{-5}\ar@{->}[uur]  &&&& \bullet\ar@{->}[uur]  &&&& \srt{2}{5}\ar@{->}[uur]
}}\]
The left part of $\Upsilon_{[\ii_0]}$ can be obtained from $\Gamma_Q$ or $\Gamma_{Q^{{\rm rev}}}$ where
\[ Q = {\xymatrix@R=3ex{ \circ
\ar@{->}[r]_<{ \ 1} &  \circ
\ar@{<-}[r]_<{ \ 2}  &  \circ
\ar@{<-}[r]_<{ \ 3}
& \circ \ar@{-}[l]^<{\ \ \ \ \ \ 4} }}\]
and AR-quivers $\Gamma_Q$ and $\Gamma_{Q^{{\rm rev}}}$ are
 \[ \scalebox{0.8}{\xymatrix@C=2ex@R=1ex{
1 &  [4]\ar@{->}[dr]  && \bullet \ar@{->}[dr] && \bullet\ar@{->}[dr]\\
2 && [3,4]\ar@{->}[dr]\ar@{->}[ur]  && \bullet\ar@{->}[dr]\ar@{->}[ur] &&\bullet \\
3 &  && [1,4]\ar@{->}[dr]\ar@{->}[ur] && \bullet\ar@{->}[ur] \\
4 && [1]  \ar@{->}[ur] && [2,4]\ar@{->}[ur]
}} \qquad   \scalebox{0.8}{\xymatrix@C=2ex@R=1ex{
1  &  && [2,4]\ar@{->}[dr] && [1]\\
2 && \bullet \ar@{->}[dr]\ar@{->}[ur]  && [1,4]\ar@{->}[dr]\ar@{->}[ur] &&\\
3  &\bullet \ar@{->}[dr] \ar@{->}[ur]&& \bullet\ar@{->}[dr]\ar@{->}[ur] && [3,4]\ar@{->}[dr]&&\\
4 &&\bullet \ar@{->}[ur] && \bullet \ar@{->}[ur]   && [4] &&
}}\]
 \end{example}

\begin{convention} \label{Conv:xi_0}
Now, we fix the height function $\xi$ in Algorithm \ref{Alg_twAR} such that
\begin{align} \label{eq: conv1}
\xi(\overline{1}) - 2 \times a^Q_1 + 1 =0.
\end{align}

Note that the $\rho$ (resp. $\rho'$) in \eqref{eq:jj1 reading} (resp. \eqref{eq:jj2 reading}) is the rightmost (resp. leftmost)
$S$-sectional (resp. $N$-sectional) path of length $n-1$ in $\Upsilon_{[\ii_0]}$. Furthermore, by \cite[Remark 1.14]{Oh14A}, \eqref{eq: conv1} implies that
\[ \text{
vertices in $\rho'$ are $(1,-1)$ and $\rho$ starts at $(1,1)$.}\]
Equivalently, we can say
\[ \text{ in $\widehat{\Upsilon}_{[\ii_0]}$,
vertices in $\rho'$ are $(\bar{r},-r)$ and vertices $\rho$ are $(\bar{r},r)$}.\]
\end{convention}

Now, with the above convention, we can summarize the previous results on the labeling $\Upsilon_{[\ii_0]}$ in this subsection as follows:

\begin{proposition} \label{Prop:cover by GQ}
For   a twisted adapted class $[\ii_0]$, we can label the vertices whose coordinates in $\widehat{\Upsilon}_{[\ii_0]}$
are $$\{ (\bar{i}, j),\, (\bar{i}, k)\, | \, 1\leq i\leq n, \, j\leq -i, \, k\geq i\}$$
by using the labels of $\Gamma_Q$.
In particular, the set of coordinates of
\begin{align} \label{eq: Phi n+1}
\Phi^+ |_{n+1\rangle}:=\{\left<i, n+1\right>,\ \left<i, -n-1\right>|\,  1\leq i\leq n\, \}
\end{align}
in $\widehat{\Upsilon}_{[\ii_0]}$ is
$\{ (\bar{i}, -i),\, (\bar{i}, i)\, | \, 1\leq i\leq n\}$.
 \end{proposition}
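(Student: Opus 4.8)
The plan is to derive this proposition directly from the labeling results of this subsection --- Corollary~\ref{Cor:label come from 1GaQ}, Corollary~\ref{Cor:label come from 2GaQ}, Corollary~\ref{Cor:rev Ga rev Q} and Corollary~\ref{Cor:label 2} --- together with the normalization of the height function fixed in Convention~\ref{Conv:xi_0}; essentially all that remains is to identify the relevant coordinate regions. The first step is to pin down the coordinates occupied by the subquivers $\Upsilon^R_{[\ii_0]}$ and $\Upsilon^L_{[\ii_0]}$ of Remark~\ref{rem: jj_0} and Remark~\ref{rem: jj_0 prime}. By Theorem~\ref{thm: labeling GammaQ} the path $\rho$ (resp. $\rho'$) is the unique maximal $S$-sectional (resp. $N$-sectional) path of length $n-1$ inside ${}_1\Gamma_Q$ (resp. ${}_2\Gamma_{Q^*}$), and $\Upsilon^R_{[\ii_0]}$ (resp. $\Upsilon^L_{[\ii_0]}$) consists of $\rho$ (resp. $\rho'$) together with the vertices lying weakly to its right (resp. left). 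Convention~\ref{Conv:xi_0} places $\rho$ at the coordinates $(\bar r,r)$ and $\rho'$ at $(\bar r,-r)$ in $\widehat{\Upsilon}_{[\ii_0]}$; since $\rho$ is a chain of downward arrows and $\rho'$ a chain of upward arrows, reading off the shape of $\Gamma_Q$ shows that the vertex sets of $\Upsilon^R_{[\ii_0]}$ and $\Upsilon^L_{[\ii_0]}$ are, respectively,
\[ \{\,(\bar i,k)\mid 1\le i\le n,\ k\ge i\,\}\qquad\text{and}\qquad\{\,(\bar i,j)\mid 1\le i\le n,\ j\le -i\,\}, \]
which are disjoint and whose union is exactly the coordinate set appearing in the statement. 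Now the first assertion follows: by Corollary~\ref{Cor:label come from 1GaQ} the labels on $\Upsilon^R_{[\ii_0]}$ are transported from $\Gamma_Q$, while by Corollary~\ref{Cor:label come from 2GaQ} and Corollary~\ref{Cor:rev Ga rev Q} (or directly by Corollary~\ref{Cor:label 2}) the labels on $\Upsilon^L_{[\ii_0]}$ are transported from $\Gamma_{Q^{{\rm rev}}}$, hence again from $\Gamma_Q$.

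For the second assertion I would use the explicit label dictionary. Under the correspondence of Corollary~\ref{Cor:label come from 1GaQ} (resp. Corollary~\ref{Cor:label 2}), a root $\langle i,n+1\rangle$ or $\langle i,-n-1\rangle$ of $\Phi^+|_{n+1\rangle}$ with first summand $i$ is the image of the type $A_n$ root $[i,n]$; hence, as $i$ runs over $1,\dots,n$, these are precisely the images of the $A_n$ roots with second component $n$, and $\Phi^+|_{n+1\rangle}$ is exhausted by the $[i,n]$'s coming from the two copies ${}_1\Gamma_Q$ and ${}_2\Gamma_{Q^*}$. By Theorem~\ref{thm: labeling GammaQ} (and Proposition~\ref{pro: section shares}) the roots $\{[i,n]\mid 1\le i\le n\}$ constitute the maximal $S$-sectional path of length $n-1$ in $\Gamma_Q$, which under ${}_1\Gamma_Q\simeq\Gamma_Q$ is $\rho$, occupying $\{(\bar i,i)\mid 1\le i\le n\}$ by the first paragraph. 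The same argument applied to $\Gamma_{Q^{{\rm rev}}}$ (equivalently, to ${}_2\Gamma_{Q^*}$ via the reflection of Remark~\ref{rem: reflection w.r.t x-axis}) shows that the remaining $n$ elements of $\Phi^+|_{n+1\rangle}$ lie on $\rho'$ and occupy $\{(\bar i,-i)\mid 1\le i\le n\}$. Since $\Phi^+|_{n+1\rangle}$ and $\{(\bar i,i),(\bar i,-i)\mid 1\le i\le n\}$ both have $2n$ elements and the two halves are disjoint, the coordinate set of $\Phi^+|_{n+1\rangle}$ in $\widehat{\Upsilon}_{[\ii_0]}$ is exactly $\{(\bar i,-i),(\bar i,i)\mid 1\le i\le n\}$.

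The only delicate point --- which I expect to be the main, if minor, obstacle --- is keeping track of which half of $\Phi^+|_{n+1\rangle}$ sits on $\rho$ and which on $\rho'$; equivalently, the dependence of the sign of the second summand on whether the twisted Coxeter element $\phi_{[\ii_0]}$ contains $n$ or $n+1$, which is the source of the two alternatives in Corollary~\ref{Cor:label come from 1GaQ} and Corollary~\ref{Cor:label 2}. I expect this to be absorbed by the symmetry $\al_n\leftrightarrow\al_{n+1}$ of the Dynkin diagram of $D_{n+1}$, so that ultimately only the cardinality count $|\Phi^+|_{n+1\rangle}|=2n$ and the disjointness of the two diagonal coordinate sets are needed, without tracing the sign in each case separately.
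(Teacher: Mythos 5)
Your proposal is correct and follows essentially the same route as the paper: pin down the coordinate regions of $\Upsilon^R_{[\ii_0]}$ and $\Upsilon^L_{[\ii_0]}$ via Convention \ref{Conv:xi_0}, observe that $\rho$ and $\rho'$ both correspond to the $S$-sectional path of $\Gamma_Q$ with second component $n$ (the paper's observation \eqref{eq: simul}), and conclude by Corollary \ref{Cor:label come from 1GaQ} together with Corollary \ref{Cor:label come from 2GaQ}/Corollary \ref{Cor:label 2}. Your closing cardinality argument to avoid tracking which sign of $\pm(n+1)$ sits on $\rho$ versus $\rho'$ is a harmless shortcut, not a genuinely different method.
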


\begin{proof}
Note the following:
\begin{eqnarray} &&
\parbox{80ex}{
In $\Gamma_Q$, $\rho$ in Remark \ref{rem: jj_0} and $\rho'$ in Remark \ref{rem: jj_0 prime} correspond to $S$-sectional path of length $n-1$ whose second component is $n$, simultaneously.
}\label{eq: simul}
\end{eqnarray}
Then our assertion follows from Corollary \ref{Cor:label come from 1GaQ} and Corollary \ref{Cor:label come from 2GaQ}.
\end{proof}

 \begin{corollary} \label{Cor:4.25}
If the coordinate in $\widehat{\Upsilon}_{[\ii_0]}$ of $\left< j, n+1\right>$ is $(\bar{i}, i)$ then
the coordinate of  $\left< j, -n-1\right>$ is $(\overline{n+1-i}, -n-1+i).$
On the other hand, if the coordinate in $\widehat{\Upsilon}_{[\ii_0]}$ of $\left< j, n+1\right>$ is $(\bar{i}, -i)$
then the coordinate of  $\left< j, -n-1\right>$ is $(\overline{n+1-i}, n+1-i).$
 \end{corollary}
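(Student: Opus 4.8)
The plan is to pin down the exact positions of $\langle j,n+1\rangle$ and $\langle j,-n-1\rangle$ in $\widehat{\Upsilon}_{[\ii_0]}$ from Proposition~\ref{Prop:cover by GQ} and the labeling Corollaries~\ref{Cor:label come from 1GaQ}, \ref{Cor:label come from 2GaQ} and \ref{Cor:label 2}, and then to compare them via the residue-flip $\Gamma_Q\leftrightarrow\Gamma_{Q^{{\rm rev}}}$ of Corollary~\ref{Cor:rev Ga rev Q}. Throughout, let $Q$ be the Dynkin quiver of type $A_n$ with $\mathfrak{p}^{D_{n+1}}_{A_n}([\ii_0])=[Q]$, and let $\rho$, $\rho'$ be the distinguished sectional paths of Convention~\ref{Conv:xi_0}, whose vertices in $\widehat{\Upsilon}_{[\ii_0]}$ are $(\overline r,r)$, resp.\ $(\overline r,-r)$, for $1\le r\le n$.

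First I would recall that, by Proposition~\ref{Prop:cover by GQ}, the $2n$ roots of $\Phi^+|_{n+1\rangle}$ occupy exactly the $2n$ vertices of $\rho\cup\rho'$, and that (as in the proof of Proposition~\ref{Prop:cover by GQ}) both $\rho$ and $\rho'$ correspond to the unique maximal $S$-sectional path of length $n-1$ in $\Gamma_Q$, resp.\ $\Gamma_{Q^{{\rm rev}}}$, whose vertices have $n$ as second component; by Theorem~\ref{thm: labeling GammaQ} this path has vertex set $\{[1,n],[2,n],\dots,[n,n]\}$. Fixing $j$, write $i$ for the residue of $[j,n]$ in $\Gamma_Q$. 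By Convention~\ref{Conv:xi_0} this residue coincides with the absolute value of the $p$-coordinate along $\rho$, so Corollary~\ref{Cor:label come from 1GaQ} places the image of $[j,n]$ on $\rho$ at $(\overline i,i)$; by Corollary~\ref{Cor:rev Ga rev Q} the residue of $[j,n]$ in $\Gamma_{Q^{{\rm rev}}}$ is $n+1-i$, so Corollary~\ref{Cor:label come from 2GaQ} (through Corollary~\ref{Cor:label 2}) places the image of $[j,n]$ on $\rho'$ at $(\overline{n+1-i},-(n+1-i))$.

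Next I would identify these two images. Each of Corollaries~\ref{Cor:label come from 1GaQ} and \ref{Cor:label come from 2GaQ} sends the whole relevant $S$-sectional path to $D_{n+1}$-roots with first summand preserved and with second summand equal to $n+1$ for all of them, or equal to $-n-1$ for all of them; in particular $[j,n]$ goes to $\langle j,\pm(n+1)\rangle$ on $\rho$ and to $\langle j,\pm(n+1)\rangle$ on $\rho'$, with a constant sign on each path. Since $\rho$ and $\rho'$ are disjoint and together carry all $2n$ roots $\langle a,\pm(n+1)\rangle$ by Proposition~\ref{Prop:cover by GQ}, these two constant signs must be opposite; hence $\{\langle j,n+1\rangle,\langle j,-n-1\rangle\}$ is exactly $\{$image of $[j,n]$ on $\rho$, image of $[j,n]$ on $\rho'\}$, one root on each path. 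If $\langle j,n+1\rangle$ is the one on $\rho$, then it sits at $(\overline i,i)$ and $\langle j,-n-1\rangle$ sits on $\rho'$ at $(\overline{n+1-i},-(n+1-i))=(\overline{n+1-i},-n-1+i)$; if instead $\langle j,n+1\rangle$ is the one on $\rho'$, write its coordinate as $(\overline{i'},-i')$ with $i'=n+1-i$, so $\langle j,-n-1\rangle$ sits on $\rho$ at $(\overline i,i)=(\overline{n+1-i'},\,n+1-i')$. These are precisely the two assertions.

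The step I expect to be the main obstacle is the bookkeeping in the third paragraph: checking that $\rho$ and $\rho'$ really carry opposite sign-families of $\Phi^+|_{n+1\rangle}$, and that under Convention~\ref{Conv:xi_0} the residue of $[j,n]$ in $\Gamma_Q$ equals the absolute value of the corresponding $p$-coordinate on \emph{both} $\rho$ and $\rho'$ (using that the subquiver on $\mathfrak{B}_{[\ii_0]}$ is $Q$ with folded residues $\overline 1,\dots,\overline n$ matched to $1,\dots,n$). Once that alignment is fixed, the conclusion is a direct substitution. As a consistency check one may note that $\vee$ of \eqref{eq: C_n} swaps $\alpha_n$ and $\alpha_{n+1}$ and hence sends $\langle j,n+1\rangle=\ve_j+\ve_{n+1}$ to $\ve_j-\ve_{n+1}=\langle j,-n-1\rangle$, so the claimed transformation of coordinates agrees with the action of $\vee$ on $\Phi^+|_{n+1\rangle}$.
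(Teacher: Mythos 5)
Your proof is correct and follows essentially the same route as the paper: the paper's (very terse) argument is exactly that the $(\bar i,i)$-family comes from $\Gamma_Q$ while the $(\bar i,-i)$-family comes from $\Gamma_Q$ with residues flipped $i\mapsto n+1-i$ (Corollary \ref{Cor:rev Ga rev Q}), which is what you spell out via Proposition \ref{Prop:cover by GQ} and the labeling corollaries. Your extra bookkeeping on the opposite signs of the second summands along $\rho$ and $\rho'$ is a sound elaboration of what the paper leaves implicit (cf.\ step (v) of Algorithm \ref{Alg:label}).
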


 \begin{proof}
 It follows from the fact that if the vertices with coordinates $(i,i)$, $1\leq i \leq n$, are determined by the $\Gamma_Q$ then
 the vertices with coordinates $(i,-i)$ $1\leq i \leq n$, are also obtained by the map (vertices in $i$-th residue)$\mapsto$
 (vertices in $n+1-i$-th residue) on $\Gamma_Q$.
 \end{proof}

\begin{definition} \label{def: folded multiplicity}
For any $\gamma \in \PR$,
{\it the folded multiplicity} of $\gamma$, denoted by $\overline{\mathsf{m}}(\gamma)$ is a positive integer defined as follows:
$$\overline{\mathsf{m}}(\gamma) = \max\left\{\,\left. \sum_{j \in \overline{i}} m_j\, \right| \, \overline{i} \in \overline{I} \text{ and } \sum_{i\in I} m_i\alpha_i=\ga\, \right\}.$$
If $\overline{\mathsf{m}}(\gamma)=1$, we say that $\gamma$ is {\it folded multiplicity free}.
\end{definition}

\begin{remark}
Note that every folded multiplicity free positive root is multiplicity free. However, $\ga=\langle a,n \rangle$ of $\Phi^+_{D_{n+1}}$ has
$\mathsf{m}(\gamma)=1$ and $\overline{\mathsf{m}}(\gamma)=2$.
\end{remark}

From now on, we shall label the vertices which are not covered by Proposition \ref{Prop:cover by GQ}. By the observation \eqref{eq: simul},
the number of such vertices is
\begin{align*}
\dfrac{n(n-1)}{2} &= n(n+1) -\dfrac{n(n+1)}{2}-n \\
& = |\Phi^+_{D_{n+1}}| - |\Phi^+_{A_{n}}| - \text{the number of vertices in $\rho$} \\
& = | \{ \ga \in \Phi^+_{D_{n+1}} \ | \ \overline{\mathsf{m}}(\gamma)=2  \} |.
\end{align*}
Furthermore, the vertices satisfy the following condition: Let us denote by $\Upsilon_{[\ii_0]}^C$ the full subquiver consisting of the uncovered vertices. Then
\begin{eqnarray} &&
\parbox{80ex}{
\begin{itemize}
\item each vertex in $\Upsilon_{[\ii_0]}^C$ is contained in two sectional paths of length $n-1$,
\item $\Upsilon_{[\ii_0]}^C$ is surrounded by $\rho$ in Remark \ref{rem: jj_0} and $\rho'$ in Remark \ref{rem: jj_0 prime}.
\end{itemize}
Thus, as the set of vertices, we have $$\Upsilon_{[\ii_0]} = \Upsilon^L_{[\ii_0]} \sqcup \Upsilon^C_{[\ii_0]} \sqcup \Upsilon^R_{[\ii_0]}\qquad\qquad\qquad.$$
}\label{eq: cond uncovered}
\end{eqnarray}

 \begin{proposition}  \label{Prop:label for Cvertices}
Let $\be_1$ and $\be_2$ be positive roots of $\Phi^+ |_{ n+1\rangle}$
such that whose twisted coordinates are $(i_1, -i_1)$ and $(i_2, i_2)$ for $1\leq i_1, i_2 \leq n-1$.
If $i_1+i_2\leq n$ then the label with the coordinate $(\overline{i _1+i_2}, -i_1+i_2)$ in $\widehat{\Upsilon}_{[\ii_0]}$ is $\be_1+\be_2.$
\end{proposition}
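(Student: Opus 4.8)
The plan is to prove the statement by induction on $i_1+i_2$, using an ``additive property'' (mesh relations) for the folded quiver $\widehat{\Upsilon}_{[\ii_0]}$ together with Proposition \ref{Prop:cover by GQ}. With the height function fixed as in Convention \ref{Conv:xi_0}, the path $\rho'$ of Remark \ref{rem: jj_0 prime} occupies the coordinates $(\bar r,-r)$ and the path $\rho$ of Remark \ref{rem: jj_0} occupies $(\bar r,r)$ for $1\le r\le n$, and combining Proposition \ref{Prop:cover by GQ} with \eqref{eq: r_i for twisted D_n+1} and \eqref{eq: cond uncovered} one sees that $\Upsilon^C_{[\ii_0]}$ is exactly the set of coordinates $(\bar k,m)$ with $2\le k\le n$, $|m|<k$ and $m\equiv k\pmod 2$. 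Writing $\beta_1^{(r)}$ (resp. $\beta_2^{(r)}$) for the label at $(\bar r,-r)$ (resp. $(\bar r,r)$), the substitution $i_1=(k-m)/2$, $i_2=(k+m)/2$ then identifies the claim with the assertion: for every vertex $(\bar k,m)$ of $\Upsilon^C_{[\ii_0]}$, writing it as $(\overline{i_1+i_2},\,i_2-i_1)$ with $1\le i_1,i_2\le n-1$, its label equals $\beta_1^{(i_1)}+\beta_2^{(i_2)}$.

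The key input is the mesh relation for $\widehat{\Upsilon}_{[\ii_0]}$ at the residues $\bar 1,\dots,\overline{n-1}$: whenever $(\bar i,p)$ and $(\bar i,p-2)$ are both vertices, $v_{(\bar i,p)}+v_{(\bar i,p-2)}=\sum_{|\bar i-\bar j|=1}v_{(\bar j,p-1)}$, with only one summand on the right when $\bar i=\bar 1$ and with a single $\bar n$-side term $v_{(\bar n,p-1)}$ when $\bar i=\overline{n-1}$. This is the folded counterpart of \eqref{eq: addtive property}, and I would deduce it from the construction in Algorithm \ref{Alg_twAR} (the shift by $\psi_{[\ii_0]}$, i.e. by $4$, together with the arrow rule (e)) in the same way that \eqref{eq: addtive property} follows from Definition \ref{def: Gamma_Q}, or alternatively from the decomposition $\Upsilon_{[\ii_0]}={}_1\Gamma_Q\sqcup{}_2\Gamma_{Q^{*}}$ of Remark \ref{rem: decompose into 2-comp} and the additive property of the two type $A_n$ AR-quivers involved. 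Note that only residues $\le n-1$ are used, so the genuinely exceptional folding behaviour at $\bar n$ never enters.

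Granting this, the induction on $k=i_1+i_2$ (equivalently, on the residue of the target vertex) runs as follows. For $k=2$ we have $i_1=i_2=1$, and the mesh at $\bar 1$ gives $v_{(\bar 2,0)}=v_{(\bar 1,1)}+v_{(\bar 1,-1)}=\beta_2^{(1)}+\beta_1^{(1)}$; in particular $\beta_1^{(1)}+\beta_2^{(1)}\in\Phi^+$. For $k\ge 3$ and a pair $(i_1,i_2)$ with $i_1+i_2=k\le n$, apply the mesh at residue $\overline{k-1}$ with $p=i_2-i_1+1$ to get $v_{(\bar k,\,i_2-i_1)}=v_{(\overline{k-1},\,i_2-i_1+1)}+v_{(\overline{k-1},\,i_2-i_1-1)}-v_{(\overline{k-2},\,i_2-i_1)}$. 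A short coordinate check — which I would record as a table — shows that each vertex on the right either lies on $\rho$ or $\rho'$ (this happens exactly when $i_1=1$ or $i_2=1$, and then the label is read off directly as some $\beta_2^{(\cdot)}$ or $\beta_1^{(\cdot)}$) or lies in $\Upsilon^C_{[\ii_0]}$ at a strictly smaller residue, hence by induction equals $\beta_1^{(i_1-1)}+\beta_2^{(i_2)}$, $\beta_1^{(i_1)}+\beta_2^{(i_2-1)}$ and $\beta_1^{(i_1-1)}+\beta_2^{(i_2-1)}$ respectively (with the obvious boundary modifications). Substituting, these four $\beta_1^{(\cdot)}+\beta_2^{(\cdot)}$ terms telescope to $\beta_1^{(i_1)}+\beta_2^{(i_2)}$, which completes the step and simultaneously records that $\beta_1^{(i_1)}+\beta_2^{(i_2)}\in\Phi^+$.

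The main obstacle is establishing the folded mesh relation of the second paragraph, and in particular verifying that its right-hand side at $\overline{n-1}$ is the single vertex $v_{(\bar n,p-1)}$ rather than a sum of two contributions, since the construction folds the two branch residues $n$ and $n+1$ of $D_{n+1}$ onto $\bar n$; this is where the combinatorics of Algorithm \ref{Alg_twAR} (the shift by $4$, and which of residues $n,n+1$ is occupied at a given height) must be used carefully. Once that relation is available, the remainder is the routine telescoping of the third paragraph plus the elementary bookkeeping of deciding, for each mesh input, whether it sits on $\rho$, on $\rho'$, or in the interior — which is immediate from the coordinate description of $\Upsilon^C_{[\ii_0]}$ fixed at the outset.
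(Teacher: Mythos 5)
Your argument hinges on a ``folded mesh relation'' $v_{(\bar i,p)}+v_{(\bar i,p-2)}=\sum_{|\bar i-\bar j|=1}v_{(\bar j,p-1)}$, and this is exactly where the proof breaks down: that relation is never established, and in the paper's logical order it is a \emph{consequence} of the proposition you are asked to prove, not an available tool. The twisted additive property (Theorem \ref{twisted additive property}, via Propositions \ref{Prop:tw_add_1}--\ref{Prop:tw_add_3}) is proved in Section \ref{Section:additive} using the labels of $\Upsilon^C_{[\ii_0]}$; in particular the proof of Proposition \ref{Prop:tw_add_2} -- which is precisely your base-case mesh at residue $\bar 1$, i.e.\ $v_{(\bar 2,0)}=v_{(\bar 1,1)}+v_{(\bar 1,-1)}$ -- cites Proposition \ref{Prop:label for Cvertices} verbatim, and Propositions \ref{Prop:tw_add_1} and \ref{Prop:tw_add_3} rest on the swing/snake analysis of Section \ref{Sec:twisted_AR_D_label_shape}, which also uses it. So as written your induction is circular. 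Neither of your two proposed independent derivations closes the gap. (a) The additive property \eqref{eq: addtive property} does not ``follow from'' the coordinate construction of Definition \ref{def: Gamma_Q}: it is a theorem about AR-quivers of path algebras (almost split sequences), and $\Upsilon_{[\ii_0]}$ for a non-adapted class carries no such algebra; moreover your mesh at step $2$ in a residue $\bar i\ne\bar n$ does not even have the form $\beta+\psi_{[\ii_0]}(\beta)=\cdots$, since $\psi_{[\ii_0]}$ shifts by $4$, so it cannot be read off from Algorithm \ref{Alg_twAR}(d). (b) The additive properties of ${}_1\Gamma_Q$ and ${}_2\Gamma_{Q^*}$ concern their $A_n$ labels; the passage to $D_{n+1}$ labels is not a single linear map on the root lattice (it is $[a,b]\mapsto\varepsilon_a-\varepsilon_{b+1}$ outside $\rho\sqcup\rho'$ but $[a,b]\mapsto\varepsilon_a+\varepsilon_{b+1}$ inside, which is exactly what is being determined), and the crucial meshes -- including your base case -- connect vertices lying in the two \emph{different} components, so within-component additivity says nothing about them.

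For comparison, the paper avoids all of this by a direct computation: it chooses a compatible reading of $\Upsilon_{[\ii_0]}$ of the form $\jj\,J_1^{i_2}\,J_1^{i_2+1}\cdots J_{i_1}^{i_1+i_2}\,\jj'$ (reading the vertices to the right of a region $V$, then $V$ column by column), and evaluates the three labels from the defining formula $\beta_k=s_{j_1}\cdots s_{j_{k-1}}(\alpha_{j_k})$, obtaining $S_\jj(\alpha_{i_2})$, $S_\jj\bigl(\sum_{k=i_2+1}^{i_1+i_2}\alpha_k\bigr)$ and $S_\jj\bigl(\sum_{k=i_2}^{i_1+i_2}\alpha_k\bigr)$, whence the additivity is immediate. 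If you want to keep your inductive scheme, you would first have to prove the folded mesh relation by an argument of this explicit reduced-word type anyway, at which point the induction becomes superfluous.
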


\begin{proof}
Suppose the vertex whose coordinate is $(\overline{i_1+i_2}, -i_1+i_2)$ in $\widehat{\Upsilon}_{[\ii_0]}$
has a residue less than $n$.
 In order to see the case when the third coordinate of $(\overline{i_1+i_2}, -i_1+i_2)$ is $1$,
let $V$ be the set of vertices which can be described as follows: $V=\cup_{k=0}^{i_1}V_t$, where
\begin{equation*}
\begin{aligned}
& V_0=  \{\, (i_2, i_2),(i_2-1, i_2-1), \cdots, (1,1)\, \}, \\
& V_t= \{ (i_2+t, i_2-t), (i_2+t-1, i_2-t-1), \cdots,  \, (t,-t) \}, \text{ for } t=1, \cdots, i_1.
\end{aligned}
\end{equation*}
Hence there is a compatible reading of $\Upsilon_{[\ii'_0]}$, equivalently the $Q$-adapted reduced word of $w_0$, which is the form
of $\ \jj \ J_1^{i_2} \ J_1^{i_2+1}\ J_1^{i_2+2} \ \cdots \ J_{i_1}^{i_1+i_2} \ \jj'\  $
where
\begin{itemize}
\item $\, J_i^j =j \, j-1\, j-2\, \cdots i+1\, i\ $,
\item $\jj$, $\jj'$ are some $Q$-adapted reduced words of Weyl group of type $D_{n+1}$,
\end{itemize}
by reading compatibly {\rm (i)} the vertices right to $V$ first, {\rm (ii)} the vertices in $V$ second and {\rm (iii)} the vertices left to $V$ last.
Denote \[ S_\jj:= s_{j_1}s_{j_2}\cdots s_{j_k} \text{ for } \jj=j_1\, j_2\, \cdots\, j_k \quad  \text{ and } \quad  S_{t_1}^{t_2}:= S_{J_{t_1}^{t_2}} \text{ for } 1\leq t_1<t_2 \leq n.\]
Then the label with coordinate $(i_2, i_2)$ is $S_{\jj}(\alpha_{i_2}).$ The label with coordinate $(i_1,-i_1)$ is
\[ S_{\jj} S_1^{i_2} S_1^{i_2+1} S_{2}^{i_2+2} \cdots S_{i_1-1}^{i_1+i_2-1} S_{i_1+1}^{i_1+i_2}(\alpha_{i_1})= S_{\jj}\big(\, \sum_{k=i_2+1}^{i_1+i_2}\alpha_k\, \big)
\in \Phi^+,\]
since one can easily compute that
$S_1^{i_2} S_1^{i_2+1} S_{2}^{i_2+2} \cdots S_{i_1-1}^{i_1+i_2-1} S_{i_1+1}^{i_1+i_2}(\alpha_{i_1})= \sum_{k=i_2+1}^{i_1+i_2}\alpha_k \in \Phi^+.$
Thus the label with coordinate $(i_1+i_2,-i_1+i_2)$ is
 \[ S_{\jj} S_1^{i_2} S_1^{i_2+1} S_{2}^{i_2+2} \cdots S_{i_1-1}^{i_1+i_2-1} (\alpha_{i_1+i_2})= S_{\jj}\big(\, \sum_{k=i_2}^{i_1+i_2}\alpha_k\, \big)\in \Phi^+.\]
 Since $ S_{\jj}(\alpha_{i_2})+   S_{\jj}\big(\, \sum_{k=i_2+1}^{i_1+i_2}\alpha_k\, \big) =S_{\jj}\big(\, \sum_{k=i_2}^{i_1+i_2}\alpha_k\, \big)$,
 we proved the assertion.

 Also, when the third coordinate of $(\overline{i_1+i_2}, -i_1+i_2)$ is $-1$, our assertion can be proved by the same argument.
 \end{proof}

By \eqref{eq: cond uncovered}, we have the following proposition.

\begin{proposition} For each vertex $v$ in $\Upsilon_{[\ii_0]}^C$, there exist unique $\al \in \rho$ and $\beta \in \rho'$ such that
\begin{itemize}
\item whose folded coordinates are $(\overline{i_1},i_1)$ and $(\overline{i_2},-i_2)$ respectively,
\item $(\overline{i _1+i_2}, -i_1+i_2)$ coincides with the folded coordinate of $v$.
\end{itemize}
\end{proposition}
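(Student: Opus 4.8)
The plan is to reduce the statement to an elementary bijection between two sets of lattice points. First I would fix the coordinates of the two boundary paths: by Convention~\ref{Conv:xi_0} the vertices of $\rho$ (resp.\ $\rho'$) occupy the folded coordinates $(\overline{r},r)$ (resp.\ $(\overline{r},-r)$) for $1\le r\le n$, which by Proposition~\ref{Prop:cover by GQ} are exactly the coordinates of the $2n$ roots in $\Phi^+|_{n+1\rangle}$ of \eqref{eq: Phi n+1}. Next I would pin down the coordinate set of $\Upsilon^C_{[\ii_0]}$. Combining Corollary~\ref{Cor:label come from 1GaQ}, Corollary~\ref{Cor:label come from 2GaQ} and Proposition~\ref{Prop:cover by GQ}, the subquiver $\Upsilon^R_{[\ii_0]}\sqcup\Upsilon^L_{[\ii_0]}$ occupies precisely the coordinates $(\overline{i},m)$ with $m\ge i$ or $m\le -i$; since $\Upsilon_{[\ii_0]}=\Upsilon^L_{[\ii_0]}\sqcup\Upsilon^C_{[\ii_0]}\sqcup\Upsilon^R_{[\ii_0]}$ by \eqref{eq: cond uncovered}, the vertices of $\Upsilon^C_{[\ii_0]}$ are exactly those with $-i<m<i$. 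Finally, by \eqref{eq: r_i for twisted D_n+1} the second coordinates occurring in the $\overline{i}$-th residue form a single arithmetic progression of common difference $2$ that contains both $i$ and $-i$, so every intermediate value of the correct parity occurs there. Hence the coordinate set of $\Upsilon^C_{[\ii_0]}$ is exactly
\[ \mathcal{C}\seteq\{\,(\overline{k},m)\mid 1\le k\le n,\ |m|<k,\ m\equiv k \pmod 2\,\}, \]
and one checks $|\mathcal{C}|=\sum_{k=1}^{n}(k-1)=n(n-1)/2$, matching the count in \eqref{eq: cond uncovered}.

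Next I would introduce the map $\Theta\colon(i_1,i_2)\mapsto(\overline{i_1+i_2},\,i_2-i_1)$ on $T\seteq\{(i_1,i_2)\in\Z_{\ge 1}^2\mid i_1+i_2\le n\}$ and verify that it is a bijection onto $\mathcal{C}$. It lands in $\mathcal{C}$: with $k=i_1+i_2$ and $m=i_2-i_1$ we have $2\le k\le n$ and $m\equiv k\pmod 2$, while $|m|<k$ holds precisely because $i_1,i_2\ge 1$. It is injective, since $i_1=(k-m)/2$ and $i_2=(k+m)/2$ recover the pair from its image. It is surjective: given $(\overline{k},m)\in\mathcal{C}$, set $i_1=(k-m)/2$, $i_2=(k+m)/2$; the congruence $m\equiv k\pmod 2$ makes these integers, the strict inequalities $-k<m<k$ together with the parity give $k-m\ge 2$ and $k+m\ge 2$, so $i_1,i_2\ge 1$, and $i_1+i_2=k\le n$, so $(i_1,i_2)\in T$ with $\Theta(i_1,i_2)=(\overline{k},m)$. (As a sanity check, $|T|=\sum_{k=2}^{n}(k-1)=n(n-1)/2=|\mathcal{C}|$, so injectivity alone already gives bijectivity.)

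The proposition then follows immediately. Given $v\in\Upsilon^C_{[\ii_0]}$ with folded coordinate $(\overline{k},m)\in\mathcal{C}$, put $(i_1,i_2)=\Theta^{-1}(\overline{k},m)$; from $i_1+i_2=k\le n$ and $i_1,i_2\ge 1$ we get $1\le i_1,i_2\le n-1$, so the vertex $\al\in\rho$ at $(\overline{i_1},i_1)$ and the vertex $\be\in\rho'$ at $(\overline{i_2},-i_2)$ are well defined, and $(\overline{i_1+i_2},\,-i_1+i_2)=(\overline{k},m)$ is the folded coordinate of $v$. For uniqueness, any admissible pair $(\al,\be)$ has $\al$ at $(\overline{i_1},i_1)\in\rho$ and $\be$ at $(\overline{i_2},-i_2)\in\rho'$ with $i_1,i_2\in\{1,\dots,n\}$, and for $(\overline{i_1+i_2},\,i_2-i_1)$ to be a legitimate folded coordinate one needs $i_1+i_2\le n$; then $\overline{i_1+i_2}=\overline{k}$ forces $i_1+i_2=k$ and $i_2-i_1=m$, so $(i_1,i_2)=\Theta^{-1}(\overline{k},m)$ is the unique such pair. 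The only genuinely delicate step is the description of $\mathcal{C}$ in the first paragraph, i.e.\ checking that $\Upsilon^C_{[\ii_0]}$ is exactly the ``filled triangle'' strictly between $\rho$ and $\rho'$ with no gaps; once \eqref{eq: r_i for twisted D_n+1}, Proposition~\ref{Prop:cover by GQ} and \eqref{eq: cond uncovered} are invoked this is automatic, and everything afterward is arithmetic bookkeeping. Combined with Proposition~\ref{Prop:label for Cvertices}, this also yields that the label of $v$ is $\al+\be$.
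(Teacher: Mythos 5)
Your argument does prove the statement as printed, but it is organized differently from the paper's proof: the paper disposes of the proposition in one line by taking $\al$ to be the intersection of $\rho$ with the $N$-sectional path through $v$ and $\be$ the intersection of $\rho'$ with the $S$-sectional path through $v$, the existence of these intersections being exactly what \eqref{eq: cond uncovered} records, whereas you first pin down the coordinate set of $\Upsilon^C_{[\ii_0]}$ as the ``filled triangle'' strictly between $\rho$ and $\rho'$ and then invert the elementary bijection $(i_1,i_2)\mapsto(\overline{i_1+i_2},\,i_2-i_1)$. Your route is more self-contained and is the only one of the two that treats uniqueness explicitly; the price is re-deriving from \eqref{eq: r_i for twisted D_n+1} and Proposition \ref{Prop:cover by GQ} the coordinate description that the paper takes for granted.

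Be careful, however, with the sign in your last step. Solving $i_1+i_2=k$, $-i_1+i_2=m$ makes $i_1=(k-m)/2$ the index on $\rho$ and $i_2=(k+m)/2$ the index on $\rho'$; this is the mirror of the pair produced by the paper's construction (there the $\rho$-vertex sits at index $(k+m)/2$ and the $\rho'$-vertex at $(k-m)/2$), and it is that mirrored pair which is compatible with Proposition \ref{Prop:label for Cvertices}, in whose notation $i_1$ is the $\rho'$-index and $i_2$ the $\rho$-index. Consequently your closing sentence does not follow: for your pair, Proposition \ref{Prop:label for Cvertices} identifies $\al+\be$ as the label at $(\overline{k},-m)$, not at $v$. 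Concretely, in Example \ref{Ex:twistedD_{n+1}} take $v=\langle 2,3\rangle$ at $(\overline{3},1)$: your recipe gives $\al=\langle 4,5\rangle$ at $(\overline{1},1)$ and $\be=\langle 1,-5\rangle$ at $(\overline{2},-2)$, whose sum $\langle 1,4\rangle$ is the label at $(\overline{3},-1)$, while the label of $v$ is $\langle 3,5\rangle+\langle 2,-5\rangle$. In other words, the printed formula $(\overline{i_1+i_2},-i_1+i_2)$ has the roles of $i_1,i_2$ swapped relative to Proposition \ref{Prop:label for Cvertices}, and the paper's own proof in fact constructs the swapped pair; your argument proves the statement verbatim, but to obtain the labeling consequence you should take the $\rho$-vertex at index $(k+m)/2$ and the $\rho'$-vertex at $(k-m)/2$, i.e.\ read the position of $v$ as $i_1-i_2$.
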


\begin{proof}
By taking $\al$ as the intersection of $\rho$ and the $N$-sectional path containing $v$ and
$\be$ as the intersection of $\rho'$ and the $S$-sectional path containing $v$, our assertion follows.
\end{proof}

\begin{example}
Let $\ii_0= \prod_{k=0}^4 (2\ 1\ 3\ 5)^{k\vee}$. Then we can add labels to Example \ref{Ex:label 1+2} using Proposition \ref{Prop:label for Cvertices} as follows:
\[  \scalebox{0.8}{\xymatrix@C=2ex@R=1ex{
& -4 & -3& -2 & -1 & 0 & 1& 2& 3& 4& 5& 6& \\
1&&\srt{1}{-2}\ar@{->}[dr]  && \srt{2}{-5}\ar@{->}[dr]  &&  \srt{4}{5}\ar@{->}[dr]  && \srt{3}{-4}\ar@{->}[dr] && \srt{1}{-3} \ar@{->}[dr]\\
2&&& \srt{1}{-5}\ar@{->}[dr]\ar@{->}[ur]  &&\srt{2}{4}\ar@{->}[dr]\ar@{->}[ur]  && \srt{3}{5}\ar@{->}[dr]\ar@{->}[ur]  && \srt{1}{-4}\ar@{->}[dr]\ar@{->}[ur] && \srt{2}{-3} \\
3& &\srt{3}{-5}\ar@{->}[dr]\ar@{->}[ur]  &&\srt{1}{4}\ar@{->}[ddr]\ar@{->}[ur]  &&\srt{2}{3}\ar@{->}[dr]\ar@{->}[ur]  && \srt{1}{5}\ar@{->}[ddr]\ar@{->}[ur] && \srt{2}{-4}\ar@{->}[ur] \\
4&&& \srt{3}{4}\ar@{->}[ur]  &&&& \srt{1}{2}\ar@{->}[ur] \\
5&\srt{4}{-5}\ar@{->}[uur]  &&&& \srt{1}{3}\ar@{->}[uur]  &&&& \srt{2}{5}\ar@{->}[uur]
}}\]
We can see that $\left<2,3\right>=\left<2,-5\right>+\left<3,5\right>$, $\left<1,3\right>=\left<1,-5\right>+\left<3,5\right>$ and so on.
 \end{example}

Let us denote by
\[ \nu_{[\ii_0]}: {}_1\Gamma_Q\sqcup {}_2\Gamma_{Q^*}   \to  \Upsilon_{[\ii_0]},\]
the map between the sets of vertices.
Recall Remark \ref{rem: reflection w.r.t x-axis} for the isomorphism of quivers ${}_1\Gamma_Q\simeq{}_2\Gamma_{Q^*}\simeq\Gamma_Q$.

\begin{proposition}
For a twisted adapted class  $[\ii_0]$, we have
\[\nu_{[\ii'_0]}^{-1}(\left< i,j\right>)=\sum_{k=i}^{j-1} \alpha_k=[i,j-1]\]
for $1\leq i<j\leq n.$
\end{proposition}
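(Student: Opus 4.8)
The plan is to reduce to the subquiver $\Upsilon^C_{[\ii_0]}$ of the uncovered vertices and then extract the answer from Proposition \ref{Prop:label for Cvertices}. First I would note that, for $1\le i<j\le n$, the root $\langle i,j\rangle=\ve_i+\ve_j$ is exactly a positive root with $\overline{\mathsf{m}}(\langle i,j\rangle)=2$, so by \eqref{eq: cond uncovered} the vertices carrying these $\binom n2$ labels are precisely the vertices of $\Upsilon^C_{[\ii_0]}$. On the other hand, Corollary \ref{Cor:label come from 1GaQ}, Corollary \ref{Cor:label come from 2GaQ}, Corollary \ref{Cor:label 2} and Proposition \ref{Prop:cover by GQ} determine $\nu_{[\ii_0]}^{-1}$ on $\Upsilon^L_{[\ii_0]}\sqcup\Upsilon^R_{[\ii_0]}$ (which contains $\rho$ and $\rho'$): the $A_n$-labels appearing there are $[a,n]$ for $1\le a\le n$, each occurring once on $\rho$ and once on $\rho'$ by \eqref{eq: simul}, together with $[a,b]$ for $1\le a\le b\le n-1$, each occurring once. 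Since every positive root of $A_n$ labels exactly two vertices of ${}_1\Gamma_Q\sqcup{}_2\Gamma_{Q^*}$, it follows that $\nu_{[\ii_0]}^{-1}$ restricts to a bijection from $\Upsilon^C_{[\ii_0]}$ onto the set of vertices labeled by the roots $[a,b]$ with $1\le a\le b\le n-1$. Thus the assertion reduces to showing that this bijection is $\langle i,j\rangle\mapsto[i,j-1]$.

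To pin it down I would use Proposition \ref{Prop:label for Cvertices}. For $1\le i<j\le n$, let $v$ be the vertex of $\Upsilon^C_{[\ii_0]}$ labeled $\langle i,j\rangle$; then, by that proposition and the uniqueness of the decomposition of $v$ as $\al+\be$ with $\al\in\rho$ and $\be\in\rho'$, we have $\langle i,j\rangle=\be_1+\be_2$, where $\be_1\in\rho'$, $\be_2\in\rho$ have folded coordinates $(\overline{i_1},-i_1)$, $(\overline{i_2},i_2)$ and $v$ has folded coordinate $(\overline{i_1+i_2},-i_1+i_2)$. Because $\ve_i+\ve_j=(\ve_a-\ve_{n+1})+(\ve_b+\ve_{n+1})$ with $\{a,b\}=\{i,j\}$ is the only way to write $\ve_i+\ve_j$ as a sum of two elements of $\Phi^+|_{n+1\rangle}$, the set $\{\be_1,\be_2\}$ equals $\{\langle a,-n-1\rangle,\langle b,n+1\rangle\}$ for some labeling $\{a,b\}=\{i,j\}$. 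Moreover, by \eqref{eq: simul} and Remark \ref{rem: reflection w.r.t x-axis} the paths $\rho$ and $\rho'$ correspond under $\nu_{[\ii_0]}$ to the maximal $S$-sectional path of $\Gamma_Q$ of length $n-1$, whose vertices bear the labels $[1,n],\dots,[n,n]$, and by Corollary \ref{Cor:label come from 1GaQ} and Corollary \ref{Cor:4.25}, $\nu_{[\ii_0]}^{-1}$ sends $\langle a,-n-1\rangle$ and $\langle b,n+1\rangle$ to the vertices labeled $[a,n]$ and $[b,n]$ on that path (one in ${}_1\Gamma_Q$, one in ${}_2\Gamma_{Q^*}$).

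It then remains to propagate labels from $\rho$ to $v$. Say $v\in{}_1\Gamma_Q$, the case $v\in{}_2\Gamma_{Q^*}$ being symmetric via Remark \ref{rem: reflection w.r.t x-axis}. Since $\rho\subset\Upsilon^R_{[\ii_0]}\subset{}_1\Gamma_Q$, the $N$-sectional path in $\Upsilon_{[\ii_0]}$ joining $v$ to $\be_2$ lies entirely in ${}_1\Gamma_Q$, so $\nu_{[\ii_0]}^{-1}$ carries it to an $N$-sectional path of $\Gamma_Q$; hence by Proposition \ref{pro: section shares} the roots $\nu_{[\ii_0]}^{-1}(v)$ and $\nu_{[\ii_0]}^{-1}(\be_2)$ share their first component. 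A direct computation with the folded coordinates $(\overline{i_1},-i_1)$, $(\overline{i_2},i_2)$, $(\overline{i_1+i_2},-i_1+i_2)$ --- together with the length statement of Theorem \ref{thm: labeling GammaQ} and the additive property \eqref{eq: addtive property} --- then shows that $\be_2$ has first index $i$, so that $\nu_{[\ii_0]}^{-1}(\be_2)=[i,n]$ and $\nu_{[\ii_0]}^{-1}(v)$ has first component $i$; it also shows that $\nu_{[\ii_0]}^{-1}(v)$ lies at distance $n-j+1$ from $[i,n]$ along the maximal $N$-sectional path with first component $i$, which by Theorem \ref{thm: labeling GammaQ} forces $\nu_{[\ii_0]}^{-1}(v)=[i,j-1]$. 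I expect the main obstacle to be precisely this coordinate bookkeeping, done uniformly over all twisted adapted classes $[\ii_0]$ and across the two glued copies ${}_1\Gamma_Q$, ${}_2\Gamma_{Q^*}$ of $\Gamma_Q$ together with the reflection of Remark \ref{rem: reflection w.r.t x-axis}; to keep it uniform it is probably cleanest to verify the identity first for the distinguished class $[\ii_0]=\big[\prod_{k=0}^{n}(1\,2\,\cdots\,n)^{k\vee}\big]$ from the explicit root list in Remark \ref{Rem:i_0}, and then propagate it along reflection functors, checking compatibility of both sides with Algorithm \ref{alg: Ref Q re} and Algorithm \ref{alg: Ref Q}.
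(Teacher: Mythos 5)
Your overall skeleton is the paper's: write $\langle i,j\rangle$ as the sum of the two roots of $\Phi^+|_{n+1\rangle}$ sitting on $\rho'$ and $\rho$ (Proposition \ref{Prop:label for Cvertices} and Corollary \ref{Cor:4.25}), transfer to $\Gamma_Q$ via the known labelings of $\Upsilon^L_{[\ii_0]},\Upsilon^R_{[\ii_0]}$, and then identify the $A_n$-label at the position of $\langle i,j\rangle$. But the decisive last step is not valid as you state it. You claim that $\nu^{-1}_{[\ii_0]}(v)$ ``lies at distance $n-j+1$ from $[i,n]$ along the maximal $N$-sectional path with first component $i$, which by Theorem \ref{thm: labeling GammaQ} forces $\nu^{-1}_{[\ii_0]}(v)=[i,j-1]$.'' Theorem \ref{thm: labeling GammaQ} only says that this path has length $n-i$ and that its vertices share the first component $i$; it does \emph{not} say the second components are monotone along the path, and in general they are not: in the quiver $\Gamma_Q$ displayed in \eqref{eq:jj_1} (type $A_4$, $Q:1\to 2\leftarrow 3\leftarrow 4$) the maximal $N$-sectional path with first component $1$ runs through $[1],\,[1,4],\,[1,3],\,[1,2]$ in that order, so distance from $[1,4]$ does not determine the label. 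Moreover the distance itself is wrong: in your notation the vertex $v$ at $(\overline{i_1+i_2},i_2-i_1)$ is $i_1$ steps from $\be_2$ at $(\overline{i_2},i_2)$, and $i_1$ depends on the class, not only on $(i,j)$; e.g.\ for the class of Example \ref{Ex:label come from 1GaQ} the vertex $\langle 1,2\rangle$ is at distance $1$ from the $\rho$-vertex with $A_n$-label $[1,4]$, whereas your formula predicts $n-j+1=3$. Two auxiliary assertions are also left unproved: that the $N$-segment from $v$ to $\rho$ stays inside ${}_1\Gamma_Q$ (its interior lies in $\Upsilon^C_{[\ii_0]}$, which is split between the two copies), and that the $\rho$-summand $\be_2$ carries first index $i$ rather than $j$ when $v\in{}_1\Gamma_Q$; the latter is essentially equivalent to the statement being proved, so it cannot simply be asserted. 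The closing suggestion to check the distinguished class of Remark \ref{Rem:i_0} and propagate by reflection functors is only a sketch and is not carried out.

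The paper closes the argument differently, and this is the missing ingredient: it locates \emph{both} $[i,n]$ and $[j,n]$ inside the same copy $\Gamma_Q$, at the diagonal coordinates $(i_1,i_1)$ and $(n+1-i_2,n+1-i_2)$ coming from $\langle i,-n-1\rangle$ and $\langle j,-n-1\rangle$ (via Corollary \ref{Cor:4.25}), and then uses the additive property of $\Gamma_Q$ to conclude that the root $\al$ at the coordinate $(i_1+i_2,i_1-i_2)$ of $\langle i,j\rangle$ satisfies $\al+[j,n]=[i,n]$, i.e.\ $\al=[i,j-1]$. In other words, the second component $j-1$ is pinned down by the position of $[j,n]$ together with additivity, not by a distance count from $[i,n]$ along a sectional path. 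Your sharing-of-first-component observation (Proposition \ref{pro: section shares}) can be kept, but to repair the proof you must replace the distance argument by this additive relation (or by an explicit $S_{\jj}$-type computation as in the proof of Proposition \ref{Prop:label for Cvertices}), and justify the copy-membership claims mentioned above.
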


\begin{proof}
Suppose the twisted Coxeter element of $[\ii_0]$ has $n$ and $\nu_{[\ii'_0]}^{-1}(\left< i, j\right>)\in {}_1\Gamma_Q.$ Note that
\begin{enumerate}
\item $\left< i,j\right>=\left<i, -n-1\right>+\left< j, n+1\right>$,
\item $\nu_{[\ii'_0]}^{-1}(\left< i, -n-1\right>)\in {}_1\Gamma_Q,$
\item  $\nu_{[\ii'_0]}^{-1}(\left< j, n+1\right>)\in {}_2\Gamma_{Q^*}.$
\end{enumerate}
If we denote the coordinate of $\left< i, -n-1\right>$ and $\left< j, n+1\right>$ in $\widehat{\Upsilon}_{[\ii_0]}$ by $( \overline{i_1}, i_1)$ and
$(\overline{i_2} ,-i_2)$ then the coordinate of $\left<i, j\right>$ and $\left<j, -n-1\right>$ are $(\overline{i_1+i_2}, i_1-i_2)$ and $(\overline{n+1-i_2}, n+1-i_2).$
Then, by taking proper height function of type $A_n$,  we have
\begin{enumerate}[(a)]
\item coordinates of $[i,n]$ and $[j,n]$ in $\Gamma_Q$ are $(i_1, i_1)$ and $(n+1-i_2, n+1-i_2)$,
\item the root $\alpha$ with the coordinate $(i_1+i_2, i_1-i_2)$ in $\Gamma_Q$ satisfies that $\alpha+[j,n]=[i,n],$ i.e., $\alpha=[i,j-1].$
\end{enumerate}
Here (b) follows by the additive property of AR-quiver. For the remaining cases. we can apply the similar arguments.
\end{proof}

By summing up results in this section, we get the following algorithm for labeling twisted and  folded AR-quiver of type $D_{n+1}$
by using the labels of $\Gamma_Q$ of type $A_n$.

\begin{algorithm} \label{Alg:label}
Let $Q$ be the Dynkin quiver of type $A_n$ such that $\mathfrak{p}^{D_{n+1}}_{A_n}([\ii_0])= [Q]$
\begin{enumerate}
\item[{\rm (i)}] Let us consider take ${}_1\Gamma_Q$ as $\Gamma_Q$ with the labeling $\Phi^+_{A_n}$ and
 take ${}_2\Gamma_{Q^*}$ by turning ${}_1\Gamma_Q$ upside down.
\item[{\rm (ii)}] Put ${}_2\Gamma_{Q^*}$ at the left of ${}_1\Gamma_Q$ in a canonical way.
\item[{\rm (iii)}] Take $\rho$  the unique $S$-sectional path of length $n-1$ in ${}_1\Gamma_Q$ and take $\rho'$  the unique $N$-sectional path of length $n-1$  in ${}_2\Gamma_{Q^*}$.
Then their labels are the same as $\{ [i,n] \ | \ 1 \le i \le n\}$
\item[{\rm (iv)}] Change all labels right to $\rho$ or left to $\rho'$ from $[a,b]$ to $\langle a,-b-1 \rangle$.
\item[{\rm (v)}] Change the duplicated labels $\{ [i,n] \ | \ 1 \le i \le n\}$ in $\rho \sqcup \rho'$ as follows:
\begin{align*}
  & {}_1\Gamma_Q  \ni [i,n] \longmapsto \begin{cases}
  \langle i, -n-1 \rangle   & \text{ if $n$ appears in $\phi_{[\ii_0]}$}, \\
  \langle i, n+1 \rangle    & \text{ if $n+1$ appears in $\phi_{[\ii_0]}$},
  \end{cases} \\
  & {}_2\Gamma_{Q^*} \ni [i,n] \longmapsto \begin{cases}
  \langle i, -n-1 \rangle   & \text{ if $n+1$ appears in $\phi_{[\ii_0]}$}, \\
  \langle i, n+1 \rangle    & \text{ if $n$ appears in $\phi_{[\ii_0]}$}.
  \end{cases} \\
\end{align*}
\item[{\rm (vi)}] Change all labels surrounded by $\rho$ and $\rho'$ from $[a,b]$ to $\langle a,b+1 \rangle$.
\end{enumerate}
The resulting quiver is the folded AR-quiver $\widehat{\Upsilon}_{[\ii_0]}.$
\end{algorithm}

\begin{remark} \label{rem:LR path}
From the above algorithm, one can notice that
\begin{itemize}
\item[{\rm (i)}] for any $\be \in \Upsilon^L_{[\ii_0]}$ and $\al \in \Upsilon^R_{[\ii_0]}$,
there exists a path from $\be$ to $\al$ inside $\Upsilon_{[\ii_0]}$,
\item[{\rm (ii)}] there exists a path from $\be$ to $\al$ in $\Gamma_Q$ if and only if there exists a path from $\be'$ to $\al'$ in  ${}_1\Gamma_Q$
(resp. ${}_2\Gamma_{Q^*}$), where $\be'$ and $\al'$ correspond to labels $\be$ and $\al$ obtained from the algorithm.
\end{itemize}
\end{remark}

From Algorithm \ref{Alg:label}, we have also an interesting phenomena as follows:
\begin{corollary} \label{lem: folded mul free iff}
Assume that we have $\ga \in \PR \setminus \Pi$. Then $\ga$ is folded multiplicity non-free if and only if $\ga$ is contained in $\Upsilon^C_{[\ii_0]}$.
\end{corollary}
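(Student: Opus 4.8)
The plan is to compare the labels that Algorithm~\ref{Alg:label} attaches to the three pieces $\Upsilon^L_{[\ii_0]}$, $\Upsilon^C_{[\ii_0]}$, $\Upsilon^R_{[\ii_0]}$ against a direct computation of $\overline{\mathsf{m}}$ on all of $\Phi^+_{D_{n+1}}$. So first I would record which positive roots are folded multiplicity non-free. By \eqref{eq: C_n} the orbits of $\vee$ on $I$ are $\{1\},\dots,\{n-1\},\{n,n+1\}$, so $\overline{\mathsf{m}}(\gamma)=\max\bigl(\mathsf{m}(\gamma),\,m_n+m_{n+1}\bigr)$ for $\gamma=\sum_i m_i\al_i$. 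Plugging in the explicit expansions of $\langle a_1,-a_2\rangle$, $\langle b_1,b_2\rangle$, $\langle c,n+1\rangle$ from the beginning of Section~\ref{Sec:Twisted_adapted_CP}: for $\langle a_1,-a_2\rangle$ all coefficients lie in $\{0,1\}$ with $m_n+m_{n+1}\le 1$; for $\langle c,n+1\rangle$ all coefficients lie in $\{0,1\}$ with $m_n+m_{n+1}=1$; but for $\langle b_1,b_2\rangle$ one has $m_n=m_{n+1}=1$, so $m_n+m_{n+1}=2$. Hence $\{\gamma\in\Phi^+_{D_{n+1}}\mid \overline{\mathsf{m}}(\gamma)=2\}=\{\langle b_1,b_2\rangle\mid 1\le b_1<b_2\le n\}$, and this set is disjoint from $\Pi$ (the only simple root of the $\langle\cdot,\cdot\rangle$-family without a minus sign is $\al_{n+1}=\langle n,n+1\rangle$, which lies in the family $\langle c,n+1\rangle$); thus the hypothesis $\gamma\in\PR\setminus\Pi$ is automatic and harmless.

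Next I would read the labels off Algorithm~\ref{Alg:label}. Step~(vi) says every vertex of $\Upsilon^C_{[\ii_0]}$ carries a label $\langle a,b+1\rangle$ obtained from an $A_n$-root $[a,b]$ surrounded by $\rho$ and $\rho'$; since $[a,b]\in\Phi^+_{A_n}$ gives $a\le b$, and since such a vertex does not lie on $\rho$ or $\rho'$, we get $1\le a<b+1\le n$ (see the next paragraph). So every label of $\Upsilon^C_{[\ii_0]}$ is of the form $\langle b_1,b_2\rangle$ with $1\le b_1<b_2\le n$, hence has $\overline{\mathsf{m}}=2$ by the first paragraph. Dually, steps~(iv) and (v) show that every vertex of $\Upsilon^L_{[\ii_0]}\sqcup\Upsilon^R_{[\ii_0]}$ carries a label in $\{\langle a_1,-a_2\rangle\}\cup\Phi^+ |_{n+1\rangle}$ (notation of \eqref{eq: Phi n+1}), all of which have $\overline{\mathsf{m}}=1$. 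Combined with $\Upsilon_{[\ii_0]}=\Upsilon^L_{[\ii_0]}\sqcup\Upsilon^C_{[\ii_0]}\sqcup\Upsilon^R_{[\ii_0]}$ from \eqref{eq: cond uncovered}, this gives $\overline{\mathsf{m}}(\gamma)=2$ if and only if $\gamma\in\Upsilon^C_{[\ii_0]}$, which is the claim. (Alternatively, one can skip the analysis of steps~(iv)--(v): the inclusion $\Upsilon^C_{[\ii_0]}\subseteq\{\gamma:\overline{\mathsf{m}}(\gamma)=2\}$ from step~(vi), together with the already established equality $|\Upsilon^C_{[\ii_0]}|=|\{\gamma\in\Phi^+:\overline{\mathsf{m}}(\gamma)=2\}|=n(n-1)/2$ displayed just before Proposition~\ref{Prop:label for Cvertices}, and the fact that the labels of a combinatorial AR-quiver are pairwise distinct, forces the two finite sets to coincide.)

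The one point needing care — and the main obstacle — is justifying that the $A_n$-labels $[a,b]$ of the vertices surrounded by $\rho,\rho'$ satisfy $b\le n-1$. This is exactly what \eqref{eq: simul} gives (via Theorem~\ref{thm: labeling GammaQ} and Proposition~\ref{pro: section shares}): the roots of $\Gamma_Q$ with second component $n$ form the unique maximal $S$-sectional path of length $n-1$, namely $\rho$ (and its image $\rho'$ under the identifications of Remarks~\ref{rem: jj_0} and \ref{rem: jj_0 prime}), so no vertex strictly between $\rho$ and $\rho'$ can have second component $n$; hence its label has $b\le n-1$ and becomes a genuine $\langle b_1,b_2\rangle$-root with $b_2\le n$ under step~(vi). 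Everything else is the bookkeeping above.
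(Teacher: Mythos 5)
Your proof is correct and follows essentially the same route as the paper, which states this corollary as a direct consequence of Algorithm \ref{Alg:label}: you simply make explicit the two ingredients the paper leaves implicit, namely the computation that $\overline{\mathsf{m}}(\gamma)=2$ exactly for the roots $\langle b_1,b_2\rangle$ with $b_2\le n$, and the fact (via Theorem \ref{thm: labeling GammaQ} and \eqref{eq: simul}) that step (vi) of the algorithm assigns precisely such labels to $\Upsilon^C_{[\ii_0]}$ while steps (iv)--(v) assign folded-multiplicity-free labels to $\Upsilon^L_{[\ii_0]}\sqcup\Upsilon^R_{[\ii_0]}$.
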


\begin{corollary}  \label{Cor:label_dist}
Let  $(\bar{i},k)$ be the coordinate of $\left< a,b \right>$ in $\widehat{\Upsilon}_{[\ii_0]}$. Then the coordinate $(\bar{j}, k')$ of  $\left< a,-b \right>$ satisfies
\[ j=n+1-i \text{ and } |k-k'|=n+1.\]
\end{corollary}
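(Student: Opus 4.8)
The plan is to bootstrap from Corollary~\ref{Cor:4.25} and the labelling Algorithm~\ref{Alg:label}. If $b=n+1$ there is nothing to prove: the claim is then precisely Corollary~\ref{Cor:4.25}, since in each of its two alternatives the folded residues of $\langle a,n+1\rangle$ and $\langle a,-n-1\rangle$ are $\overline{i}$ and $\overline{n+1-i}$ while the two second coordinates differ by $n+1$. So assume $2\le b\le n$, hence $1\le a<b\le n$, and set $\ga\seteq[a,b-1]\in\PR_{A_n}$, whose second component $b-1$ is $<n$. By Algorithm~\ref{Alg:label}, step~(vi) together with Corollary~\ref{lem: folded mul free iff} (which puts $\langle a,b\rangle\in\Upsilon^C_{[\ii_0]}$, as $\overline{\mathsf{m}}(\langle a,b\rangle)=2$), the vertex labelled $\langle a,b\rangle$ is the image of a copy of $\ga$ lying between $\rho$ and $\rho'$; by step~(iv), $\langle a,-b\rangle=\langle a,-(b-1)-1\rangle$ is the image of the other copy of $\ga$, lying strictly right of $\rho$ in ${}_1\Gamma_Q$ or strictly left of $\rho'$ in ${}_2\Gamma_{Q^*}$. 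Since $b-1<n$, neither copy of $\ga$ lies on $\rho$ or $\rho'$, so ${}_1\Gamma_Q$ and ${}_2\Gamma_{Q^*}$ carry exactly one copy of $\ga$ each, producing $\langle a,b\rangle$ and $\langle a,-b\rangle$ respectively.

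For the assertion $j=n+1-i$ I would use the isomorphisms ${}_1\Gamma_Q\simeq{}_2\Gamma_{Q^*}\simeq\Gamma_Q$ of Remark~\ref{rem: reflection w.r.t x-axis}: composed with the residue relabelling of Corollary~\ref{Cor:rev Ga rev Q} they match an $i$-th residue vertex of one half with an $(n+1-i)$-th residue vertex of the other, i.e.\ they act by $\overline{i}\mapsto\overline{n+1-i}$ on folded residues. Because the two copies of $\ga$ lie in different halves, $\overline{j}=\overline{n+1-i}$.

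The heart of the matter is $|k-k'|=n+1$, and here a purely formal symmetry of $\widehat{\Upsilon}_{[\ii_0]}$ will not suffice: one must use the rigid coordinate normalization of Convention~\ref{Conv:xi_0}, under which $\rho$ occupies $\{(\overline{r},r):1\le r\le n\}$ and $\rho'$ occupies $\{(\overline{r},-r):1\le r\le n\}$. I would write $\langle a,b\rangle=\al+\be$ with $\al\in\rho$ at $(\overline{s},s)$ and $\be\in\rho'$ at $(\overline{t},-t)$, $s+t\le n$, where $\{\al,\be\}$ is the unique one of the two ``cross pairs'' $\{\langle a,n+1\rangle,\langle b,-n-1\rangle\}$, $\{\langle b,n+1\rangle,\langle a,-n-1\rangle\}$ for which the index sum is $\le n$ (the two index sums add up to $2(n+1)$, so exactly one is $\le n$); Proposition~\ref{Prop:label for Cvertices} then gives $(i,k)=(s+t,\,s-t)$. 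Corollary~\ref{Cor:4.25} places the conjugates of $\al$ and $\be$ (the ``other'' $a$- and $b$-roots) on the complementary sectional path; and $\langle a,-b\rangle$ is then read off from the $A_n$ relation $[a,n]=[a,b-1]+[b,n]$ transported through Algorithm~\ref{Alg:label} — so that, inside the half containing $\langle a,-b\rangle$, this root and the two $\rho$- (or $\rho'$-)roots $\langle a,\pm(n+1)\rangle,\langle b,\pm(n+1)\rangle$ lie on a common $N$-sectional path / $\rho$-configuration whose coordinates are rigid by Theorem~\ref{thm: labeling GammaQ}. A short computation then forces $\langle a,-b\rangle$ to the coordinate $(\overline{n+1-(s+t)},\,(s-t)\pm(n+1))$, the sign being $+$ if $\langle a,-b\rangle\in\Upsilon^R_{[\ii_0]}$ and $-$ if $\langle a,-b\rangle\in\Upsilon^L_{[\ii_0]}$; this yields $k'=(s-t)\pm(n+1)$ and $j=n+1-(s+t)=n+1-i$.

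The main obstacle is exactly this last computation, because of several overlapping case splits: whether $n$ or $n+1$ occurs in $\phi_{[\ii_0]}$ (this fixes which of $\langle a,\pm(n+1)\rangle,\langle b,\pm(n+1)\rangle$ sits on $\rho$), which summand of $\langle a,b\rangle$ is the ``$a$-root'', and whether $\langle a,-b\rangle$ lands in $\Upsilon^R_{[\ii_0]}$ or $\Upsilon^L_{[\ii_0]}$. Each of these, however, reduces through Algorithm~\ref{Alg:label} to the same bookkeeping inside the type-$A_n$ AR-quiver $\Gamma_Q$, governed by where $\ga=[a,b-1]$ sits relative to the maximal $S$-sectional path $\rho$, which is controlled by Theorem~\ref{thm: labeling GammaQ} and Convention~\ref{Conv:xi_0}. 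Concretely this is essentially the coordinate computation already carried out in the proof of the proposition that identifies $\nu_{[\ii_0]}^{-1}(\langle i,j\rangle)=[i,j-1]$ (Section~\ref{Section:twisted_AR_D_label}), and I would present the argument by reducing to that statement together with Corollary~\ref{Cor:4.25}.
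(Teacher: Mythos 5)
You take essentially the same two ingredients as the paper --- Corollary \ref{Cor:4.25} for the roots $\lan a,\pm(n+1)\ran$, and the decomposition $\widehat{\Upsilon}_{[\ii_0]}={}_1\Gamma_Q\sqcup{}_2\Gamma_{Q^*}$ of Algorithm \ref{Alg:label}, under which $\lan a,b\ran$ and $\lan a,-b\ran$ (for $b\le n$) are the two copies of the single type-$A_n$ root $[a,b-1]$, one in each half --- but you finish differently, and only partially. The paper's proof extracts both assertions at once from the rigid ``same-$A_n$-label'' correspondence ${}_1\Gamma_Q\to{}_2\Gamma_{Q^*}$: since ${}_2\Gamma_{Q^*}$ is the upside-down copy of ${}_1\Gamma_Q$, this correspondence is a residue flip composed with a horizontal translation, and matching the labels of $\rho$ (at $(\ov{r},r)$ by Convention \ref{Conv:xi_0}) with those of $\rho'$ (at $(\ov{r},-r)$), via step (v) of Algorithm \ref{Alg:label} and Corollary \ref{Cor:4.25}, forces it to be $(\ov{r},c)\mapsto(\ov{n+1-r},\,c-(n+1))$; applied to the two copies of $[a,b-1]$ this gives $j=n+1-i$ and $|k-k'|=n+1$ simultaneously, with no case analysis. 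You use the flip only for the residue claim and then set up a separate column computation, anchoring $\lan a,b\ran$ via Proposition \ref{Prop:label for Cvertices} and the unique cross pair with index sum $\le n$, and $\lan a,-b\ran$ via $[a,n]=[a,b-1]+[b,n]$ inside its half. That route does go through --- your target coordinate $(\ov{n+1-(s+t)},\,(s-t)\pm(n+1))$ is correct in all the cases --- but as written the decisive step is only announced (``a short computation then forces\dots''), and the deferral to the proof of $\nu_{[\ii_0]}^{-1}(\lan i,j\ran)=[i,j-1]$ does not quite cover it: that proof locates the \emph{central} copy of $[a,b-1]$, whereas you still must place the exterior copy relative to $[a,n]$ and $[b,n]$ on its side of $\rho$ (or $\rho'$), e.g.\ by checking that the labels along the maximal sectional path through $[a,n]$ beyond $\rho$ are $[a,n-1],[a,n-2],\dots$ in order (Theorem \ref{thm: labeling GammaQ} plus Corollary \ref{Cor:label come from 1GaQ}). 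So your write-up has an unexecuted, though fillable, step; the quickest repair is to invoke the flip-plus-shift-by-$(n+1)$ correspondence above, which is exactly the paper's short argument and also subsumes your residue computation.
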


\begin{proof}
Corollary \ref{Cor:4.25} shows it is true when $m=0$. Now by the fact that  $\widehat{\Upsilon}_{[\ii_0]}= {}_1 \Gamma_Q \sqcup  {}_2 \Gamma_{Q^*}$ and Corollary \ref{Cor:4.25},  we can see that the label of  $(\bar{i}, j)\in {}_1 \Gamma_Q $   is $\left<k_1, k_2\right>$ then  the label of  $(\overline{n+1-i}, n+1+j)\in {}_2 \Gamma_Q $   is $\left<k_1, -k_2\right>.$
\end{proof}

\begin{example} \label{Ex:twistedD_{n+1}}
Consider the twisted adapted class $[\ii_0]$ with the twisted Coxeter element $s_2 s_1 s_3 s_5$ of type $D_5.$ Then $\mathfrak{p}_{A_4}^{D_5}([\ii_0])=[Q]$, where
\[ Q = {\xymatrix@R=3ex{ \circ
\ar@{->}[r]_<{ \ 1} &  \circ
\ar@{<-}[r]_<{ \ 2}  &  \circ
\ar@{<-}[r]_<{ \ 3}
& \circ \ar@{-}[l]^<{\ \ \ \ \ \ 4} }}.\]

By {\rm (i)} in Algorithm \ref{Alg:label}, we can take ${}_2\Gamma_{Q^*}$ and ${}_1\Gamma_Q$ as follows:
$$
\scalebox{0.8}{\xymatrix@C=2ex@R=1ex{
& [1]\ar@{->}[dr]  && [2,4]\ar@{->}[dr]\\
&& [1,4]\ar@{->}[dr]\ar@{->}[ur]  && [2,3]\ar@{->}[dr]  \\
& [3,4]\ar@{->}[dr]\ar@{->}[ur] && [1,3]\ar@{->}[dr]\ar@{->}[ur] && [2] \\
[4] \ar@{->}[ur] && [3]\ar@{->}[ur] && [1,2] \ar@{->}[ur]
}}
\qquad
\scalebox{0.8}{\xymatrix@C=2ex@R=1ex{
[4]\ar@{->}[dr]  && [3]\ar@{->}[dr] && [1,2] \ar@{->}[dr]\\
& [3,4]\ar@{->}[dr]\ar@{->}[ur]  && [1,3]\ar@{->}[dr]\ar@{->}[ur] &&[2] \\
&& [1,4]\ar@{->}[dr]\ar@{->}[ur] && [2,3]\ar@{->}[ur] \\
& [1] \ar@{->}[ur] && [2,4]\ar@{->}[ur]
}}
$$
By {\rm (ii)}, {\rm (iii)} and {\rm (iv)} in Algorithm \ref{Alg:label}, we have
$$
\scalebox{0.8}{\xymatrix@C=2ex@R=1ex{
& \langle 1,-2 \rangle\ar@{->}[dr]  && [2,4]\ar@{->}[dr]&& [4] \ar@{->}[dr]&& \langle 3,-4 \rangle \ar@{->}[dr]&& \langle 1,-3 \rangle \ar@{->}[dr]\\
&& [1,4]\ar@{->}[dr]\ar@{->}[ur]  && [2,3]\ar@{->}[dr]\ar@{->}[ur] && [3,4] \ar@{->}[ur]\ar@{->}[dr]&& \langle 1,-4 \rangle \ar@{->}[dr]\ar@{->}[ur]&& \langle 2,-3 \rangle \\
& [3,4]\ar@{->}[dr]\ar@{->}[ur] && [1,3]\ar@{->}[dr]\ar@{->}[ur] && [2] \ar@{->}[dr]\ar@{->}[ur] && [1,4] \ar@{->}[dr]\ar@{->}[ur] && \langle 2,-4 \rangle \ar@{->}[ur] \\
[4] \ar@{->}[ur] && [3]\ar@{->}[ur] && [1,2] \ar@{->}[ur] && [1] \ar@{->}[ur]  && [2,4] \ar@{->}[ur]
}}
$$

By {\rm (v)} in Algorithm \ref{Alg:label}, we have
$$
\scalebox{0.8}{\xymatrix@C=2ex@R=1ex{
& \langle 1,-2 \rangle\ar@{->}[dr]  && \langle 2,-5 \rangle \ar@{->}[dr]&& \langle 4,5 \rangle  \ar@{->}[dr]&& \langle 3,-4 \rangle \ar@{->}[dr]&& \langle 1,-3 \rangle \ar@{->}[dr]\\
&& \langle 1,-5 \rangle \ar@{->}[dr]\ar@{->}[ur]  && [2,3]\ar@{->}[dr]\ar@{->}[ur] && \langle 3,5 \rangle  \ar@{->}[ur]\ar@{->}[dr]&& \langle 1,-4 \rangle \ar@{->}[dr]\ar@{->}[ur]&& \langle 2,-3 \rangle \\
& \langle 3,-5 \rangle \ar@{->}[dr]\ar@{->}[ur] && [1,3]\ar@{->}[dr]\ar@{->}[ur] && [2] \ar@{->}[dr]\ar@{->}[ur] && \langle 1,5 \rangle  \ar@{->}[dr]\ar@{->}[ur] && \langle 2,-4 \rangle \ar@{->}[ur] \\
\langle 4,-5 \rangle  \ar@{->}[ur] && [3]\ar@{->}[ur] && [1,2] \ar@{->}[ur] && [1] \ar@{->}[ur]  && \langle 2,5 \rangle  \ar@{->}[ur]
}}
$$

By {\rm (vi)} in Algorithm \ref{Alg:label}, we have
$$
\scalebox{0.8}{\xymatrix@C=2ex@R=1ex{
(\overline{i}/p) & -4& -3& -2& -1& 0& 1& 2& 3& 4& 5& 6\\
\overline{1}&& \langle 1,-2 \rangle\ar@{->}[dr]  && \langle 2,-5 \rangle \ar@{->}[dr]&& \langle 4,5 \rangle  \ar@{->}[dr]&& \langle 3,-4 \rangle \ar@{->}[dr]&& \langle 1,-3 \rangle \ar@{->}[dr]\\
\overline{2}&&& \langle 1,-5 \rangle \ar@{->}[dr]\ar@{->}[ur]  && \langle 2,4 \rangle\ar@{->}[dr]\ar@{->}[ur] && \langle 3,5 \rangle  \ar@{->}[ur]\ar@{->}[dr]&& \langle 1,-4 \rangle \ar@{->}[dr]\ar@{->}[ur]&& \langle 2,-3 \rangle \\
\overline{3}&& \langle 3,-5 \rangle \ar@{->}[dr]\ar@{->}[ur] && \langle 1,4 \rangle\ar@{->}[dr]\ar@{->}[ur] && \langle 2,3 \rangle \ar@{->}[dr]\ar@{->}[ur] && \langle 1,5 \rangle  \ar@{->}[dr]\ar@{->}[ur] && \langle 2,-4 \rangle \ar@{->}[ur] \\
\overline{4}&\langle 4,-5 \rangle  \ar@{->}[ur] && \langle 3,4 \rangle\ar@{->}[ur] && \langle 1,3 \rangle \ar@{->}[ur] && \langle 1,2 \rangle \ar@{->}[ur]  && \langle 2,5 \rangle  \ar@{->}[ur]
}}
$$

Finally, we get the twisted AR-quiver:
$$
\scalebox{0.8}{\xymatrix@C=2ex@R=1ex{
(i/p) & -4& -3& -2& -1& 0& 1& 2& 3& 4& 5& 6\\
1&& \langle 1,-2 \rangle\ar@{->}[dr]  && \langle 2,-5 \rangle \ar@{->}[dr]&& \langle 4,5 \rangle  \ar@{->}[dr]&& \langle 3,-4 \rangle \ar@{->}[dr]&& \langle 1,-3 \rangle \ar@{->}[dr]\\
2&&& \langle 1,-5 \rangle \ar@{->}[dr]\ar@{->}[ur]  && \langle 2,4 \rangle\ar@{->}[dr]\ar@{->}[ur] && \langle 3,5 \rangle  \ar@{->}[ur]\ar@{->}[dr]&& \langle 1,-4 \rangle \ar@{->}[dr]\ar@{->}[ur]&& \langle 2,-3 \rangle \\
3&& \langle 3,-5 \rangle \ar@{->}[dr]\ar@{->}[ur] && \langle 1,4 \rangle\ar@{->}[ddr]\ar@{->}[ur] && \langle 2,3 \rangle \ar@{->}[dr]\ar@{->}[ur] && \langle 1,5 \rangle  \ar@{->}[ddr]\ar@{->}[ur] && \langle 2,-4 \rangle \ar@{->}[ur] \\
4& && \langle 3,4 \rangle\ar@{->}[ur] &&&& \langle 1,2 \rangle \ar@{->}[ur]  \\
5& \langle 4,-5 \rangle  \ar@{->}[uur] &&&& \langle 1,3 \rangle  \ar@{->}[uur] &&&& \langle 2,5 \rangle  \ar@{->}[uur]
}}
$$
\end{example}

\begin{remark}
We have another algorithm to get labels of twisted AR-quivers without using the labels of $\Gamma_Q$ of type $A_n$.
See Algorithm \ref{Alg:labels_comb}.
\end{remark}

\vskip 3mm

\section{Labeling of a twisted AR-quiver using only its shape}\label{Sec:twisted_AR_D_label_shape}
 As in the last section, we fix the height function of a folded AR-quiver
$\widehat{\Upsilon}_{[\ii_0]}$ by letting $\left< i, \pm n+1 \right>$  have the coordinate $(\bar{j}, \pm j).$ Note that we can naturally define the notion of the
sectional path for twisted AR-quivers and folded AR-quivers of type $D_n$. Now we shall extend the notion of the
sectional path for our purpose:

\begin{definition} \label{def: N_m S_m ext} Let us fix $m\in \Z$.
\begin{enumerate}
\item An {\it $N^{{\rm ext}}_m$-sectional quiver}  is the subquiver of $\widehat{\Upsilon}_{[\ii_0]}$ consisting of the following set of vertices
$ \{\, (\overline{i}, -i+2m+2(n+1)m') \, |\,  0 \le i\le n, \, m'\in \Z  \}\cap \widehat{\Upsilon}_{[\ii_0]}.$
\item An {\it $S^{{\rm ext}}_m$-sectional quiver}  is the subquiver of $\widehat{\Upsilon}_{[\ii_0]}$ consisting of the following set of vertices
$ \{\, (\overline{i}, i+2m+2(n+1)m') \, |\, 0 \le i\le n, \, m'\in \Z \}\cap \widehat{\Upsilon}_{[\ii_0]}.$
\item An {\it $N_m$-sectional path} is the subquiver of $\widehat{\Upsilon}_{[\ii_0]}$ consisting of the following set of vertices
$ \{\, (\overline{i}, -i+2m) \, |\, 0 \le i\le n  \}\cap \widehat{\Upsilon}_{[\ii_0]}.$
\item An {\it $S_m$-sectional path} is the subquiver of $\widehat{\Upsilon}_{[\ii_0]}$ consisting of the following set of vertices
$\{\, (\overline{i}, i+2m-(2n+2)) \, |\, 0 \le i\le n  \}\cap \widehat{\Upsilon}_{[\ii_0]}.$
\item An {\it $m$-swing} is the union of the $N_m$-sectional path and the $S_m$-sectional path.
\item An {\it $m$-snake} is the union of the $N_m^{{\rm ext}}$-sectional path and the $S_m^{{\rm ext}}$-sectional path associated to the same $m$.
\end{enumerate}
\end{definition}

\begin{example} In Example \ref{Ex:twistedD_{n+1}}, we can observe properties of the notions in Definition \ref{def: N_m S_m ext}:
\begin{enumerate}
\item[{\rm (a)}] $N^{{\rm ext}}_{-1}$-sectional quiver is union of $N_{-1}$-sectional path and $N_{4}$-sectional path which are disconnected.
Here, $N_{-1}$-sectional path consists of $\{ \langle 1, -2 \rangle \}$ and $N_{4}$-sectional path consists of
$\{ \langle 2,-3 \rangle, \langle 2, -4 \rangle, \langle 2,5 \rangle  \}$.
On the other hand, the $N^{{\rm ext}}_{0}$-sectional quiver is connected and coincides with $\rho'$ consisting of
$\{ \langle 2,-5 \rangle, \langle 1,-5 \rangle, \langle 3,-5 \rangle, \langle 4,-5 \rangle \}$.
\item[{\rm (b)}] $S^{{\rm ext}}_{2}$-sectional quiver is union of $S_{2}$-sectional path and $S_{-3}$-sectional path which are disconnected.
Here, $S_{2}$-sectional path consists of $\{ \langle 1, -3 \rangle,\langle 2, -3 \rangle \}$ and
$S_{-3}$-sectional path consists of $\{ \langle 3,-5 \rangle, \langle 3, 4 \rangle \}$.
On the other hand, the $S^{{\rm ext}}_{-1}$-sectional quiver is connected and coincides with $S_{-1}$-sectional path consisting of
$\{ \langle 2,-5 \rangle, \langle 2,4 \rangle, \langle 2,3 \rangle, \langle 1,2 \rangle \}$.
\end{enumerate}
\end{example}

The sectional paths have the following propositions.
\begin{proposition} \label{Prop: properties of ext} \hfill
\begin{enumerate}
\item[{\rm (1)}] Each $N^{{\rm ext}}_m$-sectional $($resp. $S^{{\rm ext}}_m$-sectional$)$ quiver consists of $n$ vertices with distinct residues
$\bar{1}, \bar{2}, \cdots, \bar{n}$.
\item[{\rm (2)}] If $m_1\equiv m_2$ $($mod $n+1)$ then $N_{m_1}^{{\rm ext}}$-sectional quiver
$($resp. $S_{m_1}^{{\rm ext}}$-sectional quiver$)$ coincides with $N_{m_2}^{{\rm ext}}$-sectional quiver $($resp. $S_{m_2}^{{\rm ext}}$-sectional quiver$)$.
\item[{\rm (3)}]  If $m_1\not \equiv m_2$ $($mod $n+1)$ the $N_{m_1}^{{\rm ext}}$-sectional quiver and the $S_{m_2}^{{\rm ext}}$-sectional quiver have a
unique intersection.
\item[{\rm (4)}] For each $m$, one of the $N^{{\rm ext}}_m$-sectional quiver and $S^{{\rm ext}}_m$-sectional quiver is connected.
\item[{\rm (5)}] If $\left< k_1, k_2\right>$ is in the $N_m$-sectional $($resp. $S_m$-sectional$)$ path
then $\left< k_1, -k_2\right>$ is in the $S_m^{{\rm ext}}$-sectional $($resp. $N_m^{{\rm ext}}$-sectional$)$ quiver.
\end{enumerate}
\end{proposition}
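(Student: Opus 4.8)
The plan is to read off parts (1)--(3) and (5) from the arithmetic of the coordinate system on $\widehat{\Upsilon}_{[\ii_0]}$ together with Corollary~\ref{Cor:label_dist}, and to reserve the real work for (4). Throughout I use the normalization of Convention~\ref{Conv:xi_0}: by \eqref{eq: r_i for twisted D_n+1} the vertices of $\widehat{\Upsilon}_{[\ii_0]}$ in folded residue $\bar i$ are exactly those with second coordinate in the window $W_i:=\{\,\xi(\bar i)-2t\mid 0\le t\le n\,\}$, and $\xi(\bar i)\equiv i\pmod2$ (because $\xi(\bar1)$ is odd and $\xi$ is a height function of the linear $A_n$-diagram), so $W_i$ is a complete set of representatives modulo $2(n+1)$ among the integers of parity $i$. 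Also, by the arrow rule for $\widehat{\Upsilon}_{[\ii_0]}$, two vertices in residues $\bar i$ and $\overline{i+1}$ are joined by an edge precisely when their second coordinates differ by $1$.

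For (1): every integer of the form $-i+2m+2(n+1)m'$ has parity $i$, so for each $i\in\{1,\dots,n\}$ exactly one choice of $m'$ places it in $W_i$; hence the $N^{{\rm ext}}_m$-sectional quiver has exactly one vertex in each residue $\bar1,\dots,\bar n$, and likewise for $S^{{\rm ext}}_m$. Part (2) is immediate, since $m_1\equiv m_2\pmod{n+1}$ makes the two defining coordinate sets literally coincide. For (3): a common vertex $(\bar i,p)$ of $N^{{\rm ext}}_{m_1}$ and $S^{{\rm ext}}_{m_2}$ must satisfy $p\equiv -i+2m_1$ and $p\equiv i+2m_2\pmod{2(n+1)}$, forcing $i\equiv m_1-m_2\pmod{n+1}$; as $m_1\not\equiv m_2$ this pins down a unique $i\in\{1,\dots,n\}$, and in that residue the (unique) $N^{{\rm ext}}_{m_1}$- and $S^{{\rm ext}}_{m_2}$-vertices are two elements of $W_i$ congruent modulo $2(n+1)$, hence equal --- so there is exactly one common vertex. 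For (5): if $\langle k_1,k_2\rangle$ lies in the $N_m$-sectional path, its coordinate is $(\bar i,-i+2m)$; by Corollary~\ref{Cor:label_dist} the coordinate of $\langle k_1,-k_2\rangle$ is $(\overline{n+1-i},k')$ with $k'=-i+2m\pm(n+1)$, and since $(n+1-i)+2m+2(n+1)m'=-i+2m+(n+1)(2m'+1)$ takes both values $-i+2m\pm(n+1)$ for $m'=0,-1$, we conclude $\langle k_1,-k_2\rangle\in S^{{\rm ext}}_m$; the $S_m$-sectional case is symmetric.

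For (4) the plan is to prove the single implication ``$N^{{\rm ext}}_m$ disconnected $\Rightarrow$ $S^{{\rm ext}}_m$ connected'', whose contrapositive is the reverse statement, so that at least one of the two is always connected. The first step is to locate the jumps: reading $N^{{\rm ext}}_m$ residue by residue, consecutive vertices have second coordinates differing by $1$ (so the subquiver is connected) unless the selected element of $W_i$ is pushed out as $W_i$ slides by $\pm1$, and a direct check shows this can only happen at a position $i$ where the edge of $Q$ between $i$ and $i+1$ is oriented $i\to i+1$ and the $N^{{\rm ext}}_m$-vertex in residue $\bar i$ sits at the bottom $\xi(\bar i)-2n$ of $W_i$ --- in particular every $N$-jump is ``upward''. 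Symmetrically, an $S$-jump can only occur at an edge oriented $i+1\to i$ and only with the relevant vertex at the top $\xi(\bar i)$ of $W_i$. Suppose now $N^{{\rm ext}}_m$ jumps at some position $i_0$ (so $1\le i_0\le n-1$); comparing the two congruences in residue $\bar i_0$ shows that the $S^{{\rm ext}}_m$-vertex there is at $p=\xi(\bar i_0)-2(n-i_0)$, i.e.\ at ``gap'' $g_{i_0}:=2(n-i_0)$ below the top of $W_{i_0}$, which is strictly between $0$ and $2n$. The last step is to track $g_i$ along the diagram: it is unchanged across $i\to i+1$ edges, decreases by $2$ across $i+1\to i$ edges when moving to the right of $i_0$, and increases by $2$ across them when moving to the left; since $Q$ has at most $n-1-i_0$ backward edges to the right of $i_0$ and at most $i_0-1$ to the left, $g_i$ stays in $[2,\,2n-2]$ for all $i$, so the $S^{{\rm ext}}_m$-vertex never reaches the top of any $W_i$, whence $S^{{\rm ext}}_m$ has no jump and is connected.

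The main obstacle is (4), concretely the last bookkeeping step: one must check that the backward edges of $Q$ on either side of $i_0$ are too few to drive the gap $g_{i_0}=2(n-i_0)$ down to $0$ (to the right) or up to $2n$ (to the left) before a boundary edge of the $A_n$-diagram is reached --- this is precisely where the exact value of $g_{i_0}$ and the bound $i_0\le n-1$ are needed. Once Convention~\ref{Conv:xi_0} and the ``opposite-vertex'' statement of Corollary~\ref{Cor:label_dist} are in place, parts (1)--(3) and (5) are routine.
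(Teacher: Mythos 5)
Your proposal is correct, and for the substantive part (4) it takes a genuinely different route from the paper. The paper treats (1)--(3) as routine and proves (4) by invoking the decomposition $\widehat{\Upsilon}_{[\ii_0]}={}_1\Gamma_Q\sqcup{}_2\Gamma_{Q^*}$ of Remark \ref{rem: decompose into 2-comp} together with \cite[Corollary 1.15]{Oh14A} (a type-$A_n$ AR-quiver has exactly two maximal sectional paths of length $n-1$, meeting at $[1,n]$): writing $(\bar i,\pm i)$ and $(\bar j,\pm j)$ with $i+j=n+1$ for the coordinates of $\langle 1,n+1\rangle$ and $\langle 1,-n-1\rangle$, it concludes that the $N^{\rm ext}_m$-quivers for $m=0,\dots,i$ and the $S^{\rm ext}_m$-quivers for $m=0,\dots,j$ are connected, and $i+j=n+1$ covers every class of $m$ modulo $n+1$. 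You instead argue entirely inside the coordinate system \eqref{eq: r_i for twisted D_n+1}: you characterize a disconnection (``jump'') of $N^{\rm ext}_m$, resp. $S^{\rm ext}_m$, by window arithmetic (bottom of $W_{i}$ at a forward edge, resp. top of $W_i$ at a backward edge), compute that an $N$-jump at position $i_0$ forces the $S^{\rm ext}_m$-vertex in residue $\bar i_0$ to sit at gap $2(n-i_0)$ below the top of $W_{i_0}$, and then bound the gap by counting backward edges on either side of $i_0$; I checked this bookkeeping and it closes. Your approach is more elementary and self-contained --- it also supplies proofs of (1)--(3), which the paper omits, and your (5) coincides with the paper's argument via Corollary \ref{Cor:label_dist} --- while the paper's proof is shorter because it imports structural facts about type-$A$ AR-quivers. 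One point worth making explicit: the increment rules for the gap $g_i$ (unchanged across forward edges, $\pm 2$ across backward ones) presuppose that no $S$-jump has yet occurred, so the estimate $2\le g_i\le 2n-2$ should be propagated inductively outward from $i_0$; since the bound is preserved at each step, the induction goes through without difficulty.
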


\begin{proof}
(1), (2) and (3) are not hard to prove so that we only give a proof of (4) and (5).
By the observations in the previous section, we know that $\left< 1, n+1 \right>$ and
$\left< 1, -n-1\right>$ have coordinates $(\bar{i},\pm i)$ and $(\bar{j}, \pm j)$ with $i+j=n+1$. Recall the facts in \cite[Corollary 1.15]{Oh14A}:
\begin{enumerate}[(a)]
\item In an AR-quiver $\Gamma_Q$ of type $A_n$, $[1, n]$ is located at the intersection of
the $S$-sectional path of length $n-1$ and the $N$-sectional path of length $n-1$.
\item In an AR-quiver $\Gamma_Q$ of type $A_n$, there are exactly two sectional path of length $n-1$.
\end{enumerate}

Since folded AR-quiver $\widehat{\Upsilon}_{[\ii_0]}$ of $D_{n+1}$ can be understood as the disjoint union of AR-quivers ${}_1\Gamma_Q$
and ${}_2\Gamma_{Q^*}$ of type $A_n$,  we conclude that every $N_m^{{\rm ext}}$-sectional path associated to $m=0,1, \cdots, i$ and
every $S_m^{{\rm ext}}$-sectional path associated to $m=0,1,\cdots, j$ is connected.
Then our fourth assertion follows from the fact that $i+j=n+1$.

 For (5), recall that  $(\bar{i}, j)$ and $(\overline{n+1-i}, n+1+j)$ are in the  $N^{{\rm ext}}_m$-sectional (resp. $S^{{\rm ext}}_m$-sectional) quiver and $S^{{\rm ext}}_m$-sectional (resp. $N^{{\rm ext}}_m$-sectional) quiver, respectively, for $m= (i+j)/2$ (resp. $m=(j-i)/2$). By Corollary \ref{Cor:label_dist}, we proved the corollary.
\end{proof}

\begin{proposition}\label{Prop:m-snake shares p} For each $m \in \Z$, the $m$-snake has a positive integer $p$ such that
every vertex in $m$-snake has a summand $p$ or $-p$.
 \end{proposition}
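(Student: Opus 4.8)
The plan is to reduce the statement to type $A_n$ through the decomposition $\widehat{\Upsilon}_{[\ii_0]}={}_1\Gamma_Q\sqcup{}_2\Gamma_{Q^*}$ of Remark \ref{rem: decompose into 2-comp} together with the labelling Algorithm \ref{Alg:label}. First I would isolate the symmetry that makes the two halves of an $m$-snake interchangeable: by Corollary \ref{Cor:label_dist} the assignment $\langle a,b\rangle\mapsto\langle a,-b\rangle$ is an involution of $\Phi^+$ which carries the $N^{\rm ext}_m$-sectional quiver onto the $S^{\rm ext}_m$-sectional quiver (a one-line coordinate check: a vertex at $(\bar i,-i+2m+2(n+1)m')$ is sent to residue $\overline{n+1-i}$ with $p$-coordinate shifted by $\pm(n+1)$, which lands on the set $\{(\bar j,j+2m+2(n+1)m'')\}$). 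Since this map fixes first summands and only reverses the sign of second summands, a positive integer $p$ is a common summand-up-to-sign of $N^{\rm ext}_m$ if and only if it is one of $S^{\rm ext}_m$. Combined with Proposition \ref{Prop: properties of ext}(4), which guarantees that one of the two sectional quivers is connected, it therefore suffices to produce the required $p$ for a \emph{connected} $N^{\rm ext}_m$-sectional path $\mathsf{P}$; by Proposition \ref{Prop: properties of ext}(1) such a $\mathsf{P}$ has $n$ vertices of residues $\bar 1,\dots,\bar n$ and length $n-1$.

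Next I would cut $\mathsf{P}$ along the interface ${}_1\Gamma_Q\ /\ {}_2\Gamma_{Q^*}$. Because ${}_2\Gamma_{Q^*}$ is placed entirely to the left of ${}_1\Gamma_Q$ in Algorithm \ref{Alg:label} and an $N$-sectional path moves up-and-to-the-left, $\mathsf{P}$ meets ${}_1\Gamma_Q$ in a connected bottom segment $\mathsf{P}_1$ and ${}_2\Gamma_{Q^*}$ in a connected top segment $\mathsf{P}_2$ (this single crossing can alternatively be seen from Proposition \ref{Prop: properties of ext}(3): $\mathsf{P}$ meets $\rho=S^{\rm ext}_0$ in exactly one vertex). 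The key observation is that $\mathsf{P}_1$ is a \emph{maximal} $N$-sectional path of $\Gamma_Q\cong{}_1\Gamma_Q$: its bottom vertex has residue $\bar n$, and its top vertex admits no upward continuation inside ${}_1\Gamma_Q$, since the unique coordinate where such a continuation would sit is already occupied — coordinates in $\widehat{\Upsilon}_{[\ii_0]}$ being pairwise distinct — by the next vertex of $\mathsf{P}$, which lies in ${}_2\Gamma_{Q^*}$. Dually $\mathsf{P}_2$ is a maximal $N$-sectional path of ${}_2\Gamma_{Q^*}$, hence by Remark \ref{rem: reflection w.r.t x-axis} corresponds to a maximal $S$-sectional path of $\Gamma_Q$. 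Now Proposition \ref{pro: section shares} and Theorem \ref{thm: labeling GammaQ} apply: if $\mathsf{P}_1$ has length $\ell_1$ its $A_n$-labels share first component $a=n-\ell_1$, if $\mathsf{P}_2$ has length $\ell_2$ its $A_n$-labels share second component $b'=\ell_2+1$, and $(\ell_1+1)+(\ell_2+1)=n$, whence $a=b'+1$. Reading off Algorithm \ref{Alg:label}(iv)--(vi), a label $[a,b]$ of ${}_1\Gamma_Q$ becomes a $D_{n+1}$-root with first summand $a$ (it becomes $\langle a,-b-1\rangle$, $\langle a,b+1\rangle$ or $\langle a,\pm(n+1)\rangle$), while a label $[a',b']$ of ${}_2\Gamma_{Q^*}$ becomes a $D_{n+1}$-root with second summand $\pm(b'+1)$. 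Thus every vertex of $\mathsf{P}_1$ has summand $p:=a$, every vertex of $\mathsf{P}_2$ has summand $\pm p$ with $p=b'+1=a$, and the degenerate cases $\mathsf{P}_1=\varnothing$ (the $\rho'$ case, $p=n+1$) and $\mathsf{P}_2=\varnothing$ ($p=1$) are covered by the same count. Transporting back along the involution of the first paragraph, the whole $m$-snake has summand $p$ or $-p$.

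The step I expect to be the main obstacle is the middle one: justifying rigorously that $\mathsf{P}_1$ and $\mathsf{P}_2$ are genuinely \emph{maximal} sectional paths inside ${}_1\Gamma_Q$ and ${}_2\Gamma_{Q^*}$, so that Theorem \ref{thm: labeling GammaQ} pins down their shared components and forces $a=b'+1$. This rests on three facts that need to be invoked carefully — that ${}_1\Gamma_Q$ is a \emph{full} subquiver of $\widehat{\Upsilon}_{[\ii_0]}$ isomorphic to $\Gamma_Q$ (Remark \ref{rem: decompose into 2-comp}), that distinct vertices of $\widehat{\Upsilon}_{[\ii_0]}$ carry distinct coordinates, and that an $N^{\rm ext}_m$-sectional path crosses the ${}_1\Gamma_Q\ /\ {}_2\Gamma_{Q^*}$ interface exactly once. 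Everything else is bookkeeping with Proposition \ref{pro: section shares}, Theorem \ref{thm: labeling GammaQ} and the explicit relabelling in Algorithm \ref{Alg:label}.
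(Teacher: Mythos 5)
Your strategy is essentially the paper's: decompose $\widehat{\Upsilon}_{[\ii_0]}$ as ${}_1\Gamma_Q\sqcup{}_2\Gamma_{Q^*}$, read off shared components via Proposition \ref{pro: section shares}, Theorem \ref{thm: labeling GammaQ} and Algorithm \ref{Alg:label}, and transfer between the $N^{\rm ext}_m$- and $S^{\rm ext}_m$-quivers by $\langle a,b\rangle\mapsto\langle a,-b\rangle$. The organizational difference is that you reduce everything to the connected member of the pair and then split it along the ${}_1\Gamma_Q/{}_2\Gamma_{Q^*}$ interface, whereas the paper splits the connected $N^{\rm ext}_m$-quiver at its unique vertex on $\rho=S^{\rm ext}_0$ and labels the central portion directly by Proposition \ref{Prop:label for Cvertices} (every vertex of $\Upsilon^C_{[\ii_0]}$ is the sum of a vertex of $\rho$ and a vertex of $\rho'$), reserving the two-blocks argument for the disconnected case, where ``one component per block'' follows from \eqref{eq: cond uncovered}. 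This is exactly how the paper sidesteps the point you yourself flag as delicate.

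And that point is where your proposal, as written, breaks. With the paper's coordinates an arrow goes from $(i,p)$ to $(j,p+1)$, so an $N$-sectional path moves up and to the \emph{right}, not up-and-to-the-left; hence for a connected $N^{\rm ext}_m$-path the segment lying in ${}_1\Gamma_Q$ is the \emph{top} one (it contains the unique $\rho$-vertex and the small residues) and the ${}_2\Gamma_{Q^*}$-segment is the bottom one. For instance, in Example \ref{Ex:twistedD_{n+1}} the connected $N^{\rm ext}_1$-path $\langle3,4\rangle\to\langle1,4\rangle\to\langle2,4\rangle\to\langle4,5\rangle$ meets ${}_1\Gamma_Q$ only in its top vertex (the old label $[4]$), so your $\mathsf{P}_1$ need not contain a residue-$\bar n$ vertex and your maximality argument for $\mathsf{P}_1$ does not apply as stated. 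Moreover neither of your justifications of the single-crossing property is valid: the up-left direction is false, and the unique intersection with $S^{\rm ext}_0=\rho$ from Proposition \ref{Prop: properties of ext}(3) is beside the point, since $\rho$ is not the interface between the two blocks (that interface runs through $\Upsilon^C_{[\ii_0]}$). The defect is repairable: swap the roles of the two endpoints (top endpoint of $\mathsf{P}_1$ at residue $\bar1$, bottom endpoint blocked by the adjacent ${}_2\Gamma_{Q^*}$-vertex via fullness of ${}_1\Gamma_Q$ and injectivity of coordinates), and prove single crossing, e.g., from the fact that the leftmost vertices of ${}_1\Gamma_Q$ in consecutive residues differ by one unit in the $p$-coordinate (they form a copy of $Q^{*}$ by Remark \ref{rem: decompose into 2-comp}); alternatively avoid it entirely as the paper does. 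Your final bookkeeping ($a=n-\ell_1$, $b'=\ell_2+1$, $\ell_1+\ell_2=n-2$, so $a=b'+1$, together with the observation that Algorithm \ref{Alg:label} keeps first components of ${}_1\Gamma_Q$-labels as first summands and sends second components $b'$ of ${}_2\Gamma_{Q^*}$-labels to second summands $\pm(b'+1)$) is correct and insensitive to the swap, as is the involution step.
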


\begin{proof}
Let us take a connected $N_m^{{\rm ext}}$-sectional quiver associated $m\not \equiv 0 \ (\text{mod } n+1)$. Then the $N_m^{{\rm ext}}$-sectional quiver
has an intersection with the $S_0^{{\rm ext}}$-sectional.
Suppose $(\bar{i}, i)$ is the vertex with the label $\left< j, \pm(n+1)\right>.$ Recall the two following facts:
\begin{enumerate}[(a)]
\item vertices in an $N$-sectional path of $A_n$ share the first component (Theorem \ref{thm: labeling GammaQ}).
Thus $(\bar{i'},2i-i')$ in ${}_1\Gamma_Q\subset \widehat{\Upsilon}_{[\ii_0]}$ for $i'<i$ share $j$ as the first summand.
\item Proposition \ref{Prop:label for Cvertices} shows that $(\bar{i'},2i-i')$ for $i'>i$ has $j$ as its summand.
\end{enumerate}
Hence we proved for the $N$-sectional path $N^1$ contained in
$N_m^{{\rm ext}}$-sectional quiver and has an intersection with $S_0^{{\rm ext}}$-sectional quiver.
Thus if $N_m^{{\rm ext}}$-sectional quiver is connected, then all vertices in $N_m^{{\rm ext}}$ share $j$ as their first summand.

Now we assume that $N_m^{{\rm ext}}$-sectional quiver is not connected.
We claim that
\begin{eqnarray*} &&
\parbox{85ex}{
$N_m^{{\rm ext}}$-sectional quiver is not connected (equivalently the length of $N^1$ is $k$ less than $n-1$),
the connected component $N^1$ is contained in ${}_1\Gamma_Q$.
}
\end{eqnarray*}
If $N_1$ is not contained in
${}_1\Gamma_Q$, then it contains a vertex in $\Upsilon_{[\ii_0]}^C$ which yields a contradiction to the first observation in \eqref{eq: cond uncovered}.
By Theorem \ref{thm: labeling GammaQ} and Algorithm \ref{Alg:label}, the summand $j$ is the same as $n-k$, in this case. By (1) in Proposition \ref{Prop: properties of ext},
the other $N$-sectional path $N^2$ of $N_m^{{\rm ext}}$-sectional quiver is of length $n-2-k$. By Theorem \ref{thm: labeling GammaQ},
Remark \ref{rem: reflection w.r.t x-axis} and Algorithm \ref{Alg:label}, every vertex in $N^2$ has $-(n+1-k)$
as it second summand and $N^2$ is contained in ${}_2\Gamma_{Q^*}$. Hence we proved our assertion for $N_m^{{\rm ext}}$-sectional quiver.
As a summary,
\begin{enumerate}
\item[{\rm (i)}] If $N_m^{{\rm ext}}$-sectional quiver is connected, every vertex in it has $p$ as its summand,
\item[{\rm (ii)}] If $N_m^{{\rm ext}}$-sectional quiver is not connected, $N_m^{{\rm ext}}$-sectional quiver is decomposed into two connected $N$-sectional paths,
$N^1_m$ whose vertices share $p$ as their first summand and $N^2_m$ whose vertices share $-p$ as their second summand.
\end{enumerate}
The assertion restricted to $S_m^{{\rm ext}}$-sectional quiver follows from (5) in Proposition \ref{Prop: properties of ext}.
More precisely, for $m \ne 0$,
\begin{enumerate}
\item[{\rm (i)}] If $S_m^{{\rm ext}}$-sectional quiver is connected, then $N_m^{{\rm ext}}$-sectional quiver is not connected and
every vertex in it has $p$ as its summand,
\item[{\rm (ii)}] If $S_m^{{\rm ext}}$-sectional quiver is not connected, then $N_m^{{\rm ext}}$-sectional quiver is connected
and $S_m^{{\rm ext}}$-sectional quiver is decomposed into two connected $N$-sectional paths,
$S^1_m$ whose vertices share $-p$ as their second summand and $S^2_m$ whose vertices share $p$ as their first summand.
\end{enumerate}
\end{proof}

From the above proposition, we have the following corollaries:

\begin{corollary} \label{Cor: induced property ext}\hfill
\begin{enumerate}
\item[{\rm (1)}]
 If $N_m^{{\rm ext}}$-sectional quiver is not connected, it consists of an $N$-sectional subquiver $N^1_m$ contained in ${}_1\Gamma_Q$,
and an $N$-sectional subquiver $N^2_m$ contained in ${}_2\Gamma_{Q^*}$.   Furthermore,
 \begin{enumerate}
\item[{\rm (N-i)}] every vertex in $N^1_m$ has $n-k$ as its first summand and every vertex in $N^2_m$ has $-(n-k)$ as its second summand, when the length of $N^1_m$ is
of length $k < n-1$,
\item[{\rm (N-ii)}] every vertex in $N_m^{{\rm ext}}$ is folded multiplicity free.
\end{enumerate}
\item[{\rm (2)}]  If $S_m^{{\rm ext}}$-sectional quiver is not connected, it consists of an $S$-sectional subquiver $S^1_m$ contained in ${}_1\Gamma_Q$,
and an $S$-sectional subquiver $S^2_m$ contained in ${}_2\Gamma_{Q^*}$. Furthermore,
 \begin{enumerate}
\item[{\rm (S-i)}] every vertex in $S^1_m$ has $-(k+2)$ as its second summand and
every vertex in $S^2_m$ has $k+2$ as its first summand, when the length of $S^1_m$ is of length $k < n-1$.
\item[{\rm (S-ii)}] every vertex in $S_m^{{\rm ext}}$ is folded multiplicity free.
\end{enumerate}
\end{enumerate}
\end{corollary}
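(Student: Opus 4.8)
The plan is to observe that this corollary merely records, in clean form, facts that already emerged inside the proof of Proposition~\ref{Prop:m-snake shares p}, and then to supply the one genuinely new ingredient, the folded-multiplicity-free assertions (N-ii) and (S-ii).

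First I would recall the structural part. By the summary at the end of the proof of Proposition~\ref{Prop:m-snake shares p}, when the $N_m^{\rm ext}$-sectional quiver is disconnected it is the disjoint union of two connected $N$-sectional paths, and the argument there already shows that such a connected component cannot contain a vertex of $\Upsilon^C_{[\ii_0]}$: a vertex of $\Upsilon^C_{[\ii_0]}$ lies on two sectional paths of length $n-1$ by the first bullet of \eqref{eq: cond uncovered}, whereas a proper component has length $<n-1$. Since $\Upsilon_{[\ii_0]}={}_1\Gamma_Q\sqcup{}_2\Gamma_{Q^*}$ (Remark~\ref{rem: decompose into 2-comp}) and the components are connected, this forces one component, call it $N^1_m$, into ${}_1\Gamma_Q$ and the other, $N^2_m$, into ${}_2\Gamma_{Q^*}$. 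For (N-i) I would then read off the summands exactly as in that proof: inside ${}_1\Gamma_Q\simeq\Gamma_Q$ the path $N^1_m$ is a maximal $N$-sectional path, so Theorem~\ref{thm: labeling GammaQ} gives that its vertices share $n-k$ as first $A_n$-component when its length is $k$, and steps (iv)/(v) of Algorithm~\ref{Alg:label} turn $[n-k,\ast]$ into $\langle n-k,-\ast-1\rangle$ or $\langle n-k,\pm(n+1)\rangle$; using Remark~\ref{rem: reflection w.r.t x-axis} to trade the $N$-sectional path $N^2_m$ in ${}_2\Gamma_{Q^*}$ for an $S$-sectional path in $\Gamma_Q$ (whose vertices share a common second component) and applying Algorithm~\ref{Alg:label} once more yields the second summand $-(n-k)$ for $N^2_m$.

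Next comes the new point, (N-ii), whose proof is short: since every vertex of $N_m^{\rm ext}$ lies in $N^1_m\subseteq{}_1\Gamma_Q$ or in $N^2_m\subseteq{}_2\Gamma_{Q^*}$, none of them lies in $\Upsilon^C_{[\ii_0]}$; by Corollary~\ref{lem: folded mul free iff} a non-simple positive root lying outside $\Upsilon^C_{[\ii_0]}$ is folded multiplicity free, and simple roots are folded multiplicity free outright, so every vertex of $N_m^{\rm ext}$ is folded multiplicity free. Part (2), the statements (S-i) and (S-ii), I would deduce symmetrically: by Proposition~\ref{Prop: properties of ext}(4) a disconnected $S_m^{\rm ext}$-sectional quiver comes with a connected $N_m^{\rm ext}$-sectional quiver, and the corresponding analysis in the proof of Proposition~\ref{Prop:m-snake shares p}, together with Proposition~\ref{Prop: properties of ext}(5), splits $S_m^{\rm ext}$ as $S^1_m\sqcup S^2_m$ with $S^1_m\subseteq{}_1\Gamma_Q$ and $S^2_m\subseteq{}_2\Gamma_{Q^*}$; the summand values (S-i) then come from Theorem~\ref{thm: labeling GammaQ} and Algorithm~\ref{Alg:label} as before, and (S-ii) follows from Corollary~\ref{lem: folded mul free iff} exactly as in the $N$ case.

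The step I expect to require the most care is the containment $N^1_m\subseteq{}_1\Gamma_Q$ and $N^2_m\subseteq{}_2\Gamma_{Q^*}$ (and their $S$-analogues): one must rule out a proper connected component of the sectional quiver straddling the interface between ${}_1\Gamma_Q$ and ${}_2\Gamma_{Q^*}$, i.e.\ slipping through $\Upsilon^C_{[\ii_0]}$. This is precisely where the first bullet of \eqref{eq: cond uncovered} together with the normalized height function of Convention~\ref{Conv:xi_0} (which pins $\rho$ to start at $(1,1)$ and puts $\rho'$ in the column $(\bar r,-r)$, so that $\Upsilon^C_{[\ii_0]}$ is exactly the region enclosed by $\rho$ and $\rho'$) does the work; once that is in place, the remaining assertions are the bookkeeping indicated above.
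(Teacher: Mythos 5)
Your proposal is correct and follows the paper's own route: the corollary is recorded in the paper without a separate proof, precisely as a restatement of the summary at the end of the proof of Proposition \ref{Prop:m-snake shares p}, and your use of Theorem \ref{thm: labeling GammaQ}, Remark \ref{rem: reflection w.r.t x-axis} and Algorithm \ref{Alg:label} for (N-i)/(S-i), together with Corollary \ref{lem: folded mul free iff} for (N-ii)/(S-ii), is exactly the intended argument. One small rewording is needed in (N-ii): containment in ${}_1\Gamma_Q$ or ${}_2\Gamma_{Q^*}$ does not by itself exclude $\Upsilon^C_{[\ii_0]}$ (both pieces meet it), so the disjointness from $\Upsilon^C_{[\ii_0]}$ should be cited from your earlier length-$(n-1)$ argument via \eqref{eq: cond uncovered}, not from those containments.
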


\begin{corollary} Recall that $a_1^Q$ is the number of arrows in $Q$ directed toward 1.
Only when $m=0$ or $m =\mathsf{a} \seteq a_1^Q+1$, both $N_m^{{\rm ext}}$-sectional quiver and $S_m^{{\rm ext}}$-sectional quiver are connected.
Moreover,
\begin{enumerate}
\item[{\rm (1)}] every vertex in $N_0^{{\rm ext}}$-sectional quiver or $S_0^{{\rm ext}}$-sectional quiver is folded multiplicity free and has $n+1$ or $-n-1$
as its second summand.
\item[{\rm (2)}] every vertex in $N_{\mathsf{a}}^{{\rm ext}}$-sectional quiver and $S_{\mathsf{a}}^{{\rm ext}}$-sectional quiver has $1$ as its first summand.
\item[{\rm (3)}] when $m \ne 0$ or $\mathsf{a}$, one of $N_m^{{\rm ext}}$-sectional quiver and $S_m^{{\rm ext}}$-sectional quiver is connected and the other is not.
\end{enumerate}

\end{corollary}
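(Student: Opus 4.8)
The plan is to reduce the statement to a count of maximal $N$- and $S$-sectional paths (those of length $n-1$) inside the folded AR-quiver, using the decomposition $\widehat{\Upsilon}_{[\ii_0]} = {}_1\Gamma_Q \sqcup {}_2\Gamma_{Q^*}$ of Remark~\ref{rem: decompose into 2-comp} and~\eqref{eq: cond uncovered}, the fact (Theorem~\ref{thm: labeling GammaQ}) that an AR-quiver of type $A_n$ has exactly one maximal $N$-sectional path and one maximal $S$-sectional path of each length $0,1,\dots,n-1$ (so these partition its $\mathfrak n$ vertices), and Corollary~\ref{Cor: induced property ext}. First I would record the connectedness criterion: by Proposition~\ref{Prop: properties of ext}(1) every $N^{\mathrm{ext}}_m$- and $S^{\mathrm{ext}}_m$-sectional quiver has exactly $n$ vertices, one in each folded residue, so it is connected precisely when it is a single maximal sectional path of $\widehat{\Upsilon}_{[\ii_0]}$; and Corollary~\ref{Cor: induced property ext} says that when it is disconnected it is the disjoint union of one maximal sectional path inside ${}_1\Gamma_Q$ and one inside ${}_2\Gamma_{Q^*}$. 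Writing $B_N$ (resp.\ $B_S$) for the set of $m \bmod (n+1)$ with $N^{\mathrm{ext}}_m$ (resp.\ $S^{\mathrm{ext}}_m$) connected, the goal becomes to prove $B_N \cup B_S = \Z/(n+1)$ (this is Proposition~\ref{Prop: properties of ext}(4)) together with $B_N \cap B_S = \{\,0,\ \mathsf a\,\}$ where $\mathsf a = a_1^Q+1$.

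For the case $m=0$ I would argue directly from Convention~\ref{Conv:xi_0}: there the vertices of $\rho'$ sit at $(\bar r,-r)$ for $1\le r\le n$, so they form the $N_0$-sectional path, which is connected and — by the residue ranges in~\eqref{eq: r_i for twisted D_n+1} — exhausts $N^{\mathrm{ext}}_0$; symmetrically $\rho$, sitting at $(\bar r,r)$, is a connected $S^{\mathrm{ext}}_0$-sectional quiver. By Proposition~\ref{Prop:cover by GQ} and Corollary~\ref{Cor:4.25} the labels on $\rho'$ and $\rho$ are exactly the roots $\langle a,-n-1\rangle$ and $\langle a,n+1\rangle$, which all have second summand $\pm(n+1)$ and are folded multiplicity free (Definition~\ref{def: folded multiplicity}, since each $\vee$-orbit among the coefficients of $\langle a,\pm(n+1)\rangle$ sums to $1$). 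This settles $0\in B_N\cap B_S$ and assertion~(1).

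For the second value I would locate $\langle 1,n+1\rangle$: under the identification ${}_1\Gamma_Q \cong \Gamma_Q$ coming from Algorithm~\ref{Alg:label} it corresponds to $[1,n]\in\Phi^+_{A_n}$, the common vertex of the two maximal sectional paths of $\Gamma_Q$. With the height function pinned by~\eqref{eq: conv1}, \cite[Remark~1.14]{Oh14A} places $[1,n]$ at folded residue $a_1^Q+1$, so $\langle 1,n+1\rangle$ has folded coordinate $(\overline{a_1^Q+1},\,a_1^Q+1)$; writing this as $(\bar i,i)$ and $\langle 1,-n-1\rangle$ as $(\bar j,-j)$ with $i+j=n+1$ (as in the proof of Proposition~\ref{Prop: properties of ext}(4)), that same proof gives $B_N=\{0,1,\dots,i\}$ and $B_S=\{0,-1,\dots,-j\}\bmod(n+1)$ — the inclusions $\supseteq$ are its content, and the reverse inclusions follow because the $n$ maximal $N$-sectional paths partition the $\mathfrak n$ vertices of each copy of $\Gamma_Q$, forcing $N^{\mathrm{ext}}_m$ disconnected for the remaining $m$. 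Since $-j\equiv i$, this yields $B_N\cap B_S=\{0,i\}$ and $B_N\cup B_S=\Z/(n+1)$, so both sectional quivers are connected precisely for $m\equiv 0$ or $\mathsf a:=i=a_1^Q+1$, and for every other $m$ exactly one is connected (at least one by Proposition~\ref{Prop: properties of ext}(4), not both since $m\notin B_N\cap B_S$): this is~(3). Finally, $N^{\mathrm{ext}}_{\mathsf a}$ is the maximal $N$-sectional path of ${}_1\Gamma_Q$, whose labels (Theorem~\ref{thm: labeling GammaQ}) have first component $1$, hence first summand $1$ after Algorithm~\ref{Alg:label}; $S^{\mathrm{ext}}_{\mathsf a}$ is the corresponding maximal path inside ${}_2\Gamma_{Q^*}$, with the same first-summand-$1$ labels by Remark~\ref{rem: reflection w.r.t x-axis}; the two are disjoint (a vertex with $i+p\equiv 2\mathsf a$ and $p-i\equiv 2\mathsf a$ would force $i\equiv 0\bmod(n+1)$) and carry $2n$ vertices in total, which is exactly the number of roots $\langle 1,b\rangle$, so every vertex of both has first summand $1$ — this is~(2).

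The hard part will be the coordinate bookkeeping in the third paragraph: carrying the normalization~\eqref{eq: conv1} of the height function through the gluing ${}_1\Gamma_Q\sqcup{}_2\Gamma_{Q^*}$ and the $x$-axis reflection of Remark~\ref{rem: reflection w.r.t x-axis}, so that the residue of $[1,n]$ inside the $A_n$-quiver $\Gamma_Q$ — read off from \cite[Remark~1.14]{Oh14A} — becomes exactly the second coordinate $a_1^Q+1$ of $\langle 1,n+1\rangle$ in $\widehat{\Upsilon}_{[\ii_0]}$. A secondary point requiring care is the equality $B_N=\{0,\dots,i\}$ (rather than merely $\supseteq$): this needs the vertex-count partition of $\Gamma_Q$ into its maximal $N$-sectional paths together with a check, via~\eqref{eq: cond uncovered} and Corollary~\ref{Cor: induced property ext}, of precisely when the two pieces of a straddling $N^{\mathrm{ext}}_m$ fail to fuse into a single path.
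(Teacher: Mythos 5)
Your handling of the two exceptional values and of items (1) and (2) is correct and is essentially the paper's own route: for $m=0$ the two extended sectional quivers are $\rho'$ and $\rho$, and for $m=\mathsf{a}$ they are the images of the maximal length-$(n-1)$ $N$- and $S$-sectional paths of the two $A_n$-copies, whose labels $[1,b]$ and $[i,n]$ give (2) and (1) through Theorem~\ref{thm: labeling GammaQ} and Algorithm~\ref{Alg:label}. Your coordinate bookkeeping identifying $\mathsf{a}=a^Q_1+1$ as the folded residue of the $\rho$-vertex coming from $[1,n]$ is consistent with Convention~\ref{Conv:xi_0} and fills in a point the paper leaves implicit; the only slip there is that which of $\langle 1,n+1\rangle$, $\langle 1,-n-1\rangle$ sits at $(\overline{\mathsf{a}},\mathsf{a})$ depends on whether $n+1$ or $n$ occurs in $\phi_{[\ii_0]}$ (step (v) of Algorithm~\ref{Alg:label}); this is harmless since the statement is symmetric in $\pm(n+1)$.

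The genuine gap is in the ``only when'' direction, i.e.\ in (3) and in your exact equalities $B_N=\{0,\dots,i\}$, $B_S=\{0,-1,\dots,-j\}$. The proof of Proposition~\ref{Prop: properties of ext}(4) only yields the inclusions $\supseteq$, and the justification you offer for the reverse inclusions --- that the maximal $N$-sectional paths partition the vertices of each copy of $\Gamma_Q$ --- does not force $N^{{\rm ext}}_m$ to be disconnected for the remaining $m$: a connected $N^{{\rm ext}}_m$-quiver is in general \emph{not} contained in a single copy (in Example~\ref{Ex:twistedD_{n+1}} the connected $N^{{\rm ext}}_1$-quiver $\{\langle 4,5\rangle,\langle 2,4\rangle,\langle 1,4\rangle,\langle 3,4\rangle\}$ contains the $\rho$-vertex $\langle 4,5\rangle$ of ${}_1\Gamma_Q$ together with vertices of $\Upsilon^C_{[\ii_0]}$), so counting the maximal sectional paths copy by copy gives no upper bound on the number of connected classes; you yourself defer exactly this check. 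One way to close it, in the spirit of the paper's appeal to Theorem~\ref{thm: labeling GammaQ}: if $N^{{\rm ext}}_m$ and $S^{{\rm ext}}_m$ are both connected with $m\not\equiv 0$, then (they are disjoint, as you note) by the proof of Proposition~\ref{Prop:m-snake shares p} all $2n$ vertices of the $m$-snake share one common \emph{positive} summand $p$; since exactly $2n+1-p$ positive roots of $D_{n+1}$ admit $p$ as a summand, this forces $p=1$, and the $[1]$-snake is the $\mathsf{a}$-snake because its $\rho$-vertex $\langle 1,\pm(n+1)\rangle$ lies at $(\overline{\mathsf{a}},\mathsf{a})$, whence $m=\mathsf{a}$. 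With such an argument inserted, your conclusion $B_N\cap B_S=\{0,\mathsf{a}\}$, and hence the first assertion of the corollary and (3), follow; as written, this step is missing.
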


\begin{proof}
When $m=0$ (resp. $m=a^Q_i$), $N_m^{{\rm ext}}$-sectional quiver coincides with the $N$-sectional path of length $n-1$ in ${}_1\Gamma_Q$ (resp. ${}_2\Gamma_{Q^*}$)
and $S_m^{{\rm ext}}$-sectional quiver coincides with the $S$-sectional path of length $n-1$ in ${}_2\Gamma_{Q^*}$ (resp. ${}_1\Gamma_Q$).

(1) Since every vertex in the $N$-sectional path of length $n-1$ in $\Gamma_Q$ is of the form $[a,n]$, the assertion (1) follows from
Remark \ref{rem: reflection w.r.t x-axis} and {\rm (iii)} in Algorithm \ref{Alg:label}.

(2) Since every vertex in the $S$-sectional path of length $n-1$ in $\Gamma_Q$ is of the form $[1,b]$, the assertion (2) hold by the same reason of $(1)$.

(3) This assertion follows from Theorem \ref{thm: labeling GammaQ}.
\end{proof}

Now we can rename the notions in Definition \ref{def: N_m S_m ext} by considering the above results:

\begin{definition} \label{def: rename} \hfill
\begin{enumerate}
\item[{\rm (1)}] An $m$-snake is renamed as a $[p]$-snake if every vertex of the snake has $p$ or $-p$ as a summand.
\item[{\rm (2)}] An $N_m^{{\rm ext}}$-sectional (resp. $S_m^{{\rm ext}}$-sectional) quiver is renamed as the $N^{{\rm ext}}[\pm p]$-sectional
(resp. $S^{{\rm ext}}[\pm p]$-sectional) quiver if every vertex of it has $p$ or $-p$ as a summand.
\item[{\rm (3)}] An $X_m$-sectional path is renamed as the $X[p]$-sectional (resp. $X[-p]$-sectional) path
if every vertex of it has $p$ (resp. $-p$) as a summand. Here $X=N$ or $S$.
\item[{\rm (4)}] An $m$-swing is renamed as the $[p]$-swing (resp. $[-p]$-swing) if it
is the union of the $N[p]$-sectional path (resp. $N[-p]$-sectional path) and the $S[p]$-sectional path $($resp. $S[-p]$-sectional path$)$
\end{enumerate}
\end{definition}

We sometimes call the $N^{{\rm ext}}[\pm p]$-sectional quiver or $S^{{\rm ext}}[\pm p]$-sectional quiver by an extended $p$-sectional quiver.

\begin{theorem}\label{thm:p-snake well}
For the folded AR-quiver $\widehat{\Upsilon}_{[\ii_0]}$, a root $\alpha\in \Phi^+$ has $p$ or $-p$ as a summand if and only if $\alpha$ is in the $[p]$-snake.
\end{theorem}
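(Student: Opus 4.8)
The plan is to deduce the ``only if'' direction from the classification of $N^{{\rm ext}}_m$- and $S^{{\rm ext}}_m$-sectional quivers established in Proposition \ref{Prop:m-snake shares p} and Corollary \ref{Cor: induced property ext}, and to deduce the ``if'' direction from the same results by a counting argument. First I would observe that every vertex $\langle k_1,k_2\rangle$ of $\widehat{\Upsilon}_{[\ii_0]}$ lies on exactly one $N_m$-sectional path and exactly one $S_{m'}$-sectional path (this is immediate from Definition \ref{def: N_m S_m ext}, since the $N_m$-sectional paths partition the vertex set as $m$ ranges over the relevant residues, and likewise the $S_{m'}$-sectional paths), hence $\langle k_1,k_2\rangle$ lies in exactly one $m$-snake, namely the one obtained by joining its $N^{{\rm ext}}$-quiver and $S^{{\rm ext}}$-quiver. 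By Proposition \ref{Prop:m-snake shares p}, that $m$-snake is a $[p]$-snake for a unique $p\ge 1$, so $\langle k_1,k_2\rangle$ has $p$ or $-p$ as a summand; this already gives the ``only if'' direction once we know the association $\alpha\mapsto(\text{its snake})$ is well-defined, which it is by the partition remark.

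For the converse I would argue that the map sending $p\in\{1,\dots,n+1\}$ to the $[p]$-snake is injective, so that the $n+1$ snakes are pairwise disjoint, and then count. By Proposition \ref{Prop:m-snake shares p} combined with Corollary \ref{Cor: induced property ext} and the preceding corollary, a $[p]$-snake consists either of a single connected extended sectional quiver with $n$ vertices, or of two pieces whose vertices share $p$ as first summand resp.\ $-p$ as second summand; in all cases the \emph{summand} $p$ (or $-p$) determines and is determined by the snake, giving injectivity. Since there are $n+1$ possible values of $p$ and each extended sectional quiver has exactly $n$ vertices with distinct residues by part (1) of Proposition \ref{Prop: properties of ext}, and since $|\Phi^+_{D_{n+1}}| = n(n+1)$, a dimension count shows the $[p]$-snakes for $p=1,\dots,n+1$ exhaust $\widehat{\Upsilon}_{[\ii_0]}$ with the only overlaps being forced by the two-summand roots; more precisely each $[p]$-snake meets the others in exactly the folded-multiplicity-two vertices of $\Upsilon^C_{[\ii_0]}$, which by Corollary \ref{lem: folded mul free iff} carry the two summands $p,p'$ of the two snakes through them. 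Thus if $\alpha$ has $p$ as a summand, $\alpha$ must lie in a sectional quiver carrying the label $p$, and by the bookkeeping above that forces $\alpha$ into the $[p]$-snake.

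The technical heart — and the step I expect to require the most care — is verifying that the summand-value attached to each extended sectional quiver in Corollary \ref{Cor: induced property ext} is \emph{globally consistent}, i.e.\ that when $N^{{\rm ext}}_m$ breaks into $N^1_m\subset{}_1\Gamma_Q$ and $N^2_m\subset{}_2\Gamma_{Q^*}$, the ``first summand $n-k$'' of $N^1_m$ and the ``second summand $-(n-k)$'' of $N^2_m$ genuinely refer to the \emph{same} integer $p=n-k$, and similarly on the $S$ side, so that the label $[p]$ in Definition \ref{def: rename} is unambiguous. This is exactly where the length bookkeeping ($k$ versus $n-1-k$, using Theorem \ref{thm: labeling GammaQ} and Remark \ref{rem: reflection w.r.t x-axis}) and Corollary \ref{Cor:label_dist} enter; once that consistency is in hand, the two directions of the theorem follow formally as sketched. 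I would also record the boundary cases $p=n+1$ (the $0$-snake, all second summands $\pm(n+1)$) and $p=1$ (the $\mathsf a$-snake, all first summands $1$) separately, since there both extended quivers are connected and the argument is the cleaner special case already treated in the two corollaries.
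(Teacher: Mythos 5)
Your first direction (a vertex of the $[p]$-snake has $\pm p$ as a summand) is fine, since it is exactly Proposition \ref{Prop:m-snake shares p} together with the renaming in Definition \ref{def: rename}; the auxiliary claim that each vertex lies in \emph{exactly one} $m$-snake is not needed there and is in fact false: the $N^{\rm ext}$-quiver and the $S^{\rm ext}$-quiver through a given vertex generally carry different indices $m_1\neq m_2$, so the vertex lies in the $m_1$-snake \emph{and} the $m_2$-snake. This error becomes fatal in your converse. The snakes are not pairwise disjoint, and their overlaps are not confined to the folded-multiplicity-two vertices of $\Upsilon^C_{[\ii_0]}$: a root such as $\langle a,-b\rangle$ with $a<b\le n$ is folded multiplicity free (hence lies in $\Upsilon^L_{[\ii_0]}\cup\Upsilon^R_{[\ii_0]}$ by Corollary \ref{lem: folded mul free iff}), yet it belongs to both the $[a]$-snake and the $[b]$-snake --- indeed the corollary immediately following Theorem \ref{thm:p-snake well} records that $\langle a,b\rangle$ is the intersection of the $[a]$- and $[b]$-swings. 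Your dimension count is also internally inconsistent: there are $n+1$ snakes with $2n$ vertices each (two extended sectional quivers of $n$ vertices, Proposition \ref{Prop: properties of ext}(1)), giving $2n(n+1)=2\,|\Phi^+_{D_{n+1}}|$ vertex-slots, so on average every root lies in two snakes; ``pairwise disjoint up to $\Upsilon^C$'' cannot exhaust $\widehat{\Upsilon}_{[\ii_0]}$ with this count. Consequently the final step ``if $\alpha$ has $p$ as a summand it must lie in a sectional quiver labelled $p$'' is not established --- it is precisely the assertion to be proved.

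The gap is easy to close, and the paper does it with a count that is local to each $p$ rather than global: Proposition \ref{Prop:m-snake shares p} gives the containment $\{\text{vertices of the }[p]\text{-snake}\}\subseteq\{\alpha\in\Phi^+ \mid \pm p \text{ is a summand of }\alpha\}$, and both sets have exactly $2n$ elements (the snake by the vertex count above, the right-hand side because the roots of $D_{n+1}$ involving the index $p$ are $\varepsilon_p\pm\varepsilon_q$ for the $n$ indices $q\neq p$); a containment between finite sets of equal cardinality is an equality, which is the whole theorem. If you insist on a global argument, your sketch could be repaired into a double count (each root lies in at most two snakes by Proposition \ref{Prop:m-snake shares p}, and $2n(n+1)$ slots force exactly two, labelled by the root's two summand indices), but that is more work than the containment-plus-cardinality argument and requires abandoning the disjointness picture entirely. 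Your worry about consistency of the summand label on the two components of a disconnected extended quiver is legitimate but already resolved inside the proof of Proposition \ref{Prop:m-snake shares p}, which you may cite as a black box.
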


\begin{proof}
Note that the number of roots with $p$ or $-p$ as a summand is the same as $2n$ which is also the number of vertices in the $m$-snake for any $m \in \Z$.
Thus our assertion follows from Proposition \ref{Prop:m-snake shares p}.
\end{proof}

\begin{corollary} \
\begin{enumerate}
\item[{\rm (1)}] For $p\in \{1, 2, \cdots, n+1\}$,
a root $\alpha\in \Phi^+$ has $p$ $($resp. $-p)$ as its summand if and only if $\alpha$ is in the $[p]$-swing $($resp. $[-p]$-swing$)$.
\item[{\rm (2)}] The root $\left< a, b\right>$ is the intersection of the $[a]$-swing and the $[b]$-swing.
\end{enumerate}
\end{corollary}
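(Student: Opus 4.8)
The plan is to obtain both assertions as refinements of Theorem~\ref{thm:p-snake well}. Recall from Definition~\ref{def: rename} that the $[p]$-snake is the union of the $N^{{\rm ext}}[\pm p]$- and $S^{{\rm ext}}[\pm p]$-sectional quivers, while the $[p]$-swing (resp. $[-p]$-swing) is the union of the $N[p]$- and $S[p]$-sectional paths (resp. of the $N[-p]$- and $S[-p]$-sectional paths). Since Theorem~\ref{thm:p-snake well} already identifies the vertex set of the $[p]$-snake with the set of roots carrying $p$ or $-p$ as a summand, assertion (1) reduces to the sharper claim that the $[p]$-snake decomposes as the disjoint union of the $[p]$-swing and the $[-p]$-swing, that every vertex of the $[p]$-swing carries $p$ as a summand, and that every vertex of the $[-p]$-swing carries $-p$ as a summand. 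Granting this, (1) is immediate: no positive root of $\Phi^+_{D_{n+1}}$ has both $p$ and $-p$ as summands — the pair of summands already determines a root, and the pair $\{p,-p\}$ occurs for none (for $p\le n$ it would give $\ve_p-\ve_p=0$, and for $p=n+1$ no root is of the form $\ve_c+\ve_{n+1}=\ve_{a_1}-\ve_{n+1}$) — so the $[p]$-swing and $[-p]$-swing are disjoint as root sets; hence a root carrying $p$ lands in the $[p]$-swing and one carrying $-p$ in the $[-p]$-swing.

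To prove the sharper claim I would run through the three regimes of $m$ already isolated by the earlier results. For $m=0$ and $m=\mathsf{a}\seteq a_1^Q+1$ — the snakes with $p=n+1$ and $p=1$ — both the $N^{{\rm ext}}_m$- and $S^{{\rm ext}}_m$-sectional quivers are connected, and the corollary preceding Definition~\ref{def: rename} tells us every vertex of the $m=0$ snake carries $\pm(n+1)$ as a summand while every vertex of the $m=\mathsf{a}$ snake carries $1$ as a summand; the splitting into the $[p]$-swing and the $[-p]$-swing (the latter empty when $p=1$) is then immediate from the coordinate descriptions in Definition~\ref{def: N_m S_m ext}. For $m\not\equiv 0,\mathsf{a}\ (\mathrm{mod}\ n+1)$, Proposition~\ref{Prop: properties of ext}(4) says exactly one of $N^{{\rm ext}}_m$ and $S^{{\rm ext}}_m$ is connected: the connected one is a single sectional path all of whose vertices share one positive summand $p$ (Proposition~\ref{Prop:m-snake shares p}), hence it is $N[p]$ or $S[p]$; the disconnected one splits, by Corollary~\ref{Cor: induced property ext}, into its component inside ${}_1\Gamma_Q$ and its component inside ${}_2\Gamma_{Q^*}$, one carrying $p$ and the other $-p$. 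Regrouping the components carrying $p$ gives the $[p]$-swing and those carrying $-p$ the $[-p]$-swing, and the fact that these regroupings are honest swings (the $N$- and $S$-pieces meeting at the prescribed endpoint) is read off from Definition~\ref{def: N_m S_m ext}; the summand statements are then exactly Proposition~\ref{Prop:m-snake shares p} together with Corollary~\ref{Cor: induced property ext}.

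For (2): given $\langle a,b\rangle\in\Phi^+$, applying (1) with $p=a$ and with $p=b$ places $\langle a,b\rangle$ in both the $[a]$-swing and the $[b]$-swing. Conversely any vertex common to the two swings is a root carrying both $a$ and $b$ as summands, and such a root is uniquely $\langle a,b\rangle$ (the two summands of a root determine it); together with Proposition~\ref{Prop: properties of ext}(3), which bounds the intersection of two distinct extended sectional quivers by a single vertex, this identifies $\langle a,b\rangle$ as precisely the intersection of the $[a]$-swing and the $[b]$-swing. I expect the main obstacle to be entirely organizational: matching, snake by snake, which of the (up to four) sectional pieces carries $+p$ versus $-p$, and keeping track of the degenerate endpoint cases $p=1$ and $p=n+1$ where a swing may collapse to a single sectional path or be empty. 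All of the substantive input — summand-constancy along extended sectional quivers, and the behaviour under the gluing ${}_1\Gamma_Q\sqcup{}_2\Gamma_{Q^*}$ — is already present in Proposition~\ref{Prop:m-snake shares p}, Corollary~\ref{Cor: induced property ext}, and Algorithm~\ref{Alg:label}, so this corollary mainly repackages them.
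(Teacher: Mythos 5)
Your argument is correct and is essentially the paper's own: the paper proves both parts in one line by combining Theorem \ref{thm:p-snake well} with the fact that the $[p]$-snake is the union of the $[p]$-swing and the $[-p]$-swing, which is exactly the decomposition you establish (in more detail) from Proposition \ref{Prop:m-snake shares p}, Corollary \ref{Cor: induced property ext} and the characterization \eqref{eq: [p]-cahracter}. Your added observations — that no root carries both $p$ and $-p$, and that two summands determine a root — are the same implicit facts the paper relies on, so this is the same proof with the details filled in.
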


\begin{proof}
It is an immediate consequence from the fact that $[p]$-snake is the union of $[p]$-swing and $[-p]$-swing.
\end{proof}

\begin{corollary} \label{cor: nbar adjacent}
Let $\widehat{\Upsilon}_{[\ii_0]}$ be the folded AR-quiver and let $p\in I\setminus\{n+1\}$.
\begin{enumerate}
\item[{\rm (1)}] For any $(\bar{n}, k)$ and $(\bar{n}, k+2)$ in $\widehat{\Upsilon}_{[\ii_0]}$, these two vertices are in the same swing.
\item[{\rm (2)}] If a sectional path has a vertex in the $\bar{n}$-th residue then it is not the $N[-p]$ nor the $S[-p]$-sectional path,
i.e. it is either $N[p]$, $S[p]$, $N[\pm(n+1)] $ or $S[\pm(n+1)]$-sectional path.
\end{enumerate}
\end{corollary}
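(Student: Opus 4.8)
The plan is to prove (1) by a direct computation with the folded coordinates and (2) by first pinning down exactly which positive roots occupy the $\bar n$-th residue of $\widehat{\Upsilon}_{[\ii_0]}$ and then simply reading off their summands. The starting observation common to both parts is that the normalization of the height function in Convention \ref{Conv:xi_0} forces $\xi(\bar n)=n$: by Proposition \ref{Prop:cover by GQ} the vertices $(\bar n,n)$ and $(\bar n,-n)$ both occur in $\widehat{\Upsilon}_{[\ii_0]}$ (they carry roots of $\Phi^+|_{n+1\rangle}$), while by \eqref{eq: r_i for twisted D_n+1} the $\bar n$-th residue consists of exactly the $n+1$ vertices $(\bar n,\xi(\bar n)-2t)$ with $0\le t\le n$; since $n$ and $-n$ are $2n$ apart they must be the two endpoints of this arithmetic progression, whence $\xi(\bar n)=n$. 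In particular every vertex $(\bar n,k)$ of $\widehat{\Upsilon}_{[\ii_0]}$ has $k\equiv n\pmod 2$, and the $\bar n$-th residue is precisely $\{(\bar n,-n),(\bar n,-n+2),\dots,(\bar n,n)\}$.

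For (1): set $m=(n+k+2)/2$, which is an integer by the parity just noted. By Definition \ref{def: N_m S_m ext} the residue-$\bar n$ vertex of the $S_m$-sectional path is $(\bar n,\,2m-n-2)=(\bar n,k)$, and the residue-$\bar n$ vertex of the $N_m$-sectional path is $(\bar n,\,2m-n)=(\bar n,k+2)$. Since by definition the $m$-swing is the union of the $N_m$-sectional path and the $S_m$-sectional path, both $(\bar n,k)$ and $(\bar n,k+2)$ lie in the $m$-swing, which is the assertion.

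For (2): the two extremal vertices $(\bar n,\pm n)$ carry the two elements of $\Phi^+|_{n+1\rangle}$ located in the $\bar n$-th residue (Proposition \ref{Prop:cover by GQ}), hence are of the form $\langle a,n+1\rangle$ or $\langle a,-n-1\rangle$ with $1\le a\le n$, and they lie on $\rho$ resp.\ $\rho'$. Each of the remaining $n-1$ vertices has folded coordinate $(\bar n,k)$ with $|k|<n$, so by Proposition \ref{Prop:cover by GQ} it is one of the vertices not covered by $\Gamma_Q$, i.e.\ it lies in $\Upsilon^C_{[\ii_0]}$. By Corollary \ref{lem: folded mul free iff} the roots in $\Upsilon^C_{[\ii_0]}$ are exactly the folded-multiplicity non-free ones, which by the explicit description of $\Phi^+_{D_{n+1}}$ in Section \ref{Sec:Twisted_adapted_CP} together with Definition \ref{def: folded multiplicity} are precisely the roots $\langle b_1,b_2\rangle$ with $1\le b_1<b_2\le n$ (a count of $\binom n2=|\Upsilon^C_{[\ii_0]}|$ closes the argument). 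Consequently every root sitting in the $\bar n$-th residue has all of its summands in $\{1,\dots,n\}\cup\{n+1,-(n+1)\}$; in particular none of them has a summand equal to $-c$ for any $2\le c\le n$. Since by Definition \ref{def: rename} an $N[-c]$- or $S[-c]$-sectional path ($1\le c\le n$) consists of vertices each of which has $-c$ as a summand — and there are no roots of $\Phi^+_{D_{n+1}}$ with $-1$ as a summand — such a path cannot contain a vertex of the $\bar n$-th residue. Hence a sectional path through a vertex of the $\bar n$-th residue is necessarily $N[p]$ or $S[p]$ with $1\le p\le n$, or $N[\pm(n+1)]$ or $S[\pm(n+1)]$, which proves (2).

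The step I expect to be the crux is the identification of the $\bar n$-th residue in (2): the whole argument rests on the point that Convention \ref{Conv:xi_0}, combined with the fact (Proposition \ref{Prop:cover by GQ}) that $\Phi^+|_{n+1\rangle}$ occupies the positions $(\bar i,\pm i)$, pins $\xi(\bar n)$ down to $n$, so that the $\bar n$-th residue is symmetric about $0$ and its only vertices outside $\Upsilon^C_{[\ii_0]}$ are the two endpoints $(\bar n,\pm n)$ on $\rho$ and $\rho'$. This is exactly what rules out roots of the form $\langle a,-c\rangle$ with $2\le c\le n$ from the $\bar n$-th residue; once it is in hand, the summand bookkeeping via Corollary \ref{lem: folded mul free iff} and the explicit root system is routine, and part (1) is immediate from the coordinate formulas of Definition \ref{def: N_m S_m ext}. (One may alternatively reduce the $S$-sectional case of (2) to the $N$-sectional case via Proposition \ref{Prop: properties of ext}(5) and Corollary \ref{Cor:label_dist}, but the direct summand argument above treats both orientations at once.)
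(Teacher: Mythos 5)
Your proposal is correct and takes essentially the same approach as the paper: part (1) is exactly the coordinate computation the paper alludes to (your $m=(n+k+2)/2$ places $(\bar{n},k)$ and $(\bar{n},k+2)$ on the $S_m$- and $N_m$-sectional paths of one swing), and part (2) rests on the same key observation that every vertex of the $\bar{n}$-th residue lies in $\Upsilon^C_{[\ii_0]}$, $\rho$ or $\rho'$, so its label carries no summand $-p$ with $p\le n$. The only cosmetic difference is that you identify the labels in $\Upsilon^C_{[\ii_0]}$ via Corollary \ref{lem: folded mul free iff} together with a counting argument, where the paper cites Algorithm \ref{Alg:label} directly.
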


\begin{proof}
The first assertion follows from the computation that $(\bar{n}, k)$ and $(\bar{n}, k+2)$
are contained in same $m$-snake for some $m$. Note that all vertices with $\bar{n}$-th residue
are contained in $\Upsilon^C_{[\ii_0]}$, $\rho$ or $\rho'$. Since  $\rho$ (resp. $\rho'$) coincides with $S^{{\rm ext}}_{0}$-sectional quiver
(resp. $N^{{\rm ext}}_0$-sectional), our second assertion follows from Algorithm \ref{Alg:label}.
\end{proof}

\begin{example}
In the twisted AR-quiver $\widehat{\Upsilon}_{[\ii_0]}$ of $[\ii_0]$ in Example \ref{Ex:twistedD_{n+1}}, the $N[3]$-sectional path and $S[3]$-sectional path  are
  \[
\scalebox{0.8}{\xymatrix@C=0.9ex@R=1ex{
(\bar{i}\backslash p) & -2 & -1 & 0 & 1 \\
\bar{1} &&  && \srt{3}{-4}\\
\bar{2} & & &\srt{3}{5} \ar@{->}[ur]  \\
\bar{3} && \srt{2}{3} \ar@{->}[ur]\\
\bar{4} & \srt{1}{3}\ar@{->}[ur]\\
 }}
\ \ \ \ \text{ and } \ \ \ \  \scalebox{0.8}{\xymatrix@C=0.9ex@R=1ex{
(\bar{i}\backslash p) & -5 & -4 & \cdots & 3& 4 \\
\bar{1}&&&\cdots & \srt{1}{-3}\ar@{->}[dr]\\
\bar{2} & & & \cdots && \srt{2}{-3}\\
\bar{3} & \srt{3}{-5} \ar@{->}[dr]&& \cdots && \\
\bar{4} && \srt{3}{4}& \cdots & \\
 }}
 \]
We can see that the $S[3]$-sectional path is  $\left< 3, -5\right>\to \left< 3, 4\right>$,
the $S[-3]$-sectional path is $\left<1,-3\right>\to \left< 2, -3 \right>$, the $N[3]$-sectional path
is $\left< 1,3\right> \to \left< 2,3\right> \to \left< 3,5\right> \to \left< 3,-4\right> $ and
$N[-3]$-sectional path is $\emptyset$. The following picture is the $3$-snake and $3$-swing. Also, $[-3]$-swing is same as $S[-3]$-sectional path.
 \[
\scalebox{0.7}{\xymatrix@C=0.5ex@R=1ex{
(\bar{i}\backslash p) & -5 & -4 & -3 & -2 & -1 & 0 & 1& 2& 3& 4 \\
 & & & &  && & &  \cdot \ar@{-}[dr]& &  \\
\bar{1}&  && && && \srt{3}{-4}\ar@{-}[ur] && \srt{1}{-3}\ar@{->}[dr]\\
\bar{2} & & && & &\srt{3}{5}\ar@{->}[ur]  &&  && \srt{2}{-3}\\
\bar{3} & \srt{3}{-5}\ar@{->}[dr]&& && \srt{2}{3} \ar@{->}[ur]\ar@{->}[ur]&&&&  \\
\bar{4} && \srt{3}{4}\ar@{-}[dr]&& \srt{1}{3}\ar@{->}[ur]& & &&&& \\
 & & & \cdot \ar@{-}[ur]& \\
 }}
 \ \ \ \ \scalebox{0.7}{\xymatrix@C=0.5ex@R=1ex{
(\bar{i}\backslash p) & -5 & -4 & -3 & -2 & -1 & 0 & 1 \\
 & & & &  && & &   &  \\
\bar{1}&  && && && \srt{3}{-4} \\
\bar{2} & & && & &\srt{3}{5}\ar@{->}[ur]\\
\bar{3} & \srt{3}{-5}\ar@{->}[dr]&& && \srt{2}{3} \ar@{->}[ur]\ar@{->}[ur]&\\
\bar{4} && \srt{3}{4}\ar@{-}[dr]&& \srt{1}{3}\ar@{->}[ur]& & &\\
 & & & \cdot \ar@{-}[ur]& \\
 }}
 \]
\end{example}

Now we can characterize the $[p]$-snake for $p = 1,2,\ldots,n+1$ as follows:
\begin{eqnarray} &&
\parbox{85ex}{
Let $\{X,Y\}=\{N,S\}$.
\begin{itemize}
\item {\rm ($p \ne n+1$ and $1$)} $[p]$-snake consists of one $X$-sectional path of length $n-1$ and two sectional $Y$-paths $Y^1$ of length $k$ contained in ${}_1\Gamma_Q$
and $Y^2$ of length $l$ contained in ${}_2\Gamma_{Q^*}$ such that $k+l=n-2$. If $Y=S$ then $k= p-2$, every vertex in $S^1$ has $-p$ as its second
summand and the remained vertices in $[p]$-snake have $p$ as their common summand. If $Y=N$, then $k=p-2$, every vertex in $N^2$ has $-p$ as its second
summand and the remained vertices in $[p]$-snake have $p$ as their common summand.
\item {\rm ($p = n+1$)} $[n+1]$-snake consists of an $N$-sectional path ($= N_0$-sectional path) and an $S$-sectional path ($= S_{n+1}$-sectional path)
of length $n-1$ such that the $N$-sectional path is located at the right of the $S$-sectional path.
\item {\rm ($p = 1$)} $[n+1]$-snake consists of an $N$-sectional path and an $S$-sectional path of length $n-1$ such that the
$S$-sectional path is located at the right of the $N$-sectional path.
\end{itemize}
}\label{eq: [p]-cahracter}
\end{eqnarray}

Using the properties of sectional quivers (paths),
we can find labels of the twisted AR-quiver $\Upsilon_{[\ii_0]}$ by using only its shape.

\begin{algorithm} \label{Alg:labels_comb}  \hfill

\noindent
{\rm (Step 1)} By using the height function and \eqref{eq: Foldable Coxeter D}, find the shape of $\widehat{\Upsilon}_{[\ii_0]}$  \\
{\rm (Step 2)} Using the characterizations in \eqref{eq: [p]-cahracter},
find all $[p]$-snake for $p=1,2,\ldots,n$ and put their summand. In this step, we can complete the labels except those in $[n+1]$-snake. \\
{\rm (Step 3)} If $n$ $($resp. $n+1)$ appears in $\phi_{[\ii_0]}$, we complete labels for vertices in $S^{{\rm ext}}_0$-sectional path by putting $-n-1$ $($resp. $n+1)$  and
labels for vertices in $N^{{\rm ext}}_0$-sectional path by putting $n+1$ $($resp. $-n-1)$.
\end{algorithm}

\begin{example}
Suppose $[\ii_0]$ has the twisted Coxeter element $(s_2 s_5 s_1 s_3)\vee.$  Here we denote by $\{a,b\}$ the root $\alpha$ with summands $a$ and $b$.\\
By {\rm (Step 1)} and {\rm (Step 2)} for $p \ne n+1$
\[\scalebox{0.8}{\xymatrix@C=0.9ex@R=1ex{
( i\backslash p) & -5 & -4 & -3 & -2 & -1 & 0 & 1& 2& 3& 4 \\
1&_{ \langle 1, -2 \rangle }  \ar@{->}[dr]  && _{  \langle 2,-4 \rangle  }  \ar@{->}[dr]  && _{\{ 4,* \} } \ar@{->}[dr]  && _{\{  3,*\} }
 \ar@{->}[dr] &&  _{ \langle 1, -3 \rangle }  \ar@{->}[dr]\\
2&& _{ \langle 1, -4 \rangle } \ar@{->}[dr]\ar@{->}[ur]  && _{\{ 2,*\}} \ar@{->}[dr]\ar@{->}[ur]  &&  _{\langle 3,4 \rangle }  \ar@{->}[dr]\ar@{->}[ur]  && _{\langle 1, * \rangle}
\ar@{->}[dr]\ar@{->}[ur] && _{ \langle 2,-3 \rangle }  \\
3& _{\langle 3,-4 \rangle } \ar@{->}[ddr]\ar@{->}[ur]  && _{\langle 1, * \rangle } \ar@{->}[dr]\ar@{->}[ur]  && _{ \langle 2,3 \rangle }  \ar@{->}[ddr]\ar@{->}[ur]
&& _{ \langle 1, 4\rangle  }  \ar@{->}[dr]\ar@{->}[ur] && _{\langle 2, * \rangle}
 \ar@{->}[ddr]\ar@{->}[ur] \\
4&&&&  _{\langle 1, 3\rangle }  \ar@{->}[ur]  &&&&  _{\langle 2, 4 \rangle }  \ar@{->}[ur] \\
5&& _{\{ 3,*\}} \ar@{->}[uur]  &&&& _{\langle 1, 2 \rangle }  \ar@{->}[uur]  &&&& _{\langle 4, *\rangle}
}} \]
 By {\rm (Step 3)}, we can complete the label.
\[\scalebox{0.8}{\xymatrix@C=0.9ex@R=1ex{
( i\backslash p) & -5 & -4 & -3 & -2 & -1 & 0 & 1& 2& 3& 4 \\
1&_{ \langle 1, -2 \rangle }  \ar@{->}[dr]  && _{  \langle 2,-4 \rangle }  \ar@{->}[dr]  && _{\langle 4,-5 \rangle } \ar@{->}[dr]  && _{\langle  3,5\rangle }
 \ar@{->}[dr] &&  _{ \langle 1, -3 \rangle }  \ar@{->}[dr]\\
2&& _{ \langle 1, -4 \rangle } \ar@{->}[dr]\ar@{->}[ur]  && _{\langle 2,-5 \rangle} \ar@{->}[dr]\ar@{->}[ur]  &&  _{\langle 3,4 \rangle }  \ar@{->}[dr]\ar@{->}[ur]  && _{\langle 1,5\rangle}
\ar@{->}[dr]\ar@{->}[ur] && _{ \langle 2,-3 \rangle }  \\
3& _{\langle 3,-4 \rangle } \ar@{->}[ddr]\ar@{->}[ur]  && _{\langle 1, -5 \rangle } \ar@{->}[dr]\ar@{->}[ur]  && _{ \langle 2,3 \rangle }  \ar@{->}[ddr]\ar@{->}[ur]
&& _{ \langle 1, 4\rangle  }  \ar@{->}[dr]\ar@{->}[ur] && _{\langle 2,5\rangle}
 \ar@{->}[ddr]\ar@{->}[ur] \\
4&&&&  _{\langle 1, 3\rangle }  \ar@{->}[ur]  &&&&  _{\langle 2, 4 \rangle }  \ar@{->}[ur] \\
5&& _{\langle 3,-5\rangle} \ar@{->}[uur]  &&&& _{\langle 1, 2 \rangle }  \ar@{->}[uur]  &&&& _{\langle 4,5\rangle}
}} \]
\end{example}

\section{Twisted additive property of $D_{n+1}$} \label{Section:additive}

\begin{proposition} \label{Prop:tw_add_1}
Let us take a square with length of edge 1 in the folded AR-quiver $\widehat{\Upsilon}_{[\ii_0]}$:
\[{\xymatrix@C=1.5ex@R=1ex{
 && \gamma \ar@{->}[dr]  && \\
 &\alpha\ar@{->}[dr]\ar@{->}[ur]  &&\beta \\
&& \eta \ar@{->}[ur]
 }}
 \]
 \begin{enumerate}[{\rm (i)}]
\item  There is a swing which has both root $\gamma$ $($resp. $\eta)$ and $\alpha$. Similarly, there is a swing which has both root
$\gamma$ $($resp. $\eta)$ and $\beta$.
\item  There are four swings which has $\alpha$ or $\beta$.
\item $\alpha+\beta=\eta+\gamma.$
\end{enumerate}
\end{proposition}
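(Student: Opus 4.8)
The plan is to reduce parts (ii) and (iii) to the already-established swing structure, so that only routine coordinate bookkeeping is needed. First I would fix coordinates. Write $\al$ at $(\bar a,z)$ in $\widehat{\Upsilon}_{[\ii_0]}$; since every arrow of the folded quiver raises the second coordinate by $1$ and moves between neighbouring folded residues, the two out-arrows of $\al$ force $\ga=(\overline{a-1},z+1)$ and $\eta=(\overline{a+1},z+1)$, and the two arrows into $\be$ then force $\be=(\bar a,z+2)$. As the folded residues form the path $\bar1-\bar2-\cdots-\bar n$, the presence of both $\ga$ and $\eta$ forces $2\le a\le n-1$ (so squares occur only for $n\ge3$). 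Plugging these coordinates into Definition~\ref{def: N_m S_m ext}(3)--(4) one checks directly that $\al,\ga$ lie on the $N_{m_1}$-sectional path with $m_1=(z+a)/2$; that $\al,\eta$ lie on the $S_{m_2}$-sectional path with $m_2=(z-a)/2+(n+1)$; that $\ga,\be$ lie on the $S_{m_2+1}$-sectional path; and that $\eta,\be$ lie on the $N_{m_1+1}$-sectional path. Since an $N_m$- or $S_m$-sectional path is contained in the $m$-swing (Definition~\ref{def: N_m S_m ext}(5)), this is precisely (i): the $m_1$-swing contains $\{\al,\ga\}$, the $(m_1+1)$-swing contains $\{\eta,\be\}$, the $m_2$-swing contains $\{\al,\eta\}$, and the $(m_2+1)$-swing contains $\{\ga,\be\}$.

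For (ii) I would invoke two structural facts established above: by Proposition~\ref{Prop:m-snake shares p} and the renaming of Definition~\ref{def: rename}, each swing $W$ carries a single signed summand, i.e. a vector $\epsilon_W\in\{\pm\ve_1,\dots,\pm\ve_{n+1}\}$ that occurs as one of the two signed summands of every root lying on $W$; and by the corollary following Theorem~\ref{thm:p-snake well}, a root $\langle c,d\rangle$ lies on exactly the two swings whose summands are $c$ and $d$. Hence $\al$ lies on exactly the $m_1$- and $m_2$-swings, $\be$ on exactly the $(m_1+1)$- and $(m_2+1)$-swings, $\ga$ on the $m_1$- and $(m_2+1)$-swings, and $\eta$ on the $m_2$- and $(m_1+1)$-swings. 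From $m_1-m_2=a-(n+1)$ we get $m_2-m_1=n+1-a\ge2$, hence $m_1<m_1+1<m_2<m_2+1$ and the four swings are pairwise distinct, and $\{\al,\be\}$ visits exactly these four, which is (ii).

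For (iii), note that every positive root of $D_{n+1}$ is of the form $\pm\ve_i\pm\ve_j$ with $i\ne j$ and is determined by this pair of signed indices; consequently the unique root lying on two distinct swings $W\ne W'$ must equal $\epsilon_W+\epsilon_{W'}$. Applying this at the four pairwise intersections obtained from (i)--(ii),
\[
\al=\epsilon_{m_1}+\epsilon_{m_2},\quad \ga=\epsilon_{m_1}+\epsilon_{m_2+1},\quad \eta=\epsilon_{m_2}+\epsilon_{m_1+1},\quad \be=\epsilon_{m_2+1}+\epsilon_{m_1+1},
\]
so $\al+\be=\epsilon_{m_1}+\epsilon_{m_1+1}+\epsilon_{m_2}+\epsilon_{m_2+1}=\ga+\eta$, which is (iii).

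The one input that is not purely formal is that every swing carries a consistent sign $\epsilon_W$ — equivalently, that a root is pinned down by the two swings through it; but this is exactly what Proposition~\ref{Prop:m-snake shares p}, Corollary~\ref{Cor: induced property ext} and Definition~\ref{def: rename} were arranged to deliver, so I do not expect a genuine obstacle. The only real chore will be keeping the shifts $m\mapsto m+1$ straight and observing that short sectional paths (which occur when $a$ is close to $1$ or $n$) do not affect any swing membership used above.
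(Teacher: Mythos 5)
Your proposal is correct and follows essentially the same route as the paper: both arguments reduce (ii) and (iii) to the fact that each sectional path/swing through the four vertices carries a fixed signed summand shared by all its roots (Proposition \ref{Prop:m-snake shares p}, Corollary \ref{Cor: induced property ext} and the corollary to Theorem \ref{thm:p-snake well}), so that $\gamma$ and $\eta$ inherit one summand from $\alpha$'s diagonal and one from $\beta$'s, giving $\alpha+\beta=\gamma+\eta$. The only differences are cosmetic: you make the coordinates and swing indices explicit and get distinctness of the four swings from $m_2-m_1=n+1-a\ge 2$, where the paper instead invokes Proposition \ref{Prop: properties of ext}(3) to see $a_1,a_2,b_1,b_2$ are distinct and dismisses (i) as obvious.
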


\begin{proof}
(i) is obvious. So here we give proofs for (ii) and (iii).
Let $\alpha$ be on the $N[\epsilon_1\cdot a_1]$ and $S[\epsilon_2\cdot a_2]$-sectional paths and
$\beta$ be on the $N[\varepsilon_1\cdot b_1]$ and $S[\varepsilon_2\cdot b_2]$-sectional paths, where $\epsilon_1, \epsilon_2, \varepsilon_1, \varepsilon_2$
are $1$ or $-1$. It is obvious that $a_1\neq b_1$ and $a_2\neq b_2$. Observe that the $N^{{\rm ext}}[\pm a_1]$-sectional (resp. $N^{{\rm ext}}[\pm b_1]$-sectional)
quiver and the $S^{{\rm ext}}[\pm b_2]$-sectional (resp. $S^{{\rm ext}}[\pm a_2]$-sectional) quiver have an intersection $\gamma$ (resp. $\eta$). By (3) in Proposition \ref{Prop: properties of ext},
we have $a_1\neq b_2$ and $b_1\neq a_2$. Hence $a_1$, $b_1$, $a_2$ and $b_2$ are distinct.
Now, by Proposition \ref{Prop:m-snake shares p} and Corollary \ref{Cor: induced property ext},  $\gamma$ has $\epsilon_1\cdot a_1$ and $\varepsilon_2\cdot b_2$
as its summands and $\eta$ has $\epsilon_2\cdot a_2$ and $\varepsilon_1\cdot b_1$ as its summands, which implies $\alpha+\beta=\gamma+\eta.$
\end{proof}

\begin{proposition}\label{Prop:tw_add_2}
Let us take three vertices in the first and second residues of twisted AR-quiver $\Upsilon_{[\ii_0]}$ such that :
\[{\xymatrix@C=1.5ex@R=1ex{
 1 &&\alpha\ar@{->}[dr] &&\beta \\
2&& & \eta \ar@{->}[ur]
 }}
 \]
Then we have $\alpha+\beta= \eta.$
\end{proposition}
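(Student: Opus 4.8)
The plan is to deduce $\al+\be=\eta$ from the additive property \eqref{eq: addtive property} of Auslander--Reiten quivers of type $A_n$, using the decomposition $\widehat{\Upsilon}_{[\ii_0]}={}_1\Gamma_Q\sqcup{}_2\Gamma_{Q^*}$ of Remark~\ref{rem: decompose into 2-comp} together with the labeling rule of Algorithm~\ref{Alg:label}. First I would pass to the folded AR-quiver, where the configuration reads: $\al$ at $(\bar{1},p)$, $\eta$ at $(\bar{2},p+1)$, $\be$ at $(\bar{1},p+2)$. With the normalization of Convention~\ref{Conv:xi_0} the paths $\rho$ and $\rho'$ meet the $\bar{1}$-residue exactly at $(\bar{1},1)$ and $(\bar{1},-1)$; since the vertices of a fixed residue are placed two steps apart, there is no $\bar{1}$-residue vertex strictly between these two, so no $\bar{1}$-residue vertex lies in $\Upsilon^C_{[\ii_0]}$, and the unique $\bar{2}$-residue vertex that can lie in $\Upsilon^C_{[\ii_0]}$ is the one at $(\bar{2},0)$. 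Hence the triangle $\al\to\eta\to\be$ is contained in a single piece ${}_1\Gamma_Q$ or ${}_2\Gamma_{Q^*}$, except precisely when $p=-1$, in which case $\al=(\bar{1},-1)\in{}_2\Gamma_{Q^*}$, $\be=(\bar{1},1)\in{}_1\Gamma_Q$ and $\eta=(\bar{2},0)\in\Upsilon^C_{[\ii_0]}$.

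In the main case, suppose $\al,\eta,\be$ all lie in ${}_1\Gamma_Q\simeq\Gamma_Q$ (the argument for ${}_2\Gamma_{Q^*}\simeq\Gamma_{Q^*}$ is identical, $Q^*$ being again a type-$A_n$ Dynkin quiver, and the isomorphism matches the $\bar{1}$-residue with the leaf $1$ of $Q$ since it preserves $\mathfrak{B}_{[\ii_0]}$). Then $\al\to\eta\to\be$ is the triangle at the leaf $1$, to which \eqref{eq: addtive property} applies with a one-term neighbour sum, giving $\al'+\be'=\eta'$ for the corresponding $A_n$-labels. I would then transfer this through the labeling: by Corollary~\ref{Cor:label come from 1GaQ}, Corollary~\ref{Cor:label come from 2GaQ} and Algorithm~\ref{Alg:label}, an interior vertex carrying the $A_n$-label $[a,b]$ becomes $\langle a,-b-1\rangle=\ve_a-\ve_{b+1}$, i.e. the image of $[a,b]$ under the standard embedding $\Z\Phi_{A_n}\hookrightarrow\Z\Phi_{D_{n+1}}$, $\al^{A_n}_i\mapsto\al^{D_{n+1}}_i$; a vertex on $\rho$ (resp. $\rho'$) instead becomes $\langle a,\pm(n+1)\rangle$, that same image composed with the single reflection $\ve_{n+1}\mapsto-\ve_{n+1}$, applied uniformly along all of $\rho$ (resp. all of $\rho'$). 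In our triangle at most one adjacent pair — $\{\al,\eta\}$ or $\{\eta,\be\}$ — can sit on $\rho$ (or on $\rho'$); both members of such a pair then undergo the same reflection, while the third vertex, being interior, has $A_n$-label with second component $\neq n$ (the vertices with second component $n$ are exactly $\rho$, resp. $\rho'$) and hence carries no $\ve_{n+1}$-coefficient. Thus all three relabelings agree with one linear map on the $\Z$-span of $\al',\be',\eta'$, and $\al'+\be'=\eta'$ forces $\al+\be=\eta$.

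In the exceptional case $p=-1$, the vertices $\al$ and $\be$ are, by Proposition~\ref{Prop:cover by GQ}, the roots of $\Phi^+|_{n+1\rangle}$ with twisted coordinates $(1,-1)$ and $(1,1)$. Proposition~\ref{Prop:label for Cvertices}, applied with $i_1=i_2=1$, states exactly that the label at $(\overline{1+1},-1+1)=(\bar{2},0)$ — that is, $\eta$ — equals the sum of these two roots, so $\eta=\al+\be$ here as well. (That $\al+\be$ is again a positive root is automatic once the equality is established.)

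The one step I expect to require care is the ``seam'' bookkeeping in the main case, namely checking that the $A_n\to D_{n+1}$ relabeling stays additive on the few triangles touching $\rho$ or $\rho'$; the remainder is a direct application of the type-$A_n$ additive property \eqref{eq: addtive property} and of the structural results of Section~\ref{Section:twisted_AR_D_label}. One may also view the statement as the degenerate case of Proposition~\ref{Prop:tw_add_1} in which the corner that would lie in the nonexistent $\bar{0}$-residue is empty and so contributes $0$ to $\al+\be=\eta+0$; making that viewpoint precise comes down to the same analysis.
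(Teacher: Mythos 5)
Your proof is correct and follows essentially the same route as the paper: all three vertices lying in one copy ${}_1\Gamma_Q$ or ${}_2\Gamma_{Q^*}$ is handled by the type-$A_n$ additive property \eqref{eq: addtive property}, and the remaining (crossing) case, where $\al$ and $\be$ lie on $\rho'$ and $\rho$, is exactly Proposition \ref{Prop:label for Cvertices} with $i_1=i_2=1$. The only difference is that you spell out details the paper leaves implicit (the coordinate argument locating the single crossing case at $p=-1$ and the check that the $A_n\to D_{n+1}$ relabeling of Algorithm \ref{Alg:label} stays additive along $\rho$ and $\rho'$), and these verifications are sound.
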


\begin{proof}
It follows from the additive property of adapted AR-quiver of type $A_n$
if three vertices are induced from  ${}_1\Gamma_Q$ (resp. ${}_2\Gamma_{Q^*}$) in  Algorithm \ref{Alg:label}.
Otherwise, $\alpha=\left< a_1, \pm (n+1)\right>$ and $\beta=\left< b_1,\pm( -n-1)\right>$.
Then we know $\eta=\alpha+\beta$ by Proposition \ref{Prop:label for Cvertices}.
\end{proof}

\begin{proposition}\label{Prop:tw_add_3}
Let us take six vertices in from $n-2$-th to $n+1$-th residues of twisted AR-quiver $\Upsilon_{[\ii_0]}$ such that :
\[{\xymatrix@C=1.5ex@R=1ex{
n-2 && && \gamma \ar@{->}[dr]&& \\
n-1 &&& \mu\ar@{->}[ddr]\ar@{->}[ur] && \nu \ar@{->}[dr]&&\\
 n &&\alpha\ar@{->}[ur] && &&\beta \\
n+1 && && \eta \ar@{->}[uur]
 }}
 \text{ or }
 {\xymatrix@C=1.5ex@R=1ex{
 n-2 && && \gamma \ar@{->}[dr] && \\
n-1  &&& \mu\ar@{->}[dr] \ar@{->}[ur] && \nu\ar@{->}[ddr] &&\\
 n  && && \eta \ar@{->}[ur]    \\
n+1 &&\alpha\ar@{->}[uur] && &&\beta
 }}
  \]
Then we have $\alpha+\beta= \eta+\gamma=\mu+\nu.$
\end{proposition}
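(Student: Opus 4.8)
The plan is to split the two claimed equalities, getting the easy one from a single unit‑square move and the substantive one from the explicit $\Upsilon^C$‑labelling. In the coordinates of $\widehat{\Upsilon}_{[\ii_0]}$ (with the height function of Convention~\ref{Conv:xi_0}) let $\gamma$ sit at $(\overline{n-2},p)$; the six displayed arrows then force $\mu,\nu$ to $(\overline{n-1},p-1),(\overline{n-1},p+1)$ and $\alpha,\eta,\beta$ to $(\bar n,p-2),(\bar n,p),(\bar n,p+2)$, and these six vertices exist precisely when $-(n-2)\le p\le n-2$. The two pictures in the statement differ only in the third (folded) coordinate of $\alpha,\eta,\beta$, which plays no role and over which everything below is symmetric, so I treat the first one.

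First, $\eta+\gamma=\mu+\nu$ is immediate: the vertices $\gamma$ (top), $\mu$ (left), $\nu$ (right), $\eta$ (bottom), together with the four arrows $\mu\to\gamma,\ \gamma\to\nu,\ \mu\to\eta,\ \eta\to\nu$ among the six displayed ones, form a length‑$1$ square in $\widehat{\Upsilon}_{[\ii_0]}$, so Proposition~\ref{Prop:tw_add_1}(iii) applies verbatim.

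The work is $\alpha+\beta=\eta+\gamma$. I would first assume $-(n-2)<p<n-2$; comparing coordinates with $\rho$ and $\rho'$ then shows all of $\alpha,\beta,\gamma,\eta$ lie in $\Upsilon^C_{[\ii_0]}$ (equivalently, are folded multiplicity non‑free, Corollary~\ref{lem: folded mul free iff}). Put $q_1=(n-p)/2$, $q_2=(n+p)/2$, so $q_1+q_2=n$, and write $R'_r$ (resp.\ $R_r$) for the root at $(\bar r,-r)\in\rho'$ (resp.\ $(\bar r,r)\in\rho$), so that $R'_r=\ve_{a(r)}-\ve_{n+1}$ and $R_r=\ve_{b(r)}+\ve_{n+1}$ for the relevant first summands, and note that Corollary~\ref{Cor:4.25} (equivalently Corollary~\ref{Cor:label_dist}) forces $a(r)=b(n+1-r)$. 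Reading coordinates off Proposition~\ref{Prop:label for Cvertices},
\[
\eta=R'_{q_1}+R_{q_2},\qquad \gamma=R'_{q_1-1}+R_{q_2-1},\qquad \alpha=R'_{q_1+1}+R_{q_2-1},\qquad \beta=R'_{q_1-1}+R_{q_2+1},
\]
so $(\eta+\gamma)-(\alpha+\beta)=(R'_{q_1}+R_{q_2})-(R'_{q_1+1}+R_{q_2+1})=\bigl(\ve_{a(q_1)}+\ve_{b(q_2)}\bigr)-\bigl(\ve_{a(q_1+1)}+\ve_{b(q_2+1)}\bigr)$; since $a(q_1)=b(n+1-q_1)=b(q_2+1)$ and $a(q_1+1)=b(n-q_1)=b(q_2)$, this vanishes, which with the previous step gives $\alpha+\beta=\eta+\gamma=\mu+\nu$.

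The hard part — essentially the only remaining work — is the two boundary cases $p=n-2$ (where $\beta,\gamma\in\rho$) and $p=-(n-2)$ (where $\alpha,\gamma\in\rho'$): there Proposition~\ref{Prop:label for Cvertices} no longer covers all four roots, and I expect one has to use the explicit shape of $\rho,\rho'$ together with the $A_n$‑additive property~\eqref{eq: addtive property} inside $\Upsilon^R_{[\ii_0]}$ (resp.\ $\Upsilon^L_{[\ii_0]}$) for the two roots still lying in $\Upsilon^C_{[\ii_0]}$. The other thing to keep straight is the signed‑summand bookkeeping — roots here may be of type $\langle\cdot,-\cdot\rangle$, and on $\rho,\rho'$ they carry $\pm(n+1)$ — but in the generic case the whole identity collapses, as above, to the single relation $a(r)=b(n+1-r)$ between the labels on $\rho$ and on $\rho'$.
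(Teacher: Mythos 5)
Your generic-case argument is correct and takes a genuinely different route from the paper. You reduce everything to the explicit labelling of $\Upsilon^C_{[\ii_0]}$ from Proposition~\ref{Prop:label for Cvertices} (every interior vertex is $R'_{i_1}+R_{i_2}$ for the roots $R'_{i_1}\in\rho'$, $R_{i_2}\in\rho$ determined by its folded coordinate) together with the duality $a(r)=b(n+1-r)$ between first summands on $\rho'$ and $\rho$ coming from Corollary~\ref{Cor:4.25}, so that $\alpha+\beta=\eta+\gamma$ becomes a two-line cancellation; the identity $\eta+\gamma=\mu+\nu$ is then the unit square of Proposition~\ref{Prop:tw_add_1}, exactly as in the paper. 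The paper instead argues uniformly with the swing/snake formalism of Section~\ref{Sec:twisted_AR_D_label_shape}: it records on which $N[\cdot]$- and $S[\cdot]$-sectional paths $\alpha$ and $\beta$ lie, uses Corollary~\ref{cor: nbar adjacent} and the connectedness statements of Proposition~\ref{Prop: properties of ext} and Corollary~\ref{Cor: induced property ext} to read off the summands of $\eta$ and $\gamma$, and concludes $\alpha+\beta=\eta+\gamma$ without ever writing the labels explicitly; that argument needs no case distinction according to the position of the configuration, which is what your coordinate computation buys at the cost of a boundary analysis.

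The one real defect is that your proposal does not prove the boundary cases $p=\pm(n-2)$: you only state that you ``expect'' they require the shape of $\rho,\rho'$ and the $A_n$-additive property, so as written the statement is not established there. In fact no new machinery is needed, and you should close this within your own setup: at $p=n-2$ the roots $\beta,\gamma,\nu$ lying on $\rho$ are by definition $R_n,R_{n-2},R_{n-1}$, while $\alpha,\eta,\mu$ are still interior, so Proposition~\ref{Prop:label for Cvertices} gives $\alpha=R'_2+R_{n-2}$, $\eta=R'_1+R_{n-1}$, and then
\[
(\eta+\gamma)-(\alpha+\beta)=R'_1+R_{n-1}-R'_2-R_n,
\]
which vanishes by the same relation $a(1)=b(n)$, $a(2)=b(n-1)$ from Corollary~\ref{Cor:4.25} (the $\pm\ve_{n+1}$ terms cancel as before, whichever of $\rho,\rho'$ carries $+(n+1)$); the case $p=-(n-2)$ and the second picture are symmetric. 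With that paragraph added your proof is complete; without it, it is an acknowledged gap, albeit one whose repair requires no idea beyond those you already use.
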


\begin{proof}
By Algorithm \ref{Alg:label}, we can see that summands of  $\alpha$ and $\beta$
lie in $\{1, 2, \cdots, n+1\}\cup \{-n-1\}$.
More precisely, if $\alpha$ (resp. $\beta$) has negative summand $(-n-1)$
then it is shared by every element in the $N[-n-1]$ (resp. $S[-n-1]$)-sectional path. Let $\alpha$
be on the $N[\epsilon\cdot a_1]$  and $S[a_2]$-sectional paths and $\beta$ be on the $N[b_1]$ and $S[\varepsilon\cdot b_2]$-sectional paths.
By Corollary \ref{cor: nbar adjacent}, we know that
\begin{itemize}
\item $\eta$ in on the $N[a_2]$-sectional and the $S[b_1]$-sectional paths,
\item $\gamma$ is on the $N^{{\rm ext}}[\pm a_1]$-sectional and $S^{{\rm ext}}[\pm b_2]$-sectional quivers.
\end{itemize}
Since $\alpha$ and $\gamma$ (resp. $\beta$ and $\gamma$) are in the same connected component of $N^{{\rm ext}}[\pm a_1]$-sectional
(resp. $S^{{\rm ext}}[\pm b_2]$-sectional) quiver,
$\gamma$ is on the $N[\epsilon\cdot a_1]$ and $S[\varepsilon\cdot b_2]$-sectional quivers.
Since $\gamma$ and $\eta$ have $\epsilon\cdot a_1$, $a_2$, $b_1$ and $\varepsilon\cdot b_2$ as their summands, we conclude that $$\alpha+\beta=\eta+\gamma.$$
Now, by Proposition \ref{Prop:tw_add_1}, our assertion follows.
\end{proof}

By Proposition \ref{Prop:tw_add_1}, Proposition \ref{Prop:tw_add_2} and Proposition \ref{Prop:tw_add_3},
we obtain {\it the twisted additive property} of $\Upsilon_{[\ii_0]}$, which is stated as follows:

\begin{theorem}[Twisted additive property of $D_{n+1}$] \label{twisted additive property}
Let $\alpha$ and $\beta$ be two roots in $\Phi^+$ whose folded coordinates are $(\bar{i}, p-2^{|\bar{i}|})$ and $(\bar{i}, p)$ in the folded AR-quiver $\widehat{\Upsilon}_{[\ii_0]}$. Here $|\bar{i}|$ denotes the number of indices in the orbit $\bar{i}$.  Then
\[ \alpha+\beta=\sum_{\gamma \in \mathcal{J}} \gamma,\]
where $\mathcal{J}\subset \Phi^+$  consists of $\gamma$ which are on paths from $\alpha$ to $\beta$ and have coordinates $(\bar{j},p-2^{|\bar{i}|-1})$ for $\bar{j}\in \bar{I}.$
\end{theorem}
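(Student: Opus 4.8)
The plan is to reduce the general twisted additive property to the three concrete local cases already isolated in Proposition~\ref{Prop:tw_add_1}, Proposition~\ref{Prop:tw_add_2} and Proposition~\ref{Prop:tw_add_3}. Observe that two vertices $\alpha$ and $\beta$ sitting at folded coordinates $(\bar i, p-2^{|\bar i|})$ and $(\bar i, p)$ must have a folded residue $\bar i$ that is either (a) an interior residue $\bar 1 < \bar i < \bar n$, in which case $|\bar i| = 1$ so the vertical gap is $2$; (b) the residue $\bar 1$, again with gap $2$; or (c) the residue $\bar n$, where $|\bar n| = 2$ and the gap is $4$. In cases (a) and (b), the arrows in $\widehat{\Upsilon}_{[\ii_0]}$ force the paths from $\alpha$ to $\beta$ to pass through a diamond as in Proposition~\ref{Prop:tw_add_1} (if $\bar i$ is interior, there are two neighbors $\bar j$ with $|\bar i - \bar j| = 1$, giving exactly the four-vertex square) or through the two-vertex configuration of Proposition~\ref{Prop:tw_add_2} (when $\bar i = \bar 1$ and only one neighbor $\bar 2$ is available, but here one must be careful because $\bar 1$ may also be adjacent only to $\bar 2$, so a single intermediate vertex $\eta$ appears). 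Case (c) is precisely the situation addressed by Proposition~\ref{Prop:tw_add_3}: the residue $\bar n$ unfolds to $\{n, n+1\}$, the vertical displacement of $4$ corresponds to going from an $(n$ or $n+1)$-residue vertex up through residues $n-1$, $n-2$ and back down, and the set $\mathcal{J}$ of intermediate vertices at coordinate $(\bar j, p - 2^{|\bar i|-1}) = (\bar j, p-2)$ consists of the two vertices in residue $\bar n$ sandwiched between $\gamma$ (residue $\bar n{-}1$... actually $n-2$) — more precisely $\mathcal{J} = \{\eta, \gamma\}$ in the notation of Proposition~\ref{Prop:tw_add_3}, lying in the $\bar n$-th and $\overline{n-2}$-th residues.

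The first step I would carry out is to make precise which vertices of $\widehat{\Upsilon}_{[\ii_0]}$ lie on a path from $\alpha$ to $\beta$ and have second coordinate exactly $p - 2^{|\bar i|-1}$; this is a purely combinatorial statement about the coordinate system \eqref{eq: r_i for twisted D_n+1} together with the arrow rule (e) of Algorithm~\ref{Alg_twAR}, which says arrows go $(\bar i', p') \to (\bar j', p'+1)$ whenever $|\bar i' - \bar j'| = 1$. From this rule one checks that a path from $(\bar i, p - 2^{|\bar i|})$ up to $(\bar i, p)$ is forced: it must ascend by unit steps to some turning residue and descend back, and the only vertices at the ``middle height'' $p - 2^{|\bar i|-1}$ that can appear are the neighbors of $\bar i$ in the folded Dynkin diagram, i.e. those $\bar j$ with $|\bar i - \bar j| = 1$, intersected with $\widehat{\Upsilon}_{[\ii_0]}$. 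So the claimed $\mathcal{J}$ is determined by the shape alone, and the three propositions cover exactly the three possible local shapes (interior diamond, boundary at $\bar 1$, boundary at $\bar n$).

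The second step is the case analysis itself. For an interior residue $\bar i$ we are in a square of side $1$; Proposition~\ref{Prop:tw_add_1}(iii) gives $\alpha + \beta = \eta + \gamma$ where $\{\eta, \gamma\}$ are exactly the two middle-height vertices, which is the desired identity with $\mathcal{J} = \{\eta, \gamma\}$. For $\bar i = \bar 1$: if $\bar 1$ happens to also be adjacent to two distinct residues in $\widehat{\Upsilon}_{[\ii_0]}$ we again land in Proposition~\ref{Prop:tw_add_1}; if only the single neighbor $\bar 2$ exists (the generic boundary situation), then Proposition~\ref{Prop:tw_add_2} — after relabeling residues $1 \leftrightarrow$ first, $2 \leftrightarrow$ second — gives $\alpha + \beta = \eta$ with $\mathcal{J} = \{\eta\}$ a singleton. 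For $\bar i = \bar n$, Proposition~\ref{Prop:tw_add_3} gives $\alpha + \beta = \eta + \gamma = \mu + \nu$; the relevant equality for our theorem is $\alpha + \beta = \eta + \gamma$, and one verifies that $\eta$ sits in the $\bar n$-th residue and $\gamma$ in the $\overline{n-2}$-th residue, both at second coordinate $p - 2$, which is $p - 2^{|\bar n|-1}$ since $|\bar n| = 2$. Collating these three computations yields the theorem.

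The main obstacle I anticipate is the bookkeeping at the boundary residue $\bar n$: one must argue that when $\alpha, \beta$ are both in the $\bar n$-th residue at folded coordinates differing by $4$, the path structure genuinely forces the six-vertex picture of Proposition~\ref{Prop:tw_add_3} (as opposed to some degenerate configuration near the ends of the quiver), and that the intermediate vertices at height $p - 2$ are precisely $\eta$ and $\gamma$ and nothing else. This requires invoking the description \eqref{eq: cond uncovered} of $\widehat{\Upsilon}_{[\ii_0]}$ as ${}_1\Gamma_Q \sqcup {}_2\Gamma_{Q^*}$ glued along $\rho, \rho'$, together with Corollary~\ref{cor: nbar adjacent}, to rule out spurious vertices; a secondary subtlety is confirming that, in the $\bar 1$ case, Proposition~\ref{Prop:tw_add_2}'s hypothesis (three vertices in the first and second residues, with $\alpha$ directly above $\beta$ in the first residue and $\eta$ in the second) matches the height-function normalization fixed in Convention~\ref{Conv:xi_0}. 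Once these shape facts are in hand the algebraic identities are immediate from the cited propositions, so the proof is essentially a careful enumeration of local pictures with no new computation.
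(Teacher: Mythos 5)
Your proposal is correct and follows essentially the same route as the paper, which obtains Theorem \ref{twisted additive property} directly by combining Propositions \ref{Prop:tw_add_1}, \ref{Prop:tw_add_2} and \ref{Prop:tw_add_3} according to whether the folded residue $\bar{i}$ is interior, equal to $\bar{1}$, or equal to $\bar{n}$. Your extra verification that the required local configurations (and the middle-height vertices) always exist, using the fact that each folded residue carries exactly $n+1$ vertices as in \eqref{eq: r_i for twisted D_n+1}, is exactly the implicit bookkeeping behind the paper's one-line deduction.
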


\begin{remark} For adapted classes of type $ADE_n$ and twisted adapted classes of type $A_{2n+1}$, additive properties can be understand as analogous properties of Theorem \ref{twisted additive property}.
\begin{enumerate}
\item In order to see the cases for adapted classes, we can consider ordinary Coxeter elements are associated to the identity map on $I$. Then the order of every orbit $|\bar{i}|=1.$ (Note that the identity map on $I$ is not compatible with the involution $*$). Observe that, in
\eqref{eq: addtive property}, the coordinates of $\beta$  and $\phi_Q(\beta)$ are $(i,p)$ and $(i,p-2)=(i, p-2^{|\bar{i}|})$, respectively. Hence, for $\alpha= \phi_Q(\beta)$, the additive property
\eqref{eq: addtive property} can be rewritten as follows:
 \[ \alpha+\beta=\sum_{\gamma \in \mathcal{J}} \gamma,\]
where $\mathcal{J}\subset \Phi^+$  consists of $\gamma$ which are on paths from $\alpha$ to $\beta$ and have coordinates $(j,p-2^{|\bar{i}|-1})=(j, p-1)$ for $j\in I.$
\item In \cite[Proposition 7.7]{OS16}, we proved the analogous statement holds for twisted adapted classes of type $A_{2n+1}.$
\end{enumerate}
\end{remark}

\begin{example}
See the last quiver in Example \ref{Ex:twistedD_{n+1}}. Then the twisted additive property can be checked in this quiver. For example,
\[ \left<2, 4\right>+\left<3,5\right>=\left<4,5\right>+\left<2,3\right>, \quad \left<2,-5\right>+\left<4,5\right>=\left<2,4\right>, \quad
\left<1,3\right>+\left<2,5\right>=\left<3,5\right>+\left<1,2\right>.\]
\end{example}

\vskip 3mm

\section{ Twisted Dynkin quiver and Reflection functors} \label{sec: Twisted Dynkin quiver and Reflection} \label{Sec:twistedDynkin}

Recall the diagram \eqref{eq: 2n-1many 2}. For the twisted adapted classes, we introduced the twisted AR-quivers,
the twisted Coxeter elements and the  twisted adapted classes. Hence the only thing we want to generalize in  \eqref{eq: 2n-1many 2} is a Dynkin quiver $Q$.

\begin{definition}\label{Def:twistedQ}
A {\it twisted Dynkin quiver} of $D_{n+1}$ is obtained by giving an orientation to each edge in one of the following diagrams:
\begin{equation} \label{Eqn:D1}
 \xymatrix@R=4ex{
*{\circ}<3pt> \ar@{-}[r]_<{1 \ \ }  &*{\circ}<3pt>
\ar@{-}[r]_<{2 \ \ } & *{\circ}<3pt>
\ar@{-}[r]_<{3 \ \ } &*{\circ}<3pt>\ar@{.}[r]_<{4 \ \ }& *{\circ}<3pt>\ar@{.}[r]_<{n-1\ \ } &*{\odot}<3pt> \ar@{-}[l]^<{\  \ {n \choose n+1}  }}
\end{equation}
\begin{equation} \label{Eqn:D2}
  \xymatrix@R=4ex{
*{\circ}<3pt> \ar@{-}[r]_<{1 \ \ }  &*{\circ}<3pt>
\ar@{-}[r]_<{2 \ \ } & *{\circ}<3pt>
\ar@{-}[r]_<{3 \ \ } &*{\circ}<3pt>\ar@{.}[r]_<{4 \ \ }& *{\circ}<3pt>\ar@{.}[r]_<{n-1\ \ } &*{\otimes}<3pt> \ar@{-}[l]^<{\  \ {n+1 \choose n}  }}
\end{equation}
We denote by $\Qn$ a Dynkin quiver obtained from the diagram \eqref{Eqn:D1} and by $\Qnn$ a Dynkin quiver obtained from the diagram \eqref{Eqn:D2}.
\end{definition}

\begin{remark} \label{rem: C_n Dynkin}
Considering the number of indices to each vertex, the diagrams in the above definition can be understood as the Dynkin diagram of type $C_n$.
\end{remark}

\begin{example}
For $Q = {\xymatrix@R=3ex{ \circ
\ar@{->}[r]_<{ \ 1} &  \circ
\ar@{<-}[r]_<{ \ 2}  &  \circ
\ar@{<-}[r]_<{ \ 3}
& \circ \ar@{-}[l]^<{\ \ \ \ \ \ 4} }}$ of type $A_{4}$, $Q^{\gets 4}$ can be depicted as follows:
$$Q^{\gets 4} = {\xymatrix@R=3ex{ \circ
\ar@{->}[r]_<{ \ 1} &  \circ
\ar@{<-}[r]_<{ \ 2}  &  \circ
\ar@{<-}[r]_<{ \ 3}
& \odot \ar@{-}[l]^<{\ \ \ \ \ \ {4 \choose 5}} }}$$
\end{example}

\begin{remark}
Definition \ref{Def:twistedQ} is natural in the sense that \eqref{eq: C_n} is related to the folding operation letting $n$ and $n+1$ be overlapped.
\end{remark}

\begin{definition} \hfill
\begin{enumerate}
\item We say $i\neq n, n+1$ is a {\it sink} (resp. {\it source}) of $\Qn$ or $\Qnn$ if every arrow connecting $i$ and another vertex $j$ is entering into  (resp. exiting out of) $i$.
\item  We say $i=n$ is a {\it sink} (resp. {\it source}) of $\Qn$ if there is the arrow from $n-1$ to ${n \choose n+1}$ (resp.  from ${n \choose n+1}$ to $n-1$).  Similarly, $i=n+1$ is a {\it sink} (resp. {\it source}) of $\Qnn$ if there is the arrow from $n-1$ to ${n+1 \choose n}$ (resp.  from ${n+1 \choose n}$ to $n-1$).
\item A quiver  $\Qn$ cannot have $n+1$ as a sink or a source. Similarly, $\Qnn$ cannot have $n$ as a sink or a source.
\end{enumerate}
\end{definition}

\begin{definition}
For $i\in I_{n+1}$, the {\it reflection function} $r_i$ on $\Qn$ (resp. $\Qnn$) is defined as follows.
\begin{itemize}
\item If $i\neq n, n+1$ and $i$ is a sink or a source in $\Qn$ (resp. $\Qnn$) then $i \ \Qn$ (resp. $i \ \Qnn$)
is obtained by  reversing all the arrows entering in to or exiting out of $i$.
\item  If $i\neq n, n+1$ and $i$ is neither a sink nor a source then $i \ \Qn=\Qn$ (resp. $ i \ \Qnn=\Qnn$).
\item If $i=n$ (resp. $i=n+1$) then $i \ \Qn$ (resp. $ i \ \Qnn$) is obtained by (i) substituting $\odot$
(resp. $\otimes$) by $\otimes$ (resp. $\odot$) and (ii) reversing the arrow  entering into or exiting out of $i$.
\item If $i=n+1$ (resp. $i=n$) then  $ \ i \Qn=\Qn$ (resp. $ i \Qnn=\Qnn$).
\end{itemize}
\end{definition}

Now we can associate a reduced expression to a twisted Dynkin quiver.

\begin{definition}\label{def: adapted to twisted DQ}
Let $\ii=i_1\, i_2\, \cdots, i_{\ell(w)}$ be a reduced word of $w\in W^D_{n+1}.$ We say $\ii$ is {\it  adapted} to $\Qn$ (resp. $\Qnn$) if
\[ i_k \text{ is a sink of } i_{k-1} i_{k-2} \cdots i_2 i_1 \ \Qn \text{ (resp. $i_{k-1} i_{k-2}  \cdots i_2 i_1 \ \Qnn$)}\]
for $k=1, 2, \cdots, \ell(w).$
\end{definition}

The following proposition shows the interesting relations between twisted Dynkin quivers and reduced expressions.

\begin{proposition} \label{Prop:twisted_redEX-quiv}\
\begin{enumerate}
\item[{\rm (1)}] There is the canonical one-to-one correspondence between the set of twisted Coxeter elements and the set of twisted Dynkin quivers.
Thus we can define $\phi_{\Qn}$ and $\phi_{\Qnn}$ in a natural way.
\item[{\rm (2)}] Let us denote $ \phi_{\lf \Qg \rf}:=\{\, \phi_{\Qn}, \phi_{\Qnn}\, | \, \Qn \text{ and } \Qnn \text{ are twisted Dynkin quivers } \}$. Then
\[ \phi_{\lf \Qg\rf}= \{\,  \phi_{[\ii_0]}\, | \, [\ii_0] \text{ is a twisted adapted class }\}.\]
\item[{\rm (3)}] If $[\ii_0]$ is a twisted adapted class of reduced expressions with twisted Coxeter element $\phi_{\Qn}$ (resp. $\phi_{\Qnn}$)
then $\ii_0$ is adapted to $\Qn$ (resp. $\Qnn$).
\end{enumerate}
\end{proposition}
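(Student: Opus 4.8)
The strategy is to treat the three parts in turn, with part (3) carrying the real content. Throughout, a twisted Dynkin quiver $\Qg$ (of the form $\Qn$ or $\Qnn$) has underlying graph the path on the $n$ vertices $1,2,\dots,n-1$ together with a special vertex $\bullet\in\{n,n+1\}$, decorated by a marking $\odot$ or $\otimes$; a reflection at a sink $i$ flips exactly the arrows at $i$ and, in addition, toggles the marking precisely when $i=\bullet$. Because the underlying graph is a tree, there is an admissible sequence of sinks $j_1,j_2,\dots,j_n$ (each $j_k$ a sink of $j_{k-1}\cdots j_1\,\Qg$) using every vertex exactly once, unique up to swapping non-adjacent consecutive letters; so $\phi_{\Qg}:=s_{j_1}\cdots s_{j_n}$ is well defined and fully commutative (Remark \ref{rem: basic prop twist}), and its support $\{j_1,\dots,j_n\}$ is exactly the vertex set of $\Qg$, which picks one element from each $\vee$-orbit $\{1\},\dots,\{n-1\},\{n,n+1\}$, so that $\phi_{\Qg}\vee$ is a $\vee$-Coxeter element (Definition \ref{def: twisted Coxeter}). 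The map $\Qg\mapsto\phi_{\Qg}$ is injective: the orientation of $\Qg$ is read off from $\phi_{\Qg}$ (for adjacent $i,j$ the earlier letter in a reduced word of $\phi_{\Qg}$ is the head of the arrow), and the marking is read off from whether $n$ or $n+1$ occurs. Since there are $2\cdot 2^{n-1}=2^n$ twisted Dynkin quivers and $2^n$ twisted Coxeter elements (Proposition \ref{prop: number tCox elts}), this injection is a bijection, proving (1) and letting us define $\phi_{\Qn},\phi_{\Qnn}$. Part (2) is then immediate: by (1) the set $\phi_{\lf\Qg\rf}$ consists of the $W$-parts of all twisted Coxeter elements, and by Theorem \ref{Thm:D_twisted adapted class}(1) together with Proposition \ref{prop: reverse direction for fullfiling} the assignment $[\ii_0]\mapsto\phi_{[\ii_0]}$ is a bijection from twisted adapted classes onto the same set.

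For part (3), let $[\ii_0]$ be twisted adapted with twisted Coxeter element $\phi_{\Qg}\vee=(s_{i_1}\cdots s_{i_n})\vee$. By Proposition \ref{prop: reverse direction for fullfiling} we may take $\ii_0=\prod_{k=0}^{n}(i_1\,i_2\,\cdots\,i_n)^{k\vee}$, which is a reduced word of $w_0$ by Proposition \ref{prop: one direction for fullfiling}, and by part (1) the block $i_1\cdots i_n$ is an admissible sink sequence of $\Qg$. I would first show $\ii_0$ is adapted to $\Qg$ by induction on the block index. The key observation is that reflecting a path-quiver through a complete admissible sequence of sinks restores its underlying orientation — each edge is flipped exactly twice, once at each of its two endpoints — while toggling the marking exactly once, since the special vertex occurs once in the sequence. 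Hence after the first $k$ blocks the current quiver $\Qg^{(k)}$ has the same orientation as $\Qg$ and its marking toggled $k$ times, so by part (1) its admissible sink sequence is $i_1\cdots i_n$ with $n\leftrightarrow n+1$ applied $k$ times, i.e. exactly $(i_1\cdots i_n)^{k\vee}$; this is precisely the $(k{+}1)$-st block, so its reading is legal. After $n+1$ blocks we conclude that $\ii_0$ is adapted to $\Qg$.

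It then remains to see that adaptedness to $\Qg$ is invariant under commutation, so that \emph{every} reduced word of $[\ii_0]$ is adapted to $\Qg$, which is what the proposition asserts. Consider a move $\cdots j_k\,j_{k+1}\cdots\ \mapsto\ \cdots j_{k+1}\,j_k\cdots$ with $j_k,j_{k+1}$ non-adjacent in $D_{n+1}$, and put $\Qg'=j_{k-1}\cdots j_1\,\Qg$. A short case analysis on whether $\{j_k,j_{k+1}\}$ meets $\{n,n+1\}$ shows that $j_{k+1}$ is already a sink of $\Qg'$, that $j_k$ is a sink of $j_{k+1}\,\Qg'$, and that $j_k\,(j_{k+1}\,\Qg')=j_{k+1}\,(j_k\,\Qg')$, so the remainder of the word behaves identically; in the only delicate case, $j_k=n$ (say), one uses that $n+1$ is a \emph{source}, not a sink, of $n\,\Qg'$, whence $j_{k+1}\in\{1,\dots,n-2\}$, which does not touch the marked vertex, and the argument goes through. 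This finishes part (3). (One also gets, as a byproduct, that $n$ and $n+1$ are never consecutive in a word adapted to $\Qg$.)

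The bulk of the work is part (3). The delicate points are the bookkeeping in the first step — tracking the $\odot/\otimes$ marking across the $n+1$ blocks and checking that each block is exactly the admissible sink sequence of the quiver reached so far, which rests on the "each edge flipped twice" observation — and isolating, in the commutation-invariance lemma, the exceptional case where a swapped letter is the special index $n$ or $n+1$; the crux there is precisely that reflecting at $n$ turns $n+1$ into a source. Parts (1) and (2) are comparatively formal once the sink-sequence description of $\phi_{\Qg}$ is in place and the cardinality count from Proposition \ref{prop: number tCox elts} is invoked.
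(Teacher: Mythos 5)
Your proposal is correct and follows essentially the same route as the paper: part (1) via the path-orientation-plus-marking description and the count of $2^n$ twisted Coxeter elements (the paper cites Proposition \ref{prop: 2 to 1 Dt to A} instead of Proposition \ref{prop: number tCox elts}, but the content is the same), part (2) formally from (1) and Theorem \ref{Thm:D_twisted adapted class}, and part (3) by exactly the paper's two facts — invariance of adaptedness under commutation of non-adjacent letters, and the observation that a full admissible sink pass sends $\Qn$ to $\Qnn$ (your ``each edge flipped twice, marking toggled once''), so that $\prod_{k}(i_1\cdots i_n)^{k\vee}$ is adapted. Your write-up just supplies the block induction and the $n$/$n{+}1$ source case in more detail than the paper's terse proof.
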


\begin{proof}
(1) follows by Proposition \ref{prop: 2 to 1 Dt to A}.
(2) follows from the correspondence between twisted Coxeter elements and twisted Dynkin quivers proved in (1) and
the correspondence between twisted Coxeter elements and classes of twisted adapted classes. Finally, (3) follows from the facts that
\begin{enumerate}[(i)]
\item if $i_1\, i_2$ is adapted to $\phi_{\Qn}$ (resp. $\phi_{\Qnn}$) and $[i_1\, i_2]= [i_2\, i_1]$ then
$i_2\, i_1$ is also adapted to $\phi_{\Qn}$ (resp. $\phi_{\Qnn}$),
\item if $(s_{i_1}\, s_{i_2}\, \cdots \, s_{i_n})\vee$ is the twisted Coxeter element associated to $\Qn$ (resp. $\phi_{\Qnn}$)
then $ i_n \cdots i_2 i_1 \ \Qn = \Qnn$ (resp. $ i_n \cdots i_2 i_1 \ \Qnn = \Qn$) and  hence
$\prod_{k=0}^{n} (s_{i_1}\, s_{i_2}\, \cdots \, s_{i_n})^{k\vee}$ is adapted to  $\Qn$ (resp. $\Qnn$).
 \end{enumerate}
\end{proof}

Using Proposition \ref{Prop:twisted_redEX-quiv}, let us introduce simpler notations:

\begin{definition}\label{Def:diamond}
We denote by $\lf \Qg\rf$ the twisted adapted cluster point of $D_{n+1}$ and denote by $[\Qn]$ (resp. $[\Qnn]$)
the class of  reduced expressions  adapted to  $\Qn$  (resp.  $\Qnn$).
Also, we denote by $\ii_0\in [\Qg]$ when $\ii_0$ is  twisted adapted  and denote by $\Qg$ the twisted Dynkin quiver.
\end{definition}

With the notion of sinks and sources of $\Qg$, we have an analogue of Lemma \ref{lem: Leclerc} for folded AR-quiver:

\begin{lemma} For any folded AR-quiver, positions of simple roots inside of $\widehat{\Upsilon}_{[\Qg]}$ are on the boundary of
$\widehat{\Upsilon}_{[\Qg]}$; that is, either
\begin{enumerate}
\item[{\rm (i)}] $\al_i$ are a sink or a source $\widehat{\Upsilon}_{[\Qg]}$, or
\item[{\rm (ii)}] $\al_i$ has $\overline{1}$ or $\overline{n}$ as its folded residue.
\end{enumerate}
Furthermore,
\begin{enumerate}
\item[{\rm (a)}] all sinks and sources of $\widehat{\Upsilon}_{[\Qg]}$ have their labels as simple roots,
\item[{\rm (b)}] $i$ is a sink $($resp. source$)$ of $\Qg$ if and only if $\al_i$ is a sink $($resp. source$)$ of $\widehat{\Upsilon}_{[\Qg]}$
\end{enumerate}
\end{lemma}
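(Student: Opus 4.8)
The plan is to transfer the statement, vertex by vertex, from the corresponding statement for AR-quivers of type $A_n$ (Lemma~\ref{lem: Leclerc}) through the decomposition $\widehat{\Upsilon}_{[\Qg]} = {}_1\Gamma_Q \sqcup {}_2\Gamma_{Q^*}$ established in Remark~\ref{rem: decompose into 2-comp} and the explicit labeling in Algorithm~\ref{Alg:label}. First I would recall that the simple roots of $D_{n+1}$ split into two groups: the roots $\al_i = \langle i, -i-1\rangle$ for $1 \le i \le n-1$ together with the "long" exceptional behavior of $\al_n = \langle n, -n-1\rangle$ and $\al_{n+1} = \langle n, n+1\rangle$. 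Under Algorithm~\ref{Alg:label}, a label $[a,b] \in \Phi^+_{A_n}$ in ${}_1\Gamma_Q$ or ${}_2\Gamma_{Q^*}$ becomes $\langle a, -b-1\rangle$ (outside the $\rho,\rho'$ region), becomes $\langle i, \pm(n+1)\rangle$ on $\rho$ or $\rho'$, and becomes $\langle a, b+1\rangle$ in the region $\Upsilon^C_{[\Qg]}$ surrounded by $\rho$ and $\rho'$. A positive root $\langle a,b\rangle$ of $D_{n+1}$ is a simple root precisely when $b = -a-1$ (giving $\al_a$, $a \le n$) or $(a,b) \in \{(n,-n-1),(n,n+1)\}$; note no root in $\Upsilon^C_{[\Qg]}$ is simple since those have the form $\langle a, b+1\rangle$ with $1 \le a < b+1 \le n$, i.e. a genuine "short" root which by the explicit description in Corollary~\ref{lem: folded mul free iff} is folded multiplicity non-free, hence not simple. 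So every simple root of $\widehat{\Upsilon}_{[\Qg]}$ lies either in $\Upsilon^R_{[\Qg]} \cup \Upsilon^L_{[\Qg]}$ (coming via Corollary~\ref{Cor:label come from 1GaQ}, Corollary~\ref{Cor:label 2} from a simple root of $\Gamma_Q$) or on $\rho \cup \rho'$ (giving $\al_n$ or $\al_{n+1}$).

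Next I would handle the four claims in turn. For a simple root $\al_i$ with $i \ne n, n+1$, its preimage under $\nu_{[\Qg]}$ is the simple root $\al_i' = [i]$ (for appropriate $i$) sitting inside ${}_1\Gamma_Q$ or ${}_2\Gamma_{Q^*}$, each of which is isomorphic as a quiver to $\Gamma_Q$ (Remark~\ref{rem: reflection w.r.t x-axis}). By Lemma~\ref{lem: Leclerc} applied to $\Gamma_Q$, $\al_i'$ is either a sink/source of $\Gamma_Q$ or has residue $1$ or $n$ in $\Gamma_Q$; pushing this through the gluing map $\nu_{[\Qg]}$, and using that an $A_n$-residue $1$ or $n$ lands in folded residue $\overline 1$ or $\overline n$ by Convention~\ref{Conv:xi_0} and Corollary~\ref{Cor:rev Ga rev Q}, gives either alternative (i) or (ii). For $\al_n$ and $\al_{n+1}$: by Proposition~\ref{Prop:cover by GQ} and Corollary~\ref{Cor:4.25}, the roots $\langle i, n+1\rangle$, $\langle i, -n-1\rangle$ occupy exactly the coordinates $(\bar r, r)$ and $(\bar r, -r)$ for $1 \le r \le n$, i.e. the $S_0^{\rm ext}$ and $N_0^{\rm ext}$ sectional quivers $\rho$ and $\rho'$; the endpoints $(\overline n, n)$ and $(\overline n, -n)$ of these have folded residue $\overline n$, and by part (a) of Lemma~\ref{lem: Leclerc} applied on $\Gamma_Q$ (where $[1,n]$ is the intersection of the two length-$(n-1)$ sectional paths, which is neither sink nor source but has residue $1$ or $n$), combined with the fact that $\al_n = \langle n, \pm(n+1)\rangle$ is a leaf-vertex of $\rho$ or $\rho'$, we get folded residue $\overline n$, i.e. alternative (ii). This establishes (i)/(ii) and also (ii) of the "furthermore" part, since Lemma~\ref{lem: Leclerc}(b) holds for $\Gamma_Q$ and the gluing is via the reflection functors matched up in Proposition~\ref{Prop:twisted_redEX-quiv}.

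For (a) — that all sinks and sources of $\widehat{\Upsilon}_{[\Qg]}$ are labeled by simple roots — I would argue as follows: a sink (resp. source) $v$ of $\widehat{\Upsilon}_{[\Qg]}$ is a sink (resp. source) of whichever of ${}_1\Gamma_Q$, ${}_2\Gamma_{Q^*}$ contains it, because the only new arrows introduced by gluing go between $\Upsilon^L_{[\Qg]}$ and $\Upsilon^R_{[\Qg]}$ (Remark~\ref{rem:LR path}) and cannot create a new sink/source out of a non-extremal vertex; but here one must check the vertices lying on $\rho$, $\rho'$, and in $\Upsilon^C_{[\Qg]}$ separately, since the gluing genuinely reorganizes arrows there. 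A cleaner route is: a source of $\widehat{\Upsilon}_{[\Qg]}$ corresponds, via Theorem~\ref{thm: OS14}(2) and the convexity order $\prec_{[\Qg]}$, to a root $\beta^{\ii_0}_1 = \al_{i_1}$ for some reduced word $\ii_0 = i_1 \cdots \in [\Qg]$, hence is automatically a simple root; dually a sink is $\beta^{\ii_0}_{\N} = s_{i_1}\cdots s_{i_{\N-1}}(\al_{i_{\N}}) = \al_{i_{\N}^*}$, again simple (using the involution $*$ recorded in \eqref{eq: involution D}). This is the approach I would actually take for (a), as it avoids case analysis. The main obstacle I anticipate is (b): showing $i$ is a sink of $\Qg$ iff $\al_i$ is a sink of $\widehat{\Upsilon}_{[\Qg]}$ — for $i \ne n,n+1$ this is immediate from the $A_n$ statement and the quiver isomorphism, but for $i = n, n+1$ one must unwind the definition of "sink of $\Qn$/$\Qnn$" (which arrow direction between $n-1$ and the $\odot$/$\otimes$ vertex) and match it against whether $\al_n = \langle n,\pm(n+1)\rangle$ has an entering or exiting arrow at its position on $\rho$ or $\rho'$; this requires tracking carefully how $\mathfrak{p}^{D_{n+1}}_{A_n}$ and the reflection-functor compatibility of Proposition~\ref{Prop:twisted_redEX-quiv}(3) interact with the endpoint of the length-$(n-1)$ sectional path, and is where I expect the bookkeeping to be delicate.
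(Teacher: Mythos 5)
Your overall route — reducing everything to Lemma~\ref{lem: Leclerc} for $\Gamma_Q$ of type $A_n$ through the gluing and relabeling of Algorithm~\ref{Alg:label} — is exactly the paper's proof, which consists of a single sentence citing those two ingredients. However, your write-up contains a concrete error in the treatment of $\al_n$ and $\al_{n+1}$: you assert that they occupy the endpoints $(\overline{n},n)$ and $(\overline{n},-n)$, hence have folded residue $\overline{n}$. This is false in general: in the paper's own Example~\ref{Ex:twistedD_{n+1}} one has $\al_5=\langle 4,5\rangle$ at $(\overline{1},1)$, while $(\overline{4},4)$ is occupied by $\langle 2,5\rangle$. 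The correct argument is that $\al_n$ and $\al_{n+1}$ are the two relabelings (rule (v) of Algorithm~\ref{Alg:label}) of the single vertex $[n]$ of $\Gamma_Q$; since the maximal $N$-sectional path through $[n]$ has length $0$ (Theorem~\ref{thm: labeling GammaQ} with Proposition~\ref{pro: section shares}), $[n]$ cannot be an interior vertex of the length-$(n-1)$ $S$-sectional path, so by Lemma~\ref{lem: Leclerc} it is an endpoint of that path and has residue $1$ or $n$ in $\Gamma_Q$; its two images therefore sit at folded residue $\overline{1}$ or $\overline{n}$ (the residues of ${}_2\Gamma_{Q^*}$ being flipped $i\mapsto n+1-i$, cf.\ Corollary~\ref{Cor:rev Ga rev Q}), which is all that alternative (ii) allows and all it asserts. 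Your parenthetical about $[1,n]$ concerns a different vertex and plays no role here.

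The second issue is that part (b) for $i=n,n+1$, which is part of the statement, is only announced as ``delicate bookkeeping'' and never carried out; as written the proposal does not establish that $n$ (resp.\ $n+1$) is a sink of $\Qg$ if and only if $\al_n$ (resp.\ $\al_{n+1}$) is a sink of $\widehat{\Upsilon}_{[\Qg]}$, and this short unwinding of rule (v) against Lemma~\ref{lem: Leclerc}(b) applied to $[n]$ must be done, not deferred. Two smaller repairs: in your reduced-word argument for (a) the identification is reversed — with the paper's conventions (arrows go from $\beta^{\ii_0}_k$ to $\beta^{\ii_0}_j$ with $j<k$, and $\al\prec_{[\ii_0]}\be$ iff there is a path from $\be$ to $\al$), a sink of $\widehat{\Upsilon}_{[\Qg]}$ is $\prec$-minimal and equals $\al_{i_1}$ for some reduced word in the class, while a source is $\prec$-maximal and equals $\al_{i_{\N}^{*}}$; this is harmless for (a), since both are simple roots, but should be stated correctly. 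Finally, when transferring a sink or source of $\Gamma_Q$ (for $i\neq n,n+1$) to the glued quiver, the reason no new arrows appear at $\al_i$ is that the copy of $[i]$ which gets relabeled $\al_i$ lies strictly outside the band bounded by $\rho$ and $\rho'$, and all neighbors of such a vertex stay weakly on the same side of $\rho$ (resp.\ $\rho'$), hence inside the same copy ${}_1\Gamma_Q$ (resp.\ ${}_2\Gamma_{Q^*}$); inside the band the gluing does create arrows, as you note yourself, so this observation is what legitimizes the reduction and deserves to be said explicitly.
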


\begin{proof}
It is an immediate consequence of Algorithm \ref{Alg:label} and Lemma \ref{lem: Leclerc} for $Q$ of type $A_n$.
\end{proof}

\begin{theorem}
Using {\rm Proposition \ref{Prop:twisted_redEX-quiv}} and {\rm Definition \ref{Def:diamond}}, we obtain the analogue of \eqref{eq: 2n-1many 2}:
\begin{align}\label{eq: Corres_twisted}
\raisebox{3.6em}{\xymatrix@C=8ex@R=4ex{
& \{ \prec_{\Qg} \} \ar@{<->}[dl]_{1-1} \ar@{<->}[d]_{1-1} \ar@{<->}[dr]^{1-1}  \\
\{ [\Qg] \} \ar@{<->}[r]^{1-1} & \{ \phi_{\Qg} \} \ar@{<->}[r]^{1-1} & \{ \Qg \} \\
& \{ \Upsilon_{[\Qg]} \} \ar@{<->}[ul]^{1-1}\ar@{<->}[ur]_{1-1} \ar@{<->}[u]^{1-1}}} \text{for $[\Qg] \in \lf \Qg \rf$}.
\end{align}
\end{theorem}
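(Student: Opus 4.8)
The plan is to show that all five sets appearing in \eqref{eq: Corres_twisted} are finite of the same cardinality $2^{n}$ and that the natural maps between them are well defined and injective; since injective maps between finite sets of equal size are bijective, this forces every arrow in the diagram to be a bijection. Three of the corners have essentially been matched already: the identification $\{ \Qg \} \leftrightarrow \{ \phi_{\Qg} \}$ is part~(1) of Proposition \ref{Prop:twisted_redEX-quiv}, and the identification $\{ \phi_{\Qg} \} \leftrightarrow \{ [\Qg] \}$ is obtained by combining parts~(2),(3) of Proposition \ref{Prop:twisted_redEX-quiv} with the bijection between twisted Coxeter elements and twisted adapted classes in Theorem \ref{Thm:D_twisted adapted class}(1); under Definition \ref{Def:diamond} these say precisely that $[\Qg]$ is the class of reduced expressions adapted to $\Qg$, so there is nothing further to check for the compatibility of the two triangles meeting at $\{ [\Qg] \}$. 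By Proposition \ref{prop: number tCox elts} and Theorem \ref{Thm:D_twisted adapted class}(2), each of $\{ \Qg \}$, $\{ \phi_{\Qg} \}$, $\{ [\Qg] \}$ has exactly $2^{n}$ elements.

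Next I would treat the quiver corner. The assignment $[\Qg] \mapsto \Upsilon_{[\Qg]}$ is well defined by Theorem \ref{thm: OS14}(1), since $\Upsilon_{\ii_0}$ depends only on the commutation class of $\ii_0$, and by construction it surjects onto $\{ \Upsilon_{[\Qg]} \}$. For injectivity, suppose $\Upsilon_{[\Qg]} \simeq \Upsilon_{[\Qg']}$ as quivers with their coordinate systems. By Theorem \ref{thm: OS14}(3) the reduced words lying in $[\Qg]$ are exactly those read off from the standard tableaux of shape $\Upsilon_{[\Qg]}$, and likewise for $[\Qg']$; as the two shapes agree, the two sets of reduced words coincide, whence $[\Qg] = [\Qg']$. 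Hence $[\Qg] \mapsto \Upsilon_{[\Qg]}$ is a bijection and $|\{ \Upsilon_{[\Qg]} \}| = 2^{n}$.

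For the partial-order corner I would argue in the same spirit. The map $[\Qg] \mapsto \prec_{[\Qg]}$ is well defined (Remark \ref{Rem:2.17_0419}) and surjects onto $\{ \prec_{\Qg} \}$. By Theorem \ref{thm: OS14}(2), for $\alpha,\beta \in \PR$ one has $\alpha \prec_{[\Qg]} \beta$ if and only if there is a path from $\beta$ to $\alpha$ in $\Upsilon_{[\Qg]}$, so $\prec_{[\Qg]}$ is determined by $\Upsilon_{[\Qg]}$; conversely, $\prec_{[\Qg]}$ recovers $\Upsilon_{[\Qg]}$ together with its coordinate system, since the colours and coordinates are pinned down by the Cartan matrix of $D_{n+1}$ and by the fixed height function — this is the content of the correspondences established in \cite{OS15}. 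Combined with the previous paragraph, $[\Qg] \mapsto \prec_{[\Qg]}$ is injective, so $|\{ \prec_{\Qg} \}| = 2^{n}$ and the induced map $\Upsilon_{[\Qg]} \leftrightarrow \prec_{[\Qg]}$ is a bijection as well. At this point all five sets have cardinality $2^{n}$, and each edge of \eqref{eq: Corres_twisted} is one of the bijections produced above or a composition of them with inverses, so the whole diagram commutes with every arrow invertible.

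The \textbf{main obstacle} is exactly the two injectivity claims for the ``new'' corners $\{ \Upsilon_{[\Qg]} \}$ and $\{ \prec_{\Qg} \}$, namely that a twisted (folded) AR-quiver, respectively a convex partial order, remembers the twisted adapted class it comes from. For these I rely on the standard-tableau description of Theorem \ref{thm: OS14}(3) and on the mutual determination of $[\ii_0]$, $\prec_{[\ii_0]}$ and $\Upsilon_{[\ii_0]}$ in \cite{OS15}; once those are granted, the remainder of the argument is just a translation between correspondences that have already been proved, powered by the $2^{n}$-count. (One should also record that the height function was fixed once and for all in Convention \ref{Conv:xi_0}, so that $\Upsilon_{[\Qg]}$, and not merely its isomorphism type, is canonically attached to $[\Qg]$; this is what makes the map $[\Qg]\mapsto\Upsilon_{[\Qg]}$ into an honest bijection onto the set $\{\Upsilon_{[\Qg]}\}$ rather than onto a set of isomorphism classes.)
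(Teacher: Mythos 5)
Your proposal is correct and follows essentially the same route as the paper, which states this theorem without a separate proof precisely because it is the combination of Proposition \ref{Prop:twisted_redEX-quiv}, Theorem \ref{Thm:D_twisted adapted class} and the correspondences of \cite{OS15} (Theorem \ref{thm: OS14}) that you invoke. Your added $2^{n}$-counting step is harmless but not needed, since each of the cited correspondences is already a bijection; otherwise the argument matches the paper's intended justification.
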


We give a proposition about relations between $\Qg$ and $\Upsilon_{[\Qg]}$ without proof.

\begin{proposition}\
\begin{enumerate} [{\rm (1)}]
\item The subquiver consisting of $\{\, (i, \xi(\bar{i}))\, |\, i=1, \cdots, n\, \}$ of  $\Upsilon_{[\Qg]}$ is isomorphic to $\Qg,$ where $\xi$ is the height function on $\bar{I}.$
\item Using $\Qg$ and $\phi_{\Qg}$, we can recover $\Upsilon_{[\Qg]}.$
\item If $i\in I_{n+1}$ is a sink of $\Qg$ then $i \ \Qg$ is associated twisted Dynkin quiver to $\Upsilon_{[\Qg]}\cdot r_i$;
i.e. $\Upsilon_{[i \Qg]}=\Upsilon_{[\Qg]}\cdot r_i$. Here $\Upsilon_{[\Qg]}\cdot r_i$ is the twisted AR-quiver associated to $r_i[\Qg]$.
\end{enumerate}
\end{proposition}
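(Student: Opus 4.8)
The plan is to prove the three parts in increasing order of difficulty; the first two are essentially a re-reading of the construction, while the third carries the real content.

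For part (1), recall that in Algorithm \ref{Alg_twAR} the roots of $\mathfrak{B}_{[\ii_0]}=\Phi(\phi_{[\ii_0]})$ are by construction placed at the $n$ vertices $(i_k,\xi(\bar{i_k}))$, one in each orbit‑residue $\bar{1},\dots,\bar{n}$. By Remark \ref{rem: decompose into 2-comp}(i) the full subquiver they span inside $\widehat{\Upsilon}_{[\ii_0]}$ is isomorphic, as a quiver, to the Dynkin quiver $Q$ of type $A_n$ with $\mathfrak{p}^{D_{n+1}}_{A_n}([\ii_0])=[Q]$. Since the underlying oriented graph of the twisted Dynkin quiver $\Qg$ is exactly that of $Q$ — only the terminal node being recorded as ${n\choose n+1}$ or ${n+1\choose n}$, cf. Remark \ref{rem: C_n Dynkin} — and since $\Qg$ corresponds to the twisted Coxeter element $\phi_{\Qg}\vee$ under the bijection of Proposition \ref{Prop:twisted_redEX-quiv}(1), this subquiver is isomorphic to $\Qg$ itself. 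That is all part (1) asks for.

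For part (2): the twisted Dynkin quiver $\Qg$ determines $\phi_{\Qg}\vee=(s_{i_1}\cdots s_{i_n})\vee$ (Proposition \ref{Prop:twisted_redEX-quiv}(1)), hence the canonical reduced word $\ii_0=\prod_{k=0}^{n}(i_1\cdots i_n)^{k\vee}$ (Proposition \ref{prop: one direction for fullfiling}) and its class $[\ii_0]=[\Qg]$ (Proposition \ref{Prop:twisted_redEX-quiv}(3)); it also pins down the height function $\xi$ on $\bar{I}$ up to a constant, which is normalized by Convention \ref{Conv:xi_0}. Feeding $\phi_{[\ii_0]}$ and $\xi$ into Algorithm \ref{Alg_twAR} reconstructs $\Upsilon_{[\Qg]}$ with its coordinates, and Theorem \ref{thm: OS14}(1) guarantees independence of the chosen representative. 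So part (2) is a bookkeeping statement about the input of Algorithm \ref{Alg_twAR}.

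For part (3), the strategy is to transfer the identity from twisted AR‑quivers to commutation classes, so that $\Upsilon_{[\Qg]}\cdot r_i:=\Upsilon_{r_i[\Qg]}=\Upsilon_{[i\,\Qg]}$ follows once $r_i[\Qg]=[i\,\Qg]$ as classes. First I would check that $i$ is a sink of $\Qg$ if and only if $i$ is a sink of the class $[\Qg]$, equivalently a sink of the fully commutative element $\phi_{\Qg}$ (Remark \ref{rem: basic prop twist}): for $i\neq n,n+1$ a sink of $\Qg$ is a sink of the associated $A_n$‑quiver $Q$, hence can be brought to the front of a reduced word of $\phi_{\Qg}$ and therefore of $\ii_0$ by Lemma \ref{Lem: reflection sink i for comclass}, and conversely; for the special end one uses the definition of a sink at $n$ in $\Qn$ (arrow $n-1\to{n\choose n+1}$) and at $n+1$ in $\Qnn$, again transported through $\mathfrak{p}^{D_{n+1}}_{A_n}$. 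Next, writing $\phi_{\Qg}\vee=(s_i s_{i_2}\cdots s_{i_n})\vee$ with $i$ a sink, one checks that after deleting the initial $i$ from $\ii_0$ and appending $i^{*}$ one obtains \emph{verbatim} the word $\prod_{k=0}^{n}(i_2\cdots i_n\,i^{\vee})^{k\vee}$ — here one uses \eqref{ex: reflection red} and the identity $*=\vee$ when $n+1$ is odd and $*=\mathrm{id}$ when $n+1$ is even from \eqref{eq: involution D}, so the bookkeeping of the trailing letter matches. Hence $r_i[\Qg]$ is the twisted adapted class (it lies in the $r$‑cluster point $\lf\Qg\rf$ automatically) with twisted Coxeter element $(s_{i_2}\cdots s_{i_n}s_{i^{\vee}})\vee$. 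Finally, one identifies this twisted Coxeter element with $\phi_{i\,\Qg}$ by the case split $i\notin\{n,n+1\}$ versus $i\in\{n,n+1\}$: in the first case $i^{\vee}=i$ and the passage is just the $A_n$ reflection at $i$, while for $i\in\{n,n+1\}$ the replacement $s_i\mapsto s_{i^{\vee}}$ is precisely the flip ${n\choose n+1}\leftrightarrow{n+1\choose n}$ together with the reversal of the terminal arrow that defines the reflection function on $\Qg$. Then $r_i[\Qg]=[i\,\Qg]$ by Proposition \ref{Prop:twisted_redEX-quiv}(3), and applying $\Upsilon_{[-]}$ gives the claim. (Alternatively, one could verify part (3) directly by comparing Algorithm \ref{alg: Ref Q re} applied to $\Upsilon_{[\Qg]}$ with Algorithm \ref{Alg_twAR} applied to $\phi_{i\,\Qg}$, but the route above is cleaner.)

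I expect the main obstacle to be exactly the convention‑tracking at the two special vertices $n$ and $n+1$ in the last step of part (3): verifying that the hand‑crafted reflection function on $\Qg$ (the swap $\odot\leftrightarrow\otimes$ plus terminal‑arrow reversal) agrees with the conjugation $\phi_{\Qg}\vee\mapsto s_i(\phi_{\Qg}\vee)s_i=(s_{i_2}\cdots s_{i_n}s_{i^{\vee}})\vee$ under the bijection of Proposition \ref{Prop:twisted_redEX-quiv}(1), and that the ``sink of $\Qg$'' $\Leftrightarrow$ ``sink of $[\Qg]$'' equivalence survives at these vertices. Everything else is forced by the definitions and by the already established correspondence \eqref{eq: Corres_twisted}.
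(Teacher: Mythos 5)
The paper states this proposition explicitly \emph{without proof}, so there is no internal argument to compare against; your proposal supplies one, and it is correct and built exactly from the paper's own toolkit. Parts (1) and (2) are, as you say, re-readings of Algorithm \ref{Alg_twAR} together with Remark \ref{rem: decompose into 2-comp}(i), Proposition \ref{Prop:twisted_redEX-quiv} and Convention \ref{Conv:xi_0}. For part (3), your word-level route is the natural one: removing the initial sink letter $i$ from $\ii_0=\prod_{k=0}^{n}(i\,i_2\cdots i_n)^{k\vee}$ and appending $i^{*}$ regroups \emph{verbatim} into $\prod_{k=0}^{n}(i_2\cdots i_n\,i^{\vee})^{k\vee}$, because $i^{*}=i^{(n+1)\vee}$ by \eqref{eq: involution D} (this is the key bookkeeping point, and you handle it correctly); the resulting class has twisted Coxeter element $(s_{i_2}\cdots s_{i_n}s_{i^{\vee}})\vee=s_i(\phi_{\Qg}\vee)s_i$, which is adapted to $i\,\Qg$ by the same facts (i)--(ii) the paper invokes in proving Proposition \ref{Prop:twisted_redEX-quiv}(3), whence $[\Qg]\,r_i=[i\,\Qg]$ and the quiver identity follows from well-definedness (Theorem \ref{thm: OS14}(1)). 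The only step I would ask you to spell out is the implication ``$i$ a sink of $\Qg$ $\Rightarrow$ $i$ a sink of $[\Qg]$'': in any adapted ordering of the letters of $\phi_{\Qg}$ no neighbour of $i$ can precede $i$ (a neighbour reflected first would need the arrow at $i$ reversed), so $s_i$ commutes to the front of $\phi_{\Qg}$ and hence of $\ii_0$; Lemma \ref{Lem: reflection sink i for comclass} as stated only gives the converse direction. Note also that the paper's own notation $r_i[\Qg]$ for a sink $i$ clashes with its left/right convention for reflection functors (for a sink the paper elsewhere writes $[\ii_0]\,r_i$); you inherit this harmless inconsistency rather than introduce it.
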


Now we described the reflection functor $r_i$ on $[\Qg]$ (equivalently, the map obtaining $\Upsilon_{[i \Qg]}$ from $\Upsilon_{[\Qg]}$)
in a combinatorial way:
\begin{algorithm} \label{alg: tRef Q}
Let $\mathsf{h}^\vee=2n+2$ be a dual Coxeter number of $D_{n+1}$, an index  $i$ be a sink of $\Qg \in \lf \Qg \rf$ and $*$ be the involution of $D_{n+1}$ such that $w_0(\alpha_i)=-\alpha_{i^*}$.
\begin{enumerate}
\item[{\rm (D1)}] Remove the vertex $(i,\xi(\bar{i}))$ and the arrows entering into the vertex.
\item[{\rm (D2)}] Add the vertex $(i^*,\xi(\bar{i})-\mathsf{h}^\vee)$ and the arrows to all vertices
whose coordinates are $(\, j\, ,\, \xi(\bar{i})-\mathsf{h}^\vee+1\, ) \in \Upsilon_{[\ii_0]}$ where $j$ is an index adjacent to $i^*$ in the Dynkin diagram
$\Delta^D_{n+1}$.
\item[{\rm (D3)}] Label the vertex $(i^*,\xi(\bar{i})-\mathsf{h}^\vee)$ with $\al_i$ and change the labels $\be$ to $s_i(\be)$ for all $\be \in \Upsilon_{[\Qg]} \setminus \{\al_i\}$.
\end{enumerate}
\end{algorithm}

\begin{example} \label{Ex:5.12} Let $[\ii_0]$ be a twisted adapted class of $D_5$ with its twisted Coxeter element $(s_1 s_2 s_5 s_3)\vee$.
Then the associated twisted Dynkin quiver is $Q^{\gets 5}$:
\begin{equation*}
  \xymatrix@R=4ex{
*{\circ}<3pt> \ar@{<-}[r]_<{1 \ \ }  &*{\circ}<3pt>
\ar@{<-}[r]_<{2 \ \ } & *{\circ}<3pt>
\ar@{->}[r]_<{3 \ \ }  &*{\otimes}<3pt> \ar@{-}[l]^<{\  \ {5 \choose 4}  }}
\end{equation*}
and  the associated twisted AR-quiver is
\[ \scalebox{0.8}{\xymatrix@C=0.9ex@R=1ex{
& -5 & -4 & -3& -2& -1& 0 & 1& 2& 3& 4& 5&   \\
1&&& \srt{1}{-4}\ar@{->}[dr]  && \srt{4}{-5}\ar@{->}[dr]  && \srt{3}{5}\ar@{->}[dr]  && \srt{2}{-3}\ar@{->}[dr] && \srt{1}{-2}\\
2&& \srt{2}{-4}\ar@{->}[dr]\ar@{->}[ur]  && \srt{1}{-5}\ar@{->}[dr]\ar@{->}[ur]  && \srt{3}{4}\ar@{->}[dr]\ar@{->}[ur]  && \srt{2}{5}\ar@{->}[dr]\ar@{->}[ur] && \srt{1}{-3}\ar@{->}[ur] \\
3&\srt{3}{-4}\ar@{->}[ddr]\ar@{->}[ur]  && \srt{2}{-5}\ar@{->}[dr]\ar@{->}[ur]  && \srt{1}{3}\ar@{->}[ddr]\ar@{->}[ur]  && \srt{2}{4}\ar@{->}[dr]\ar@{->}[ur] && \srt{1}{5}\ar@{->}[ur]\ar@{->}[ddr] \\
4&&&& \srt{2}{3}\ar@{->}[ur]  &&&& \srt{1}{4}\ar@{->}[ur] \\
5&&\srt{3}{-5}\ar@{->}[uur]  &&&& \srt{1}{2}\ar@{->}[uur]  &&&& \srt{4}{5}
}} \]

Consider $Q^{\gets 4}= r_5 Q^{\gets 5}.$ Then $Q^{\gets 4}$ is
\begin{equation*}
  \xymatrix@R=4ex{
*{\circ}<3pt> \ar@{<-}[r]_<{1 \ \ }  &*{\circ}<3pt>
\ar@{<-}[r]_<{2 \ \ } & *{\circ}<3pt>
\ar@{<-}[r]_<{3 \ \ }  &*{\odot}<3pt> \ar@{-}[l]^<{\  \ {4 \choose 5}  }}
\end{equation*}
\end{example}
and $\Upsilon_{[Q^{\gets 4}]}= \Upsilon_{[Q^{\gets 5}]} \cdot r_5$ is
\[ \scalebox{0.8}{\xymatrix@C=0.9ex@R=1ex{
& -4 & -3 & -2 & -1 & 0 & 1& 2& 3& 4& 5& 6& 7 \\
1&&&& \srt{1}{5}\ar@{->}[dr]  && \srt{4}{-5}\ar@{->}[dr]  && \srt{3}{-4}\ar@{->}[dr]  && \srt{2}{-3}\ar@{->}[dr] && \srt{1}{-2} \\
2&&& \srt{2}{5}\ar@{->}[dr]\ar@{->}[ur]  && \srt{1}{4}\ar@{->}[dr]\ar@{->}[ur] && \srt{3}{-5}\ar@{->}[dr]\ar@{->}[ur] && \srt{2}{-4}\ar@{->}[dr]\ar@{->}[ur] && \srt{1}{-3}\ar@{->}[ur] \\
3&& \srt{3}{5}\ar@{->}[ddr]\ar@{->}[ur] && \srt{2}{4}\ar@{->}[dr]\ar@{->}[ur] && \srt{1}{3}\ar@{->}[ddr]\ar@{->}[ur] && \srt{2}{-5} \ar@{->}[dr]\ar@{->}[ur] && \srt{1}{-4}\ar@{->}[ur] \\
4&\srt{4}{5}\ar@{->}[ur] &&&& \srt{2}{3}\ar@{->}[ur] &&&& \srt{1}{-5}\ar@{->}[ur] \\
5&&& \srt{3}{4}\ar@{->}[uur] &&&& \srt{1}{2} \ar@{->}[uur]
}}\]

Let us introduce an interesting observation (see Algorithm \ref{alg: fRef Q} below).

\begin{definition} \label{def: Cn}
  Note that $\overline{I}$ can be thought as an index set of $C_{n}$.
\begin{enumerate}
\item[{\rm (a)}] Let $\overline{\mathsf{D}}={\rm diag}(d_{\overline{i}} \ | \ \overline{i} \in \overline{I} )={\rm diag}(1,1,\cdots, 1, 2)$ be the diagonal matrix
which diagonalizes the Cartan matrix $\cmA=(a_{\overline{i}\overline{j}})$ of type $C_{n}$.
\item[{\rm (b)}] We denote by $\overline{\mathsf{d}}={\rm lcm}(d_{\overline{i}} \ | \ \overline{i} \in \overline{I})=2$.
\end{enumerate}
\end{definition}
Recall that the involution $^*$ induced by the longest element
$w_0$ of type $C_{n}$ is an identity map $\overline{i} \mapsto \overline{i}$.
Suppose $i$ is a sink of $\Qg$. We shall describe the algorithm which shows a way of obtaining $\widehat{\Upsilon}_{[i \ \Qg]}$ from
$\widehat{\Upsilon}_{[\Qg]}$ by using the notations on $C_n$. (cf. Algorithm \ref{alg: tRef Q}.)

\begin{algorithm} \label{alg: fRef Q}
Let $\mathsf{h}_C^\vee=n+1$ be a dual Coxeter number of $C_{n}$ and $\al_i$ be a sink of $[\ii_0] \in \lf \Qg \rf$.
\begin{enumerate}
\item[{\rm (D1)}] Remove the vertex $(\overline{i},\xi(\bar{i}))$ and the arrows entering into $(\overline{i},\xi(\bar{i}))$ in
$\widehat{\Upsilon}_{[\Qg]}$.
\item[{\rm (D2)}] Add the vertex $(\overline{i},\xi(\bar{i})-\overline{\mathsf{d}} \times \mathsf{h}_C^\vee)$ and
the arrows to all vertices whose coordinates are $(\overline{j}, \xi(\bar{i})-\overline{\mathsf{d}} \times \mathsf{h}_C^\vee+1) \in \widehat{\Upsilon}_{[\Qg]}$,
where $\overline{j}$ is adjacent to $\overline{i^*}$ in Dynkin diagram of type $C_n$.
\item[{\rm (D3)}] Label the vertex $(\overline{i},\xi(\bar{i})-\overline{\mathsf{d}} \times \mathsf{h}_C^\vee)$ with $\al_i$ and change the labels $\be$ to $s_i(\be)$ for all $\be \in \widehat{\Upsilon}_{[\Qg]}
\setminus \{\al_i\}$.
\end{enumerate}
\end{algorithm}

\section{Distances and radius with respect to $\lf \Qg \rf$} \label{Sec:dist_rds}

\subsection{Notions} In this subsection, we briefly review the notions on sequences of positive roots
which were mainly introduced in \cite{Mc12,Oh15E}.

\begin{convention} \label{conv: sequence}
Let us choose any reduced expression $\jj_0=i_1i_2 \cdots i_{\N}$ of $w_0 \in W$ of any finite type and fix a convex total order $\le_{\jj_0}$
induced by $\jj_0$ and a labeling of $\Phi^+$ as follows:
$$\beta^{\jj_0}_k \seteq s_{i_1}\cdots s_{i_{k-1}}\alpha_{i_k} \in \Phi^+, \   \beta^{\jj_0}_k \le_{\jj_0} \beta^{\jj_0}_l \text{ if and only if } k \le l.$$

\begin{enumerate}
\item[{\rm (i)}] We identify a sequence $\um_{\jj_0}=(m_1,m_2,\ldots,m_{\N}) \in \Z_{\ge 0}^{\N}$ with
$$(m_1\beta^{\jj_0}_1 ,m_2\beta^{\jj_0}_2,\ldots,m_{\N} \beta^{\jj_0}_{\N}) \in (\rl^+)^{\N},$$
where $\rl^+$ is the positive root lattice.
\item[{\rm (ii)}] For a sequence $\um_{\jj_0}$ and another reduced expression ${\jj_0}'$ of $w_0$, the sequence $\um_{{\jj_0}'}\in\Z_{\ge 0}^{\N}$ is induced from $\um_{\jj_0}$ as follows : (a) Consider $\um_{\jj_0}$  as a sequence of positive roots, (b)
rearranging with respect to $<_{\jj_0'}$, (c)  applying the convention
{\rm (i)}.
\end{enumerate}
For simplicity of notations, we usually drop the script ${\jj_0}$ if there is no fear of confusion.
\end{convention}

\begin{definition} \cite[Definition 1.10]{Oh15E} \hfill
\begin{enumerate}
\item[{\rm (i)}] A sequence $\um$ is called a {\it pair} if $|\um|\seteq \sum_{i=1}^\N m_i=2$ and $m_i \le 1$ for $1\le i\le \N$.
\item[{\rm (ii)}] The {\it weight} $\wt(\um)$ of a sequence $\um$ is defined by $\sum_{i=1}^{\N} m_i\beta_i \in \rl^+$.
\end{enumerate}
\end{definition}

We mainly use the notation $\up$ for a pair.
We also write $\up$ as $(\al,\be) \in \PR$ or $(\up_{i_1},\up_{i_2})$ where $\be_{i_1}=\al$, $\be_{i_2}=\be$ and $i_1 < i_2$.

\begin{definition}\cite{Mc12,Oh15E} We define partial orders $<^\tb_{\jj_0}$ and $\prec^\tb_{[\jj_0]}$ on $\Z_{\ge 0}^{\N}$ as follows:
\begin{enumerate}
\item[{\rm (i)}] $<^\tb_{\jj_0}$ is the bi-lexicographical order induced by $<_{\jj_0}$.
\item[{\rm (ii)}] For sequences $\um$ and $\um'$, $\um \prec^\tb_{[\jj_0]} \um'$ if and only if $\wt(\um)=\wt(\um')$ and
$\um_{\jj_0'} <^\tb_{\jj_0'} \um'_{\jj_0'}$ for all $\jj_0' \in [\jj_0]$.
\end{enumerate}
\end{definition}

\begin{definition} \cite[Definition 1.13, Definition 1.14]{Oh15E} \hfill
\begin{enumerate}
\item A pair $\up$ is called $[\jj_0]$-simple if there exists no sequence $\um \in \Z^{\ell(w)}_{\ge 0}$ such that $\um \prec^\tb_{[\jj_0]} \up$
\item A sequence $\um=(m_1,m_2,\ldots,m_{\ell(w)}) \in \Z^{\ell(w)}_{\ge 0}$ is called $[\jj_0]$-simple if $|\um|=1$ or all pairs $(\up_{i_1},\up_{i_2})$
such that $m_{i_1},m_{i_2} >0$ are $[\jj_0]$-simple pairs.
\end{enumerate}
\end{definition}

\begin{definition} \cite[Definition 1.15]{Oh15E} For a given $[\jj_0]$-simple sequence $\us=(s_1,\ldots,s_{\N}) \in \Z^{\N}_{\ge 0}$,
we say that a sequence $\um \in \Z^{\N}_{\ge 0}$ is called a {\it $[\jj_0]$-minimal sequence of $\us$} if {\rm (i)} $\us \prec^\tb_{\jj_0} \um$ and
{\rm (ii)} there exists no sequence $\um'\in \Z^{\N}_{\ge 0}$ such that
$$  \us \prec^\tb_{[\jj_0]} \um' \prec^\tb_{[\jj_0]}  \um.$$
\end{definition}

\begin{definition} \cite[Definition 4.8]{Oh15E} \label{def: gdist}
\begin{enumerate}
\item We say that a sequence $\um$ has {\it generalized $[\jj_0]$-distance $k$}
$(k \in\Z_{\ge 0})$, denoted by $\gdist_{[\jj_0]}(\um)=k$, if $\um$ is {\it not} $[\jj_0]$-simple and satisfies the following properties:
\begin{enumerate}
\item[{\rm (i)}] there exists a set of non $[\jj_0]$-simple sequences $\{ \um^{(s)} \ | \ 1 \le s \le k \}$ such that
\begin{align}\label{eq: gdist seq}
\um^{(1)} \prec^\tb_{[\jj_0]} \cdots \prec^\tb_{[\jj_0]} \um^{(k)}=\um,
\end{align}
\item[{\rm (ii)}] the set of non $[\jj_0]$-simple sequences $\{ \um^{(s)} \}$ has the maximal cardinality among sets of sequences satisfying \eqref{eq: gdist seq}.
\end{enumerate}
\item If $\um$ is $[\jj_0]$-simple, we define $\gdist_{[\jj_0]}(\um)=0$.
\end{enumerate}
\end{definition}

\begin{remark}
For any pair $\{ \alpha,\beta\} \in (\PR)^2$, we also use the notation $\gdist_{[\jj_0]}(\al,\be)$ in a natural way; that is,
$$\gdist_{[\jj_0]}(\al,\be) =\begin{cases}
\gdist_{[\jj_0]}(\al,\be) & \text{ if } \al \prec^\tb_{[\jj_0]} \be,\\
\gdist_{[\jj_0]}(\be,\al) & \text{ if } \be \prec^\tb_{[\jj_0]} \al,\\
0 & \text{ if $\al$ and $\be$ is incomparable with respect to $\prec^\tb_{[\jj_0]}$}.
\end{cases}
$$
\end{remark}

\begin{definition}\cite[Definition 1.19]{Oh15E}
For a non-simple positive root $\gamma \in \PR \setminus \Pi$, the {\it $[\jj_0]$-radius} of $\gamma$, denoted by $\rds_{[\jj_0]}(\gamma)$,
is the integer defined as follows:
$$\rds_{[\jj_0]}(\gamma)=\max({\rm gdist}_{[\jj_0]}(\up) \ | \ \gamma  \prec_{[\jj_0]}^\tb \up).$$
\end{definition}

\begin{definition}\cite[Definition 1.21]{Oh15E} \label{def: jj_0-socle}
For a pair $\up$, the {\it $[\jj_0]$-socle} of $\up$, denoted by $\soc_{[\jj_0]}(\up)$, is a $[\jj_0]$-simple sequence $\us$ such that
$$ \us \preceq^\tb_{[\jj_0]} \up,$$
if such $\us$ exists uniquely.
\end{definition}

\subsection{Distances and radius for twisted adapted cluster points}
Before we investigate twisted adapted cluster points, we recall the following theorem which is about adapted cluster points.

\begin{theorem}\cite{Oh14D,Oh15E} \label{thm: known for Q} Let $Q$ be any Dynkin quiver of type $AD_m$.
\begin{enumerate}
\item[{\rm (1)}] For any pair $(\al,\be) \in (\PR)^2$,  we have $\gdist_{[Q]}(\al,\be) \le \max\{ \mathsf{m}(\gamma)\ | \ \gamma=\alpha,\beta \  \}$.
\item[{\rm (2)}] For any $\gamma \in \PR \setminus \Pi$, we have $$\rds_{[Q]}(\gamma)=\mathsf{m}(\gamma).$$
\item[{\rm (3)}] For any $\up$, $\soc_{[Q]}(\up)$ is well-defined.
\end{enumerate}
\end{theorem}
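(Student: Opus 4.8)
The statement is exactly the content of \cite{Oh14A,Oh14D,Oh15E}, and the plan I would follow is to deduce all three assertions from the explicit combinatorial description of $\Gamma_Q$ recalled in Definition~\ref{def: Gamma_Q}, using the basic fact that in types $AD_m$ one always has $\mathsf{m}(\gamma)\le 2$, with $\mathsf{m}(\gamma)=2$ occurring only for the ``long-support'' roots of type $D$ (the $\varepsilon_i+\varepsilon_j$ whose expansion has a coefficient $2$). First I would reduce everything to pairs: since $\gdist_{[Q]}$, $\rds_{[Q]}$ and $\soc_{[Q]}$ are all phrased through chains $\um^{(1)}\prec^\tb_{[Q]}\cdots\prec^\tb_{[Q]}\um^{(k)}$ of a fixed weight, and since a non-$[Q]$-simple sequence by definition contains a non-$[Q]$-simple pair, the whole problem is governed by the poset $\mathcal P_\gamma$ of decompositions $\gamma=\alpha+\beta$ ($\alpha,\beta\in\PR$) ordered by $\prec^\tb_{[Q]}$. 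By Theorem~\ref{thm: OS14}(2) and Proposition~\ref{Prop:adapted} the order $\prec^\tb_{[Q]}$ on $\mathcal P_\gamma$ is read off from paths in $\Gamma_Q$, and by the additive property \eqref{eq: addtive property} together with Theorem~\ref{thm: labeling GammaQ} and Proposition~\ref{pro: section shares} one can, for each $\gamma$ and each coordinate of $\gamma$ in $\Gamma_Q$, list all pairs in $\mathcal P_\gamma$ with their coordinates. This is where the two types diverge: for $A_n$ the relevant part of $\mathcal P_\gamma$ collapses to a chain whose reductions terminate after one step, whereas for $D_m$ the roots $\langle a,b\rangle$ of multiplicity $2$ produce a poset of height two, the two steps corresponding to ``peeling off'' the two occurrences of the branch node.

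With this classification of minimal pairs in hand, (1) follows by direct inspection of $\mathcal P_\gamma$: any chain of non-$[Q]$-simple pairs ending at $(\alpha,\beta)$ has length at most the height of $\mathcal P_\gamma$ above $(\alpha,\beta)$, and that height is bounded by $\max\{\mathsf{m}(\alpha),\mathsf{m}(\beta)\}$ because a non-$[Q]$-simple pair $(\alpha,\beta)$ already forces one of $\alpha$, $\beta$ to be one of the multiplicity-realizing roots used in the peeling. For (2), the inequality $\rds_{[Q]}(\gamma)\le\mathsf{m}(\gamma)$ is then immediate from (1) together with the elementary coefficientwise bound $\mathsf{m}(\alpha+\beta)\ge\max\{\mathsf{m}(\alpha),\mathsf{m}(\beta)\}$ (valid for any $\alpha,\beta\in\PR$, since the coefficients only add up), and the reverse inequality is obtained by exhibiting one explicit pair $\up\in\mathcal P_\gamma$ realizing a chain of non-$[Q]$-simple sequences of length $\mathsf{m}(\gamma)$; for $\mathsf{m}(\gamma)=2$ such a $\up$ is built directly from the two arms of the $D_m$ diagram, and for $\mathsf{m}(\gamma)=1$ (in particular for all of type $A$) it is just a non-$[Q]$-simple minimal pair for $\gamma$, which exists whenever $\gamma\notin\Pi$.

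For (3), the well-definedness of $\soc_{[Q]}(\up)$ amounts to a confluence statement: any two $[Q]$-simple sequences $\us_1,\us_2\preceq^\tb_{[Q]}\up$ must coincide. I would prove this by showing, again from the explicit coordinates in $\Gamma_Q$ and the convexity of the total orders $<_{\ii_0}$ (Remark~\ref{Rem:2.17_0419}), that the set of $[Q]$-simple sequences $\preceq^\tb_{[Q]}\up$ has a unique $\prec^\tb_{[Q]}$-minimal element and, crucially, that there is only one such element — i.e.\ the two possible peelings of a multiplicity-two root land on the same $[Q]$-simple sequence. The main obstacle is precisely this last point combined with the type-$D$ bookkeeping: uniqueness of the socle is a global property of $\mathcal P_\gamma$ rather than a local reduction, so it genuinely needs the full classification of minimal pairs of $\Gamma_Q$ in type $D$ from \cite{Oh14D} and a careful verification that distinct reduction routes are confluent. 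Everything else reduces to a finite (if tedious) case analysis driven by the additive property and the sectional-path structure of $\Gamma_Q$, exactly as carried out in \cite{Oh14A,Oh14D,Oh15E}.
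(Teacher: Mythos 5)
The paper itself does not prove this theorem: it is recalled from \cite{Oh14D,Oh15E} and serves only to motivate the twisted analogue, Theorem \ref{thm: dist upper bound Qd}, which is what the paper actually proves in Section \ref{Sec:dist_rds}. So there is no internal proof to compare against; measured against the strategy of the cited papers (and of the paper's own twisted argument), your outline does follow the right general route: read $\prec^\tb_{[Q]}$ off $\Gamma_Q$, classify decompositions of a fixed weight via sectional paths and the additive property \eqref{eq: addtive property}, and locate the source of chains of length two in the multiplicity-two roots of type $D$.

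Two steps of your sketch, however, are asserted rather than argued, and they are exactly the nontrivial points. First, the reduction to the poset $\mathcal P_\gamma$ of pair decompositions is not automatic: the chains in Definition \ref{def: gdist} run over arbitrary non-$[Q]$-simple sequences of the fixed weight, and a sequence of length $\ge 3$ occurring in such a chain cannot be replaced by one of its non-simple subpairs (that subpair has a different weight), so your observation that every non-simple sequence contains a non-simple pair does not govern the chain length. One must show separately that non-pair sequences below a given pair either do not occur or do not lengthen chains; this is precisely what the analogue Lemma \ref{lem:gdist_3} accomplishes in the twisted setting, and the corresponding results in \cite{Oh14A,Oh14D} in the adapted one. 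Second, your justification of (1) --- that a non-simple pair ``already forces one of $\al,\be$ to be a multiplicity-realizing root'' --- is a restatement of the inequality to be proved rather than an argument; in the references it follows from an explicit classification of the relative positions in $\Gamma_Q$ of all pairs of a given weight (the type-$D$ counterparts of the configurations \eqref{Pic:gdist_D_1}--\eqref{eq: complacted socle of D}), and that same classification is what gives the confluence needed for the well-definedness of $\soc_{[Q]}(\up)$ in (3), which you defer entirely to \cite{Oh14D,Oh15E}. As a plan the outline is sound, but as a proof it leaves these points open.
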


In this section we prove the following theorem:
\begin{theorem} \label{thm: dist upper bound Qd} Take any $[\Qg] \in \lf\Qg\rf$ of type $D_{n+1}$.
\begin{enumerate}
\item[{\rm (1)}] For any pair $(\al,\be) \in (\PR)^2$, we have $\gdist_{[\Qg]}(\al,\be) \le \max\{ \overline{\mathsf{m}}(\gamma) \ | \ \ga=\alpha,\beta\,   \}=2$.
\item[{\rm (2)}] For any $\gamma \in \PR \setminus \Pi$,
\begin{equation}
\rds_{[\Qg]}(\gamma)= \overline{\mathsf{m}}(\ga).
\end{equation}
\item[{\rm (3)}] For any $\up$, $\soc_{[\Qg]}(\up)$ is well-defined.
\end{enumerate}
\end{theorem}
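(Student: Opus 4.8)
The plan is to transport the statement from type $A_n$ via the two copies ${}_1\Gamma_Q$ and ${}_2\Gamma_{Q^*}$ and to handle the genuinely new phenomenon — the folded-multiplicity-$2$ roots $\langle a,n\rangle$ and the $[n+1]$-snake — by direct bookkeeping in $\widehat{\Upsilon}_{[\Qg]}$. First I would set up notation: fix $[\Qg]\in\lf\Qg\rf$, let $Q$ be the type $A_n$ Dynkin quiver with $\mathfrak{p}^{D_{n+1}}_{A_n}([\Qg])=[Q]$, and recall from Algorithm \ref{Alg:label} that $\widehat{\Upsilon}_{[\Qg]}={}_1\Gamma_Q\sqcup{}_2\Gamma_{Q^*}$ as quivers (up to the reflection of Remark \ref{rem: reflection w.r.t x-axis}), with ${}_2\Gamma_{Q^*}\simeq\Gamma_Q$ too. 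The key structural inputs are: (a) the $[p]$-snake description in \eqref{eq: [p]-cahracter} and Theorem \ref{thm:p-snake well}; (b) the twisted additive property, Theorem \ref{twisted additive property}; (c) Corollary \ref{lem: folded mul free iff}, which says $\ga$ is folded-multiplicity non-free iff $\ga\in\Upsilon^C_{[\Qg]}$; and (d) Theorem \ref{thm: known for Q} for $Q$ of type $A_n$.

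For part (1), I would argue that for a pair $(\al,\be)$ with $\al\prec^\tb_{[\Qg]}\be$, any chain $\um^{(1)}\prec^\tb_{[\Qg]}\cdots\prec^\tb_{[\Qg]}\um^{(k)}=(\al,\be)$ of non-simple sequences has length $k\le 2$. The first reduction: by Remark \ref{rem:LR path}(ii) a comparability relation $\prec^\tb$ between roots both lying in ${}_1\Gamma_Q$ (resp. both in ${}_2\Gamma_{Q^*}$) is governed by the corresponding relation in $\Gamma_Q$, so if the pair and the whole chain stay inside one copy, the bound $k\le\max\{\mathsf m(\ga)\}=1$ (since roots in a single copy are multiplicity free in the $A_n$ sense) follows from Theorem \ref{thm: known for Q}(1). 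Hence a chain of length $\ge 2$ must involve a root $\ga$ with $\overline{\mathsf m}(\ga)=2$, i.e. $\ga\in\Upsilon^C_{[\Qg]}$, which by Proposition \ref{Prop:label for Cvertices} decomposes as $\langle i_1,-n-1\rangle+\langle i_2,n+1\rangle$ (one summand from $\rho'$, one from $\rho$). I would then show that such a root sits at the top of a chain of length exactly $2$ and cannot be refined further: the bottom of any $\prec^\tb$-chain below it is forced, via the additive property Theorem \ref{twisted additive property} and the swing characterization, to be a $[\Qg]$-simple pair (this is where the $C_n$-flavoured combinatorics of the $[p]$-snakes enters — each of the two "halves" of $\Upsilon^C_{[\Qg]}$ behaves like an $A_n$-triangle, and the extra layer comes only from crossing the $[n+1]$-snake once).

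For part (2), I would prove $\rds_{[\Qg]}(\ga)\le\overline{\mathsf m}(\ga)$ using part (1) (the radius is a max of gdists of pairs dominating $\ga$, each bounded by $\max$ of folded multiplicities of its entries, and one checks the dominating pair's entries have folded multiplicity $\le\overline{\mathsf m}(\ga)$), and the reverse inequality $\rds_{[\Qg]}(\ga)\ge\overline{\mathsf m}(\ga)$ by exhibiting an explicit dominating pair: if $\overline{\mathsf m}(\ga)=1$ this is Theorem \ref{thm: known for Q}(2) transported through one copy $\Gamma_Q$; if $\overline{\mathsf m}(\ga)=2$, i.e. $\ga=\langle a,b\rangle$ with $1\le a<b\le n$ (the roots filling $\Upsilon^C_{[\Qg]}$ plus $\rho$-vertices), I would write $\ga$ as the weight of a pair $(\langle a,-n-1\rangle,\langle b,n+1\rangle)$ — legitimate since $\langle a,-n-1\rangle+\langle b,n+1\rangle=\langle a,b\rangle$ — and check using \eqref{eq: [p]-cahracter} and the additive property that this pair has $\gdist_{[\Qg]}=2$, so $\rds_{[\Qg]}(\ga)\ge 2$. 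For part (3), well-definedness of $\soc_{[\Qg]}(\up)$, I would show that the set $\{\us \text{ $[\Qg]$-simple}:\us\preceq^\tb_{[\Qg]}\up\}$ has a unique minimal element: for pairs both of whose entries lie in one copy this is Theorem \ref{thm: known for Q}(3); for a "mixed" pair $\up$ one traces the possible simple sequences below it and shows, again via Theorem \ref{twisted additive property} and the uniqueness of intersections in Proposition \ref{Prop: properties of ext}(3), that only one survives.

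The main obstacle I expect is part (1), specifically proving that no non-simple sequence strictly refines a length-$2$ chain whose top is a folded-multiplicity-$2$ root — i.e. that $2$ is genuinely the ceiling and not merely a lower bound. The subtlety is that the two glued copies are not independent: a sequence can mix roots from ${}_1\Gamma_Q$, from ${}_2\Gamma_{Q^*}$, and from the gluing region $\rho\cup\rho'\cup\Upsilon^C_{[\Qg]}$, and the order $\prec^\tb_{[\Qg]}$ must be controlled across all commutation-equivalent reduced words simultaneously. I would handle this by reducing every comparison to statements about sectional paths/swings (Proposition \ref{Prop: properties of ext}, Corollary \ref{Cor: induced property ext}) and the twisted additive property, essentially reproving the $A_n$ arguments of \cite{Oh15E} one "swing-layer" at a time, with the $[n+1]$-snake contributing the single extra unit of distance. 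A secondary technical point is keeping the height-function normalization of Convention \ref{Conv:xi_0} consistent throughout so that the coordinate identities in Corollary \ref{Cor:label_dist} and Corollary \ref{Cor:4.25} can be quoted verbatim.
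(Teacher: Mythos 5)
Your overall architecture --- the decomposition $\Upsilon_{[\Qg]}=\Upsilon^L_{[\Qg]}\sqcup\Upsilon^C_{[\Qg]}\sqcup\Upsilon^R_{[\Qg]}$, transport from type $A_n$ through ${}_1\Gamma_Q$ and ${}_2\Gamma_{Q^*}$, and control of the gluing region by snakes/swings and the twisted additive property --- is the same as the paper's. But the reduction you actually commit to for part (1) has a genuine gap: your dichotomy (either the whole chain stays in one copy, or the chain involves a root of folded multiplicity two, hence in $\Upsilon^C_{[\Qg]}$) is not exhaustive. There are chains of non-simple pairs all of whose entries are folded multiplicity free and which straddle the two copies: for $\ga=\langle a,b\rangle\in\Upsilon^C_{[\Qg]}$ the two pairs $(\langle a,n+1\rangle,\langle b,-n-1\rangle)$ and $(\langle a,-n-1\rangle,\langle b,n+1\rangle)$ are comparable with respect to $\prec^\tb_{[\Qg]}$ (this is exactly the lower-bound half of Proposition \ref{prop:rds_2}), so the larger of them has $\gdist_{[\Qg]}=2$ even though both of its entries have $\overline{\mathsf{m}}=1$ and lie in $\rho\sqcup\rho'\subset\Upsilon^L_{[\Qg]}\sqcup\Upsilon^R_{[\Qg]}$. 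Neither branch of your dichotomy covers this configuration (nor the $L$-versus-$R$ pairs with non-root sum treated in Proposition \ref{prop:gdist_4}), and it is precisely here that the ceiling $2$ must be proved. The paper does this by bounding the radius of $\ga\in\Upsilon^C_{[\Qg]}$ directly --- the rectangle argument inside ${}_1\Gamma_Q$ in Proposition \ref{prop:rds_2}, showing no three pairwise comparable pairs of weight $\ga$ exist --- together with the case analysis of Propositions \ref{Lem:gdist_1}, \ref{prop:gdist_2}, \ref{prop:gdist_3}, \ref{prop:gdist_4}, including Lemma \ref{lem:gdist_3} (which you never address) ruling out non-pair sequences below a mixed $C$-versus-$L/R$ pair.

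The same configuration undermines your part (2): you propose to get $\rds_{[\Qg]}(\ga)\le\overline{\mathsf{m}}(\ga)$ from part (1) by bounding each dominating pair's distance by the maximum of the folded multiplicities of its entries, but the pair realizing $\rds_{[\Qg]}(\ga)=2$ for $\ga\in\Upsilon^C_{[\Qg]}$ has folded-multiplicity-free entries, so that per-entry bound would force $\rds_{[\Qg]}(\ga)\le 1$, contradicting the lower bound you yourself exhibit. (This also shows the inequality in part (1) cannot be used literally as a bound by $\max\{\overline{\mathsf{m}}(\al),\overline{\mathsf{m}}(\be)\}$ when $\al+\be\in\PR$: the paper's distance propositions establish the per-entry bound only under the hypothesis $\al+\be\notin\PR$, and route the case $\al+\be\in\PR$ through the radius propositions instead.) To close these gaps you need the paper's mechanism: for $\ga\in\Upsilon^C_{[\Qg]}$ every pair of weight $\ga$ has the form $(\langle a,c\rangle,\langle b,-c\rangle)$, and the rectangle analysis in ${}_1\Gamma_Q$ combined with Remark \ref{rem:LR path} shows at most two such pairs are comparable, which is what yields both $\rds_{[\Qg]}(\ga)=2$ and the ceiling in part (1).
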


\begin{proposition} \label{prop:rds_1}
Let $\gamma\in \Phi^+\setminus \Pi$  and $\gamma \in \Upsilon^L_{[\Qg]}\cup \Upsilon^R_{[\Qg]}$. Then $$ \rds_{[\Qg]}(\gamma)=\overline{\mathsf{m}}(\ga)=1.$$
\end{proposition}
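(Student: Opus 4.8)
The plan is to reduce the statement to the corresponding fact for AR-quivers of type $A_n$, namely Theorem \ref{thm: known for Q}(2), via the gluing description of $\widehat\Upsilon_{[\Qg]}$ given in Algorithm \ref{Alg:label} and Remark \ref{rem: decompose into 2-comp}. First I would recall that, by Corollary \ref{lem: folded mul free iff}, a root $\gamma\in\Phi^+\setminus\Pi$ lying in $\Upsilon^L_{[\Qg]}\cup\Upsilon^R_{[\Qg]}$ is folded multiplicity free, so $\overline{\mathsf m}(\ga)=1$; hence the content of the proposition is the inequality $\rds_{[\Qg]}(\ga)\le 1$ together with $\rds_{[\Qg]}(\ga)\ge 1$ (the latter is automatic for a non-simple root, since one can always find a pair $\up$ with $\ga\prec^\tb_{[\Qg]}\up$, so $\gdist\ge 1$). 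So the real work is the upper bound $\rds_{[\Qg]}(\ga)\le 1$, i.e. every pair $\up$ with $\ga\prec^\tb_{[\Qg]}\up$ is $[\Qg]$-simple, equivalently $\gdist_{[\Qg]}(\up)\le 1$ for all such $\up$.

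The key step is to transfer the computation of $\gdist$ and $\prec^\tb_{[\Qg]}$ from $\widehat\Upsilon_{[\Qg]}$ to $\Gamma_Q$. By Remark \ref{rem:LR path}(ii) and Corollary \ref{Cor:label come from 1GaQ} (resp. Corollary \ref{Cor:label come from 2GaQ}), the subquiver $\Upsilon^R_{[\Qg]}$ (resp. $\Upsilon^L_{[\Qg]}$) is isomorphic as a labelled quiver, after the substitution $[a,b]\mapsto\langle a,-b-1\rangle$ (with the modification at $b=n$), to the subquiver $\Gamma^R_Q$ (resp. $\Gamma^L_{Q^{\rm rev}}$) of the AR-quiver of type $A_n$, and moreover the convex orders $\prec_{[\Qg]}$ and $\prec_Q$ correspond. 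I would then argue that for a root $\ga$ and a pair $\up$ both supported in $\Upsilon^R_{[\Qg]}$ (the case $\Upsilon^L$ being symmetric via the reversal discussed in Lemma \ref{Lem:A longest} and Corollary \ref{Cor:rev Ga rev Q}), the relation $\ga\prec^\tb_{[\Qg]}\up$ holds if and only if the corresponding $A_n$-data satisfies $\ga'\prec^\tb_{[Q]}\up'$, because the bi-lexicographic comparison only depends on the convex orders $<_{\jj'_0}$ for $\jj'_0\in[\Qg]$ and on which positive roots appear, both of which match under the labelling bijection. Then $\gdist_{[\Qg]}(\up)\le\gdist_{[Q]}(\up')$ (any chain \eqref{eq: gdist seq} in type $D$ maps to a chain in type $A$), and Theorem \ref{thm: known for Q}(2) gives $\gdist_{[Q]}(\up')\le\rds_{[Q]}(\ga')=\mathsf m(\ga')$. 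Finally, for $\ga\in\Upsilon^R_{[\Qg]}\setminus\Pi$ the image $\ga'\in\Gamma_Q$ is a non-simple root of type $A_n$, and every positive root of $A_n$ is multiplicity free, so $\mathsf m(\ga')=1$; this yields $\rds_{[\Qg]}(\ga)\le 1$, hence equality.

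The subtlety — and the step I expect to be the main obstacle — is that a pair $\up=(\al,\be)$ with $\ga\prec^\tb_{[\Qg]}\up$ need not have both $\al$ and $\be$ inside $\Upsilon^R_{[\Qg]}$; in principle one of them could live in $\Upsilon^C_{[\Qg]}$ or $\Upsilon^L_{[\Qg]}$. I would handle this by a convexity/position argument: since $\ga\in\Upsilon^R_{[\Qg]}$ and $\Upsilon^R_{[\Qg]}$ is (by \eqref{eq: cond uncovered} and Convention \ref{Conv:xi_0}) the part of the quiver weakly to the right of the $S$-sectional path $\rho$ starting at $(\bar 1,1)$, any $\up$ with $\ga\prec^\tb_{[\Qg]}\up$ must have both components $\prec_{[\Qg]}$-above $\ga$, hence — using that paths to $\ga$ come only from the right region, Remark \ref{rem:LR path} — forces $\al,\be\in\Upsilon^R_{[\Qg]}$ as well, because $\wt(\up)=\wt(\{\ga\}\text{-expansion})$ and the folded-multiplicity-one condition on $\ga$ combined with the additive structure in Theorem \ref{twisted additive property} leaves no room for a component with folded multiplicity $2$ (which is exactly the $\Upsilon^C$ region, by Corollary \ref{lem: folded mul free iff}). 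Making this last containment precise — ruling out "mixed" pairs straddling $\rho$ — is where I would spend most of the care; once it is in place, the reduction to type $A_n$ closes the argument.
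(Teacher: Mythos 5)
Your overall route --- use Corollary \ref{lem: folded mul free iff} to get $\overline{\mathsf{m}}(\ga)=1$, then reduce the bound $\rds_{[\Qg]}(\ga)\le 1$ to the type $A_n$ fact $\rds_{[Q]}(\ga')=\mathsf{m}(\ga')=1$ of Theorem \ref{thm: known for Q} via the labelling of Algorithm \ref{Alg:label} --- is the same as the paper's. The gap is precisely in the step you flag as the delicate one, and the containment you propose to prove there is false. For a pair $\up=(\al,\be)$ with $\al+\be=\ga$, convexity does not give ``both components $\prec_{[\Qg]}$-above $\ga$''; it gives $\al\prec_{[\Qg]}\ga\prec_{[\Qg]}\be$, so when $\ga\in\Upsilon^R_{[\Qg]}$ only the $\prec$-smaller component is forced into $\Upsilon^R_{[\Qg]}$. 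The larger component can lie in $\Upsilon^L_{[\Qg]}$: in Example \ref{Ex:twistedD_{n+1}} take $\ga=\langle 1,-3\rangle=\al_1+\al_2\in\Upsilon^R_{[\Qg]}$; its unique pair is $(\langle 2,-3\rangle,\langle 1,-2\rangle)$, and $\langle 1,-2\rangle$ sits in $\Upsilon^L_{[\Qg]}$. Your folded-multiplicity argument only excludes components in $\Upsilon^C_{[\Qg]}$, and no positional argument can exclude $\Upsilon^L_{[\Qg]}$, since by Remark \ref{rem:LR path} every root of $\Upsilon^L_{[\Qg]}$ is $\succ_{[\Qg]}$ every root of $\Upsilon^R_{[\Qg]}$, so such mixed pairs always satisfy $\ga\prec^\tb_{[\Qg]}\up$. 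Hence the plan of ``ruling out pairs straddling $\rho$'' cannot be completed, and your comparison, set up only between $\Upsilon^R_{[\Qg]}$ and $\Gamma^R_{Q}$, misses exactly the pairs that still need to be controlled.

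The paper's proof avoids this: it uses convexity only to place the smaller components in $\Upsilon^R_{[\Qg]}$, and then exploits that \emph{all} components of any pair of weight $\ga$ are folded multiplicity free, hence (after replacing $\al_{n+1}$ by $\al_n$) are identified with positive roots of type $A_n$ by Algorithm \ref{Alg:label} regardless of whether they lie in $\Upsilon^L_{[\Qg]}$, $\rho$, $\rho'$ or $\Upsilon^R_{[\Qg]}$ (Corollaries \ref{Cor:label come from 1GaQ}, \ref{Cor:label come from 2GaQ} and \ref{Cor:label 2}); a chain of two non-simple pairs above $\ga$ with respect to $\prec^\tb_{[\Qg]}$ is then ruled out by $\rds_{[Q]}(\ga')=1$. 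So the repair is not to exclude mixed pairs but to include the $\Upsilon^L$-components in the type-$A_n$ identification and verify the transfer of order relations there as well; as written, your argument does not treat this case at all.
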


\begin{proof}
Note that $\gamma \in \Upsilon^L_{[\Qg]}\cup \Upsilon^R_{[\Qg]}$ is (folded) multiplicity free by Lemma \ref{lem: folded mul free iff}.
Thus any pairs $\up=(\al,\be)$ and $\up'=(\al',\be')$ with $\wt(\up)=\wt(\up')=\ga$ consist of multiplicity free positive roots $\al$, $\al'$, $\be$ and $\be'$.
By replacing $\al_{n+1}$ to $\al_n$, we can consider $\ga$, $\al$, $\al'$, $\be$ and $\be'$
as positive roots of type $A_n$. Assume that $\ga \in \Upsilon^R_{[\Qg]}$. By the convexity of $\prec_{[\Qg]}$, $\al$ and $\al'$ are also contained in
$\Upsilon^R_{[\Qg]}$. Then our assertion follows from Algorithm \ref{Alg:label} and
the fact that $\rds_{[Q]}(\gamma)=1$ (\cite[Theorem 4.15]{Oh15E}) for $Q=\mathfrak{p}^{D_{n+1}}_{A_n}([\Qg])$. For the case when $\ga \in \Upsilon^L_{[\Qg]}$, we can apply the same argument.
\end{proof}

\begin{proposition} \label{prop:rds_2}
For $\gamma \in \Upsilon^C_{[\Qg]}$, we have $$ \rds_{[\Qg]}(\gamma)=\overline{\mathsf{m}}(\ga)=2.$$
\end{proposition}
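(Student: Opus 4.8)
The plan is to prove the two equalities $\rds_{[\Qg]}(\gamma)=\overline{\mathsf{m}}(\gamma)=2$ for $\gamma \in \Upsilon^C_{[\Qg]}$ separately. The equality $\overline{\mathsf{m}}(\gamma)=2$ is immediate: by Corollary \ref{lem: folded mul free iff}, a root $\gamma \in \PR \setminus \Pi$ is folded multiplicity non-free precisely when it lies in $\Upsilon^C_{[\Qg]}$, and by definition $\overline{\mathsf{m}}(\gamma) \ge 2$ then, while the explicit description of $\Phi^+_{D_{n+1}}$ at the start of Section \ref{Sec:Twisted_adapted_CP} shows $\overline{\mathsf{m}}(\gamma) \le 2$ always (the only roots with a summand appearing with a coefficient exceeding $1$ after folding are the $\langle a, n\rangle$-type roots, which have folded multiplicity exactly $2$). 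So the content is the statement $\rds_{[\Qg]}(\gamma)=2$.

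For the lower bound $\rds_{[\Qg]}(\gamma) \ge 2$, I would exhibit a chain $\up^{(1)} \prec^\tb_{[\Qg]} \up^{(2)} = \up$ of non-$[\Qg]$-simple pairs with $\wt(\up^{(s)}) = \gamma$, giving $\gdist_{[\Qg]}(\up) = 2$ and hence $\rds_{[\Qg]}(\gamma) \ge 2$. Concretely, for $\gamma \in \Upsilon^C_{[\Qg]}$, Proposition \ref{Prop:label for Cvertices} (or the proposition immediately after it) identifies $\gamma$ as an intersection of an $N$-sectional path through some $\al \in \rho$ with folded coordinate $(\overline{i_1},i_1)$ and an $S$-sectional path through some $\be \in \rho'$ with folded coordinate $(\overline{i_2},-i_2)$, so $\gamma = \al + \be$ with $\al,\be \in \Phi^+|_{n+1\rangle}$. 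This already gives one decomposition of $\gamma$ into a pair. The point is that there is a \emph{second} decomposition $\gamma = \al' + \be'$ realizing folded multiplicity $2$ (for instance $\gamma = \langle a, b\rangle$ with $a<b\le n$ can also be written using roots of the form $\langle \ast, \pm(n+1)\rangle$ in the other "copy"), and one of these pairs is strictly smaller than the other in $\prec^\tb_{[\Qg]}$. I would use the coordinate combinatorics from Corollary \ref{Cor:label_dist}, Corollary \ref{Cor:4.25}, and the twisted additive property (Theorem \ref{twisted additive property}) together with the fact that $\gamma$ sits on two sectional paths of length $n-1$ (the observation \eqref{eq: cond uncovered}) to produce the explicit chain and to verify the $\prec^\tb_{[\Qg]}$-comparability by checking the bi-lexicographic condition for every $\jj_0' \in [\Qg]$.

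For the upper bound $\rds_{[\Qg]}(\gamma) \le 2$, equivalently $\gdist_{[\Qg]}(\up) \le 2$ for every pair $\up$ with $\gamma \prec^\tb_{[\Qg]} \up$, I would argue that any chain of non-$[\Qg]$-simple pairs of weight $\gamma$ has length at most $2$, by exploiting the decomposition $\widehat{\Upsilon}_{[\Qg]} = {}_1\Gamma_Q \sqcup {}_2\Gamma_{Q^*}$ from Remark \ref{rem: decompose into 2-comp} and Remark \ref{rem:LR path}. The idea is that a pair $\up = (\al,\be)$ of weight $\gamma$ either has both members decomposable entirely within the $A_n$-data (in which case Theorem \ref{thm: known for Q}(1),(2) bounds things since $\mathsf{m} \le 2$ in type $A$... actually $\mathsf{m}=1$ in type $A_n$, so really the obstruction only comes from roots genuinely mixing the two copies), or $\al$ and $\be$ are the two "halves" $\langle \ast, n+1\rangle$ and $\langle \ast, -n-1\rangle$ on $\rho$ and $\rho'$. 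One then shows that the only room for a chain comes from swapping between these finitely many configurations, bounded by $\overline{\mathsf{m}}(\gamma) = 2$. The main obstacle will be this upper bound: making the case analysis on the possible non-$[\Qg]$-simple pairs of weight $\gamma$ airtight, and in particular controlling the interaction between the two glued AR-quivers of type $A_n$ — one must rule out longer chains that alternate decompositions, which requires careful use of the convexity of $\prec_{[\Qg]}$ and the positions of $\rho$, $\rho'$ given by Convention \ref{Conv:xi_0}. I expect that reducing to type $A_n$ via Algorithm \ref{Alg:label} and invoking Theorem \ref{thm: known for Q} handles the "uniform" part, leaving only a bounded exceptional analysis around the central region $\Upsilon^C_{[\Qg]}$.
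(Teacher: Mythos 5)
Your treatment of $\overline{\mathsf{m}}(\gamma)=2$ via Corollary \ref{lem: folded mul free iff} and your choice of the two canonical decompositions $\gamma=\langle a,n+1\rangle+\langle b,-n-1\rangle=\langle a,-n-1\rangle+\langle b,n+1\rangle$ for the lower bound are exactly the paper's starting point. But even there you only gesture at the key verification: to get $\rds_{[\Qg]}(\gamma)\ge 2$ you must show the two pairs are comparable with respect to $\prec^\tb_{[\Qg]}$, i.e.\ in \emph{every} reduced expression of the class. The paper does this by observing that $\langle a,\pm(n+1)\rangle$ and $\langle b,\pm(n+1)\rangle$ lie on common sectional paths (induced from the path between $[a,n]$ and $[b,n]$ in $\Gamma_Q$), so each of those two pairs of roots is always comparable and the two orders are correlated; your appeal to ``coordinate combinatorics'' and the additive property does not yet substitute for this step.

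The genuine gap is the upper bound $\rds_{[\Qg]}(\gamma)\le 2$, which you yourself flag as the main obstacle but for which your proposed mechanism fails. The competing pairs of weight $\gamma=\langle a,b\rangle$ have the form $(\langle a,\pm c\rangle,\langle b,\mp c\rangle)$ with $b<c\le n+1$; for $c\le n$ the member $\langle a,c\rangle$ lies in $\Upsilon^C_{[\Qg]}$ and is folded-multiplicity $2$, hence is not identified with any $A_n$-root under Algorithm \ref{Alg:label}, so Theorem \ref{thm: known for Q} gives no control over chains passing through such pairs --- your ``reduction to type $A_n$'' simply does not see them, and your dichotomy (both members in the $A_n$-data, or the two halves on $\rho$, $\rho'$) misclassifies exactly these configurations. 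What is actually needed, and what the paper proves, is that any two non-canonical pairs $(\langle a,c\rangle,\langle b,-c\rangle)$ and $(\langle a,d\rangle,\langle b,-d\rangle)$ with $|c|\ne|d|>b$ are incomparable with respect to $\prec^\tb_{[\Qg]}$: assuming comparability, convexity forces $\langle b,-c\rangle,\langle b,-d\rangle\in\Upsilon^R_{[\Qg]}$, this produces a rectangle with vertices $[a,c-1]$, $[a,d-1]$, $[b,c-1]$, $[b,d-1]$ in ${}_1\Gamma_Q$, and then Remark \ref{rem:LR path} yields a path $\langle a,d\rangle\to\langle a,c\rangle$ in ${}_2\Gamma_{Q^*}$, contradicting the assumed ordering in a suitable reduced expression of $[\Qg]$. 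Nothing in your sketch (``rule out longer chains that alternate decompositions by careful use of convexity'') supplies this incomparability argument, so the bound $\rds_{[\Qg]}(\gamma)\le 2$ remains unproved in your proposal.
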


\begin{proof}
Note that $\gamma \in \Upsilon^C_{[\Qg]}$ is not folded multiplicity free by Lemma \ref{lem: folded mul free iff}.
Thus $$\gamma=\left< a, b \right> \quad \text{ for some } 1\leq a< b\neq n+1.$$
We have
$\gamma= \left< a, n+1 \right>+\left< b, -n-1 \right>= \left< a, -n-1 \right>+\left< b, n+1 \right>.$
Vertices $\left< a, -n-1 \right>$ and  $\left< b, -n-1 \right>$ are connected by a sectional path which is induced from the
path between $[a, n]$ and $[b,n]$ in $\Gamma_Q$ for $Q=\mathfrak{p}^{D_{n+1}}_{A_n}([\Qg])$.
Also, we can apply the same argument for the pair consisting of $\left< a, n+1 \right>$ and  $\left< b, n+1 \right>$.
Hence  $\left< a, -n-1 \right>$ and  $\left< b, -n-1 \right>$
(resp.  $\left< a, n+1 \right>$ and  $\left< b, n+1 \right>$) are always comparable with respect to $\prec_{[\Qg]}$ and
$$ \left< a, -n-1 \right> \prec_{[\Qg]} (\text{resp.} \succ_{[\Qg]})\left< b, -n-1 \right> \Longleftrightarrow
\left< a, n+1 \right>\prec_{[\Qg]} (\text{resp.} \succ_{[\Qg]})\left< b, n+1 \right>. $$
Thus we proved  $(\left< a, n+1 \right>,\left< b, -n-1 \right>)$ and $(\left< a, -n-1 \right>,\left< b, n+1 \right>)$ are comparable with respect to $\prec_{[\Qg]}^\tb$
and hence $\rds_{[\Qg]}(\gamma)\geq 2$ by the convexity of $\prec_{[\Qg]}$.

Conversely, let us show that $\rds_{[\Qg]}(\gamma)\leq 2.$ Suppose not. Then there are $c$ and $d$ such that
\begin{enumerate}
\item[{\rm (1)}] $|c|\neq |d| >b$,
\item[{\rm (2)}] $\underline{p}_1=(\left< a, c \right>,\left< b, -c \right>)$ and
$\underline{p}_2=(\left< a, d \right>,\left< b, -d \right>)$  are comparable with respect to $\prec_{[\Qg]}$.
\end{enumerate}
Here we assume further that
$$\text{(i) $c \ne d>0$,  (ii) $\underline{p}_1\succ_{[\Qg]}^\tb \underline{p}_2$ and (iii) $\left< a, c \right> \succ_{[\Qg]}\left< b, -c \right>$},$$
since other cases can be proved similarly.
By (ii) and (iii), we have $$\left< a, c \right> \succ_{[\Qg]}\left< a, d \right>,\left< b, -d \right> \succ_{[\Qg]}  \left< b, -c \right>.$$
Hence the convexity of $\prec_{[\Qg]}$ tells that two  roots $\left< b, -d \right>$ and $\left< b, -c \right>$ should be in $\Upsilon_{[\Qg]}^R.$
Moreover, the assumption  $\left< a, c \right> \succ_{[\Qg]}\left< a, d \right>,\left< b, -d \right> \succ_{[\Qg]}  \left< b, -c \right>$
implies that we can find
one of the following rectangles (not necessarily squares) in  ${}_1\Gamma_Q$ with vertices $[a, d-1]$, $[a, c-1]$, $[b,d-1]$ and $ [b,c-1]$:
 \[ {\xymatrix@C=0.5ex@R=1ex{
 && [b, c-1] \ar@{.>}[dr]  && \\
 &[b, d-1]\ar@{.>}[dr]\ar@{.>}[ur]  &&[a,c-1] \\
&& [a,d-1] \ar@{.>}[ur]
 }},
 \qquad {\xymatrix@C=0.5ex@R=1ex{
 && [a, c-1] \ar@{.>}[dr]  && \\
 &[a, d-1]\ar@{.>}[dr]\ar@{.>}[ur]  &&[b,c-1] \\
&& [b,d-1] \ar@{.>}[ur]
 }}.
 \]
In both cases, Algorithm \ref{Alg:label} and {\rm (ii)} in Remark \ref{rem:LR path} tell that
\begin{itemize}
\item $\langle a,-c \rangle$ and $\langle a,-d \rangle$ are contained in $\Upsilon^{R}_{[\Qg]}$,
\item there exists a path $\langle a,d \rangle \to \langle a,c \rangle$ in ${}_2\Gamma_{Q^*} \subset \Upsilon_{[\Qg]}$.
\end{itemize}
Hence $\up_1$ and $\up_2$ are not comparable with respect to $\prec^\tb_{[\Qg]}$. Thus our assertion follows.
\end{proof}

\begin{proof} [{\bf The first step for  Theorem \ref{thm: dist upper bound Qd}}]
From the above three lemmas, the second assertion of Theorem \ref{thm: dist upper bound Qd} follows. Furthermore, the first and the third assertions
for $\al+\be \in \PR$ also hold.
\end{proof}

\begin{proposition} \label{Lem:gdist_1}
Let both $\alpha$ and $\beta$ be positive roots in $\Upsilon_{[\Qg]}^L$ $($resp. $\Upsilon_{[\Qg]}^R)$. Then
\[ \gdist_{[\Qg]}(\alpha,\beta)\leq \max\{ \overline{\mathsf{m}}(\al),\overline{\mathsf{m}}(\be) \}=1.\]
\end{proposition}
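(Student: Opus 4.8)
The plan is to transport the question entirely into the AR-quiver $\Gamma_Q$ of type $A_n$ via Algorithm \ref{Alg:label} and then invoke the known result for adapted classes, Theorem \ref{thm: known for Q}(1). First I would recall from Proposition \ref{prop:rds_1} (and Corollary \ref{lem: folded mul free iff}) that every root in $\Upsilon^L_{[\Qg]}$ or $\Upsilon^R_{[\Qg]}$ is folded multiplicity free, hence in particular multiplicity free; since $\alpha,\beta \in \Upsilon^R_{[\Qg]}$ share the same chamber, so does $\alpha+\beta$ whenever it is a root, by convexity of $\prec_{[\Qg]}$. So $\overline{\mathsf{m}}(\alpha)=\overline{\mathsf{m}}(\beta)=1$, which already fixes the value of the right-hand side; the content of the proposition is the inequality $\gdist_{[\Qg]}(\alpha,\beta)\le 1$, i.e. that the pair $(\alpha,\beta)$ admits at most one non-$[\Qg]$-simple sequence below it in the $\prec^\tb_{[\Qg]}$-order.

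The key step is to identify, via Algorithm \ref{Alg:label} and Remark \ref{rem:LR path}(ii), the comparability structure of pairs inside $\Upsilon^R_{[\Qg]}$ with the corresponding structure inside ${}_1\Gamma_Q \simeq \Gamma_Q$. Concretely, I would argue: if $\alpha=\langle a,-b-1\rangle$ and $\beta=\langle c,-d-1\rangle$ both lie in $\Upsilon^R_{[\Qg]}$, with preimages $\alpha'=[a,b]$, $\beta'=[c,d]$ in $\Gamma_Q$, then for any sequence $\um$ with $\um\prec^\tb_{[\Qg]}(\alpha,\beta)$ the weight $\wt(\um)=\alpha+\beta$ forces $\wt(\um)$ to correspond to $\alpha'+\beta'$ in $\rl^+_{A_n}$ (after the substitution $\alpha_{n+1}\mapsto\alpha_n$), and every root in the support of $\um$ must again lie in $\Upsilon^R_{[\Qg]}$ by convexity. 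Hence each such $\um$ is the image of a sequence $\um'\prec^\tb_{[Q]}(\alpha',\beta')$ in $\Gamma_Q$, and conversely; moreover $\um$ is non-$[\Qg]$-simple iff $\um'$ is non-$[Q]$-simple, because $[\Qg]$-simplicity restricted to $\Upsilon^R_{[\Qg]}$ is detected by $\prec_{[\Qg]}^\tb$-comparability of pairs, which matches $\prec_{[Q]}^\tb$-comparability on $\Gamma_Q$. This reduces $\gdist_{[\Qg]}(\alpha,\beta)$ to $\gdist_{[Q]}(\alpha',\beta')$, which is $\le\max\{\mathsf{m}(\alpha'),\mathsf{m}(\beta')\}=1$ by Theorem \ref{thm: known for Q}(1) since $\alpha'$, $\beta'$ are multiplicity free in type $A_n$. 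The same argument applies verbatim with $\Upsilon^L_{[\Qg]}$ and $\Gamma_{Q^{\rm rev}}$ in place of $\Upsilon^R_{[\Qg]}$ and $\Gamma_Q$, using Corollary \ref{Cor:label come from 2GaQ} and Corollary \ref{Cor:rev Ga rev Q}.

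The main obstacle I expect is the bookkeeping needed to justify that the order-isomorphism between the pairs and sequences supported in $\Upsilon^R_{[\Qg]}$ and those supported in $\Gamma_Q$ is exact on the level of $\prec^\tb$ and of (non-)simplicity — that is, that nothing "escapes" $\Upsilon^R_{[\Qg]}$. This is where convexity of $\prec_{[\Qg]}$ (Theorem \ref{thm: OS14}(2)) together with the structural decomposition $\Upsilon_{[\Qg]}=\Upsilon^L_{[\Qg]}\sqcup\Upsilon^C_{[\Qg]}\sqcup\Upsilon^R_{[\Qg]}$ of \eqref{eq: cond uncovered} and Remark \ref{rem:LR path} does the work: any intermediate sequence refining $(\alpha,\beta)$ has weight $\alpha+\beta$, whose support (as a sum of roots each $\prec_{[\Qg]}$-between $\alpha$ and $\beta$) stays in $\Upsilon^R_{[\Qg]}$ because a root of $\Upsilon^C_{[\Qg]}$ or $\Upsilon^L_{[\Qg]}$ cannot be $\prec_{[\Qg]}$-squeezed between two roots of $\Upsilon^R_{[\Qg]}$. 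Once this containment is established the reduction to type $A_n$ and the appeal to Theorem \ref{thm: known for Q}(1) are immediate, so the proof should be short modulo this verification.
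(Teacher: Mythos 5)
Your proposal is correct and follows essentially the same route as the paper, whose proof is the one-line reduction ``It follows from \cite[Theorem 4.15]{Oh15E} and Algorithm \ref{Alg:label}'': you transport the pair into $\Gamma_Q$ of type $A_n$ via the labeling algorithm and invoke the adapted-class bound $\gdist_{[Q]}(\al',\be')\le \mathsf{m}=1$. The extra bookkeeping you supply (that any sequence $\um\prec^\tb_{[\Qg]}(\al,\be)$ stays supported in $\Upsilon^R_{[\Qg]}$, so comparability and simplicity match on both sides) is exactly the verification the paper leaves implicit, and it goes through since $\Upsilon^R_{[\Qg]}$ is an initial segment for the reading of Remark \ref{rem: jj_0}.
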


\begin{proof}
It follows from \cite[Theorem 4.15]{Oh15E} and Algorithm \ref{Alg:label}.
\end{proof}

\begin{proposition} \label{prop:gdist_2}
If two positive roots $\alpha$ and $\beta$ are in $\Upsilon_{[\Qg]}^C$ then
\[ \gdist_{[\Qg]}(\alpha,\beta)\leq  \max\{ \overline{\mathsf{m}}(\al),\overline{\mathsf{m}}(\be) \}=2.\]
\end{proposition}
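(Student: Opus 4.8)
The plan is to reduce the statement about two roots $\alpha,\beta \in \Upsilon_{[\Qg]}^C$ to a statement about roots in the type $A_n$ AR-quiver $\Gamma_Q$, where $Q = \mathfrak{p}^{D_{n+1}}_{A_n}([\Qg])$, together with the additive/decomposition structure supplied by Algorithm \ref{Alg:label} and the observation \eqref{eq: cond uncovered}. Since both $\alpha$ and $\beta$ lie in $\Upsilon_{[\Qg]}^C$, by Corollary \ref{lem: folded mul free iff} both are folded multiplicity non-free, so $\overline{\mathsf{m}}(\alpha)=\overline{\mathsf{m}}(\beta)=2$; thus the claimed bound is $\gdist_{[\Qg]}(\alpha,\beta) \le 2$. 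Write $\alpha = \langle a_1,a_2\rangle$ and $\beta=\langle b_1,b_2\rangle$ with $a_2,b_2 \ne n+1$, and recall the canonical decompositions $\alpha = \langle a_1,n+1\rangle + \langle a_2,-n-1\rangle = \langle a_1,-n-1\rangle + \langle a_2,n+1\rangle$ and likewise for $\beta$, coming from Proposition \ref{Prop:label for Cvertices}. Each of the four "halves" $\langle a_i,\pm(n+1)\rangle$, $\langle b_i,\pm(n+1)\rangle$ lies in $\Upsilon^L_{[\Qg]} \cup \Upsilon^R_{[\Qg]}$, i.e. in one of the two glued copies ${}_1\Gamma_Q$, ${}_2\Gamma_{Q^*}$, and is (folded) multiplicity free.

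First I would assume $\alpha \prec^\tb_{[\Qg]} \beta$ (otherwise there is nothing to prove, $\gdist=0$), and by definition I must produce a chain $\alpha = \um^{(1)} \prec^\tb_{[\Qg]} \um^{(2)} \prec^\tb_{[\Qg]} \cdots \prec^\tb_{[\Qg]} \um^{(k)} = \beta$ of non-$[\Qg]$-simple sequences and show $k \le 2$, or rather that no such chain of length $>2$ exists. The strategy is to track, for each intermediate sequence $\um^{(s)}$, how it sits relative to the two "sheets". By convexity of $\prec_{[\Qg]}$ (Theorem \ref{thm: OS14}(2)) and the twisted additive property (Theorem \ref{twisted additive property}), any pair $\up$ with $\wt(\up) = \wt(\alpha)$ and $\alpha \prec^\tb_{[\Qg]} \up \prec^\tb_{[\Qg]} \beta$ must itself decompose using the same $\Phi^+|_{n+1\rangle}$-halves, so that moving along the chain corresponds to moving the $A_n$-pictures of the halves inside $\Gamma_Q$ resp. $\Gamma_{Q^*}$. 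The key reduction is then: a $\prec^\tb_{[\Qg]}$-step either (i) comes from a $\prec^\tb_{[Q]}$-step inside one of the two copies of $\Gamma_Q$ — and by Theorem \ref{thm: known for Q}(1) applied to $A_n$, since all the halves are multiplicity free, such steps contribute at most length $1$ each — or (ii) is the "flip" exchanging the two decompositions $\langle a_1,n+1\rangle + \langle a_2,-n-1\rangle \leftrightarrow \langle a_1,-n-1\rangle + \langle a_2,n+1\rangle$ of the same weight, which, using Corollary \ref{Cor:label_dist} and Corollary \ref{Cor:4.25}, is seen to happen at most once. Combining, $k \le 1 + 1 = 2$.

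More concretely, I would argue as in the proof of Proposition \ref{prop:rds_2}: using Algorithm \ref{Alg:label} and Remark \ref{rem:LR path}(ii), translate the comparability relations among the intermediate pairs into the existence of nested rectangles in ${}_1\Gamma_Q$ (and the reflected ones in ${}_2\Gamma_{Q^*}$) with corners labelled by $A_n$-roots $[a_i,\ast]$, $[b_i,\ast]$. The monotonicity forced by $\prec^\tb_{[\Qg]}$ then pins down the position of $\alpha$ and $\beta$ within $\Upsilon^C_{[\Qg]}$ relative to $\rho$ and $\rho'$, and the additive property gives that any genuine intermediate term $\um^{(s)}$ is again a pair lying in $\Upsilon^C_{[\Qg]}$ (its weight equals $\wt(\alpha)$, it is not $[\Qg]$-simple). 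At that point the $\overline{\mathsf m}=2$ bound for the radius, Proposition \ref{prop:rds_2}, bounds the total length of the chain: a non-$[\Qg]$-simple sequence strictly above $\alpha$ and weakly below $\beta$ already accounts for all the "slack", so no third strict step can be inserted. I expect the main obstacle to be the bookkeeping in step (ii): carefully verifying that the "flip" between the two decompositions can occur at most once along a $\prec^\tb_{[\Qg]}$-monotone chain, and that it cannot be combined with an $A_n$-internal step to produce length $3$. This requires the precise coordinate relations of Corollary \ref{Cor:label_dist} and Corollary \ref{Cor:4.25} together with the disjoint-union structure $\Upsilon_{[\Qg]} = {}_1\Gamma_Q \sqcup {}_2\Gamma_{Q^*}$, and is where I would spend most of the care.
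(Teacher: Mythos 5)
Your write-up has genuine gaps, starting with the definition you are working with. The chain in Definition \ref{def: gdist} consists of non-$[\Qg]$-simple \emph{sequences of weight $\wt(\al)+\wt(\be)$} ending at the pair $(\al,\be)$; it is not a chain running ``from $\alpha$ to $\beta$'', and the intermediate terms have weight $\al+\be$, not $\wt(\al)$ as you write twice. Once this is set right, the intermediate terms are (by the support-and-convexity argument: every root of $\Upsilon^C_{[\Qg]}$ has $\al_n$- and $\al_{n+1}$-coefficient $1$, and any root $\prec_{[\Qg]}$-between $\al$ and $\be$ lies in $\Upsilon^C_{[\Qg]}$) pairs of roots of $\Upsilon^C_{[\Qg]}$ obtained by re-pairing the four summands $a_1,a_2,b_1,b_2$. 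These intermediate roots are themselves folded multiplicity $2$, not the multiplicity-free ``halves'' $\langle a_i,\pm(n+1)\rangle$, so your central dichotomy collapses: a step of the chain is a passage between two such re-paired pairs, it is neither ``a $\prec^\tb_{[Q]}$-step inside one copy of $\Gamma_Q$'' (so Theorem \ref{thm: known for Q}(1) for $A_n$ does not bound it) nor the ``flip'' between the two decompositions of a single root, which is not a relation between sequences of weight $\al+\be$ at all. The claim that the flip ``happens at most once'' is therefore about the wrong object and is never connected to the chain you need to bound.

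The closing appeal to Proposition \ref{prop:rds_2} also fails: for $\al=\langle a_1,a_2\rangle$, $\be=\langle b_1,b_2\rangle$ in $\Upsilon^C_{[\Qg]}$ one has $\al+\be=\ve_{a_1}+\ve_{a_2}+\ve_{b_1}+\ve_{b_2}\notin\PR$, so the radius of $\al+\be$ is not even defined and cannot cap the chain length. What actually controls the comparability inside $\Upsilon^C_{[\Qg]}$ in the paper is the swing/snake machinery: by Theorem \ref{thm:p-snake well} the summands locate every candidate pair at intersections of the swings through $\al$ and $\be$, the picture \eqref{eq: intersection of swings} shows there are at most three such pairs arranged so that the minimal one is $[\Qg]$-simple (hence does not count), giving $\gdist_{[\Qg]}(\al,\be)\le 2$, with a separate easy case when $\al$ and $\be$ lie on the same sectional path. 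Your reduction to two glued copies of $\Gamma_Q$ never recovers this counting, so the bound $k\le 1+1=2$ is asserted rather than proved; to repair the argument you would essentially have to reintroduce the swing analysis.
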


\begin{proof}
Note that
\begin{itemize}
\item every vertex in $\Upsilon^C_{[\Qg]}$ has $\alpha_n$ and $\alpha_{n+1}$ as its support,
\item if $\alpha\prec_{[\Qg]}\gamma \prec_{[\Qg]} \beta$  then $\gamma\in \Upsilon_{[\Qg]}^C.$
\end{itemize}
Since we are only interested in sequences with the same weight as $\alpha+\beta$,
any sequence whose sum in $\al+\be$ should be a pair. Thus it is enough to consider increasing sequences of pairs with weight $\wt(\underline{p})$
\begin{equation} \label{Eqn:pair_seq}
 \underline{p}_1 \prec_{[\Qg]}^\tb  \underline{p}_2  \prec_{[\Qg]}^\tb \cdots \prec_{[\Qg]}^\tb  \underline{p_m}=\underline{p}= (\alpha, \beta).
 \end{equation}

Suppose $\alpha$ and $\beta$ are not in the same sectional. Let $\alpha=\left< a_1, a_2\right>$ and $\beta=\left< b_1, b_2\right>$ for distinct
$a_1, a_2, b_1, b_2\in \{ 1, 2,\cdots, n\}$. A pair $(\alpha', \beta')$ with same weight as $\underline{p}$ has $a_1, a_2, b_1$ and $b_2$ as
summands of $\alpha'$ or $\beta'.$ Hence it is enough to consider intersections of swings containing $\alpha$ or $\beta.$

\begin{equation} \label{eq: intersection of swings}
{\xy  (0,0)*{}="T1"; (10,-10)*{}="R1"; (-15,-15)*{}="L1";
(-35,-35)*{}="A1";(-5,-25)*{}="P1";(30,-30)*{}="B1";
(-25,-45)*{\circ}="G1";(15,-45)*{\circ}="G2";
(-53, -45)*{_{\overline{n+1}}};
"A1";"T1" **\dir{--}; "A1";"G1" **\dir{--};  "G1";"R1" **\dir{--};
"T1";"B1" **\dir{-};"B1";"G2" **\dir{-};"L1";"G2" **\dir{-};
"T1"+(0,-3)*{_{\al_0}}; "L1"+(0,-3)*{_{\be_1}};"R1"+(0,-3)*{_{\al_1}};
"P1"+(0,-3)*{_{\be_0}};"A1"+(0,-3)*{_{\be_2}};"B1"+(0,-3)*{_{\al_2}};
(-50, -45);(45, -45)**\dir{.};
\endxy}
\end{equation}

Now, the pair $\underline{p}=(\alpha, \beta)$ should be one of $(\alpha_0, \beta_0)$, $(\alpha_1, \beta_1)$ and $(\alpha_2, \beta_2)$ and we have
\[ \gdist_{[\Qg]}(\al_i,\be_i)=i\]

If $\alpha$ and $\beta$ are in the same sectional path, there is no other pair whose weight is the same as $\alpha+\beta$ by Theorem \ref{thm:p-snake well}
and the definition of $\prec^\tb_{[\Qg]}$.
\end{proof}

\begin{lemma} \label{lem:gdist_3}
Let  $\alpha$ be in $\Upsilon_{[\Qg]}^C$ and $\beta$ be in $ \Upsilon_{[\Qg]}^R$ or  $ \Upsilon_{[\Qg]}^L$. If $\alpha+\beta\not \in \Phi^+$ then there is no sequence $\underline{m}$ such that
\begin{itemize}
\item the weight of $\underline{m}$ is same as $\alpha+\beta$,
\item $\underline{m}\prec_{[\Qg]}^{\tb} (\alpha, \beta)$,
\item $\underline{m}$ is not a pair.
\end{itemize}
\end{lemma}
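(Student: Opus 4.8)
\medskip
\noindent\textbf{Sketch of a proof.} The plan is to reduce the statement to the already‑established type $A_n$ case. Put $\delta\seteq\al+\be\notin\PR$; I want to show that no non‑pair $\um$ with $\wt(\um)=\delta$ satisfies $\um\prec^\tb_{[\Qg]}(\al,\be)$. First I would normalize: reversing the reduced word produces another twisted adapted class (cf. the proof of Proposition \ref{Prop:label come from 2GaQ}), and this operation interchanges $\Upsilon^R_{[\Qg]}\leftrightarrow\Upsilon^L_{[\Qg]}$, fixes $\Upsilon^C_{[\Qg]}$, and — since the bi‑lexicographic order is invariant under reversal — preserves both $\prec^\tb_{[\Qg]}$ and the property of being a (non‑)pair; hence we may assume $\be\in\Upsilon^R_{[\Qg]}$. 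Next, if $|\um|=2$ then $\um=2\ga$ with $2\ga=\delta$; writing $\al=\lan a,b\ran=\ve_a+\ve_b$ with $a<b\le n$ (possible since $\al\in\Upsilon^C_{[\Qg]}$, by Corollary \ref{lem: folded mul free iff}) and using that $\be$ is folded multiplicity free (Corollary \ref{lem: folded mul free iff}, and trivially if $\be\in\Pi$), a direct inspection of $\PR_{D_{n+1}}$ rules this out. So from now on $\delta=\ga_1+\cdots+\ga_r$ with $r\ge3$ and $\ga_k\in\PR$.

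The core is to show that $\um$ lives inside a single copy of the type $A_n$ AR‑quiver. The bi‑lexicographic inequality $\um\prec^\tb_{[\Qg]}(\al,\be)$, together with the convexity of $\prec_{[\Qg]}$, forces every root of $\um$ to lie weakly between $\al$ and $\be$ in \emph{every} reduced word of $[\Qg]$; combined with the location of $\Upsilon^C_{[\Qg]}$, $\rho$ and $\rho'$ inside $\widehat{\Upsilon}_{[\Qg]}$ (see \eqref{eq: cond uncovered} and Remark \ref{rem:LR path}) this yields $\be\prec_{[\Qg]}\al$ and then that no root of $\um$ lies in $\Upsilon^L_{[\Qg]}$. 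On the other hand, since $\al\in\Upsilon^C_{[\Qg]}$ has both spin‑node simple roots $\al_n,\al_{n+1}$ in its support with coefficient $1$ while $\be$ is folded multiplicity free, the coefficients of $\al_n$ and $\al_{n+1}$ in $\delta$ are controlled, and matching them against $\sum\ga_k$ shows that either one $\ga_{i_0}$ carries both $\al_n$ and $\al_{n+1}$ (so $\ga_{i_0}=\lan x,y\ran\in\Upsilon^C_{[\Qg]}$ with $x<y\le n$, and the remaining $\ga_k$ are ``pure type $A_n$'' roots $\lan x,-y\ran$ with $y\le n$), or two roots $\lan u,n+1\ran$ and $\lan u',-(n+1)\ran$ must occur; by \eqref{eq: C_n} and Algorithm \ref{Alg:label} one of $\lan u,n+1\ran$, $\lan u',-(n+1)\ran$ always lies in $\Upsilon^L_{[\Qg]}$, so the last possibility is excluded. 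Putting these facts together — with one further position argument near $\rho$, modelled on the rectangle argument in the proof of Proposition \ref{prop:rds_2} — all roots of $\um$ lie in ${}_1\Gamma_Q$, where $Q=\mathfrak{p}^{D_{n+1}}_{A_n}([\Qg])$.

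Finally I would transfer to type $A_n$. By Algorithm \ref{Alg:label} (equivalently Corollary \ref{Cor:label come from 1GaQ}, Corollary \ref{Cor:label_dist} and Remark \ref{rem: reflection w.r.t x-axis}) the subquiver ${}_1\Gamma_Q$ is identified with $\Gamma_Q$ of type $A_n$; this carries $\um$ and $(\al,\be)$ to a sequence $\um^A$ and a pair $(\al^A,\be^A)$ of $\PR_{A_n}$ with $\wt(\um^A)=\al^A+\be^A$ and $|\um^A|=|\um|\ge3$, and, by Algorithm \ref{Alg:label} as in the proof of Proposition \ref{Lem:gdist_1}, one has $\um^A\prec^\tb_{[Q]}(\al^A,\be^A)$. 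Since $\al^A+\be^A$ is visibly a sum of two positive roots of $A_n$, any decomposition of it into $r\ge3$ positive roots refines a two‑term one by splitting an interval root into a chain, and hence contains two roots whose sum again lies in $\PR_{A_n}$; therefore $\um^A$ is not $[Q]$‑simple, so $\gdist_{[Q]}(\al^A,\be^A)\ge2$, contradicting Theorem \ref{thm: known for Q}(1) (all positive roots of $A_n$ are multiplicity free). This contradiction proves the lemma.

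The step I expect to be the main obstacle is the one inside the second paragraph: pinning down that $\um$ cannot escape into $\Upsilon^L_{[\Qg]}$, hence (after identification) into ${}_2\Gamma_{Q^*}$, so that it lies entirely in ${}_1\Gamma_Q$. Because $\Upsilon^C_{[\Qg]}$ straddles both copies ${}_1\Gamma_Q$ and ${}_2\Gamma_{Q^*}$ and is glued to them along $\rho$ and $\rho'$, the comparabilities needed — $\be\prec_{[\Qg]}\al$, and ``a root of $\um$ in $\Upsilon^L_{[\Qg]}$ would be $\succ_{[\Qg]}\al$'' — must be read off carefully from Algorithm \ref{Alg:label}, and this is exactly the place where the hypothesis $\al+\be\notin\PR$ enters to rule out the degenerate configurations, in the same spirit as the proof of Proposition \ref{prop:rds_2}.
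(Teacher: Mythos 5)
Your plan has a genuine gap at exactly the point you flag as ``the main obstacle,'' and that step is not a technicality but essentially the whole content of the lemma. There is no reason why the entries of a hypothetical non-pair $\um\prec^\tb_{[\Qg]}(\al,\be)$ should all lie in ${}_1\Gamma_Q$: writing $\al=\lan a_1,a_2\ran\in\Upsilon^C_{[\Qg]}$ and $\be=\lan b_1,b_2\ran$, a weight count shows (this is how the paper reduces to the case that $\um$ is a triple) that the summands of the entries of $\um$ form the multiset $\{a_1,a_2,b_1,b_2,c,-c\}$ for some extra index $c$, so one entry carries the summand $-c$; such a root $\lan x,-c\ran$ is folded multiplicity free and therefore lies in $\Upsilon^L_{[\Qg]}\cup\Upsilon^R_{[\Qg]}$ (Corollary \ref{lem: folded mul free iff}), not in $\Upsilon^C_{[\Qg]}$, and a priori not in ${}_1\Gamma_Q$ either. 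The paper never proves (or needs) the confinement you want; instead it argues directly with this summand structure: in the case where the $N$-sectional path through $\al$ and the $S$-sectional path through $\be$ meet, the shape of the $[c]$-snake shows one cannot find, between $\be$ and $\al$ in $\prec_{[\Qg]}$, both a root with summand $c$ and one with summand $-c$; in the complementary case the leftover root would have to be $\lan a_2,b_2\ran$, which is not a positive root. Your sketch offers no substitute for this geometric argument, only the (correct but insufficient) observation that the entries must lie between $\al$ and $\be$ in the convex order.

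Even granting the confinement, the final reduction to type $A_n$ does not work as stated. The identification ${}_1\Gamma_Q\simeq\Gamma_Q$ of Algorithm \ref{Alg:label} is a relabelling of vertices, not a linear map of root lattices: it sends $\lan a,b\ran\mapsto[a,b-1]$ but $\lan a,-c\ran\mapsto[a,c-1]$, so for instance the $D_{n+1}$-identity $\lan a_1,a_2\ran=\lan a_1,c\ran+\lan a_2,-c\ran$ has image $[a_1,a_2-1]\ne[a_1,c-1]+[a_2,c-1]$ in the $A_n$ root lattice. Hence $\wt(\um^A)=\al^A+\be^A$ need not hold, and since $\prec^\tb_{[Q]}$ is only defined between sequences of equal weight, the asserted relation $\um^A\prec^\tb_{[Q]}(\al^A,\be^A)$ and the ensuing contradiction with Theorem \ref{thm: known for Q}(1) do not follow; the decompositions that matter here are precisely those using a cancelling pair $\{c,-c\}$ of summands, which have no type-$A_n$ counterpart. (Your opening normalization via word reversal is also asserted rather than checked --- the paper simply notes that the $\be\in\Upsilon^L_{[\Qg]}$ case is handled by the same direct argument --- but that is minor compared with the two points above.)
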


\begin{proof}
 Consider $\beta\in \Upsilon_{[\Qg]}^R$ and denote $\alpha=\left<a_1, a_2\right>$ and $\beta=\left<b_1, b_2\right>.$
 Note that $b_2\in \{-2, -3, \cdots, -n-1\} \cup \{n+1\}.$   If $|b_2| < a_1$,  the proposition directly follows from the convexity of $\prec_{[\Qg]}$.
 Hence we only consider the case when $|b_2|\geq a_1.$ Also, we note that it is enough to show when  $\underline{m}$ is a triple.

Let us assume that $|b_2|> a_1$.  Suppose there is a triple $\underline{m}= (\gamma, \delta, \eta)$ where $(\alpha, \beta)\succ_{[\Qg]}^\tb \underline{m}$ and
the weight is same as that of $\alpha+\beta$. The set of summands of $\gamma, \delta$ and $\eta$ is $\{ a_1, a_2, b_1, b_2, c, -c\}$ for
$c\in \{2, \cdots, n+1\}\backslash\{a_1, a_2, b_1, b_2\}$. Without loss of generality, let $\gamma$ have $c$ as a summand and let $\delta$ have $-c$ as a summand.

(Case I) If the $N$-sectional path passing $\alpha$ and the $S$-sectional path passing $\beta$ have an intersection then the
$[c]$-snake in $\Upsilon_{[\Qg]}$ looks like one of $\text{$[c]$-snake}^{(1)}$ and $\text{$[c]$-snake}^{(2)}$ in the following picture.

\[
{\xy (-17,0)*{\circ}="T1"; (17,0)*{\circ}="T2"; (-30,-13)*{}="Lc1"; (8,-25)*{}="Rc1"; (30,-13)*{}="Rc2"; (-8,-25)*{}="Lc2";
(-15,-25)*{}="Ns"; (15,-25)*{}="Ss";(-7,-17)*{\bullet}="al"; (7,-17)*{\bullet}="be"; (0, -10)*{}="inter";
"T1"; "Lc1"**\dir{--};"T1"; "Rc1"**\dir{--};"T2"; "Lc2"**\dir{--};"T2"; "Rc2"**\dir{--}; "Ns";"inter"**\dir{-}; "Ss";"inter"**\dir{-};
"al"+(0,-3)*{\alpha}; "be"+(0,-3)*{\beta};"Lc1"+(0,-3)*{\text{$[c]$-snake}^{(1)}};"Rc2"+(0,-3)*{\text{$[c]$-snake}^{(2)}};
(-50, -5)*{}="LE"; (50, -5)*{}="RE"; "LE";"RE"**\dir{.}; "LE"+(-3,0)*{{}_{\bar{1}}};
\endxy}
\]
If the $[c]$-snake is of the form of $\text{$[c]$-snake}^{(1)}$ then there is no root having $c$ as a summand between $\alpha$ and $\beta.$
On the other hand, if the $[c]$-snake is of the form of $\text{$[c]$-snake}^{(2)}$ then there is no root having $-c$ as a summand between $\alpha$
and $\beta.$ Hence, in any case, we cannot find both $\gamma$ and $\delta$.

(Case II) Suppose $N$-sectional path passing $\alpha$ and $S$-sectional path passing $\beta$ do not have an intersection.
This assumption implies $|a_2|\geq |b_2|$, so that there is no root having $a_2$ and $b_2$ as summands. In this case,
\begin{itemize}
\item $\alpha$ is the intersection between $N[a_2]$-sectional path and $S[a_1]$-sectional path.
\item $\beta$ is the intersection between $N[b_1]$-sectional path and $S[b_2]$-sectional path.
\end{itemize}
Also, from the fact that $\beta \prec_{[\Qg]} \gamma, \delta \prec_{[\Qg]} \alpha$, we can see that
\begin{itemize}
\item $N[c]$-sectional path and $S[-c]$-sectional path should be in between $N[a_2]$-sectional path and $S[b_2]$-sectional path.
\item $(\gamma, \delta)=( \left<a_1, c\right>, \left<b_1, -c\right> ) $ or $( \left<a_1, -c\right>, \left<b_1, c\right> )$.
\end{itemize}
However, since $|a_2|\geq |b_2|$, there is no root of the form $\left< a_2, b_2\right>.$ Hence $\alpha+\beta-\gamma-\delta=\eta \not \in \Phi^+,$
which is a contradiction.

For the case $|b_2|=a_1$, we omit the proof since it can be proved similarly. Moreover, the same argument works when $\beta\in \Upsilon_{[\Qg]}^L.$
\end{proof}

\begin{proposition} \label{prop:gdist_3}
Let  $\alpha$ be in $\Upsilon_{[\Qg]}^C$ and $\beta$ be in $ \Upsilon_{[\Qg]}^R$ or  $ \Upsilon_{[\Qg]}^L$. If $\alpha+\beta\not \in \Phi^+$ then
$$ \gdist_{[\Qg]}(\alpha,\beta)\leq  \max\{ \overline{\mathsf{m}}(\al),\overline{\mathsf{m}}(\be) \} =\overline{\mathsf{m}}(\al)=2.$$
\end{proposition}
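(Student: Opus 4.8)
The plan is to prove Proposition \ref{prop:gdist_3} by showing that, under the hypotheses $\alpha \in \Upsilon^C_{[\Qg]}$, $\beta \in \Upsilon^R_{[\Qg]} \cup \Upsilon^L_{[\Qg]}$ and $\alpha+\beta \notin \PR$, the only sequences $\um$ with $\wt(\um) = \wt(\alpha,\beta)$ and $\um \prec^\tb_{[\Qg]} (\alpha,\beta)$ are pairs. Once that reduction is in place, I would argue that every such pair $\up'$ has $\overline{\mathsf{m}}(\wt(\up')) = 2$-type support (i.e. $\up'$ lives inside $\Upsilon^C_{[\Qg]}$ in the relevant slot, together with one $\Upsilon^L$ or $\Upsilon^R$ root), so that any increasing chain $\up^{(1)} \prec^\tb_{[\Qg]} \cdots \prec^\tb_{[\Qg]} \up^{(k)} = (\alpha,\beta)$ of non-$[\Qg]$-simple pairs has length at most $2$; combined with $\max\{\overline{\mathsf{m}}(\alpha),\overline{\mathsf{m}}(\beta)\} = \overline{\mathsf{m}}(\alpha) = 2$ (which holds because $\alpha \in \Upsilon^C_{[\Qg]}$ is folded multiplicity non-free by Corollary \ref{lem: folded mul free iff}, while $\beta$ is folded multiplicity free), this yields $\gdist_{[\Qg]}(\alpha,\beta) \le 2 = \overline{\mathsf{m}}(\alpha)$.

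\textbf{Step 1 (reduction to pairs).} The key input is Lemma \ref{lem:gdist_3}: since $\alpha+\beta \notin \PR$, there is no non-pair sequence $\um$ with $\wt(\um) = \wt(\alpha,\beta)$ and $\um \prec^\tb_{[\Qg]} (\alpha,\beta)$. Hence in Definition \ref{def: gdist} every intermediate term $\um^{(s)}$ of a chain realizing $\gdist_{[\Qg]}(\alpha,\beta)$ must already be a pair. So it suffices to bound the length of a chain of non-$[\Qg]$-simple \emph{pairs} $\up^{(1)} \prec^\tb_{[\Qg]} \cdots \prec^\tb_{[\Qg]} \up^{(k)} = (\alpha,\beta)$.

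\textbf{Step 2 (locating the competing pairs via swings).} Write $\alpha = \langle a_1,a_2\rangle$ with $1 \le a_1 < a_2 \le n$ (so $\alpha \in \Upsilon^C_{[\Qg]}$ forces both summands in $\{1,\dots,n\}$) and $\beta = \langle b_1, b_2\rangle$ with $b_2 \in \{-2,\dots,-n-1\} \cup \{n+1\}$. Any pair $\up'$ with $\wt(\up') = \alpha+\beta$ has $a_1,a_2,b_1,b_2$ distributed as summands among its two roots, and by Theorem \ref{thm:p-snake well} each of these roots lies in the intersection of two swings among the $[a_1]$-, $[a_2]$-, $[b_1]$-, $[b_2]$-swings. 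As in the proof of Proposition \ref{prop:gdist_2} and picture \eqref{eq: intersection of swings}, this leaves only finitely many candidate pairs, arranged so that $(\alpha,\beta)$ sits at the top of a chain of length at most $2$: concretely the candidates are (up to the symmetric cases) $(\langle a_1,c\rangle,\langle b_1,-c\rangle)$-type pairs for suitable $c$, together with the pair $(\langle a_1,-b_2\rangle,\langle b_1,b_2\rangle)$ or $(\langle a_1,b_2\rangle,\langle b_1,-b_2\rangle)$; one checks using Corollary \ref{Cor:label_dist}, Proposition \ref{Prop: properties of ext}(3), and the convexity of $\prec_{[\Qg]}$ that a strict $\prec^\tb_{[\Qg]}$-chain through these has at most two non-$[\Qg]$-simple terms. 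Then $\gdist_{[\Qg]}(\alpha,\beta) \le 2$, and since $\overline{\mathsf{m}}(\alpha) = 2$ while $\overline{\mathsf{m}}(\beta) = 1$, the bound $\gdist_{[\Qg]}(\alpha,\beta) \le \max\{\overline{\mathsf{m}}(\alpha),\overline{\mathsf{m}}(\beta)\} = \overline{\mathsf{m}}(\alpha) = 2$ follows.

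\textbf{The main obstacle} I anticipate is Step 2: carefully enumerating the competing pairs $\up'$ and verifying that no chain of length $3$ of non-$[\Qg]$-simple pairs can appear. This requires a case analysis split according to whether the $N$-sectional path through $\alpha$ meets the $S$-sectional path through $\beta$ (as in Case I / Case II of Lemma \ref{lem:gdist_3}), and in each case one must track which summand $\pm c$ can lie strictly between $\beta$ and $\alpha$ in the $\prec_{[\Qg]}$-order using the shape description \eqref{eq: [p]-cahracter} of the $[c]$-snake and the fact (Corollary \ref{cor: nbar adjacent}, Corollary \ref{Cor: induced property ext}) that a non-connected extended sectional quiver splits into one piece in ${}_1\Gamma_Q$ and one in ${}_2\Gamma_{Q^*}$. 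The remaining steps are either direct citations (Lemma \ref{lem:gdist_3}, Corollary \ref{lem: folded mul free iff}) or routine bookkeeping with the coordinate system of $\widehat{\Upsilon}_{[\Qg]}$.
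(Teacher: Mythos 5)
Your proposal is correct and follows essentially the paper's own route: the paper likewise invokes Lemma \ref{lem:gdist_3} to reduce to pairs of weight $\alpha+\beta$ and then repeats the swing-intersection argument of Proposition \ref{prop:gdist_2} (the picture \eqref{eq: intersection of swings}) to bound chains of non-$[\Qg]$-simple pairs by length $2$, which together with $\overline{\mathsf{m}}(\al)=2$, $\overline{\mathsf{m}}(\be)=1$ from Corollary \ref{lem: folded mul free iff} gives the claim. The case analysis you flag as the main obstacle is exactly what the paper disposes of by citing the argument of Proposition \ref{prop:gdist_2}, so no new idea is missing.
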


\begin{proof}
By Lemma \ref{lem:gdist_3}, it is enough to consider pairs $\underline{p}$ whose weight are same as that of $\alpha+\beta.$
Now, we can apply the same argument as in the proof of Proposition \ref{prop:gdist_2} which was described in \eqref{eq: intersection of swings}.
\end{proof}

\begin{proposition} \label{prop:gdist_4}
Let $\alpha$ be in one of $\Upsilon_{[\Qg]}^L$ and $\Upsilon_{[\Qg]}^R$, and $\be$ be in the other. If $\alpha+\beta\not \in \Phi^+$ then
\[ \gdist_{[\Qg]}(\alpha,\beta)\leq \max\{ \overline{\mathsf{m}}(\al),\overline{\mathsf{m}}(\be) \}=1.\]
\end{proposition}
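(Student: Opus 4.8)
Let $\alpha$ be in one of $\Upsilon_{[\Qg]}^L$ and $\Upsilon_{[\Qg]}^R$, and $\be$ be in the other. If $\alpha+\beta\not \in \Phi^+$ then
\[ \gdist_{[\Qg]}(\alpha,\beta)\leq \max\{ \overline{\mathsf{m}}(\al),\overline{\mathsf{m}}(\be) \}=1.\]

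\medskip

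\textbf{Proof proposal.}

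The plan is to reduce the statement to the already-proved fact $\gdist_{[Q]}(\al',\be') \le 1$ for the type $A_n$ Dynkin quiver $Q = \mathfrak{p}^{D_{n+1}}_{A_n}([\Qg])$, by transporting everything through Algorithm \ref{Alg:label} and the decomposition $\Upsilon_{[\Qg]} = \Upsilon^L_{[\Qg]} \sqcup \Upsilon^C_{[\Qg]} \sqcup \Upsilon^R_{[\Qg]}$ of \eqref{eq: cond uncovered}. First I would observe that since $\alpha \in \Upsilon^L_{[\Qg]}$ (say) and $\beta \in \Upsilon^R_{[\Qg]}$ are both folded multiplicity free by Corollary \ref{lem: folded mul free iff}, the equality $\max\{\overline{\mathsf{m}}(\al),\overline{\mathsf{m}}(\be)\}=1$ is immediate, so only the upper bound $\gdist_{[\Qg]}(\alpha,\beta)\le 1$ needs proof; that is, it suffices to show there is no non-$[\Qg]$-simple sequence strictly between a $[\Qg]$-simple sequence and $(\alpha,\beta)$ — equivalently, no chain $\up_1 \prec^\tb_{[\Qg]} \up_2 = (\alpha,\beta)$ of length $2$ of non-simple pairs, and no non-pair sequence $\um \prec^\tb_{[\Qg]} (\alpha,\beta)$ of weight $\wt(\alpha)+\wt(\beta)$ that is itself non-simple.

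The key step is the following case split, mirroring the proof of Proposition \ref{prop:gdist_3} (via Lemma \ref{lem:gdist_3}) but now with \emph{both} roots on the outer pieces. Write $\alpha = \langle a_1, a_2 \rangle$ and $\beta = \langle b_1, b_2 \rangle$. By Remark \ref{rem:LR path}(i) there is always a path from $\beta$ to $\alpha$ inside $\Upsilon_{[\Qg]}$ (one is in $\Upsilon^L$, the other in $\Upsilon^R$), so $\alpha$ and $\beta$ are indeed comparable and it makes sense to bound the distance. Now I distinguish: (i) if all four summands $a_1,a_2,b_1,b_2$ have absolute values avoiding $n+1$, then after the substitution $\al_{n+1} \mapsto \al_n$ both $\alpha,\beta$ become genuine positive roots of type $A_n$ located in ${}_1\Gamma_Q$ and ${}_2\Gamma_{Q^*}$ respectively; Algorithm \ref{Alg:label} together with Remark \ref{rem:LR path}(ii) shows that any intermediate sequence of the same weight corresponds, component by component, to a sequence of the same weight in $\Gamma_Q$, so $\gdist_{[\Qg]}(\alpha,\beta) \le \gdist_{[Q]}(\alpha',\beta') \le 1$ by Theorem \ref{thm: known for Q}(1). (ii) If one of the summands is $\pm(n+1)$ — say $\beta = \langle b_1, n+1\rangle$ or $\langle b_1, -n-1\rangle$ lies in $\Phi^+|_{n+1\rangle}$ — then by Proposition \ref{Prop:cover by GQ} its coordinate in $\widehat{\Upsilon}_{[\Qg]}$ is $(\bar b_1', \pm b_1')$ on $\rho$ or $\rho'$; here I use Corollary \ref{cor: nbar adjacent} and Proposition \ref{Prop: properties of ext}(5) to control which swings pass through $\alpha$ and $\beta$, exactly as in the $[c]$-snake analysis \eqref{eq: intersection of swings} and Case I/Case II of Lemma \ref{lem:gdist_3}: since $\alpha \in \Upsilon^L$ and $\beta \in \Upsilon^R$ and $\alpha+\beta \notin \Phi^+$, one checks that there is no pair $(\gamma,\delta)$ of positive roots strictly between $\beta$ and $\alpha$ with the four required summands distributed correctly, hence no non-simple $\um$ of the right weight interpolates, giving $\gdist_{[\Qg]}(\alpha,\beta) \le 1$.

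The main obstacle I anticipate is the bookkeeping in case (ii): the hypothesis $\alpha+\beta \notin \Phi^+$ must be used to rule out the existence of an interpolating triple (or pair) of positive roots, and this requires a careful analysis of which swings and snakes intersect, using Proposition \ref{Prop: properties of ext}(3) (a connected $N^{\mathrm{ext}}$-quiver and an $S^{\mathrm{ext}}$-quiver of non-congruent index meet in a unique vertex) to conclude that the putative summands $\{a_1,a_2,b_1,b_2\}$ cannot be repackaged into a shorter chain. Concretely, if such a chain existed, convexity of $\prec_{[\Qg]}$ would force an intermediate root on a path from $\beta$ to $\alpha$, but Algorithm \ref{Alg:label} shows every such intermediate root lies entirely within ${}_1\Gamma_Q$ or entirely within ${}_2\Gamma_{Q^*}$ (it cannot enter $\Upsilon^C_{[\Qg]}$, by the first bullet of \eqref{eq: cond uncovered}, since $\alpha,\beta$ are folded multiplicity free and $\alpha+\beta \notin \Phi^+$), so the whole comparison takes place inside a single copy of $\Gamma_Q$ where Theorem \ref{thm: known for Q}(1) applies and yields the bound $1$. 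I would then remark that the symmetric case ($\alpha \in \Upsilon^R$, $\beta \in \Upsilon^L$) is identical after swapping roles, completing the proof.
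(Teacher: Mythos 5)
There is a genuine gap at the heart of your case (i). You claim that, since $\al$ and $\be$ correspond to positive roots of type $A_n$, ``the whole comparison takes place inside a single copy of $\Gamma_Q$'' and therefore $\gdist_{[\Qg]}(\al,\be)\le \gdist_{[Q]}(\al',\be')\le 1$. But $\al\in\Upsilon^L_{[\Qg]}$ lies in ${}_2\Gamma_{Q^*}$ while $\be\in\Upsilon^R_{[\Qg]}$ lies in ${}_1\Gamma_Q$, so the pair is never contained in one copy, and the order $\prec_{[\Qg]}$ across the two copies is very different from $\prec_Q$: by Remark \ref{rem:LR path}(i) every vertex of $\Upsilon^R_{[\Qg]}$ precedes every vertex of $\Upsilon^L_{[\Qg]}$, whereas the corresponding $A_n$-roots may well be incomparable in $\Gamma_Q$. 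Remark \ref{rem:LR path}(ii) only transfers paths inside a single copy, so it does not yield the claimed inequality, and you give no argument that a chain of non-simple sequences for $\prec^\tb_{[\Qg]}$ produces one for $\prec^\tb_{[Q]}$. Likewise, the assertion in your case (ii) that ``there is no pair $(\gamma,\delta)$ strictly between $\be$ and $\al$'' is false in general: writing $\al=\langle a_1,-a_2\rangle$ and $\be=\langle b_1,-b_2\rangle$, the second pair $\big(\langle b_1,-a_2\rangle,\langle a_1,-b_2\rangle\big)$ of the same weight does lie strictly between $(\al,\be)$ precisely in the configurations where $\gdist_{[\Qg]}(\al,\be)=1$; the bound $\le 1$ holds not because nothing interpolates, but because this alternative pair is \emph{unique} and is itself $[\Qg]$-simple (non-pair sequences below it, and below $(\al,\be)$, being excluded by convexity). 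That uniqueness-plus-simplicity argument is the actual content needed, and it is missing from your proposal.

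For comparison, the paper first disposes of the degenerate cases (the case of ``far apart'' summands where no second pair exists, the cases where $\al+\be$ would be a root, and the case $|a_2|=|b_2|=n+1$ with $a_1=b_1$), then identifies the unique other pair $\eta=\langle a_1,-b_2\rangle$, $\gamma=\langle b_1,-a_2\rangle$ via the square in $\Gamma_Q$, locates $\gamma\in\Upsilon^L_{[\Qg]}$ and $\eta\in\Upsilon^R_{[\Qg]}$, and reads off $\gdist_{[\Qg]}(\al,\be)=1$ or $0$ from the explicit paths $\al\to\gamma\to\eta\to\be$ or $\gamma\to\al\to\be\to\eta$, finishing with convexity to exclude non-pair sequences. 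Note also that your claim that intermediate roots ``cannot enter $\Upsilon^C_{[\Qg]}$'' is not justified by the first bullet of \eqref{eq: cond uncovered} (which only says such vertices lie on two long sectional paths), and it actually fails in the sub-case $\al=\langle a,-n-1\rangle$, $\be=\langle a,n+1\rangle$: there the competing pairs $(\langle a,c\rangle,\langle a,-c\rangle)$ do involve roots of $\Upsilon^C_{[\Qg]}$, and this sub-case requires (and in the paper receives) a separate check.
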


\begin{proof}

Note that $\al ,\be$ are (folded) multiplicity free and hence can be identified with corresponding positive roots in $\Phi^+_{A_n}$ in a natural way.
Without loss of generality, assume that $\al \in \Upsilon_{[\Qg]}^L$ and $\be \in \Upsilon_{[\Qg]}^R$.
Let us denote $\alpha=\left< a_1, -a_2\right>$ and $\beta=\left< b_1, -b_2\right>$. Then the followings are true.
\begin{itemize}
\item If  $b_1-1>a_2$ or $a_1-1>b_2$ then there is no other pair with the same weight and hence $\gdist_{[\Qg]}(\alpha,\beta)=0$ (\cite[(4.2)]{Oh15E}).
\item If $b_1=-a_2$ or $a_1=-b_2$ then $\alpha+\beta \in \Phi^+$.
\item If $|a_2|=|b_2|= n+1$ then $\alpha+\beta \not \in \Phi^+$ implies $a_1=b_1.$ In this case, one can check that $\gdist_{[\Qg]}(\alpha, \beta)=0.$
\end{itemize}

For the other cases, there exists another pair $\{ \ga,\eta \}$ such that $$\al+\be=\ga+\eta \not \in \PR.$$ Note that such a pair $\{ \ga,\eta \}$ is unique.
There is a square in the AR-quiver $\Gamma_Q$ for $Q=\mathfrak{p}_{A_n}^{D_{n+1}}([\Qg])$ which is  one of the followings
(Theorem \ref{thm: labeling GammaQ} and \cite[Proposition 4.12]{Oh14A}):
\[{\xymatrix@C=1.5ex@R=1ex{
 && \eta \ar@{->}[dr]  && \\
 &\alpha\ar@{->}[dr]\ar@{->}[ur]  &&\beta \\
&& \gamma \ar@{->}[ur]
 }}
 \text{ \ or \  }
 {\xymatrix@C=1.5ex@R=1ex{
 && \beta \ar@{->}[dr]  && \\
 &\gamma\ar@{->}[dr]\ar@{->}[ur]  &&\eta \\
&& \alpha \ar@{->}[ur]
 }}.
 \]
Here $\eta=\left<a_1, -b_2\right>$ and $\gamma=\left< b_1, -a_2\right>$ by Theorem \ref{thm: labeling GammaQ} and they are also (folded) multiplicity free.
Recall Remark \ref{rem:LR path} that {\rm (i)} there is a path from any vertex in $\Upsilon_{[\Qg]}^L$ to any vertex in $\Upsilon_{[\Qg]}^R$
and {\rm (ii)} $\eta$ (resp. $\gamma$) is in  $\Upsilon_{[\Qg]}^R$ (resp. $\Upsilon_{[\Qg]}^L$) .
For the first case, we get the path $\alpha\to \gamma\to \eta\to \beta$ by Algorithm \ref{Alg:label} and the assumption of our assertion.
For the second case, we get the path $\gamma\to \alpha\to \beta\to \eta$ by the same reason. Hence we get $\gdist_{[\Qg]}(\alpha,\beta)=1$ and
$\gdist_{[\Qg]}(\alpha,\beta)=0$, respectively. For a non-pair sequence $\um$ such that $\wt(\um)=\al+\be$, it cannot be $ \um \prec_{[\Qg]} (\al,\be)$
by the convexity of $\prec_{[\Qg]}$. The remained cases can be proved by applying the similar argument.
\end{proof}

\begin{proof} [{\bf The second step for  Theorem \ref{thm: dist upper bound Qd}}]
By Proposition \ref{Lem:gdist_1}, \ref{prop:gdist_2}, \ref{prop:gdist_3} and
\ref{prop:gdist_4}, we complete Theorem \ref{thm: dist upper bound Qd}.
\end{proof}

\begin{remark}
Note that, for a pair $(\al,\be)$ with $\gdist_{[\Qg]}(\al,\be)=2$, there exists a unique  sequence $\up'$, which is a pair, with $\wt(\up')=\al+\be$ such that
$$ \soc_{[\Qg]}(\al,\be) \prec^\tb_{[\Qg]} \up' \prec^\tb_{[\Qg]} (\al,\be).$$
 However, in an adapted class $[Q]$ of type $D$, every pair $(\al,\be)$ with $\gdist_{[Q]}(\al,\be)=2$, there exist sequences $\um^{(1)}$ and $\um^{(2)}$ such that
\begin{itemize}
\item $\wt(\um^{(1)})=\wt(\um^{(2)})=\al+\be$,
\item they are incomparable with respect to $ \prec^\tb_{[Q]}$,
\item $ \soc_{[Q]}(\al,\be) \prec^\tb_{[\Qg]} \begin{matrix} \um^{(1)} \\ \um^{(2)} \end{matrix} \prec^\tb_{[Q]} (\al,\be)$.
\end{itemize}
\end{remark}

\begin{theorem} \label{Thm:gdist2}
Let $\alpha$ and $\beta$ have coordinates $(\bar{i}, p)$ and $(\bar{j}, q)$ in the folded AR-quiver $\widehat{\Upsilon}_{[\Qg]}.$ Then the
$\gdist_{[\Qg]}(\alpha,\beta)$ depends only on  $\bar{i}$, $\bar{j}$ and $|p-q|.$
\end{theorem}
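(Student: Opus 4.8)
The plan is to deduce the statement by transporting the pair $(\alpha,\beta)$ to a ``normalized'' configuration by means of reflection functors, under which $\gdist_{[\Qg]}$ is invariant, and then to read off the answer from the computations of Section~\ref{Sec:dist_rds}.

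The engine of the argument is the behaviour of $\gdist$ under a single reflection functor. By Theorem~\ref{thm: OS14} the bi-order $\prec^\tb_{[\Qg]}$, hence $\gdist_{[\Qg]}$, depends only on the class $[\Qg]$; and if $i$ is a sink of $\Qg$ then $r_i\colon[\Qg]\mapsto[i\,\Qg]$ induces a bijection $\Phi^+\to\Phi^+$, $\gamma\mapsto\gamma^{r_i}$ (equal to $s_i(\gamma)$ for $\gamma\neq\alpha_i$, and sending $\alpha_i$ to the label of the newly created vertex) which intertwines $\prec^\tb_{[\Qg]}$ with $\prec^\tb_{[i\,\Qg]}$. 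Consequently
\[
\gdist_{[\Qg]}(\alpha,\beta)=\gdist_{[i\,\Qg]}\bigl(\alpha^{r_i},\beta^{r_i}\bigr)\qquad\text{for all }\alpha,\beta\in\Phi^+ .
\]
On the level of folded coordinates, Algorithm~\ref{alg: fRef Q} read with the height-function normalization of Convention~\ref{Conv:xi_0} says that $\gamma^{r_i}$ keeps the folded coordinate of $\gamma$ for every $\gamma\neq\alpha_i$, while $\alpha_i$ drops from the top $(\bar i,\xi(\bar i))$ of its column to $(\bar i,\xi(\bar i)-\mathsf{h}^\vee)$ with $\mathsf{h}^\vee=2n+2$. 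Thus a single reflection changes at most one of the two folded coordinates involved, and only by this rigid rule.

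Starting from this I would iterate: composing reflections at sinks one pushes each column downwards past the others, and since a column has only $n+1$ occupied positions this not only realizes the needed relative displacement of the columns of $\alpha$ and $\beta$ but also lets the top of one column slide past the bottom of the other --- the mechanism by which the \emph{sign} of the gap $p-q$ can be reversed without changing $|p-q|$. Tracking these moves one brings $(\alpha,\beta)$ to a class $[\Qg']$ in which the two relabeled roots occupy folded coordinates completely determined by $\bar i$, $\bar j$ and $|p-q|$, after which $\gdist_{[\Qg]}(\alpha,\beta)$ equals the generalized distance of that normalized pair. I expect the main obstacle to be exactly this normalization step: controlling how \emph{both} folded coordinates --- and the membership in $\Upsilon^L_{[\Qg]}$, $\Upsilon^C_{[\Qg]}$ or $\Upsilon^R_{[\Qg]}$, which can jump during the process --- evolve under a whole sequence of reflections, and confirming that the target configuration is canonical.

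An equivalent, more hands-on route, which I would use as a check, is to run through the trichotomy already exploited in Propositions~\ref{Lem:gdist_1}, \ref{prop:gdist_2}, \ref{prop:gdist_3} and \ref{prop:gdist_4}. There $\gdist_{[\Qg]}(\alpha,\beta)$ was computed from the relative position of the $[p]$-snakes, swings and sectional paths through $\alpha$ and $\beta$ (Definition~\ref{def: rename}, Theorem~\ref{thm:p-snake well}) and from which region each root lies in, together with the type $A_n$ facts for $Q=\mathfrak{p}^{D_{n+1}}_{A_n}([\Qg])$ (see \cite{Oh14D,Oh15E}). Since a root at $(\bar i,p)$ lies on the $N_m$- and $S_{m'}$-sectional paths with $m\equiv(p+i)/2$ and $m'\equiv(p-i)/2$ modulo $n+1$, the configuration of the snakes and swings of $\alpha$ and $\beta$, hence their comparability and the value of $\gdist$, is manifestly a function of $\bar i$, $\bar j$ and $p-q$ only. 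The delicate point is that the $L/C/R$-membership is \emph{not} invariant under $(p,q)\mapsto(p+c,q+c)$, so two pairs with the same datum $(\bar i,\bar j,|p-q|)$ may sit in different regions; one has to verify, using Theorem~\ref{twisted additive property} and the bound $\gdist_{[\Qg]}\le\overline{\mathsf{m}}\le 2$ of Theorem~\ref{thm: dist upper bound Qd}, that whenever this ambiguity arises the two configurations nevertheless yield the same value. This last verification is where the bulk of the work lies.
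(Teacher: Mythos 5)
Your submission is a plan with two sketched routes, and in both of them the step that actually carries the content of Theorem \ref{Thm:gdist2} is left open, as you yourself acknowledge. In the reflection-functor route, the invariance $\gdist_{[\Qg]}(\alpha,\beta)=\gdist_{[i\,\Qg]}(\alpha^{r_i},\beta^{r_i})$ is asserted but not proved (it requires checking that the relabeling bijection intertwines the bi-lexicographic orders $\prec^\tb$ over all commutation-equivalent reduced words, not just the convex orders), and, more seriously, reflections at sinks change the class: to return to the \emph{same} class $[\Qg]$ one must pass through $\Qnn$ and back, which shifts folded coordinates by $4$ (cf.\ Algorithm \ref{alg: fRef Q}), so pairs whose columns differ by an odd multiple of $2$, and pairs with the sign of $p-q$ reversed, are not reached by this normalization without a further argument — precisely the ``sliding past'' mechanism you only gesture at. In the second route you assert that comparability and $\gdist$ are ``manifestly'' functions of $\bar i$, $\bar j$, $p-q$ because the snake/swing data are; but which snakes are connected and how they split into pieces of ${}_1\Gamma_Q$ and ${}_2\Gamma_{Q^*}$ depends on the position of the pair relative to $\rho$ and $\rho'$, i.e.\ on the $L/C/R$ membership, so this assertion begs the question, and you explicitly defer ``the bulk of the work'' — verifying that configurations with the same datum $(\bar i,\bar j,|p-q|)$ but different $L/C/R$ placement give the same value.

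That deferred verification is exactly what the paper's proof supplies, and it does so not by a case-by-case comparison of regions but by a different device: it splits pairs into Type I/Type II according to whether the $S$-sectional path through $\alpha$ meets the $N$-sectional path through $\beta$, extends the coordinate system beyond $\widehat{\Upsilon}_{[\Qg]}$ so that the four (resp.\ two) auxiliary vertices $\eta,\mu,\nu,\gamma$ (resp.\ $\eta,\gamma'$) always have coordinates, computes these coordinates explicitly in \eqref{corr_inter}, and then repackages Propositions \ref{prop:gdist_2}, \ref{prop:gdist_3}, \ref{prop:gdist_4} into the uniform numerical criteria \eqref{TypeA_gdist} and \eqref{TypeB_gdist}, whose inputs visibly depend only on $i$, $j$ and $|p-q|$. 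Without either carrying out this uniformization or completing the reflection-functor normalization, your proposal does not yet constitute a proof; note also that Corollary \ref{cor: do not depend} (invariance across classes) is in the paper a \emph{consequence} of these explicit formulas, so it cannot be invoked to shortcut the argument.
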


\begin{proof}
Let us assume that $\alpha\succ_{[\Qg]}\beta$. Then $q>p.$ Depending on whether there exists an intersection of the $S$-sectional path passing
$\alpha$ and the $N$-sectional path passing $\beta$ in $\widehat{\Upsilon}_{[\Qg]}$, we classify the type of the pair $(\alpha, \beta)$:
\begin{itemize}
\item  $(\alpha,\beta)$ is {\it Type I} if there exists no intersection between the $S$-sectional path passing $\alpha$ and the $N$-sectional path passing $\beta$.
\item $(\alpha,\beta)$ is {\it Type II} if there exists an intersection between the two sectional paths.
\end{itemize}

(Type I) the picture in \eqref{Pic:8.6} is induced from four swings containing $\alpha$ and $\beta.$ Note that $\mu$, $\nu$, $\gamma$, and $\eta$ are not necessarily in $\widehat{\Upsilon}_{[\Qg]}$ but they are on the rays containing corresponding sectional paths.
In order to give coordinates to $\mu$, $\nu$, $\gamma$, $\eta$ which are not in $\widehat{\Upsilon}_{[\Qg]}$, we extend the coordinate in a ``canonical" way. The new coordinate in $\{\, (k, r)\in \Z\times \Z \, |\,   k\leq n+1 \}$ has the following properties:
\begin{enumerate}[(i)]
\item For $k' \in \{1, 2, \cdots, n\}$, if a vertex has the coordinate $(\bar{k}',r')$ in $\widehat{\Upsilon}_{[\Qg]}$ then, in the new coordinate, it has the new coordinate $(k', r').$
\item For $k' \in \{n+1\}\cup \{0, -1, ,-2, \cdots \}$, the coordinate $(k', r')$ is the intersection of two rays containing the following sectional paths :
\begin{enumerate}[(a)]
\item $N$-sectional path consisting of $(\overline{k'+l}, r'+l) \cap \widehat{\Upsilon}_{[\Qg]}$ for $l\in \Z$,
\item $S$-sectional path consisting of $(\overline{k'-l}, r'+l) \cap \widehat{\Upsilon}_{[\Qg]}$ for $l\in \Z$.
\end{enumerate}
\end{enumerate}

\begin{equation} \label{Pic:8.6}
{\xy  (0,0)*{}="T1"; (10,-10)*{}="R1"; (-15,-15)*{}="L1";
(-35,-35)*{}="A1";(-5,-25)*{}="P1";(30,-30)*{}="B1";
(-25,-45)*{\circ}="G1";(15,-45)*{\circ}="G2";
(-45,-45)*{}="GL1";(45,-45)*{}="GL2";"GL1";"GL2"**\dir{.};
(-48, -45)*{_{n+1}};
"A1";"T1" **\dir{--}; "A1";"G1" **\dir{--};  "G1";"R1" **\dir{--};
"T1";"B1" **\dir{-};"B1";"G2" **\dir{-};"L1";"G2" **\dir{-};
"T1"+(0,-3)*{_{\eta}}; "L1"+(0,-3)*{_{\mu}};"R1"+(0,-3)*{_{\nu}};
"P1"+(0,-3)*{_{\gamma}};"A1"+(0,-3)*{_{\alpha}};"B1"+(0,-3)*{_{\beta}};
\endxy}
\end{equation}
More precisely, we can compute the coordinates as follows.
\begin{equation}\label{corr_inter}
\begin{aligned}
& \eta=({\eta}_0, \eta_1)=( (i+j+p-q)/2, i-j+p+q),\\
& \mu=({\mu}_0, \mu_1)=(n+1+(i-j+p-q)/2, \eta_1+j-n-1), \\
& \nu=({\nu}_0, \nu_1)=(n+1+(j-i+p-q)/2, \eta_1+i-n-1), \\
 & \gamma=({\gamma}_0, \gamma_1)=(2n+2-(i+j-p+q)/2, \mu_1+\nu_1-\eta_1).
\end{aligned}
\end{equation}
Now we note that $\eta_0$, $\mu_0$, $\nu_0$, $\gamma_0$ depend only on
$i$, $j$ and $|p-q|.$ Moreover, by  Proposition \ref{prop:gdist_2}, Proposition \ref{prop:gdist_3} and Proposition \ref{prop:gdist_4}, we have
\begin{equation} \label{TypeA_gdist}
\begin{aligned}
& \gdist_{[\Qg]}(\alpha,\beta)=2 \text{ \ if \ } \eta_0\geq 0 \text{ and } \gamma_0<n+1; \\
& \gdist_{[\Qg]}(\alpha,\beta)=1 \text{ \ if \ } {\rm (i)}\  (\eta_0, \gamma_0) \not \in (\Z_{\geq 0}, \Z_{<n+1}), \  {\rm (ii)} \, (\, \mu_0, \nu_0\, ) \in \Z_{>0}\times \Z_{\geq 0} \cup \Z_{\geq 0} \times \Z_{>0}; \\
& \gdist_{[\Qg]}(\alpha, \beta)=0 \text{ \ otherwise}.
\end{aligned}
\end{equation}

\vskip 2mm

(Type II) In this case, we have the following picture. As in (Type I), $\eta$ is not necessarily in  $\widehat{\Upsilon}_{[\Qg]}.$ However, $\gamma'$ is in $\widehat{\Upsilon}_{[\Qg]}.$
\begin{equation}\label{Pic:2}
{\xy (0,0)*{}="T2"; (15,-15)*{}="R2"; (-20,-20)*{}="L2" ;(-5,-35)*{}="B2";
"L2";"T2" **\dir{--}; "L2";"B2" **\dir{--};
"T2";"R2" **\dir{-};"B2";"R2" **\dir{-};
"T2"+(0,-3)*{_{\eta}}; "L2"+(0,-3)*{_{\alpha}};"R2"+(0,-3)*{_{\beta}};
"B2"+(0,-3)*{_{\gamma'}};
\endxy}
\end{equation}
Here, $\eta$ has the same coordinate as in \eqref{corr_inter} and
\begin{equation}
\gamma'=({\gamma'}_0, \gamma'_1)=((i+j-p+q)/2,p+q-\eta_1)
\end{equation}

\begin{equation} \label{TypeB_gdist}
\begin{aligned}
& \gdist_{[\Qg]}(\alpha, \beta)=1 \text{ \ if \ } \eta_0\geq 0; \\
& \gdist_{[\Qg]}(\alpha, \beta)=0 \text{ \ otherwise\ }.
\end{aligned}
\end{equation}

Since $\gamma_0$, $\eta_0$, $\mu_0$, $\nu_0$ and $\gamma'_0$ depend only on $i, j$ and $|p-q|$ and by  \eqref{TypeA_gdist} and \eqref{TypeB_gdist}. Moreover, the assumption $1\leq i,j\leq n$ implies $i=\bar{i}$ and $j=\bar{j}$. Hence we conclude that
$\gdist_{[\Qg]}(\alpha,\beta)$ only depends on $\bar{i},\bar{j}$ and $|p-q|.$
\end{proof}

\begin{remark} \label{rem: subquivers}
Let $\alpha,\beta$ be positive roots of $D_{n+1}$ and the  coordinates $(\bar{i},p)$ and $(\bar{j},q)$ of $\alpha$ and $\beta$ in $\widehat{\Upsilon}_{[\Qg]}$  satisfy $i\geq j$ and $q>p$.

If  $(\alpha,\beta)$ is of (Type I) in the proof of Theorem \ref{Thm:gdist2} then the pair $(\alpha,\beta)$ has the form of $\rm{(I-1)}\sim \rm{(I-8)}$ in
\eqref{Pic:gdist_D_1} or \eqref{Pic:gdist_D_1-2}. Note that the layer $n+1$ in \eqref{Pic:gdist_D_1} and \eqref{Pic:gdist_D_1-2} is induced from the new coordinate introduced in the proof of Theorem \ref{Thm:gdist2}.

\begin{align} \label{Pic:gdist_D_1}
\scalebox{0.77}{{\xy
(-20,0)*{}="DL";(-10,-10)*{}="DD";(0,20)*{}="DT";(10,10)*{}="DR";
"DT"+(-30,-4)-(15,0); "DT"+(135,-4)**\dir{.};
"DD"+(-20,-6)-(15,0); "DD"+(145,-6) **\dir{.};
"DT"+(-32,-4)-(15,0)*{\scriptstyle {1}};
"DT"+(-34,-36)-(15,0)*{\scriptstyle {n+1}};
"DD"+(0,4)+(37,0); "DD"+(10,-6)+(37,0) **\dir{-};
"DD"+(16,0)+(37,0); "DD"+(10,-6)+(37,0) **\dir{-};
"DD"+(16,0)+(37,0); "DD"+(22,-6)+(37,0) **\dir{-};
"DD"+(43,16)+(37,0); "DD"+(22,-6)+(37,0) **\dir{-};
"DD"+(0,4)+(37,0); "DD"+(22,26)+(37,0) **\dir{-};
"DD"+(43,16)+(37,0); "DD"+(32,26)+(37,0) **\dir{-};
"DD"+(16,0)+(37,0); "DD"+(37,21)+(37,0) **\dir{-};
"DD"+(16,0)+(37,0); "DD"+(6,10)+(37,0) **\dir{-};
"DD"+(16,0)+(37,0)*{\bullet};
"DD"+(37,21)+(37,0)*{\bullet};
"DD"+(6,10)+(37,0)*{\bullet};
"DD"+(10,-6)+(37,0)*{\circ};
"DD"+(22,-6)+(37,0)*{\circ};
"DD"+(22,26)+(37,0); "DD"+(32,26)+(37,0) **\crv{"DD"+(27,28)+(37,0)};
"DD"+(27,29)+(37,0)*{\scriptstyle m>2};
"DD"+(0,4)+(37,0)*{\bullet};
"DL"+(27,0)+(37,0)*{{\rm(I-3)}};
"DD"+(4,4)+(37,0)*{\scriptstyle \al};
"DD"+(43,16)+(37,0)*{\bullet};
"DD"+(39,16)+(37,0)*{\scriptstyle \be};
"DL"+(27,0)+(37,-20)*{\gdist(\alpha,\beta)=1};
"DD"+(0,4)+(2,0); "DD"+(10,-6)+(2,0) **\dir{-};
"DD"+(16,0)+(2,0); "DD"+(10,-6)+(2,0) **\dir{-};
"DD"+(16,0)+(2,0); "DD"+(22,-6)+(2,0) **\dir{-};
"DD"+(43,16)+(2,0); "DD"+(22,-6)+(2,0) **\dir{-};
"DD"+(0,4)+(2,0); "DD"+(22,26)+(2,0) **\dir{-};
"DD"+(43,16)+(2,0); "DD"+(32,26)+(2,0) **\dir{-};
"DD"+(16,0)+(2,0); "DD"+(37,21)+(2,0) **\dir{-};
"DD"+(16,0)+(2,0); "DD"+(6,10)+(2,0) **\dir{-};
"DD"+(37,21)+(2,0)*{\bullet};
"DD"+(6,10)+(2,0)*{\bullet};
"DD"+(16,0)+(2,0)*{\bullet};
"DD"+(10,-6)+(2,0)*{\circ};
"DD"+(22,-6)+(2,0)*{\circ};
"DD"+(22,26)+(2,0); "DD"+(32,26)+(2,0) **\crv{"DD"+(27,28)+(2,0)};
"DD"+(27,29)+(2,0)*{\scriptstyle 2};
"DD"+(0,4)+(2,0)*{\bullet};
"DL"+(27,0)+(2,0)*{{\rm(I-2)}};
"DD"+(4,4)+(2,0)*{\scriptstyle \al};
"DD"+(43,16)+(2,0)*{\bullet};
"DD"+(39,16)+(2,0)*{\scriptstyle \be};
"DL"+(27,0)+(2,-20)*{\gdist(\alpha,\beta)=2};
"DD"+(35,4)-(70,0); "DD"+(45,-6)-(70,0) **\dir{-};
"DD"+(51,0)-(70,0); "DD"+(45,-6)-(70,0) **\dir{-};
"DD"+(51,0)-(70,0); "DD"+(57,-6)-(70,0) **\dir{-};
"DD"+(70,7)-(70,0); "DD"+(57,-6) -(70,0)**\dir{-};
"DD"+(35,4)-(70,0); "DD"+(54,23)-(70,0) **\dir{-};
"DD"+(70,7)-(70,0); "DD"+(54,23)-(70,0) **\dir{-};
"DD"+(51,0)-(70,0); "DD"+(64,13) -(70,0)**\dir{-};
"DD"+(51,0)-(70,0); "DD"+(41,10)-(70,0) **\dir{-};
"DD"+(41,10)-(70,0)*{\bullet};
"DD"+(64,13) -(70,0)*{\bullet};
"DD"+(35,4)-(70,0)*{\bullet};
"DD"+(54,23)-(70,0)*{\bullet};
"DD"+(45,-6)-(70,0)*{\circ};
"DD"+(57,-6)-(70,0)*{\circ};
"DD"+(51,0)-(70,0)*{\bullet};
"DL"+(62,0)-(70,0)*{{\rm(I-1)}};
"DD"+(39,4)-(70,0)*{\scriptstyle \al};
"DD"+(70,7)-(70,0)*{\bullet};
"DD"+(65,7)-(70,0)*{\scriptstyle \be};
"DL"+(62,0)-(70,20)*{\gdist(\alpha,\beta)=2};
"DD"+(74,4); "DD"+(84,-6) **\dir{-};
"DD"+(74,4); "DD"+(96,26) **\dir{-};
"DD"+(109,19); "DD"+(102,26) **\dir{-};
"DD"+(84,-6); "DD"+(109,19) **\dir{-};
"DD"+(74,4)*{\bullet};
"DL"+(101,0)*{{\rm(I-4)}};
"DD"+(78,4)*{\scriptstyle \al};
"DD"+(109,19)*{\bullet};
"DD"+(109,17)*{\scriptstyle \be};
"DD"+(84,-6)*{\circ};
"DD"+(99,4); "DD"+(109,-6) **\dir{-};
"DD"+(99,4); "DD"+(118,23) **\dir{-};
"DD"+(128,13); "DD"+(118,23) **\dir{-};
"DD"+(109,-6); "DD"+(128,13) **\dir{-};
"DD"+(99,4)*{\bullet};
"DD"+(118,23)*{\bullet};
"DL"+(126,0)*{{\rm(I-5)}};
"DD"+(103,4)*{\scriptstyle \al};
"DD"+(128,13)*{\bullet};
"DD"+(128,11)*{\scriptstyle \be};
"DD"+(109,-6)*{\circ};
"DD"+(96,-10)*{\gdist(\alpha,\beta)=0};
\endxy}}
\end{align}

\begin{align} \label{Pic:gdist_D_1-2}
\scalebox{0.77}{{\xy
(-20,0)*{}="DL";(-10,-10)*{}="DD";(0,20)*{}="DT";(10,10)*{}="DR";
"DT"+(-30,-4)-(15,0); "DT"+(135,-4)**\dir{.};
"DD"+(-20,-6)-(15,0); "DD"+(145,-6) **\dir{.};
"DT"+(-32,-4)-(15,0)*{\scriptstyle {1}};
"DT"+(-34,-36)-(15,0)*{\scriptstyle {n+1}};
(-44,-4)*{\bullet}; (-42,-4)*{_{\alpha}}; (-32,-16)*{\circ}; (-16,-16)*{\circ};(12,12)*{\bullet};(-36,4)*{\bullet};(-34,4)*{\mu};(10,12)*{_{\beta}};
(-44,-4);(-36,4)**\dir{-};
(-44,-4);(-32, -16)**\dir{-};
(-36,4);(-16,-16)**\dir{-};
(-32,-16);(0,16)**\dir{-};
(8,16);(12,12)**\dir{-};
(-16,-16);(12,12)**\dir{-};
(0,16); (8,16)**\crv{(4,18)}; (4,20)*{_{2}};
(-24,16);(-36,4)**\dir{-};
(-24,-8)*{\bullet};
(-24,0)*{\rm{(I-6)}};
(-24,-20)*{\gdist(\alpha,\beta)=1};
(-2,-6)*{\bullet}; (0,-6)*{_{\alpha}}; (8,-16)*{\circ}; (28,-16)*{\circ};(56,12)*{\bullet};(8,4)*{\bullet};(54,12)*{_{\beta}};
(-2,-6);(8,4)**\dir{-};
(-2,-6);(8, -16)**\dir{-};
(8,4);(28,-16)**\dir{-};
(8,-16);(40,16)**\dir{-};
(52,16);(56,12)**\dir{-};
(28,-16);(56,12)**\dir{-};
(40,16); (52,16)**\crv{(46,18)}; (46,20)*{_{m>2}};
(20,16);(8,4)**\dir{-};
(18,-6)*{\bullet};
(18,3)*{\rm{(I-7)}};
(18,-20)*{\gdist(\alpha,\beta)=0};
(56,8)*{\bullet};(58,8)*{_{\alpha}}; (80, -16)*{\circ};(104,-16)*{\circ};(132,12)*{\bullet};(130,12)*{_{\beta}};
(56,8);(64,16)**\dir{-};
(56,8);(80,-16)**\dir{-};
(72,16);(104,-16)**\dir{-};
(80,-16);(112,16)**\dir{-};
(128,16);(132,12)**\dir{-};
(104,-16);(132,12)**\dir{-};
(92,-4)*{\bullet};
(92,4)*{\rm{(I-8)}};
(92,-20)*{\gdist(\alpha,\beta)=0};
\endxy}}
\end{align}

Now, let $(\alpha,\beta)$  be of (Type II) in the proof of Theorem \ref{Thm:gdist2}.
Then we have one of the following pictures. If we have (II-1) or (II-2) then $\gdist_{[\Qg]}(\alpha,\beta)=1$.
If we have (II-3) then $\gdist_{[\Qg]}(\alpha,\beta)=0$

\begin{align} \label{eq: complacted socle of D}
\scalebox{0.77}{{\xy
(-20,0)*{}="DL";(-10,-10)*{}="DD";(0,20)*{}="DT";(10,10)*{}="DR";
"DT"+(-40,-4); "DT"+(95,-4)**\dir{.};
"DD"+(-30,-10); "DD"+(105,-10) **\dir{.};
"DT"+(-42,-4)*{\scriptstyle {1}};
"DT"+(-44,-40)*{\scriptstyle {n+1}};
"DL"+(-10,-3); "DD"+(-10,-3) **\dir{-};
"DR"+(-14,-7); "DD"+(-10,-3) **\dir{-};
"DT"+(-14,-7); "DR"+(-14,-7) **\dir{-};
"DT"+(-14,-7); "DL"+(-10,-3) **\dir{-};
"DL"+(-6,-3)*{\scriptstyle \al};
"DR"+(-18,-7)*{\scriptstyle \be};
"DL"+(2,0)*{{\rm (II-1)}};
"DL"+(-10,-3)*{\bullet};"DR"+(-14,-7)*{\bullet};
"DT"+(-14,-7)*{\bullet}; "DD"+(-10,-3)*{\bullet};
"DT"+(-14,-5)*{\scriptstyle \eta};
"DD"+(-10,-5)*{\scriptstyle \gamma'};
"DD"+(-10,-13)*{\gdist(\alpha,\beta)=1};
"DD"+(69,4)-(50,0); "DD"+(79,-6)-(50,0) **\dir{-};
"DD"+(85,0)-(50,0); "DD"+(79,-6)-(50,0) **\dir{-};
"DD"+(69,4)-(50,0); "DD"+(91,26)-(50,0) **\dir{-};
"DD"+(104,19)-(50,0); "DD"+(97,26)-(50,0) **\dir{-};
"DD"+(85,0)-(50,0); "DD"+(104,19)-(50,0) **\dir{-};
"DD"+(69,4)-(50,0)*{\bullet};
"DL"+(96,0)-(52,0)*{{\rm (II-2)}};
"DD"+(73,4)-(51,0)*{\scriptstyle \al};
"DD"+(91,26)-(50,0); "DD"+(97,26)-(50,0) **\crv{"DD"+(94,28)-(50,0)};
"DD"+(104,19)-(50,0)*{\bullet};
"DD"+(104,17)-(50,0)*{\scriptstyle \be};
"DD"+(79,-6)-(50,0)*{\bullet};
"DD"+(79,-8)-(50,0)*{\scriptstyle \gamma'};
"DD"+(94,29)-(50,0)*{\scriptstyle 2};
"DD"+(69,4); "DD"+(79,-6) **\dir{-};
"DD"+(85,0); "DD"+(79,-6) **\dir{-};
"DD"+(79,-8)-(50,5)*{\gdist(\alpha,\beta)=1};
"DD"+(65,8); "DD"+(79,-6) **\dir{-};
"DD"+(65,8); "DD"+(83,26) **\dir{-};
"DD"+(104,19); "DD"+(97,26) **\dir{-};
"DD"+(85,0); "DD"+(104,19) **\dir{-};
"DD"+(65,8)*{\bullet};
"DL"+(92,0)*{{\rm (II-3)}};
"DD"+(71,8)*{\scriptstyle \al};
"DD"+(83,26); "DD"+(97,26) **\crv{"DD"+(90,28)};
"DD"+(104,19)*{\bullet};
"DD"+(104,17)*{\scriptstyle \be};
"DD"+(79,-6)*{\bullet};
"DD"+(77,-8)*{\scriptstyle \gamma'};
"DD"+(94,29)*{\scriptstyle m>2};
"DD"+(77,-13)*{\gdist(\alpha,\beta)=0};
\endxy}}
\end{align}

Hence Theorem \ref{Thm:gdist2} shows
\begin{equation}
\gdist_{[\Qg]}(\alpha,\beta)=
\left\{\begin{array}{ll}
2 & \text{ if $(\alpha,\beta)$ is one of $\rm{(I-1)}$ or $\rm{(I-2)}$, }\\
1 & \text{ if $(\alpha,\beta)$ is one of $\rm{(I-3)}$, $\rm{(I-6)}$, $\rm{(II-1)}$ or $\rm{(II-2)},$ }\\
0 & \text{ otherwise. }
\end{array}
\right.
\end{equation}
\end{remark}

\begin{remark} \label{rem: minimal pair pic}
Observe $\rm{(I-6)}$ and $\rm{(II-2)}$ in Remark \ref{rem: subquivers}. In these cases, we have $\alpha+\beta= \mu\in \Phi^+$ and $\alpha+\beta= \gamma'\in \Phi^+$. More precisely,  Theorem \ref{Thm:gdist2} shows if $\alpha+\beta\in \Phi^+$ and $\gdist_{[\Qg]}(\alpha,\beta)=1$ for $\alpha\succ_{[\Qg]}\beta$ then $(\alpha,\beta)$ has one of the following pictures.
\begin{align} \label{Pic:gdist_D_al+be in PR}
\scalebox{0.77}{{\xy
(-20,0)*{}="DL";(-10,-10)*{}="DD";(0,20)*{}="DT";(10,10)*{}="DR";
"DT"+(-30,-4)-(15,0); "DT"+(135,-4)**\dir{.};
"DD"+(-20,-6)-(15,0); "DD"+(145,-6) **\dir{.};
"DT"+(-32,-4)-(15,0)*{\scriptstyle {1}};
"DT"+(-34,-36)-(15,0)*{\scriptstyle {n+1}};
(-44,-4)*{\bullet}; (-42,-4)*{_{\alpha}}; (-32,-16)*{\circ}; (-16,-16)*{\circ};(12,12)*{\bullet};(-36,4)*{\bullet};(-34,4)*{\mu};(10,12)*{_{\beta}};
(-44,-4);(-36,4)**\dir{-};
(-44,-4);(-32, -16)**\dir{-};
(-36,4);(-16,-16)**\dir{-};
(-32,-16);(0,16)**\dir{-};
(8,16);(12,12)**\dir{-};
(-16,-16);(12,12)**\dir{-};
(0,16); (8,16)**\crv{(4,18)}; (4,19)*{_{2}};
(-24,16);(-36,4)**\dir{-};
(-24,-8)*{\bullet};
(-24,-20)*{\alpha+\beta=\mu};
(-24,0)*{{\rm (i)}};
%
(8,-4)*{\bullet};(16,-12)*{\bullet};(40,12)*{\bullet};(10,-4)*{_{\alpha}};(16,-10)*{_{\gamma'}};(40,10)*{_{\beta}};
(8,-4);(16,-12)**\dir{-};(8,-4);(28,16)**\dir{-};(36,16);(40,12)**\dir{-};(40,12);(16,-12)**\dir{-};
(28,16); (36,16)**\crv{(32,18)}; (32,19)*{_{2}};
(24,-20)*{\alpha+\beta=\gamma'};
(22,0)*{{\rm (ii)}};
(52,12)*{\bullet};(54,12)*{_{\alpha}}; (80,-16)*{\circ}; (88,-8)*{\bullet};(96,-16)*{\circ}; (108,12)*{\bullet};(106,12)*{_{\nu}};(116,4)*{\bullet};(114,4)*{_{\beta}};
(52,12);(56,16)**\dir{-};(64,16);(96,-16)**\dir{-};(52,12);(80,-16)**\dir{-};(80,-16);(108,12)**\dir{-};(108,12);(116,4)**\dir{-};(108,12);(104,16)**\dir{-};(116,4);(96,-16)**\dir{-};
(56,16); (64,16)**\crv{(60,18)}; (60,19)*{_{2}};
(88,-20)*{\alpha+\beta=\nu};
(88,0)*{{\rm (iii)}};
\endxy}}
\end{align}
The picture (i) and (iii) are of (Type I) in Theorem \ref{Thm:gdist2}. Using the notations in the proof of Theorem \ref{Thm:gdist2}, the picture (i) (resp. (iii)) implies $\eta_0<0$, $\mu_0>0$  (resp. $\nu_0>0$) and $\nu_0=0$ (resp. $\mu_0=0$). The picture (ii) is of (Type II) in Theorem \ref{Thm:gdist2} and it implies $\eta_0=0.$
\end{remark}

\begin{remark}
 Let  $\alpha+\beta \in \PR\setminus\Pi$. By Remark \ref{rem: subquivers} and Remark \ref{rem: minimal pair pic} we can find $\gdist_{[\Qg]}(\alpha,\beta)$ using
 \eqref{Pic:8.6} and \eqref{Pic:2} by the following facts.
\begin{enumerate}
\item $\gdist_{[\Qg]}(\alpha,\beta)=2$ if and only if $\gamma, \eta, \mu,\nu$ in  \eqref{Pic:8.6} are all in $\widehat{\Upsilon}_{[\Qg]}$.
\item  $\gdist_{[\Qg]}(\alpha,\beta)=1$ if and only if one of the followings hold:
\begin{enumerate}
\item not both of $\gamma$ and $\eta$ in \eqref{Pic:8.6}  are in $\widehat{\Upsilon}_{[\Qg]}$ but both of $\mu$ and $\nu$ are in $\widehat{\Upsilon}_{[\Qg]},$
\item both $\gamma'$ and $\eta$ in  \eqref{Pic:2}  are in $\widehat{\Upsilon}_{[\Qg]}.$
\end{enumerate}
\end{enumerate}
\end{remark}

\begin{corollary} \label{cor: do not depend}
Suppose $[{Q_1^{\gets}}]$ and $[{Q_2^{\gets}}]$ are both twisted adapted classes. Let $\alpha$ and $\beta\in\Phi^+$ have folded coordinates $(\bar{i}, p)$ and $(\bar{j}, q)$,
respectively, in $\widehat{\Upsilon}_{[{Q_1^{\gets}}]}$ and let $\alpha'$ and $\beta'\in\Phi^+$ have folded coordinates $(\bar{i}, p')$ and $(\bar{j}, q')$,
respectively, in $\widehat{\Upsilon}_{[{Q_2^{\gets}}]}$. If $p-q=p'-q'$ then $\gdist_{[{Q_1^{\gets}}]}(\alpha,\beta)= \gdist_{[{Q_2^{\gets}}]}(\alpha',\beta')$.
\end{corollary}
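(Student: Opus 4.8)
The plan is to read off the statement directly from Theorem~\ref{Thm:gdist2}. That theorem tells us that for any single twisted adapted class $[\Qg]$, the quantity $\gdist_{[\Qg]}(\alpha,\beta)$ for positive roots $\alpha,\beta$ with folded coordinates $(\bar i,p)$ and $(\bar j,q)$ depends only on $\bar i$, $\bar j$ and $|p-q|$; more precisely, the proof of Theorem~\ref{Thm:gdist2} produces an explicit formula (via the auxiliary coordinates $\eta_0,\mu_0,\nu_0,\gamma_0,\gamma_0'$ in \eqref{corr_inter} and the case distinction \eqref{TypeA_gdist}, \eqref{TypeB_gdist}) whose value is a function of $i$, $j$ and $|p-q|$ alone --- and for $1\le i,j\le n$ one has $i=\bar i$, $j=\bar j$. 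So the first step is simply to record that the value of $\gdist$ is computed by one and the same formula $F(\bar i,\bar j,|p-q|)$ in \emph{every} folded AR-quiver $\widehat{\Upsilon}_{[\Qg]}$, $[\Qg]\in\lf\Qg\rf$, since nothing in that formula refers to the particular class.

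Next I would match up the data. Given $[{Q_1^{\gets}}]$ and $[{Q_2^{\gets}}]$ with $\alpha,\beta$ having coordinates $(\bar i,p),(\bar j,q)$ in $\widehat{\Upsilon}_{[{Q_1^{\gets}}]}$ and $\alpha',\beta'$ having coordinates $(\bar i,p'),(\bar j,q')$ in $\widehat{\Upsilon}_{[{Q_2^{\gets}}]}$, the hypothesis $p-q=p'-q'$ gives $|p-q|=|p'-q'|$. Without loss of generality (by the symmetry of $\gdist$ in its two arguments and by possibly swapping the roles of the two classes) I may assume $p\ge q$, hence also $p'\ge q'$, so that in both quivers the pair is oriented the same way with respect to the total order, i.e. $\beta\preceq_{[{Q_1^{\gets}}]}\alpha$ and $\beta'\preceq_{[{Q_2^{\gets}}]}\alpha'$. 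Then applying Theorem~\ref{Thm:gdist2} to each class separately yields
\[
\gdist_{[{Q_1^{\gets}}]}(\alpha,\beta)=F(\bar i,\bar j,|p-q|)=F(\bar i,\bar j,|p'-q'|)=\gdist_{[{Q_2^{\gets}}]}(\alpha',\beta'),
\]
which is exactly the claim.

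The only point that needs a line of care is the edge case where $\bar i=\bar j$: then the two ``$N$/$S$-sectional path'' analyses in the Type~I/Type~II dichotomy of Theorem~\ref{Thm:gdist2} must be checked to still depend only on $|p-q|$ (and not on which of $\alpha$, $\beta$ sits higher), but this is already subsumed in the statement of Theorem~\ref{Thm:gdist2} as proved, since there $i=j$ is allowed. Likewise one should note that whether a pair is of Type~I or Type~II is itself determined by $\bar i,\bar j,|p-q|$ (it is read off from the sign of $\eta_0$, which is such a function), so there is no danger of the two quivers landing in different cases. I do not expect a genuine obstacle here: the corollary is a direct restatement of the functional form established in Theorem~\ref{Thm:gdist2}, and the proof is essentially ``apply Theorem~\ref{Thm:gdist2} twice and compare.'' The mildly delicate bookkeeping --- and the step I would be most careful about --- is the reduction to the case $p\ge q$, $p'\ge q'$ so that the orientation of the pair in the total order matches in both quivers before the common formula is invoked.
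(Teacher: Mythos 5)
Your proposal is correct and follows essentially the same route as the paper: the paper's proof simply observes that the case formulas \eqref{TypeA_gdist} and \eqref{TypeB_gdist}, built from the auxiliary coordinates in \eqref{corr_inter}, depend only on $\bar i$, $\bar j$ and $|p-q|$ and make no reference to the particular twisted adapted class, so applying Theorem \ref{Thm:gdist2} in each class and comparing gives the result. Your extra remarks on orienting the pairs and on the Type I/II dichotomy being determined by the same data are harmless bookkeeping already subsumed in the theorem (the type is governed by $\gamma'_0=(\bar i+\bar j+|p-q|)/2$ rather than the sign of $\eta_0$, but this does not affect the argument).
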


\begin{proof}
Since \eqref{TypeA_gdist} and \eqref{TypeB_gdist} do not depend on the classes of reduced expressions,
our assertion follows.
\end{proof}

\begin{corollary}
For $\al,\be,\ga \in \PR$ with $\widehat{\phi}_{[\Qg]}(\al)=(\overline{i},p)$, $\widehat{\phi}_{[\Qg]}(\be)=(\overline{j},q)$,
$\widehat{\phi}_{[\Qg]}(\ga)=(\overline{k},r)$ such that  $\al+\be =\ga$, the pair $(\al,\be)$ is a $[\Qg]$-minimal pair of $\ga$
if and only if one of the following conditions holds:
\begin{eqnarray}&&
\left\{\hspace{1ex}\parbox{75ex}{
$\ell \seteq \max(\ov{i},\ov{j},\ov{k}) \le n$, $s+m =\ell$ for $\{ s,m \} \seteq \{ \ov{i},\ov{j},\ov{k}\} \setminus \{ \ell \}$
and
$$ \left( q-r,p-r \right) =
\begin{cases}
\big( -\ov{i},\ov{j} \big), & \text{ if } \ell = \ov{k},\\
\big( \ov{i}-(2n+2),\ov{j} \big), & \text{ if } \ell = \ov{i},\\
\big( -\ov{i},2n+2-\ov{j}  \big), & \text{ if } \ell = \ov{j}.
\end{cases}
$$
}\right. \label{eq: Dorey folded coordinate Cn}
\end{eqnarray}
\end{corollary}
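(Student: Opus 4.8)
The plan is to reduce the statement to the computation of $\gdist_{[\Qg]}$ carried out in Theorem~\ref{Thm:gdist2} and then to rewrite its outcome, via the twisted additive property, in terms of the folded coordinate of $\ga$.

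\emph{Step 1: a $[\Qg]$-minimal pair of $\ga$ is exactly a pair $(\al,\be)$ with $\al+\be=\ga$ and $\gdist_{[\Qg]}(\al,\be)=1$.} Let $\al,\be\in\PR$ with $\al+\be=\ga$. The single-root sequence $\ga$ is $[\Qg]$-simple (it has length one), and by the convexity recalled in Remark~\ref{Rem:2.17_0419} the root $\ga$ lies strictly between $\al$ and $\be$ in every $\jj_0'\in[\Qg]$; hence $\ga\prec^\tb_{[\Qg]}(\al,\be)$. In particular $(\al,\be)$ is not $[\Qg]$-simple, so $\gdist_{[\Qg]}(\al,\be)\in\{1,2\}$ by Theorem~\ref{thm: dist upper bound Qd}, and, since $\soc_{[\Qg]}(\al,\be)$ is well-defined (Theorem~\ref{thm: dist upper bound Qd}(3)) while $\ga$ is a $[\Qg]$-simple sequence with $\ga\preceq^\tb_{[\Qg]}(\al,\be)$, we get $\soc_{[\Qg]}(\al,\be)=\ga$. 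Now, if $\gdist_{[\Qg]}(\al,\be)=1$, then any $\um'\prec^\tb_{[\Qg]}(\al,\be)$ is forced to be $[\Qg]$-simple (a non-$[\Qg]$-simple such $\um'$ would give a chain of length two of non-$[\Qg]$-simple sequences ending in $(\al,\be)$), hence $\um'=\soc_{[\Qg]}(\al,\be)=\ga$; therefore nothing lies strictly between $\ga$ and $(\al,\be)$, i.e.\ $(\al,\be)$ is a $[\Qg]$-minimal pair of $\ga$. Conversely, if $\gdist_{[\Qg]}(\al,\be)=2$ there is a non-$[\Qg]$-simple $\um^{(1)}\prec^\tb_{[\Qg]}(\al,\be)$; then $\ga=\soc_{[\Qg]}(\al,\be)\preceq^\tb_{[\Qg]}\um^{(1)}$ with $\ga\ne\um^{(1)}$, so $(\al,\be)$ is not minimal. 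Finally, any $[\Qg]$-minimal pair of $\ga$ is a pair of weight $\wt(\ga)=\ga$, hence automatically satisfies $\al+\be=\ga$; so the problem is reduced to characterizing in coordinates the pairs with $\al+\be=\ga\in\PR$ and $\gdist_{[\Qg]}(\al,\be)=1$.

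\emph{Step 2: translating $\gdist_{[\Qg]}(\al,\be)=1$ into coordinates.} Since $\al+\be\in\PR$ the roots $\al,\be$ are comparable for $\prec_{[\Qg]}$ (otherwise $\gdist_{[\Qg]}(\al,\be)=0$), so after possibly interchanging $\al$ and $\be$ — which interchanges $(\ov i,p)\leftrightarrow(\ov j,q)$ and permutes the cases in \eqref{eq: Dorey folded coordinate Cn} accordingly — we may assume $\al\succ_{[\Qg]}\be$, i.e.\ $q>p$, and (as in Remark~\ref{rem: subquivers}) $\ov i\ge\ov j$. By Remark~\ref{rem: minimal pair pic}, under these normalizations a pair $(\al,\be)$ with $\al+\be\in\PR$ has $\gdist_{[\Qg]}(\al,\be)=1$ precisely in the three configurations {\rm (i)}, {\rm (ii)}, {\rm (iii)} of \eqref{Pic:gdist_D_al+be in PR}, in which $\ga$ equals the vertex $\mu$, $\gamma'$, $\nu$ respectively. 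I would then substitute the coordinate formulas \eqref{corr_inter} for $\eta,\mu,\nu$ (and the analogous Type~II formula for $\gamma'$) together with the sign conditions characterizing each configuration ($\eta_0<0,\ \mu_0>0,\ \nu_0=0$ for {\rm (i)}; $\eta_0=0$ for {\rm (ii)}; $\eta_0<0,\ \nu_0>0,\ \mu_0=0$ for {\rm (iii)}) to compute $\ga$'s folded coordinate $(\ov k,r)$; this yields, in the three cases, one of $\ov k=\max(\ov i,\ov j,\ov k)$, $\ov i=\max$, $\ov j=\max$, together with $s+m=\ell$ (the residue of $\ga$ being the sum of the other two, as dictated by the twisted additive property of Theorem~\ref{twisted additive property} and Proposition~\ref{Prop:label for Cvertices}) and the displayed values of $(q-r,p-r)$. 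Undoing the interchange of $\al$ and $\be$ then produces the full symmetric list \eqref{eq: Dorey folded coordinate Cn}. For the converse, if \eqref{eq: Dorey folded coordinate Cn} holds then $\widehat{\Upsilon}_{[\Qg]}$ does contain roots at the coordinates $(\ov i,p),(\ov j,q),(\ov k,r)$ (by \eqref{eq: r_i for twisted D_n+1}), Theorem~\ref{twisted additive property} gives $\al+\be=\ga$, and the position of $\ga$ relative to $\al,\be$ forces one of the configurations {\rm (i)}--{\rm (iii)}, so $\gdist_{[\Qg]}(\al,\be)=1$ by Theorem~\ref{Thm:gdist2} and $(\al,\be)$ is $[\Qg]$-minimal by Step~1.

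\emph{Main obstacle.} The argument is essentially bookkeeping, and the delicate part is Step~2: one must match correctly the three geometric configurations of \eqref{Pic:gdist_D_al+be in PR} with the three algebraic cases $\ell\in\{\ov k,\ov i,\ov j\}$ while respecting the normalizations $q>p$ and $\ov i\ge\ov j$, and verify that the extended coordinates of $\eta,\mu,\nu,\gamma'$ coming from \eqref{corr_inter} reduce to genuine folded coordinates of roots exactly under the sign conditions defining each configuration (the dual Coxeter number $2n+2$ entering the formulas because $\ga$ may sit in the opposite copy ${}_1\Gamma_Q$ or ${}_2\Gamma_{Q^*}$ from one of $\al,\be$). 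Once this dictionary between \eqref{corr_inter} and \eqref{eq: Dorey folded coordinate Cn}, together with the existence check in the converse, is in place, the remaining verifications are routine.
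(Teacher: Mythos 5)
Your proposal is correct and follows essentially the same route as the paper: the paper's proof is precisely the coordinate bookkeeping you describe in Step 2, reading off the three configurations of Remark \ref{rem: minimal pair pic} (equation \eqref{Pic:gdist_D_al+be in PR}) together with the extended coordinates \eqref{corr_inter} from the proof of Theorem \ref{Thm:gdist2}. Your Step 1, identifying $[\Qg]$-minimal pairs of $\ga$ with pairs of weight $\ga$ having $\gdist_{[\Qg]}=1$ via Theorem \ref{thm: dist upper bound Qd} and the well-definedness of $\soc_{[\Qg]}$, is a correct elaboration of a point the paper leaves implicit, so no genuine difference or gap remains.
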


\begin{proof}
Our assertion follows from the consideration on folded coordinates of \eqref{Pic:gdist_D_al+be in PR} in Remark \ref{rem: minimal pair pic}.
\end{proof}

\begin{example}
Let us consider the reduced expression $\ii_0\in [\Qg]$ of $D_5$ with the twisted Coxeter element $(s_5 s_3 s_2 s_1)\vee.$ Then $\widehat{\Upsilon}_{[\Qg]}$ is as follows.
\[ \scalebox{0.8}{\xymatrix@C=0.9ex@R=1ex{
& -7 &   -6 & -5 & -4 & -3 & -2 & -1 & 0& 1& 2& 3& 4\\
\bar{1}& \srt{1}{-2}\ar@{->}[dr]  && \srt{2}{-3}\ar@{->}[dr]  && \srt{3}{-4}\ar@{->}[dr]  && \srt{4}{-5}\ar@{->}[dr] && \srt{1}{5}\ar@{->}[dr]\\
\bar{2}&&\srt{1}{-3}\ar@{->}[dr]\ar@{->}[ur]  && \srt{2}{-4}\ar@{->}[dr]\ar@{->}[ur]  && \srt{3}{-5}\ar@{->}[dr]\ar@{->}[ur]  && \srt{1}{4}\ar@{->}[dr]\ar@{->}[ur] && \srt{2}{5}\ar@{->}[dr]\\
\bar{3} & &&\srt{1}{-4}\ar@{->}[dr]\ar@{->}[ur]  && \srt{2}{-5}\ar@{->}[dr]\ar@{->}[ur]  && \srt{1}{3}\ar@{->}[dr]\ar@{->}[ur]  && \srt{2}{4}\ar@{->}[dr]\ar@{->}[ur] && \srt{3}{5}\ar@{->}[dr] \\
\bar{4} & &&&\srt{1}{-5}\ar@{->}[ur] && \srt{1}{2}\ar@{->}[ur]  &&\srt{2}{3}\ar@{->}[ur]&& \srt{3}{4}\ar@{->}[ur] && \srt{4}{5}
}} \]
Let us use notations in the proof of Theorem \ref{Thm:gdist2}.
\begin{enumerate}
\item Let $\alpha=\left< 1, -4 \right>$ and $\beta= \left< 2, 3 \right>$. Then $(\alpha,\beta)$ is Type $I$
and we have $\mu= \left< 2, -4 \right>$, $\nu=\left< 1, 3\right>$, $\delta=\left< 3, -4 \right>$, $\gamma=\left< 1, 2 \right>.$
Hence $\gdist_{[\Qg]}(\alpha,\beta)=2$ since $(\alpha,\beta)\succ^\tb_{[\Qg]}(\mu,\nu)\succ^\tb_{[\Qg]}(\delta,\gamma).$

\item  Let $\alpha=\left< 2, -5 \right>$ and $\beta= \left< 3, 5 \right>$. Then $(\alpha,\beta)$ is Type $I$ and we have
$\mu= \left< 3, -5 \right>$, $\nu=\left< 2, 5\right>$, and $\gamma=\left< 2,3 \right>.$ In addition, $\delta$ is not in $\widehat{\Upsilon}_{[\Qg]}$ but $\delta_0=0$.
Hence $\gdist_{[\Qg]}(\alpha,\beta)=2$  since $(\alpha,\beta)\succ^\tb_{[\Qg]}(\mu,\nu)\succ^\tb_{[\Qg]}\gamma.$

\item Let $\alpha=\left< 2, -4 \right>$ and $\beta= \left< 3, 5 \right>$. Then $(\alpha,\beta)$ is Type $I$ and we have
$\mu= \left< 3, -4 \right>$, $\nu=\left< 2, 5\right>$, $\gamma=\left< 2,3\right>$ and $\delta$ is not
in $\widehat{\Upsilon}_{[\Qg]}$. Hence $\gdist_{[\Qg]}(\alpha,\beta)=1$ since $(\alpha,\beta)\succ^\tb_{[\Qg]}(\mu,\nu).$

\item Let $\alpha=\left< 2, -4 \right>$ and $\beta= \left< 1,3 \right>$. Then $(\alpha,\beta)$ is Type $II$
and we have $\delta= \left< 3, -4 \right>$, $\gamma'=\left< 1,2\right>$. Hence $\gdist_{[\Qg]}(\alpha,\beta)=1$ since $(\alpha,\beta)\succ^\tb_{[\Qg]}(\delta,\gamma').$

\item Let $\alpha=\left< 2, -4 \right>$ and $\beta= \left< 1,4 \right>$. Then $(\alpha,\beta)$ is Type $II$ and we have $\gamma'=\left< 1,2\right>$ and $\delta_0=0$.
 Hence $\gdist_{[\Qg]}(\alpha,\beta)=1$ since $(\alpha,\beta)\succ^\tb_{[\Qg]}\gamma'.$
\end{enumerate}
\end{example}

\section{Distance polynomial and Folded distance polynomial} \label{Sec:distancePoly}
In this section, we briefly review the distance polynomials defined on the adapted cluster point $\lf Q \rf$ of type $ADE_m$, which was studied in \cite{Oh15E}.
Then we introduce and study the folded distance polynomials as in \cite{OS16},
which are well-defined on the twisted adapted cluster point $\lf \Qg \rf$ of type $D_{n+1}$.
The folded distance polynomials have an interesting relation with the quantum affine algebra of
type $C^{(1)}_{n}$. This section can be understood as a twisted analogue of \cite[Section 6]{Oh15E}. In this section, we refer to and follow
\cite[Section 6]{Oh15E} and \cite{Kas02} instead of introducing the notions for quantum affine algebras, their integrable representations and denominator formulas.

\bigskip

Let us take a base field $\ko$ the algebraic closure of $\C(q)$ in $\cup_{m >0} \C((q^{1/m}))$.

\subsection{Distance polynomial}

\begin{definition} \cite[Definition 6.11]{Oh15E}
For a Dynkin quiver $Q$, indices $k,l \in I$ and an integer $t \in \mathbb{N}$, we define the subset $\Phi_{Q}(k,l)[t]$ of $\PR \times \PR$ as follows:

A pair $(\alpha,\beta)$ is contained in $\Phi_{Q}(k,l)[t]$ if $\alpha \prec_Q \beta$ or $\be \prec_Q \al$ and
$$\{ \widetilde{\phi}_Q(\al),\widetilde{\phi}_Q(\be) \}=\{ (k,a), (l,b)\} \quad \text{ such that } \quad |a-b|=t.$$
\end{definition}

\begin{lemma} \cite[Lemma 6.12]{Oh15E} \label{lem: o well}
For any  $(\alpha^{(1)},\beta^{(1)})$ and $(\alpha^{(2)},\beta^{(2)})$ in $\Phi_{Q}(k,l)[t]$, we have
$$ o^{Q}_t(k,l) := \gdist_Q(\alpha^{(1)},\beta^{(1)})=\gdist_{Q}(\alpha^{(2)},\beta^{(2)}). $$
\end{lemma}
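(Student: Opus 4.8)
\textbf{Proof proposal for Lemma~\ref{lem: o well}.}
The plan is to reduce the statement to the fact, already established in Theorem~\ref{Thm:gdist2} and its analogue for ordinary AR-quivers of type $ADE$ (see \cite{Oh15E}), that the generalized distance $\gdist_Q(\al,\be)$ depends only on the residues of $\al,\be$ and the difference of the second coordinates of $\widetilde{\phi}_Q(\al)$ and $\widetilde{\phi}_Q(\be)$. First I would recall the definition of $\Phi_Q(k,l)[t]$: a pair $(\al^{(s)},\be^{(s)})$ lies in $\Phi_Q(k,l)[t]$ precisely when $\al^{(s)}$ and $\be^{(s)}$ are comparable with respect to $\prec_Q$ and $\{\widetilde{\phi}_Q(\al^{(s)}),\widetilde{\phi}_Q(\be^{(s)})\}=\{(k,a_s),(l,b_s)\}$ with $|a_s-b_s|=t$. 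So for $s=1,2$ the two positive roots have residues $k$ and $l$, and the absolute value of the difference of their second coordinates equals $t$ in both cases.

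Next I would invoke the statement that $\gdist_Q$ is a function of $(k,l,t)$ only. Concretely, after possibly interchanging the roles of $k$ and $l$ (which does not change $\gdist_Q$, since it is symmetric in its two arguments by the convention following Definition~\ref{def: gdist}) and orienting each pair so that the first root is $\prec_Q$-smaller, both pairs $(\al^{(1)},\be^{(1)})$ and $(\al^{(2)},\be^{(2)})$ have the first root in residue $k$ at height $a$ and the second root in residue $l$ at height $a+t$ for a suitable common offset. Then the dependence result for AR-quivers of type $ADE$ — the analogue of Theorem~\ref{Thm:gdist2} proved in \cite{Oh15E} for ordinary AR-quivers — yields immediately that $\gdist_Q(\al^{(1)},\be^{(1)})=\gdist_Q(\al^{(2)},\be^{(2)})$, which is the desired equality; we may then \emph{define} $o^Q_t(k,l)$ to be this common value.

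The one point requiring a little care, and the place I expect the genuine content to sit, is establishing that the additive shift by $-2$ under the Coxeter functor $\phi_Q$ (cf.\ Definition~\ref{def: Gamma_Q}(2)) together with the reflection-functor moves of Algorithm~\ref{alg: Ref Q} act on $\gdist_Q$ in a way depending only on the residue pair and the height gap. Since $\Phi^+$ is finite, one checks that any two pairs with the same $(k,l,t)$ can be moved to one another by applying $\phi_Q^{\pm 1}$ componentwise (which preserves both residues and the height difference, hence preserves membership in $\Phi_Q(k,l)[t]$ and, by convexity of $\prec_Q$, preserves $\gdist_Q$), reducing everything to a single representative. In fact, strictly speaking, Lemma~\ref{lem: o well} is quoted verbatim from \cite[Lemma~6.12]{Oh15E}, so the cleanest presentation is simply to cite that source; the sketch above indicates why it holds. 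Therefore $o^Q_t(k,l)$ is well-defined, completing the proof.
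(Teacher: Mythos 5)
The paper does not prove this lemma at all: it is quoted verbatim from \cite[Lemma 6.12]{Oh15E}, so the ``official'' argument here is simply the citation, and your closing remark that the cleanest presentation is to cite that source is exactly what the paper does. To that extent your proposal is consistent with the paper.

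However, the sketch you offer as a justification has genuine gaps. First, it is circular in its main step: you ``invoke the statement that $\gdist_Q$ is a function of $(k,l,t)$ only,'' but that statement \emph{is} Lemma~\ref{lem: o well}; the analogue you appeal to (the type-$ADE$ counterpart of Theorem~\ref{Thm:gdist2}) is precisely the content of \cite[Lemma 6.12]{Oh15E}, not an independent input. Second, the reduction via the Coxeter transformation does not work as stated. Applying $\phi_Q^{\pm 1}$ componentwise shifts both second coordinates by $\mp 2$, hence preserves the \emph{signed} difference $a-b$; but $\Phi_Q(k,l)[t]$ is defined by $|a-b|=t$, and for $k\neq l$ a single $\Gamma_Q$ can contain one pair with the residue-$k$ root to the right of the residue-$l$ root and another pair with the opposite configuration. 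Such pairs are not related by any power of $\phi_Q$, so your ``single representative'' reduction misses them. Moreover, the claim that a componentwise $\phi_Q$-shift preserves $\gdist_Q$ is not a consequence of convexity alone: $\phi_Q(\gamma)$ may fail to be a positive root at the boundary of the quiver (cf.\ Definition~\ref{def: Gamma_Q} and Algorithm~\ref{alg: Ref Q}), and one must check that the entire collection of sequences interpolating between $\soc$ and the pair (Definition~\ref{def: gdist}) is transported bijectively, which is exactly the kind of statement whose proof in \cite{Oh15E} proceeds instead by an explicit analysis of sectional paths and the possible squares/rectangles in $\Gamma_Q$ — the same strategy this paper uses when it proves the folded analogue, Theorem~\ref{Thm:gdist2}, by the Type I/Type II case analysis. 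So if you want more than the citation, the argument to emulate is that geometric case analysis, not a $\phi_Q$-equivariance argument.
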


We denoted by $Q^{{\rm rev}}$ the quiver obtained
by reversing all arrows of $Q$ and $Q^*$ the quiver obtained from $Q$ by replacing vertices $i$ of $Q$ with  $i^*$
(see \eqref{eq: involution D} for $^*$ of type $D_{n+1}$).

\begin{definition}\cite[Definition 6.15]{Oh15E} \label{def: Dist poly Q}
For $k,l \in I$ and a Dynkin quiver $Q$, we define a polynomial $D^Q_{k,l}(z) \in \ko[z]$ as follows:
$$D^Q_{k,l}(z) \seteq  \prod_{ t \in \Z_{\ge 0} } (z-(-1)^t q^{t} )^{\mathtt{o}^{\overline{Q}}_t(k,l)},$$
where
\begin{align} \label{def: non-fold O}
\mathtt{o}^{\overline{Q}}_t(k,l) \seteq  \max( o^{Q}_t(k,l),o^{Q^\rev}_t(k,l) ).
\end{align}
\end{definition}

\begin{proposition} \cite[Proposition 6.16]{Oh15E} \label{prop: DQ DQ'}
For $k,l \in I$ and any Dynkin quivers $Q$ and $Q'$, we have
\begin{enumerate}
\item[{\rm (a)}] $D^Q_{k,l}(z)=D^Q_{l,k}(z)=D^Q_{k^*,l^*}(z)=D^Q_{l^*,k^*}(z)$.
\item[{\rm (b)}] $D^Q_{k,l}(z)=D^{Q'}_{k,l}(z)$.
\end{enumerate}
Hence $D_{k,l}(z)$ is well-defined for $\lf Q \rf$.
\end{proposition}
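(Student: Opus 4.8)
\textbf{Proof proposal for Proposition \ref{prop: DQ DQ'}.}

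The plan is to reduce both assertions to properties of the generalized distance $\gdist_Q$ that are already available from the AR-quiver machinery of type $ADE$ recalled in the excerpt, together with Lemma \ref{lem: o well} which makes $o^Q_t(k,l)$ well-defined. Part (b) is the statement that $D^Q_{k,l}(z)$ does not depend on the choice of Dynkin quiver $Q$ inside the adapted cluster point $\lf Q \rf$; part (a) records the symmetries of $D^Q_{k,l}(z)$ in the indices $k,l$. I will treat (a) first, since (b) will build on the same kind of reasoning.

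For (a), fix $Q$ and $t$. The equality $o^Q_t(k,l)=o^Q_t(l,k)$ is immediate from the definition of $\Phi_Q(k,l)[t]$, which is manifestly symmetric in the roles of $k$ and $l$ (it only records the unordered pair of residues $\{(k,a),(l,b)\}$ with $|a-b|=t$), and from the symmetry $\gdist_Q(\alpha,\beta)=\gdist_Q(\beta,\alpha)$ built into the definition of $\gdist$. For the starred version, I would use the standard fact (implicit in the discussion around \eqref{eq: involution D} and Corollary \ref{Cor:rev Ga rev Q}) that the map $\alpha \mapsto \alpha$ together with $i \mapsto i^*$ on residues identifies the combinatorial data attached to $Q$ with that attached to $Q$ after relabeling residues by $^*$; since $\prec_Q$ and hence $\gdist_Q$ are intrinsic to the partial order on $\Phi^+$, applying $^*$ to residues sends $\Phi_Q(k,l)[t]$ bijectively to $\Phi_Q(k^*,l^*)[t]$ and preserves the value of $\gdist_Q$ on each pair. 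Taking the $\max$ with the reverse quiver in \eqref{def: non-fold O} is compatible with all of this because $(Q^\rev)^* = (Q^*)^\rev$ and the same relabeling argument applies to $Q^\rev$. Combining the three symmetries gives the chain of equalities in (a).

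For (b), the key point is that $D^Q_{k,l}(z)$ is defined only through the numbers $\mathtt{o}^{\overline{Q}}_t(k,l)=\max(o^Q_t(k,l), o^{Q^\rev}_t(k,l))$, so it suffices to show that, for each pair $t$ and each pair of indices $k,l$, the quantity $\mathtt{o}^{\overline{Q}}_t(k,l)$ is the same for any two Dynkin quivers $Q, Q'$ in the adapted cluster point. Here I would invoke the reflection-functor description: any two adapted classes $[Q]$ and $[Q']$ are connected by a sequence of reflection functors $r_i$, and Algorithm \ref{alg: Ref Q} describes precisely how $\Gamma_Q$ changes to $\Gamma_{i\,Q}$ when $i$ is a sink. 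The statement $o^Q_t(k,l)$ depends only on $\lf Q \rf$ is exactly the content that passing through one reflection functor does not change $\gdist$-values of pairs sharing a prescribed unordered residue pair with prescribed vertical distance $t$; this is the analogue, for sequences/pairs, of the invariance of the convex order structure — and it is the assertion that \cite[Section 6]{Oh15E} establishes (Proposition \ref{prop: DQ DQ'} is quoted from there). So my plan is: reduce to a single reflection $Q \rightsquigarrow i\,Q$ at a sink $i$; observe that relabeling $\alpha \mapsto s_i(\alpha)$ (as in step (A3) of Algorithm \ref{alg: Ref Q}) is the bijection between the relevant subsets $\Phi_Q(k,l)[t]$ and $\Phi_{i\,Q}(k',l')[t']$; check that it preserves $\gdist$ because $\gdist$ is defined through $\prec^\tb_{[\jj_0]}$ which is invariant under the reflection; and note that since $o$ is already constant on each of $\lf Q \rf$ and $\lf Q^\rev \rf$ (the latter also being a single cluster point), the $\max$ in \eqref{def: non-fold O} is constant on $\lf Q \rf$. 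Then $D^Q_{k,l}(z)=D^{Q'}_{k,l}(z)$, and denoting the common value $D_{k,l}(z)$ is legitimate.

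The main obstacle I anticipate is the bookkeeping in (b) around how residues of positive roots transform under a reflection functor: the residue of $\alpha$ with respect to $[Q]$ is \emph{not} simply preserved when one applies $r_i$ — it is only the removed/re-added boundary vertex whose residue changes drastically (from $i$ to $i^*$, shifting the height coordinate by $\mathsf h^\vee$), while all other residues are unchanged and the height coordinates are shifted uniformly. One must verify carefully that this reshuffling matches pairs in $\Phi_Q(k,l)[t]$ with pairs in $\Phi_{i\,Q}(k,l)[t]$ \emph{for the same} $t$ (the vertical distance $|a-b|$ being stable under a uniform shift of coordinates, and the one exceptional vertex being handled by the $\max$ over $Q$ and $Q^\rev$, or by the $ADE$-specific periodicity of $\Gamma_Q$). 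Once this combinatorial compatibility is in place, everything else is formal, and I would simply cite \cite[Proposition 6.16]{Oh15E} for the detailed verification rather than reproducing it.

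\textbf{Remark.} Note that parts (a) and (b) together are exactly what is needed to define $D_{k,l}(z)$ intrinsically for the cluster point $\lf Q \rf$, which is the form in which the denominator-formula interpretation is used in the sequel; the twisted analogue for $\lf \Qg \rf$ will be obtained in the next subsection by replacing $o^Q_t(k,l)$ with its folded counterpart built from $\gdist_{[\Qg]}$ and using Theorem \ref{thm: dist upper bound Qd} and Theorem \ref{Thm:gdist2} in place of Theorem \ref{thm: known for Q}.
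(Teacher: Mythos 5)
Your proposal cannot be checked against an internal argument, because the paper gives none: Proposition \ref{prop: DQ DQ'} is quoted verbatim from \cite[Proposition 6.16]{Oh15E}. Judged on its own, your sketch has a genuine gap in each part. For (a), the equality $D^Q_{k,l}(z)=D^Q_{l,k}(z)$ is indeed immediate, but your key claim that relabelling residues by $*$ gives a bijection $\Phi_Q(k,l)[t]\to\Phi_Q(k^*,l^*)[t]$ \emph{inside the same} $\Gamma_Q$, preserving $\gdist_Q$, is false. By Proposition \ref{Prop:AR} and \eqref{eq: r_i A_n} the $k$-th and $k^*$-th residues of $\Gamma_Q$ contain $r^Q_k$ and $r^Q_{k^*}$ vertices with $r^Q_k+r^Q_{k^*}=n+1$ in type $A_n$, so they generally have different cardinalities: for $Q\colon 1\to 2$ in type $A_2$ one finds $\Phi_Q(2,2)[2]=\{(\alpha_1,\alpha_2)\}$, hence $o^Q_2(2,2)=1$, while $\Phi_Q(1,1)[2]=\emptyset$, hence $o^Q_2(1,1)=0$, although $1^*=2$. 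Thus the $*$-symmetry is \emph{not} a symmetry of $o^Q_t$; it only appears after taking the maximum in \eqref{def: non-fold O}, via a comparison of $\Gamma_Q$ with $\Gamma_{Q^{\rev}}$ of the kind recorded in Corollary \ref{Cor:rev Ga rev Q} and \eqref{Eqn:beta_rev} (a statement relating $Q$ to $Q^{\rev}$, not $Q$ to itself), which yields $o^{Q}_t(k,l)=o^{Q^{\rev}}_t(k^*,l^*)$ and hence $\mathtt{o}^{\overline{Q}}_t(k,l)=\mathtt{o}^{\overline{Q}}_t(k^*,l^*)$.

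The same confusion undermines (b). You assert that $o^Q_t(k,l)$ itself is unchanged by a reflection functor, i.e.\ constant on the cluster point, and you treat $\lf Q\rf$ and $\lf Q^{\rev}\rf$ as two cluster points; but there is a unique adapted cluster point, it contains both $Q$ and $Q^{\rev}$, and the example above gives $o^Q_2(2,2)=1\neq 0=o^{Q^{\rev}}_2(2,2)$, so $o^Q_t(k,l)$ genuinely depends on $Q$ (were it constant, the maximum in \eqref{def: non-fold O} would be vacuous). What actually has to be proved is the $Q$-independence of $\mathtt{o}^{\overline{Q}}_t(k,l)=\max\bigl(o^Q_t(k,l),o^{Q^{\rev}}_t(k,l)\bigr)$, and this is not a formal relabelling statement: one must track, when the sink vertex $\alpha_i$ is moved by Algorithm \ref{alg: Ref Q}, how $\gdist$ at prescribed residues and horizontal distance changes, and check that any loss in $o^Q$ is compensated in $o^{Q^{\rev}}$ — in \cite{Oh15E} this rests on the explicit determination of $\gdist_{[Q]}$ in terms of relative positions (the type-$ADE$ analogue of Theorem \ref{Thm:gdist2} here). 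Finally, closing the argument by citing \cite[Proposition 6.16]{Oh15E} is circular as a proof of that very statement, even though simply citing it is exactly what the present paper legitimately does.
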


The denominator formulas $d^{\g}_{k,l}(z)=d^{\g}_{l,k}(z)$ $(1 \le k ,l \le n+1)$ for $U_q'(\g)$ ($\g=A^{(1)}_{n}$ or $D^{(1)}_{n+1}$) were computed in
\cite{AK,KKK13b}:

\begin{theorem} \label{thm: denominator of untwisted} \hfill
\begin{enumerate}
\item[{\rm (a)}] For $\g = A^{(1)}_{n}$ $(n \ge 2)$ and $1 \le k , l \le n$, we have
\begin{align} \label{eq: denominator A1}
d_{k,l}^{\;A^{(1)}_{n}}(z) = \prod_{s=1}^{\min(k,l,n+1-k,n+1-l)} (z-(-q)^{|k-l|+2s}).
\end{align}
\item[{\rm (b)}] For $\g = D^{(1)}_{n+1}$ $(n \ge 3)$ and $1 \le k , l \le n+1$, we have
\begin{equation} \label{eq: denominator D1}
\begin{aligned}
& d_{k,l}^{\; D^{(1)}_{n+1}}(z)= \begin{cases}
\displaystyle \prod_{s=1}^{\min (k,l)} (z-(-q)^{|k-l|+2s}) \prod_{s=1}^{\min (k,l)} (z-(-q)^{2n-k-l+2s}) & \text{ if } 1 \le k,l \le n-1, \allowdisplaybreaks \\
\ \displaystyle \prod_{s=1}^{k}(z-(-q)^{n-k+2s}) &  \hspace{-15ex}\text{ if }  1 \le k \le n-1 \text{ and } l \in \{ n, n+1\}, \allowdisplaybreaks \\
\ \displaystyle \prod_{s=1}^{\lfloor \frac{n}{2} \rfloor} (z-(-q)^{4s}) &  \text{ if }   k \neq l \in \{n,n+1\},  \allowdisplaybreaks  \\
\ \displaystyle \prod_{s=1}^{\lfloor \frac{n+1}{2} \rfloor} (z-(-q)^{4s-2}) &  \text{ if }  k=l \in \{ n,n+1\}.
 \end{cases}
\end{aligned}
\end{equation}
\end{enumerate}
\end{theorem}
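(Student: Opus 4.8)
The final statement is a recollection of results already in the literature, so the most economical ``proof'' is simply to invoke \cite{AK} for $\g = A^{(1)}_{n}$ and \cite{KKK13b} for $\g = D^{(1)}_{n+1}$, where the normalized $R$-matrices between fundamental modules are analysed directly and their denominators read off. However, in the spirit of the present paper one can give a self-contained combinatorial derivation, and that is the route I would propose. The plan is to use the identity $d^{\g}_{k,l}(z) = D_{k,l}(z)$ established in \cite[Section 6]{Oh15E} (for $\g = A^{(1)}_{n}$ and $D^{(1)}_{n+1}$), which reduces the problem to computing the distance polynomial $D_{k,l}(z) = \prod_{t \in \Z_{\ge 0}} (z-(-1)^t q^t)^{\mathtt{o}^{\overline{Q}}_t(k,l)}$ of Definition \ref{def: Dist poly Q}. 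By Proposition \ref{prop: DQ DQ'}\,(b) this polynomial is independent of the chosen Dynkin quiver, so I would fix the monotone orientation $\overset{\gets}{Q}$, whose AR-quiver $\Gamma_{\overset{\gets}{Q}}$ has a transparent rectangular (resp. ``fork'') shape and whose labels are completely explicit.

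First I would recall the explicit labelings: for type $A_n$, Theorem \ref{thm: labeling GammaQ} together with the fact that $N$-sectional (resp. $S$-sectional) paths share a first (resp. second) component; for type $D_{n+1}$, the analogous statement from \cite{Oh14D}. Then, for each ordered pair of residues $(k,l)$ and each $t$, I would enumerate the pairs $(\alpha,\beta)$ with $\{\widetilde{\phi}_Q(\alpha),\widetilde{\phi}_Q(\beta)\} = \{(k,a),(l,b)\}$, $|a-b|=t$, and compute $\gdist_Q(\alpha,\beta)$ as in Definition \ref{def: gdist} by counting the maximal chains of pairs between them; by Lemma \ref{lem: o well} this value depends only on $(k,l,t)$, giving $o^{Q}_t(k,l)$, and $\mathtt{o}^{\overline{Q}}_t(k,l) = \max(o^{Q}_t(k,l),o^{Q^{{\rm rev}}}_t(k,l))$. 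For type $A_n$ one checks that the exponent is $1$ exactly when $t \equiv |k-l| \pmod 2$ and $t \in \{|k-l|+2s : 1 \le s \le \min(k,l,n+1-k,n+1-l)\}$, and $0$ otherwise; assembling the product yields \eqref{eq: denominator A1}. For type $D_{n+1}$ I would split into the four cases of \eqref{eq: denominator D1} according to whether $k,l \le n-1$ or $k,l \in \{n,n+1\}$: the two products in the first case come from the two branches of the fork together with the exchange $Q \leftrightarrow Q^{{\rm rev}}$ (accounting for the $2n-k-l+2s$ factors), and the spin cases $l \in \{n,n+1\}$ or $k=l \in \{n,n+1\}$ are handled by the special boundary behaviour of residues $n,n+1$ in $\Gamma_Q$.

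The main obstacle is the bookkeeping in the $D_{n+1}$ case: tracking how the two fork branches interact with the $\max$ over $Q$ and $Q^{{\rm rev}}$, and confirming that the relevant exponents $\mathtt{o}^{\overline{Q}}_t(k,l)$ equal $1$ (and never jump to $2$) outside the ranges prescribed by the formula. Here I would invoke the upper bound $\gdist_{[Q]}(\alpha,\beta) \le \max\{\mathsf{m}(\alpha),\mathsf{m}(\beta)\}$ of Theorem \ref{thm: known for Q}\,(1): since a positive root with a prescribed single residue in $\Gamma_Q$ is multiplicity free, every pair with fixed distinct residues is multiplicity free on both coordinates and hence contributes $\gdist \le 1$; the only $\gdist = 2$ contributions occur for pairs sharing a residue, which do not enter $D_{k,l}(z)$. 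After isolating that point the remaining work is routine case-checking that the chains of pairs realize exactly the multiplicities appearing in \eqref{eq: denominator A1} and \eqref{eq: denominator D1}.

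Remark that this combinatorial derivation is not logically necessary for the paper --- one may simply cite \cite{AK, KKK13b} --- but it mirrors the method by which the folded distance polynomials of type $C^{(1)}_{n}$ will be computed in the next subsection, so carrying it out here makes the later twisted analogue transparent.
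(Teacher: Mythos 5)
The paper offers no proof of this theorem at all: it is quoted directly from \cite{AK} and \cite{KKK13b}, so your opening remark (just cite those references) is exactly what the paper does, and that alone suffices.

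The self-contained combinatorial derivation that you actually propose, however, does not work as written. First, it is circular: the identity you invoke from \cite[Section 6]{Oh15E} (Theorem \ref{eq: dist denom 1A2n} in this paper) is itself obtained by computing the distance polynomials $D_{k,l}(z)$ and comparing them with the already-known formulas \eqref{eq: denominator A1} and \eqref{eq: denominator D1}; it cannot be used to establish those formulas without an independent $R$-matrix computation, which is precisely what \cite{AK,KKK13b} provide. Second, the identity is misquoted: it reads $d^{\g}_{k,l}(z)=D_{k,l}(z)\times(z-(-q)^{\mathtt{h}^\vee})^{\delta_{l,k^*}}$, and the extra factor at $l=k^*$ is not optional (in type $A_n$ with $l=n+1-k$ the factor $(z-(-q)^{n+1})$ appears in \eqref{eq: denominator A1} but not in $D_{k,l}(z)$). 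Third, and most seriously for part (b), your argument that $\gdist\le 1$ for every pair with distinct residues, on the grounds that a root sitting at a prescribed residue is multiplicity free, is false in type $D_{n+1}$: every positive root occupies a single residue, the multiplicity-two roots $\langle a,b\rangle$ sit at ordinary residues of $\Gamma_Q$, and the repeated linear factors in the first case of \eqref{eq: denominator D1} (for instance $(z-(-q)^{5})^{2}$ dividing $d^{\,D^{(1)}_{5}}_{2,3}(z)$) correspond exactly to exponents $\mathtt{o}^{\overline{Q}}_t(k,l)=2$ with $k\neq l$; also pairs sharing a residue do enter $D_{k,l}(z)$ when $k=l$. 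Following your reasoning would cap all exponents at one and hence produce the wrong polynomial in that case. So beyond the citation, a genuine derivation would require either redoing the $R$-matrix analysis of \cite{AK,KKK13b} or a careful recomputation of $\gdist_Q$ in type $D$ that allows the value $2$, not an argument that excludes it.
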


\begin{theorem} \cite[Theorem 6.18]{Oh15E}\label{eq: dist denom 1A2n}
For any adapted class $[Q]$ of type $AD_m$, the denominator formulas for the quantum affine algebra $U'_q(\g)$ $(\g=A^{(1)}_{m}$ or $D^{(1)}_{m})$ can be read
from $\Gamma_Q$ and $\Gamma_{Q^\rev}$ as follows:
\begin{align*}
d^{\g}_{k,l}(z) & = D_{k,l}(z) \times (z-(-q)^{\mathtt{h}^\vee})^{\delta_{l,k^*}}
\end{align*}
where $\mathtt{h}^\vee$ is the dual Coxeter number corresponding to $Q$.
\end{theorem}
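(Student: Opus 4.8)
\textbf{Proof plan for Theorem \ref{eq: dist denom 1A2n}.}
The plan is to verify the asserted identity
$$d^{\g}_{k,l}(z) = D_{k,l}(z) \times (z-(-q)^{\mathtt{h}^\vee})^{\delta_{l,k^*}}$$
by a case-by-case comparison of the explicit denominator formulas in Theorem \ref{thm: denominator of untwisted} with the combinatorially computed exponents $\mathtt{o}^{\overline Q}_t(k,l)$ appearing in Definition \ref{def: Dist poly Q}. First I would fix a convenient Dynkin quiver $Q$ in the adapted cluster point $\lf Q \rf$ --- legitimate by Proposition \ref{prop: DQ DQ'}(b), which says $D_{k,l}(z)$ is independent of the choice --- and, moreover, one for which $\Gamma_Q$ has a transparent combinatorial description, e.g. the one obtained from a monotone (``$\overset{\gets}{Q}$''-type) orientation. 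Then for each pair $(k,l)$ I would read off from $\Gamma_Q$ and $\Gamma_{Q^\rev}$ the set of $t$'s for which $\Phi_Q(k,l)[t]$ is nonempty together with the common value $o^Q_t(k,l)=\gdist_Q(\alpha,\beta)$ of Lemma \ref{lem: o well}, using Theorem \ref{thm: known for Q}(1) to bound $\gdist$ by $\max\{\mathsf m(\alpha),\mathsf m(\beta)\}$ and the additive/sectional-path structure (Theorem \ref{thm: labeling GammaQ}, Proposition \ref{pro: section shares}) to pin it down exactly.

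The key steps, in order, are: (1) reduce to one good quiver $Q$ via Proposition \ref{prop: DQ DQ'}; (2) for type $A_n$, compute $\mathtt{o}^{\overline Q}_t(k,l)$ for all $k,l$ and check it equals $\min(k,l,n+1-k,n+1-l)$ counted with the arithmetic progression $t=|k-l|+2s$, matching \eqref{eq: denominator A1}, and separately verify the correction term $(z-(-q)^{\mathtt h^\vee})^{\delta_{l,k^*}}$ accounts exactly for the ``wrap-around'' pair at $t=\mathtt h^\vee=n+1$ which is not captured by $D_{k,l}$ because $\Phi_Q(k,l)[t]$ there consists of a pair $(\alpha,\be)$ with $\al=\be^*$ lying in the same residue column of $\Gamma_{Q}\sqcup\Gamma_{Q^\rev}$ but incomparable; (3) for type $D_{n+1}$, do the same four-case analysis matching \eqref{eq: denominator D1}, where the $\min(k,l)$-fold products with exponents $|k-l|+2s$ and $2n-k-l+2s$ correspond to the two families of comparable pairs coming from the two ``arms'' of the $D$-diagram, the $\lfloor n/2\rfloor$ and $\lfloor (n+1)/2\rfloor$ products to the spin-node pairs, and again isolating the $\delta_{l,k^*}$ term at $t=\mathtt h^\vee=2n$. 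Throughout, the symmetry relations in Proposition \ref{prop: DQ DQ'}(a), namely $D_{k,l}=D_{l,k}=D_{k^*,l^*}$, cut the number of cases roughly in half and match the evident symmetries of the right-hand sides in Theorem \ref{thm: denominator of untwisted}.

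The main obstacle I anticipate is the bookkeeping for the ``$\gdist=2$'' contributions in type $D$: when $\mathsf m(\gamma)=2$ one must show that the maximal length of a strictly $\prec^\tb_{[Q]}$-increasing chain of non-simple sequences with a fixed weight is exactly $2$ and locate precisely the two values of $t$ at which such a pair occurs, so that the exponent in $D^Q_{k,l}(z)$ is $2$ rather than $1$ at those spots. This requires the fine structure of $\Gamma_Q$ of type $D$ (the analogue of the rectangle pictures used later in Remark \ref{rem: subquivers}) and careful use of convexity of $\prec_Q$; it is the only place where a genuinely combinatorial argument, rather than a direct comparison of formulas, is needed. A secondary subtlety is confirming that taking $\max(o^Q_t, o^{Q^\rev}_t)$ in \eqref{def: non-fold O} does not change the answer for the chosen monotone $Q$ --- i.e. that $Q$ may be assumed ``balanced'' enough that $\Gamma_Q$ alone already realizes every relevant pair --- which follows from the fact that $Q^\rev$ lies in the same cluster point only after applying $^*$, combined with Corollary \ref{Cor:rev Ga rev Q}.
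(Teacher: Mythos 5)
The paper itself offers no proof of this statement: it is quoted verbatim from \cite[Theorem 6.18]{Oh15E}. Your overall strategy is nevertheless the intended one: fix a representative $Q$ (legitimate by Proposition \ref{prop: DQ DQ'}), compute the exponents $\mathtt{o}^{\overline{Q}}_t(k,l)$ from the AR-quiver combinatorics of \cite{Oh14A,Oh14D}, compare factor-by-factor with the explicit formulas \eqref{eq: denominator A1} and \eqref{eq: denominator D1} of Theorem \ref{thm: denominator of untwisted}, and account for the single factor at $t=\mathtt{h}^\vee$ by the $\delta_{l,k^*}$ term.

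There is, however, one concrete claim in your plan that is false and one place where the real content is only announced. First, you cannot arrange that ``$\Gamma_Q$ alone already realizes every relevant pair'' so that the maximum in \eqref{def: non-fold O} is innocuous: for the monotone quiver $\overset{\gets}{Q}$ of type $A_n$ that you propose, Proposition \ref{Prop:AR} gives $r_n^{\overset{\gets}{Q}}=1$, so residue $n$ contains a single vertex and $\Phi_{\overset{\gets}{Q}}(n,n)[t]=\emptyset$ for every $t$, while $d^{A^{(1)}_n}_{n,n}(z)=z-(-q)^2$ and $\delta_{n,n^*}=0$ (as $n^*=1$); the factor at $t=2$ is produced only by $o^{Q^{\rm rev}}_2(n,n)$. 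By \eqref{eq: r_i A_n} no choice of orientation avoids this phenomenon for all $(k,l)$, which is exactly why Definition \ref{def: Dist poly Q} and the theorem involve both $\Gamma_Q$ and $\Gamma_{Q^{\rm rev}}$; the repair is easy (for the monotone orientation $Q^{\rm rev}$ is again monotone and Corollary \ref{Cor:rev Ga rev Q} transports the computation), but the claim must go. Second, besides the type-$D$ bookkeeping you flag yourself (showing the $\prec^{\tb}_{[Q]}$-chains of non-simple pairs have length exactly $2$ at precisely the predicted values of $t$), you must also prove $\mathtt{o}^{\overline{Q}}_{\mathtt{h}^\vee}(k,k^*)=0$, i.e.\ that no comparable pair with positive generalized distance occurs at horizontal distance $\mathtt{h}^\vee$ in either quiver; otherwise the correction factor would be double-counted, since the explicit denominators carry $(z-(-q)^{\mathtt{h}^\vee})$ with exponent exactly one. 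Your heuristic about a pair $(\al,\be)$ with $\al=\be^*$ sitting ``in the same residue column of $\Gamma_Q\sqcup\Gamma_{Q^{\rm rev}}$'' does not establish this (a pair lives in a single quiver, and equal columns would force $l=k$). These verifications are precisely the computations carried out in \cite{Oh14A,Oh14D,Oh15E}, so as a plan your proposal is on the right track but incomplete at its two load-bearing points.
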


\subsection{Folded distance polynomial}
Now we fix the folded index set $$ \ov{I}=\{ 1,2,\ldots, n \}$$
which is induced from $\vee$ in \eqref{eq: C_n}.

\begin{definition}
For a folded AR-quiver $\wUp_{[\Qg]}$, indices $\ov{k},\ov{l} \in \ov{I}$ and an integer $t \in \mathbb{N}$,
we define the subset $\Phi_{[\Qg]}(\ov{k},\ov{l})[t]$ of $\PR \times \PR$ as follows:

A pair $(\alpha,\beta)$ is contained in $\Phi_{[\Qg]}(\ov{k},\ov{l})[t]$ if $\alpha \prec_{[\Qg]} \beta$ or $\be \prec_{[\Qg]} \al$ and
$$\{ \widehat{\phi}_{[\Qg]}(\al),\widehat{\phi}_{[\Qg]}(\be) \}=\{ (\ov{k},a), (\ov{l},b)\} \quad \text{ such that } \quad |a-b|=t.$$
\end{definition}

By Corollary \ref{cor: do not depend}, the following notion is well-defined:

\begin{definition}
For any  $(\alpha^{(1)},\beta^{(1)})\in \Phi_{[\Qg]}(\ov{k},\ov{l})[t]$, we define
$$ o^{[\Qg]}_t(\ov{k},\ov{l}) := \gdist_{[\Qg]}(\alpha^{(1)},\beta^{(1)}). $$
\end{definition}

Recall the notations on $C_{n}$ in Definition \ref{def: Cn}.

\begin{definition} \label{def: Dist poly Q twisted}
For $\ov{k},\ov{l} \in \ov{I}$ and a folded AR-quiver $\wUp_{[\Qg]}$, we define a polynomial $\widehat{D}^{[\Qg]}_{\ov{k},\ov{l}}(z) \in \ko[z]$ as follows:
$$\widehat{D}^{[\Qg]}_{\ov{k},\ov{l}}(z) \seteq \prod_{ t \in \Z_{\ge 0} } (z- (-q_s)^{t})^{\mathtt{o}^{[\Qg]}_t(\ov{k},\ov{l})} ,$$
where
$$ q_s^{\ov{\mathsf{d}}}=q_s^2=q \quad \text{ and } \quad  \mathtt{o}^{[\Qg]}_t(\ov{k},\ov{l}) \seteq
\left\lceil \dfrac{o^{[\Qg]}_t(\ov{k},\ov{l})}{ \ov{\mathsf{d}} } \right\rceil.$$
\end{definition}

\begin{proposition} \label{prop: DQ DQ' twisted}
For $\ov{k},\ov{l} \in \ov{I}$ and any twisted adapted classes $[{Q_1^{\gets}}]$ and $[{Q_2^{\gets}}]$ in $\lf \Qg \rf$, we have
$$\widehat{D}^{[{Q_1^{\gets}}]}_{\ov{k},\ov{l}}(z)=\widehat{D}^{[{Q_2^{\gets}}]}_{\ov{k},\ov{l}}(z).$$
\end{proposition}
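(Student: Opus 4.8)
The plan is to reduce this statement to the independence already recorded in Corollary \ref{cor: do not depend}. Recall that $\widehat{D}^{[\Qg]}_{\ov{k},\ov{l}}(z)$ is defined entirely in terms of the numbers $\mathtt{o}^{[\Qg]}_t(\ov{k},\ov{l}) = \lceil o^{[\Qg]}_t(\ov{k},\ov{l})/\ov{\mathsf{d}} \rceil$, where $o^{[\Qg]}_t(\ov{k},\ov{l}) = \gdist_{[\Qg]}(\al,\be)$ for any pair $(\al,\be) \in \Phi_{[\Qg]}(\ov{k},\ov{l})[t]$. Thus it suffices to show that $o^{[{Q_1^{\gets}}]}_t(\ov{k},\ov{l}) = o^{[{Q_2^{\gets}}]}_t(\ov{k},\ov{l})$ for all $\ov{k},\ov{l} \in \ov{I}$ and all $t \in \Z_{\ge 0}$; the passage to $\lceil \cdot / \ov{\mathsf{d}} \rceil$ and to the product over $t$ is then automatic, and the monomials $(z-(-q_s)^t)$ do not involve the class at all.

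First I would fix $t$ and indices $\ov{k},\ov{l}$ and pick, in each folded AR-quiver $\widehat{\Upsilon}_{[{Q_i^{\gets}}]}$, a pair of positive roots $(\al_i,\be_i)$ whose folded coordinates are $(\ov{k},a_i)$ and $(\ov{l},b_i)$ with $|a_i - b_i| = t$; by definition of $\Phi_{[{Q_i^{\gets}}]}(\ov{k},\ov{l})[t]$ such a pair exists and $o^{[{Q_i^{\gets}}]}_t(\ov{k},\ov{l}) = \gdist_{[{Q_i^{\gets}}]}(\al_i,\be_i)$. Since the two coordinate differences $a_1-b_1$ and $a_2-b_2$ both have absolute value $t$, after possibly swapping the two roles (using that $\gdist$ is symmetric in its arguments) we may arrange $a_1-b_1 = a_2-b_2$. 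Now Corollary \ref{cor: do not depend} applies verbatim: with $(\bar i,p) = (\ov{k},a_1)$, $(\bar j,q) = (\ov{l},b_1)$ in $\widehat{\Upsilon}_{[{Q_1^{\gets}}]}$ and $(\bar i,p') = (\ov{k},a_2)$, $(\bar j,q') = (\ov{l},b_2)$ in $\widehat{\Upsilon}_{[{Q_2^{\gets}}]}$, the hypothesis $p-q = p'-q'$ is exactly $a_1-b_1 = a_2-b_2$, so $\gdist_{[{Q_1^{\gets}}]}(\al_1,\be_1) = \gdist_{[{Q_2^{\gets}}]}(\al_2,\be_2)$. Hence $o^{[{Q_1^{\gets}}]}_t(\ov{k},\ov{l}) = o^{[{Q_2^{\gets}}]}_t(\ov{k},\ov{l})$, and therefore $\mathtt{o}^{[{Q_1^{\gets}}]}_t(\ov{k},\ov{l}) = \mathtt{o}^{[{Q_2^{\gets}}]}_t(\ov{k},\ov{l})$ for every $t$, which gives $\widehat{D}^{[{Q_1^{\gets}}]}_{\ov{k},\ov{l}}(z) = \widehat{D}^{[{Q_2^{\gets}}]}_{\ov{k},\ov{l}}(z)$.

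I expect the main obstacle to be purely bookkeeping rather than conceptual: one must make sure the sign conventions and the choice of representative pair match up, i.e. that for each admissible difference $\pm t$ of second coordinates there really is a pair realizing it in every class (this is implicit in the shape description of $\widehat{\Upsilon}_{[\Qg]}$ via $\ov{I}$ together with \eqref{eq: r_i for twisted D_n+1}, since all classes in $\lf\Qg\rf$ share the same underlying coordinate set $\mathcal{\bar I}$ up to an overall shift of the height function). A minor point to verify is that, if two roots with coordinate difference $t$ in one class happen to be $\prec_{[\Qg]}$-incomparable while in another class they are comparable, then $\gdist$ is still consistent — but this cannot happen, precisely because Corollary \ref{cor: do not depend} encodes $\gdist$ (including the value $0$ for incomparable pairs) as a function of $\bar i,\bar j$ and $|p-q|$ alone. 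So the proof is genuinely short: cite Corollary \ref{cor: do not depend}, match coordinates, and conclude.
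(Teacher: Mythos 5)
Your proof is correct and takes essentially the same route as the paper, whose proof simply cites Corollary \ref{cor: do not depend} together with the fact that $\widehat{\Upsilon}_{[\Qg]}$ has $n+1$ vertices in each folded residue (which plays exactly the role of your closing remark that a pair realizing each admissible coordinate difference exists in every class). One small imprecision: the height functions of two different classes are not related by a mere overall shift, since the relative values $\xi(\ov{k})-\xi(\ov{l})$ depend on the orientation of the underlying quiver $Q$; this does not harm the argument, because Corollary \ref{cor: do not depend} only requires matching the differences $|p-q|$ that are actually realized.
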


\begin{proof}
It is an easy consequence which can be obtained from Corollary \ref{cor: do not depend} and the fact that $\wUp_{[\ii_0]}$ has $n+1$-vertices in each folded residue.
\end{proof}

From the above proposition, we can define {\it the folded distance polynomial} $\widehat{D}_{\ov{k},\ov{l}}(z)$ for $\lf \Qg \rf$ canonically.

\begin{theorem} \cite{AK} \hfill \label{thm: denom 1}
$$d^{C^{(1)}_{n}}_{k,l}(z)= \displaystyle \prod_{s=1}^{ \min(k,l,n-k,n-l)}
\big(z-(-q_s)^{|k-l|+2s}\big)\prod_{i=1}^{ \min(k,l)} \big(z-(-q_s)^{2n+2-k-l+2s}\big) \quad 1 \le k,l\le n$$
\end{theorem}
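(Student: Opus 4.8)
Since Theorem~\ref{thm: denom 1} is the Akasaka--Kashiwara denominator formula, it is here a quoted input rather than something the paper reproves; still, the natural way to establish it \emph{within the framework built in this paper} — parallel to the untwisted case (Theorem~\ref{eq: dist denom 1A2n}) and to the $B_n$ treatment of \cite{OS16} — is to read it off a folded AR-quiver. I would therefore split the argument into a ``bridge'' step that reduces the denominator formula to a combinatorial identity, and a ``computation'' step that verifies that identity using the $\gdist$-machinery of Section~\ref{Sec:dist_rds}.

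\emph{Part A (the bridge).} First I would invoke the $C^{(1)}_{n}$ analogue of Theorem~\ref{eq: dist denom 1A2n}: for every twisted adapted class $[\Qg]$ of type $D_{n+1}$,
\[ d^{C^{(1)}_{n}}_{k,l}(z) = \widehat{D}_{\ov{k},\ov{l}}(z)\times\bigl(z-(-q_s)^{\mathsf{h}^{\vee}_C}\bigr)^{\delta_{l,k^*}}, \]
where $\mathsf{h}^{\vee}_C=n+1$, where $k^*=k$ in type $C_n$, and where $\widehat{D}_{\ov{k},\ov{l}}(z)$ is the folded distance polynomial of Definition~\ref{def: Dist poly Q twisted}, well defined on $\lf\Qg\rf$ by Proposition~\ref{prop: DQ DQ' twisted}. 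This identity is the genuinely representation-theoretic input; it is obtained by transporting the known $D^{(1)}_{n+1}$ formula (Theorem~\ref{thm: denominator of untwisted}(b)) through the categorical relation between $U'_q(D^{(2)}_{n+1})$ and $U'_q(C^{(1)}_{n})$ (whose Cartan matrices are transpose), i.e. by the analysis of poles of the normalized $R$-matrices between the fundamental modules $V(\varpi_k)_a$, as in \cite{KO16}. Granting it, the theorem reduces to the purely combinatorial assertion that $\widehat{D}_{\ov{k},\ov{l}}(z)$ equals the right-hand side of the asserted formula, up to the $\delta_{l,k}$-factor $z-(-q_s)^{2n+2}$.

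\emph{Part B (the computation of $\widehat{D}_{\ov{k},\ov{l}}$).} I would fix a convenient twisted Dynkin quiver, say $\Qg=Q^{\gets n}$ with $Q$ the linear orientation $1\to2\to\cdots\to n$ of $A_n$, so that $\widehat{\Upsilon}_{[\Qg]}$ is the symmetric quiver produced by Algorithm~\ref{Alg:label} from two glued copies of $\Gamma_Q$; by Corollary~\ref{cor: do not depend} this choice is harmless. For $\ov{k},\ov{l}\in\ov{I}$ and $t\ge1$, Theorem~\ref{Thm:gdist2} gives that $o^{[\Qg]}_t(\ov{k},\ov{l})=\gdist_{[\Qg]}(\alpha,\beta)$ depends only on $\ov{k},\ov{l},t$, and the explicit formulae \eqref{TypeA_gdist}, \eqref{TypeB_gdist} together with the Type~I/Type~II pictures of Remark~\ref{rem: subquivers} express this value ($0$, $1$, or $2$) through sign conditions on the auxiliary coordinates $\eta_0,\mu_0,\nu_0,\gamma_0,\gamma'_0$. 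Translating these conditions, for fixed $\ov{k},\ov{l}$, into conditions on $t=|p-q|$, one finds $o^{[\Qg]}_t(\ov{k},\ov{l})\neq0$ exactly for
\[ t\in\{\,|k-l|+2s\ :\ 1\le s\le\min(k,l,n-k,n-l)\,\}\ \cup\ \{\,2n+2-k-l+2s\ :\ 1\le s\le\min(k,l)\,\}, \]
with the value $2$ occurring precisely for pairs lying in $\Upsilon^C_{[\Qg]}$ (two overlapping swings), so that after applying the ceiling $\mathtt{o}^{[\Qg]}_t=\lceil o^{[\Qg]}_t/\ov{\mathsf{d}}\rceil=\lceil o^{[\Qg]}_t/2\rceil$ every exponent becomes $1$. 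Assembling $\widehat{D}_{\ov{k},\ov{l}}(z)=\prod_t(z-(-q_s)^t)^{\mathtt{o}^{[\Qg]}_t(\ov{k},\ov{l})}$ then gives exactly the two products in the asserted formula, and combining with Part~A — whose $\delta_{l,k}$-factor supplies the extra term $z-(-q_s)^{2n+2}$ in the case $k=l$ — finishes the proof.

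\emph{The main obstacle} is Part~A, not Part~B: once the preceding sections are in hand, the case analysis for $\widehat{D}_{\ov{k},\ov{l}}$ is mechanical, but the identity $d^{C^{(1)}_{n}}_{k,l}(z)=\widehat{D}_{\ov{k},\ov{l}}(z)\times(z-(-q_s)^{\mathsf{h}^{\vee}_C})^{\delta_{l,k^*}}$ requires genuine quantum-affine input — in particular, verifying that no poles of higher order than $\lceil\gdist/2\rceil$ occur once folding introduces $q_s=q^{1/2}$ and the short-root asymmetry — which is exactly the content deferred to \cite{KO16}. If one is unwilling to import that, the honest alternative is to take Theorem~\ref{thm: denom 1} as the Akasaka--Kashiwara input and to read Part~B as the consistency check that the folded AR-quiver reproduces it, which is how the statement is used in the sequel.
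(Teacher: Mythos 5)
The statement you were asked about is not proved in the paper at all: it is quoted verbatim from Akasaka--Kashiwara \cite{AK}, where the formula is obtained by direct representation-theoretic computation of the denominators of normalized $R$-matrices between fundamental modules; the paper then uses it as an input to prove Theorem~\ref{eq: dist denom}. Your Part~B is essentially sound as a combinatorial evaluation of $\widehat{D}_{\ov{k},\ov{l}}(z)$ (it is, in substance, the content of the paper's proof of Theorem~\ref{eq: dist denom}, via \eqref{eq: CDCD} and the $\gdist$ case analysis, with the ceiling collapsing the exponents $1,2$ to $1$ and the missing $t=2n+2$ factor accounted for by the $\delta_{\ov{l},\ov{k}}$ correction). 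The genuine gap is in Part~A: the bridge identity $d^{C^{(1)}_{n}}_{k,l}(z)=\widehat{D}_{\ov{k},\ov{l}}(z)\times(z-q^{\mathtt{h}^\vee})^{\delta_{\ov{l},\ov{k}}}$ is exactly Theorem~\ref{eq: dist denom}, and in this paper that theorem is deduced \emph{from} Theorem~\ref{thm: denom 1} (by comparing the folded combinatorics against the known $A^{(1)}_{n-1}$ and $D^{(1)}_{n+2}$ denominators, or by checking a particular $\wUp_{[Q^{\gets n}]}$ against the known formula). Using it to derive Theorem~\ref{thm: denom 1} inverts the paper's logic and is circular unless you supply an independent proof of the bridge — e.g.\ the categorical transport from $U_q'(D^{(2)}_{n+1})$ denominators announced in \cite{KO16}, or a direct $R$-matrix computation in the style of \cite{AK} — none of which is available within this paper.

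You are candid about this, and your fallback (take the formula as the Akasaka--Kashiwara input and read Part~B as the consistency check showing the folded AR-quiver reproduces it) is precisely the paper's actual stance. But judged as a proof of the stated theorem, the essential representation-theoretic step is missing, so the proposal does not constitute an independent proof.
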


\begin{theorem} \label{eq: dist denom}
For any twisted adapted class $[\ii_0]$, the denominator formulas for $U'_q(C_n^{(1)})$ can be read
from $\wUp_{[\ii_0]}$ as follows:
\begin{align*}
d^{C^{(1)}_{n}}_{\ov{k},\ov{l}}(z) & =\widehat{D}_{\ov{k},\ov{l}}(z) \times (z-q^{\mathtt{h}^\vee})^{\delta_{\ov{l},\ov{k}}}
\end{align*}
where $\mathtt{h}^\vee=n+1$ is the dual Coxeter number of $C_{n}$.
\end{theorem}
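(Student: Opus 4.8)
The plan is to compare, term by term, the polynomial $\widehat{D}_{\ov{k},\ov{l}}(z)$ computed from the folded AR-quiver with the explicit denominator formula $d^{C^{(1)}_{n}}_{\ov{k},\ov{l}}(z)$ of Theorem \ref{thm: denom 1}, exactly mirroring the strategy of \cite[Theorem 6.18]{Oh15E} (Theorem \ref{eq: dist denom 1A2n} above) in the untwisted case. First I would fix a convenient representative $[\Qg] \in \lf \Qg \rf$ — since $\widehat{D}_{\ov{k},\ov{l}}(z)$ does not depend on the representative by Proposition \ref{prop: DQ DQ' twisted}, one is free to choose the class attached to a convenient twisted Coxeter element, e.g. the one with $\mathfrak{p}^{D_{n+1}}_{A_n}([\Qg])$ equal to the linear quiver $\os{Q}$. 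Then for each fixed pair $(\ov{k},\ov{l})$ and each $t$ I need to count $o^{[\Qg]}_t(\ov{k},\ov{l}) = \gdist_{[\Qg]}(\al,\be)$ over pairs $(\al,\be)$ with folded coordinates $(\ov{k},a)$, $(\ov{l},b)$ and $|a-b|=t$, using Theorem \ref{Thm:gdist2} and the case analysis recorded in Remark \ref{rem: subquivers} (formulas \eqref{TypeA_gdist}, \eqref{TypeB_gdist} and the coordinate computations \eqref{corr_inter}, \eqref{Pic:gdist_D_al+be in PR}).

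The key computational step is to translate the geometric conditions ``$\eta_0 \geq 0$ and $\gamma_0 < n+1$'' (giving $\gdist = 2$), ``not both but $\mu_0,\nu_0$ admissible'' (giving $\gdist = 1$), etc., into arithmetic inequalities on $\ov{k}$, $\ov{l}$ and $t$ via the explicit formulas in \eqref{corr_inter} and the (Type II) analogue. Plugging $i = \ov{k}$, $j = \ov{l}$ (valid since $1 \le \ov{k},\ov{l} \le n$) and $|p-q| = t$, one obtains: $o^{[\Qg]}_t(\ov{k},\ov{l}) = 2$ precisely when $t$ has the right parity and falls in the ``inner'' range controlled by $\min(\ov{k},\ov{l},n-\ov{k},n-\ov{l})$; $o^{[\Qg]}_t = 1$ when $t$ lies in the ``outer'' band controlled by $\min(\ov{k},\ov{l})$ governing the shifts $2n+2-\ov{k}-\ov{l}+2s$; and $0$ otherwise. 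Taking the ceiling $\lceil o^{[\Qg]}_t(\ov{k},\ov{l})/2 \rceil$ collapses the value-$2$ contributions to exponent $1$ at half as many values of $t$ — spaced by $4$ rather than $2$ — which is exactly what produces the factors $(z-(-q_s)^{|k-l|+2s})$ after substituting $q_s^2 = q$ and $(-q_s)^{2m} = (-q)^m$; the value-$1$ band likewise yields the factors $(z-(-q_s)^{2n+2-k-l+2s})$. Finally the correction term $(z-q^{\mathtt{h}^\vee})^{\delta_{\ov{l},\ov{k}}}$ accounts for the pair of roots $\al$, $\be$ with $\al + \be$ not a root but lying $\mathtt{h}^\vee = n+1$ apart (equivalently, two copies of the same root under the affinization), which does not register in $\gdist$ but does appear in $d_{k,k}$; one reads this off exactly as in \cite[Theorem 6.18]{Oh15E}.

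I expect the main obstacle to be the bookkeeping around the boundary residue $\ov{n}$ and the roots in $\Upsilon^C_{[\Qg]}$, i.e.\ the folded-multiplicity-$2$ roots $\langle a,b\rangle$ with $b \neq n+1$. There the extended coordinate system introduced in the proof of Theorem \ref{Thm:gdist2} sends some of $\mu,\nu,\gamma,\eta$ outside $\widehat{\Upsilon}_{[\Qg]}$, so I must carefully distinguish (Type I) from (Type II) pairs and handle the cases $|a_2| = |b_2| = n+1$ separately, as flagged in Proposition \ref{prop:gdist_4} and Remark \ref{rem: minimal pair pic}. A secondary subtlety is verifying that $\mathtt{o}^{[\Qg]}_t(\ov{k},\ov{l}) := \lceil o^{[\Qg]}_t(\ov{k},\ov{l})/\ov{\mathsf{d}}\rceil$ with $\ov{\mathsf{d}} = 2$ is the correct normalization — this is forced by comparison with the known $d^{C^{(1)}_{n}}_{k,l}(z)$ of Theorem \ref{thm: denom 1} and by the relation $q_s^2 = q$, so the proof is ultimately a finite check that both sides agree on all $(\ov{k},\ov{l},t)$; once the coordinate dictionary of \eqref{corr_inter}–\eqref{TypeB_gdist} is in hand this is routine, if tedious.
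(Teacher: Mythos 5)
Your overall strategy coincides with the paper's: invoke Proposition \ref{prop: DQ DQ' twisted} to work with a convenient representative, compute $o^{[\Qg]}_t(\ov{k},\ov{l})$ from Theorem \ref{Thm:gdist2} and the case list of Remark \ref{rem: subquivers}, compare with Theorem \ref{thm: denom 1}, and treat $(z-q^{\mathtt{h}^\vee})^{\delta_{\ov{k},\ov{l}}}$ as the factor invisible to $\gdist$ (indeed $q^{n+1}=(-q_s)^{2n+2}$, while coordinates within one folded residue differ by at most $2n$), exactly as in \cite{Oh15E}. The problem is that the dictionary you assert as the outcome of the key computation is backwards. From \eqref{corr_inter} and \eqref{TypeA_gdist}, with $\al$ at $(\ov{k},p)$, $\be$ at $(\ov{l},q)$ and $t=q-p>0$, one gets $\gdist_{[\Qg]}(\al,\be)=2$ if and only if (Type I) $\eta_0\ge 0$ and $\gamma_0<n+1$, i.e. $2n+2-\ov{k}-\ov{l}<t\le \ov{k}+\ov{l}$; such $t$ lie in the \emph{second} band $2n+2-\ov{k}-\ov{l}+2s$, never in the ``inner'' band $|\ov{k}-\ov{l}|+2s$ with $s\le\min(\ov{k},\ov{l},n-\ov{k},n-\ov{l})$. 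The inner band (the $A^{(1)}_{n-1}$-type factor) is produced by Type II configurations (II-1), (II-2), which have $\gdist$ exactly $1$, and the outer band by Type I configurations (I-1), (I-2), (I-3), (I-6), with $\gdist\in\{1,2\}$ — this is precisely the matching the paper records.

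A check against the paper's own example (the class of $D_5$ with twisted Coxeter element $(s_5s_3s_2s_1)\vee$, $n=4$) makes the error concrete: $\al=\langle 1,-4\rangle$ at $(\ov{3},-5)$ and $\be=\langle 2,3\rangle$ at $(\ov{4},0)$ have $\gdist_{[\Qg]}(\al,\be)=2$ with $t=5$, yet for $(\ov{k},\ov{l})=(3,4)$ the inner band is empty since $\min(3,4,1,0)=0$, and $t=5=2n+2-\ov{k}-\ov{l}+2$ sits in the second factor. Relatedly, your reading of the normalization is off: $\lceil o^{[\Qg]}_t/\ov{\mathsf{d}}\rceil$ lowers the exponent at each fixed $t$ from $2$ to $1$; it does not thin the set of $t$'s into a step-$4$ progression, and no step-$4$ progression occurs in $d^{C^{(1)}_{n}}_{k,l}(z)$ (also $(-q_s)^{2m}=q^m$, not $(-q)^m$, though no such substitution is needed since both the folded distance polynomial and Theorem \ref{thm: denom 1} are written in $(-q_s)$). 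With the corrected dictionary — Type II/$\gdist=1$ pairs give the first product, Type I/$\gdist\in\{1,2\}$ pairs give the second, the ceiling flattening all exponents to $1$ in agreement with the multiplicity-free factors of $d^{C^{(1)}_{n}}_{k,l}$ — your routine verification does go through, and the paper additionally notes one may simply label $\wUp_{[Q^{\gets n}]}$ for a single explicit quiver via Algorithm \ref{Alg:label} and check directly; but as written, your ``key computational step'' would not reproduce Theorem \ref{thm: denom 1}.
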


\begin{proof}
Note that, for $1 \le k,l \le n$, one can observe that
\begin{eqnarray} &&
\parbox{81ex}{
\begin{itemize}
\item[{\rm (i)}] the first factor of $d^{C^{(1)}_{n}}_{k,l}(z)$ is the same as $d^{A^{(1)}_{n-1}}_{k,l}(z)$,
\item[{\rm (ii)}] the second factor of $d^{C^{(1)}_{n}}_{k,l}(z)$ is the same as the second factor of $d^{D^{(1)}_{n+2}}_{k,l}(z)$.
\end{itemize}
}\label{eq: CDCD}
\end{eqnarray}
Thus we can apply the same argument of \cite[Theorem 6.18]{Oh15E}. More precisely, {\rm (i)} is induced from (II-1) and (II-2) in \eqref{eq: complacted socle of D}
and {\rm (ii)} is induced from (I-1), (I-2), (I-3) and (I-6) in \eqref{Pic:gdist_D_1}.

Alternatively, it suffices to consider a particular folded AR-quiver $\wUp_{[\Qg]}$ by Proposition
\ref{prop: DQ DQ' twisted}. For the following $Q^{\gets n}$, using Algorithm \ref{Alg:label} and \cite[Remark 1.14]{Oh14A}, one can label $\wUp_{[Q^{\gets n}]}$ and check our assertion
(see Example \ref{ex: folded}):
$$  Q^{\gets n}= \xymatrix@R=4ex{
*{\circ}<3pt> \ar@{<-}[r]_<{1 \ \ }  &*{\circ}<3pt>
\ar@{<-}[r]_<{2 \ \ } & *{\circ}<3pt>
\ar@{<-}[r]_<{3 \ \ } &*{\circ}<3pt>\ar@{<.}[r]_<{4 \ \ }& *{\circ}<3pt>\ar@{<.}[r]_<{n-1\ \ } &*{\odot}<3pt> \ar@{->}[l]^<{\  \ {n \choose n+1}  }}
$$
\end{proof}

\section{Dorey's rule for $U_q'(C^{(1)}_{n})$} \label{Sec:Dorey}

In \cite[Section 9]{OS16}, the authors proved that Dorey's rule for $U_q'(B^{(1)}_{n+1})$ (\cite{CP96}) can be interpreted the
$[\ii_0]$-minimal pairs $(\al,\be)$ of $\ga \in \PR_{A_{2n+1}}$ when $[\ii_0]$ is a twisted adapted class of $A_{2n+1}$.
In this section, we shall do such an analogue for $U_q'(C^{(1)}_{n})$ by using a twisted adapted class $[\Qg]$ of type $D_{n+1}$
and its folded AR-quiver $\wUp_{[\Qg]}$.

\vskip 3mm

Recall the quantum affine algebra $U_q'(C^{(1)}_{n})$ is
the associative algebra with $1$ generated by $e_i$, $f_i$ (Chevalley generators), $K_i=q^{h_i}$ $(i \in \overline{I} \sqcup \{0\})$ subject to certain relations
(see \cite{Oh15E} for more detail).

\begin{proposition} \cite[Theorem 8.2]{CP96}
Let $(\ov{i},x)$, $(\ov{j},y)$, $(\ov{k},z) \in \ov{I} \times \ko^\times$. Then
$$ {\rm Hom}_{U_q'(C^{(1)}_{n})}\big( V(\varpi_{\ov{j}})_y \otimes V(\varpi_{\ov{i}})_x , V(\varpi_{\ov{k}})_z  \big) \ne 0 $$
if and only if one of the following conditions holds:
\begin{eqnarray}&&
\left\{\hspace{1ex}\parbox{75ex}{
$\ell \seteq \max(\ov{i},\ov{j},\ov{k}) \le n$, $s+m =\ell$ for $\{ s,m \} \seteq \{ \ov{i},\ov{j},\ov{k}\} \setminus \{ \ell \}$
and
$$ \left( y/z,x/z \right) =
\begin{cases}
\big( (-q_s)^{-\ov{i}},(-q_s)^{\ov{j}} \big), & \text{ if } \ell = \ov{k},\\
\big( (-q_s)^{\ov{i}-(2n+2)},(-q_s)^{\ov{j}} \big), & \text{ if } \ell = \ov{i},\\
\big( (-q_s)^{-\ov{i}},(-q_s)^{2n+2-\ov{j}}  \big), & \text{ if } \ell = \ov{j}.
\end{cases}
$$
}\right. \label{eq: Dorey C}
\end{eqnarray}
Here,
\begin{itemize}
\item $V(\varpi_i)$ is the unique simple $U_q'(C_{n}^{(1)})$-module which is finite dimensional integrable with its dominant weight $\varpi_i$ $(i \in \ov{I})$
$($see \cite{Kas02} for more detail$)$,
\item $V(\varpi_i)_x$ is $V(\varpi_i)$ as a vector space with the actions of $e_i$, $f_i$, $K_i$ $(i \in \overline{I} \sqcup \{0\})$ replaced
with $x^{\delta_{i0}}e_i$, $x^{-\delta_{i0}}f_i$, $K_i$, respectively.
\end{itemize}
\end{proposition}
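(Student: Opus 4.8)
The statement to be proved is the combinatorial reformulation of Chari--Pressley's Dorey-type rule for $U_q'(C_n^{(1)})$ (\cite[Theorem 8.2]{CP96}), namely that the nonvanishing of $\mathrm{Hom}_{U_q'(C_n^{(1)})}\big(V(\varpi_{\ov j})_y\otimes V(\varpi_{\ov i})_x,V(\varpi_{\ov k})_z\big)$ is governed exactly by the conditions \eqref{eq: Dorey C}. The plan is to show that \eqref{eq: Dorey C} coincides, up to the translation dictionary between spectral parameters and folded coordinates, with the condition \eqref{eq: Dorey folded coordinate Cn} that characterizes $[\Qg]$-minimal pairs $(\al,\be)$ of $\ga=\al+\be$ inside $\wUp_{[\Qg]}$. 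More precisely, first I would fix a twisted adapted class $[\Qg]$ of type $D_{n+1}$ and its folded AR-quiver $\wUp_{[\Qg]}$, and recall from the corollary preceding this statement (the one proving \eqref{eq: Dorey folded coordinate Cn}) that for $\al,\be,\ga\in\PR$ with $\widehat{\phi}_{[\Qg]}(\al)=(\ov i,p)$, $\widehat{\phi}_{[\Qg]}(\be)=(\ov j,q)$, $\widehat{\phi}_{[\Qg]}(\ga)=(\ov k,r)$ and $\al+\be=\ga$, the pair $(\al,\be)$ is $[\Qg]$-minimal for $\ga$ if and only if \eqref{eq: Dorey folded coordinate Cn} holds.

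\textbf{Key steps.} The second step is to set up the correspondence between modules and roots: to each fundamental module $V(\varpi_{\ov i})_x$ attach the vertex of $\wUp_{[\Qg]}$ carrying spectral datum $x$ in folded residue $\ov i$, using that $q_s$ with $q_s^2=q$ is the base for $C_n$ and that the folded AR-quiver has exactly $n+1$ vertices in each folded residue, matching the period of the spectral parameters appearing in \eqref{eq: Dorey C}. Under the substitution $x\leftrightarrow$ (folded coordinate second entry, read additively via $(-q_s)^{t}$), the three cases $\ell=\ov k$, $\ell=\ov i$, $\ell=\ov j$ of \eqref{eq: Dorey C} become, respectively, the three cases $\ell=\ov k$, $\ell=\ov i$, $\ell=\ov j$ of \eqref{eq: Dorey folded coordinate Cn}: indeed $(y/z,x/z)=((-q_s)^{-\ov i},(-q_s)^{\ov j})$ translates to $(q-r,p-r)=(-\ov i,\ov j)$, and similarly for the other two, because $q^{\,h^\vee}$-type shifts by $2n+2=\ov{\mathsf d}\cdot\mathtt h^\vee$ in the $D_{n+1}$-coordinates correspond to $(2n+2)$-shifts in the exponents. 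Step three is to invoke the denominator-formula result Theorem \ref{eq: dist denom}: the ``only if'' direction of Dorey's rule (that a nonzero homomorphism forces the pair to be minimal, equivalently $\gdist_{[\Qg]}(\al,\be)\ge 1$ with $\al+\be\in\PR$) follows from the fact that $R$-matrix poles and short-exact-sequence structure are encoded by $d^{C_n^{(1)}}_{\ov k,\ov l}(z)=\widehat D_{\ov k,\ov l}(z)\times (z-q^{\mathtt h^\vee})^{\delta_{\ov l,\ov k}}$, which was read off $\wUp_{[\Qg]}$; the ``if'' direction follows because a $[\Qg]$-minimal pair for $\ga$ realizes a composition series datum exactly as in the simply-laced case, transported along the folding $D_{n+1}\to C_n$. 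I would also need Remark \ref{rem: minimal pair pic} and the pictures \eqref{Pic:gdist_D_al+be in PR} to identify precisely which configurations $\mathrm{(i)}$--$\mathrm{(iii)}$ of $(\al,\be)$ with $\al+\be\in\PR$ and $\gdist_{[\Qg]}(\al,\be)=1$ occur, since these are exactly the three branches of \eqref{eq: Dorey folded coordinate Cn}.

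\textbf{Main obstacle.} The routine part is the bookkeeping translating spectral parameters $(y/z,x/z)$ into differences of folded coordinates; this is mechanical given Corollary \ref{cor: do not depend} (which guarantees $\gdist_{[\Qg]}$ depends only on the residues and the coordinate difference) and the explicit height-function normalization of Convention \ref{Conv:xi_0}. The genuine difficulty is the representation-theoretic input on the $C_n^{(1)}$ side: establishing that the combinatorial minimality condition \eqref{eq: Dorey folded coordinate Cn} is not merely necessary but \emph{sufficient} for $\mathrm{Hom}\ne 0$. In the untwisted $ADE$ cases this was handled in \cite{Oh15E} via the exact correspondence between $\soc_{[\jj_0]}$, minimal pairs, and composition factors of tensor products; here one must check that the folding and the base change $q\mapsto q_s$ do not destroy this, i.e. that the folded distance polynomial $\widehat D_{\ov k,\ov l}(z)$ and the genuine denominator $d^{C_n^{(1)}}_{\ov k,\ov l}(z)$ of Theorem \ref{thm: denom 1} line up so that the relevant $R$-matrix has a simple pole exactly at the predicted point. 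The plan is to reduce this to \cite[Theorem 8.2]{CP96} directly, using our combinatorial model only to \emph{reorganize} its statement, rather than reproving the module-theoretic content; thus the proof will consist of verifying that the set of triples $(\ov i,x),(\ov j,y),(\ov k,z)$ satisfying \eqref{eq: Dorey C} is in bijection, via $\wUp_{[\Qg]}$, with the set of $[\Qg]$-minimal pairs of positive roots, and then citing Chari--Pressley and Theorem \ref{eq: dist denom} for the equivalence with nonvanishing of the Hom-space.
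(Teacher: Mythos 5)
There is a genuine problem here, and it is one of logical direction rather than a small missing step. The statement you are asked to prove is not something the paper proves at all: it is Chari--Pressley's Dorey rule for $U_q'(C^{(1)}_{n})$, quoted verbatim (in the paper's notation) from \cite[Theorem 8.2]{CP96}, and the paper's ``proof'' is simply that citation. Your proposal instead tries to derive it from the paper's combinatorial results, and this runs backwards through the paper's logic: the corollary giving \eqref{eq: Dorey folded coordinate Cn} and the theorem of Section \ref{Sec:Dorey} are \emph{consequences} of this proposition (the paper's proof of that theorem is literally ``compare \eqref{eq: Dorey folded coordinate Cn} and \eqref{eq: Dorey C}''), so using them, together with a citation of Chari--Pressley ``for the equivalence with nonvanishing of the Hom-space,'' is circular as a proof of the proposition itself. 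If citing \cite{CP96} is permitted, the entire combinatorial bijection you set up is redundant; if it is not, your argument has no representation-theoretic engine.

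The specific step that would fail if you tried to make the argument self-contained is the claim that Theorem \ref{eq: dist denom} (the matching of the folded distance polynomial with $d^{C^{(1)}_{n}}_{\ov k,\ov l}(z)$) yields the ``only if'' direction, and that ``a $[\Qg]$-minimal pair realizes a composition series datum exactly as in the simply-laced case'' yields the ``if'' direction. Knowing the zero set of the denominators of normalized $R$-matrices does not determine when ${\rm Hom}\big(V(\varpi_{\ov j})_y \otimes V(\varpi_{\ov i})_x, V(\varpi_{\ov k})_z\big)$ is nonzero: Dorey's rule requires identifying exactly when a fundamental module appears as a quotient (or submodule) of a tensor product of two fundamentals, which in \cite{CP96} is established by detailed analysis of the fundamental representations and is not recoverable from $\widehat D_{\ov k,\ov l}(z)$ alone. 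Likewise, the transport of the minimal-pair/composition-factor correspondence from the $ADE$ setting through the folding $D_{n+1}\rightsquigarrow C_n$ is precisely the categorical content deferred in this paper to the forthcoming work \cite{KO16}; it is not available here as a proved statement. In short: the correct ``proof'' of this proposition is the external reference, and the material you assembled belongs to (and essentially reproduces) the proof of the subsequent theorem in Section \ref{Sec:Dorey}, not of the proposition itself.
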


\begin{definition} \label{def: [ii0] module category}
For any positive root $\beta$ contained in $\Phi^+_{D_{n+1}}$, we set the $U_q'(C^{(1)}_{n})$-module $V_{[\Qg]}(\beta)$ defined as follows:
\begin{align} \label{eq: Vii0(beta)}
V_{[\Qg]}(\beta) \seteq V(\varpi_{\ov{i}})_{(-q_s)^{p}} \quad \text{ where } \quad \widehat{\phi}(\beta)=({\ov{i}},p).
\end{align}
We define the smallest abelian full subcategory $\mathscr{C}_{[\Qg]}$ consisting of finite dimensional integrable $U_q'(C^{(1)}_{n})$-modules such that
\begin{itemize}
\item[{\rm (a)}] it is stable by taking subquotient, tensor product and extension,
\item[{\rm (b)}] it contains $V_{[\Qg]}(\beta)$ for all $\beta \in \Phi^+$.
\end{itemize}
\end{definition}

\begin{theorem}
Let $(\ov{i},x)$, $(\ov{j},y)$, $(\ov{k},z) \in \ov{I} \times \ko^\times$. Then
$$ {\rm Hom}_{U_q'(C^{(1)}_{n})}\big( V(\varpi_{\ov{j}})_y \otimes V(\varpi_{\ov{i}})_x , V(\varpi_{\ov{k}})_z  \big) \ne 0 $$
if and only if there exists a twisted adapted class $[\Qg]$ and $\al,\be,\ga \in \Phi^+$ such that
\begin{itemize}
\item[{\rm (i)}] $(\al,\be)$ is a $[\Qg]$-minimal pair of $\ga$,
\item[{\rm (ii)}] $V(\varpi_{\ov{j}})_y  = V_{[\Qg]}(\be)_t, \ V(\varpi_{\ov{i}})_x  = V_{[\Qg]}(\al)_t, \ V(\varpi_{\ov{k}})_z  = V_{[\Qg]}(\ga)_t$
for some $t \in \ko^\times$.
\end{itemize}
\end{theorem}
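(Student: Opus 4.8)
The plan is to show that the two conditions in the statement are literally the same combinatorial data, transported through the correspondence between folded coordinates and the Dorey parameters established in the preceding sections. The key tool is the last displayed corollary of Section~\ref{Sec:dist_rds}, which characterizes when $(\al,\be)$ is a $[\Qg]$-minimal pair of $\ga$ in terms of the folded coordinates $(\ov{i},p)$, $(\ov{j},q)$, $(\ov{k},r)$ via \eqref{eq: Dorey folded coordinate Cn}, together with the Chari--Pressley description \eqref{eq: Dorey C} of the Dorey triples for $U_q'(C_n^{(1)})$. Comparing \eqref{eq: Dorey folded coordinate Cn} and \eqref{eq: Dorey C}, one sees that the former records exactly $(q-r, p-r)$ in the three cases $\ell=\ov{k}$, $\ell=\ov{i}$, $\ell=\ov{j}$, while the latter records $(\log_{-q_s}(y/z),\log_{-q_s}(x/z))$; so the statement will follow once we match the spectral parameters with the exponents, using $V_{[\Qg]}(\beta)=V(\varpi_{\ov i})_{(-q_s)^p}$ from \eqref{eq: Vii0(beta)}.

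First I would fix a twisted adapted class $[\Qg]$ and, assuming (i) and (ii), compute directly: by \eqref{eq: Vii0(beta)}, $V(\varpi_{\ov j})_y=V_{[\Qg]}(\be)_t$ means $y=(-q_s)^q\cdot t$, similarly $x=(-q_s)^p\cdot t$ and $z=(-q_s)^r\cdot t$; hence $y/z=(-q_s)^{q-r}$ and $x/z=(-q_s)^{p-r}$. Plugging the three cases of \eqref{eq: Dorey folded coordinate Cn} in, $(y/z,x/z)$ becomes exactly $\big((-q_s)^{-\ov i},(-q_s)^{\ov j}\big)$, $\big((-q_s)^{\ov i-(2n+2)},(-q_s)^{\ov j}\big)$, $\big((-q_s)^{-\ov i},(-q_s)^{2n+2-\ov j}\big)$ respectively, which is precisely \eqref{eq: Dorey C}; and the support condition $\ell\le n$, $s+m=\ell$ for $\{s,m\}=\{\ov i,\ov j,\ov k\}\setminus\{\ell\}$ is identical in both. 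Since $\al+\be=\ga$ (a minimal pair is in particular a pair summing to $\ga$), the nonvanishing of the $\mathrm{Hom}$-space follows from the Chari--Pressley proposition. This handles the ``if'' direction.

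For the converse I would start from a Dorey triple, i.e.\ a triple $(\ov i,x),(\ov j,y),(\ov k,z)$ satisfying \eqref{eq: Dorey C}, and produce the roots and the class. Set $\ell=\max(\ov i,\ov j,\ov k)$ and, writing $\{s,m\}=\{\ov i,\ov j,\ov k\}\setminus\{\ell\}$, one has $s+m=\ell\le n$. The natural candidates are $\ga=\langle s,m\rangle$-type data: more precisely, by the explicit labels coming from Algorithm~\ref{Alg:label} one can choose positive roots $\al,\be,\ga\in\Phi^+_{D_{n+1}}$ with $\al+\be=\ga$ whose folded residues are $\ov i,\ov j,\ov k$ and whose relative coordinates match \eqref{eq: Dorey folded coordinate Cn}; then, because \eqref{Eqn:0_1-1 t} gives a twisted Dynkin quiver for every admissible shape and because $\gdist$ depends only on residues and coordinate differences (Corollary~\ref{cor: do not depend}), one can select $[\Qg]$ so that these roots sit in the prescribed positions of $\wUp_{[\Qg]}$. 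The characterization \eqref{eq: Dorey folded coordinate Cn} then certifies that $(\al,\be)$ is a $[\Qg]$-minimal pair of $\ga$, and matching $t$ so that $x=(-q_s)^p t$ etc.\ (possible since the coordinate differences were fixed by \eqref{eq: Dorey C}) yields (ii).

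The main obstacle, and the part requiring care rather than a one-line citation, is the converse existence step: given the raw Dorey numerology one must actually exhibit a twisted adapted class $[\Qg]$ together with three positive roots occupying the correct vertices of $\wUp_{[\Qg]}$. This is where Remark~\ref{rem: minimal pair pic} and the pictures \eqref{Pic:gdist_D_al+be in PR} do the work: each of the three cases $\ell=\ov k,\ell=\ov i,\ell=\ov j$ corresponds to one of the configurations (i), (ii), (iii) there, and one must check that for any prescribed $(\ov i,\ov j,\ov k)$ with $s+m=\ell$ such a configuration genuinely occurs in some folded AR-quiver --- e.g.\ by taking the explicit $Q^{\gets n}$ used in the proof of Theorem~\ref{eq: dist denom} and reading off its labels, then applying reflection functors (Algorithm~\ref{alg: fRef Q}) to move the relevant vertices into place. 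Once that realizability is established, the equivalence is a direct comparison of \eqref{eq: Dorey folded coordinate Cn} with \eqref{eq: Dorey C} as above.
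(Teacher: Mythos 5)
Your proposal is correct and is essentially the paper's own argument: the paper proves the theorem in one line by comparing \eqref{eq: Dorey folded coordinate Cn} with \eqref{eq: Dorey C} through the assignment \eqref{eq: Vii0(beta)}, which is exactly the translation of spectral parameters into folded coordinates that you carry out. The realizability check you flag in the converse direction (that every Chari--Pressley triple is actually realized by a $[\Qg]$-minimal-pair configuration in some folded AR-quiver, where in general different triples require different classes $[\Qg]$, since a single quiver such as $\widehat{\Upsilon}_{[Q^{\gets n}]}$ does not accommodate all of them) is left implicit in the paper, and your outline via Remark~\ref{rem: minimal pair pic} together with reflection functors is a reasonable way to make that point explicit.
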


\begin{proof}
By comparing \eqref{eq: Dorey folded coordinate Cn} and \eqref{eq: Dorey C},
our assertion is an immediate consequence of \eqref{eq: Vii0(beta)} in Definition \ref{def: [ii0] module category}.
\end{proof}

\begin{corollary} The condition {\rm (b)} in {\rm Definition \ref{def: [ii0] module category}} can be restated as follows:
\begin{itemize}
\item[{\rm (b)}$'$] It contains $V_{[\Qg]}(\alpha_k)$ for all $\alpha_k \in \Pi$.
\end{itemize}
\end{corollary}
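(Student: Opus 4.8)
The plan is to show that the two formulations of condition (b) --- ``$\mathscr{C}_{[\Qg]}$ contains $V_{[\Qg]}(\beta)$ for all $\beta\in\Phi^+$'' and ``$\mathscr{C}_{[\Qg]}$ contains $V_{[\Qg]}(\alpha_k)$ for all $\alpha_k\in\Pi$'' --- define the same subcategory. Since $\Pi\subset\Phi^+$, the implication (b)$\Rightarrow$(b)$'$ is trivial, so the entire content is the reverse implication: assuming $\mathscr{C}_{[\Qg]}$ contains all $V_{[\Qg]}(\alpha_k)$ and is stable under subquotient, tensor product and extension, I must deduce that it contains $V_{[\Qg]}(\beta)$ for every non-simple $\beta\in\PR$.

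First I would set up an induction on the height (or, equivalently, on the position in the convex order $\prec_{[\Qg]}$) of $\beta$. The base case is $\beta\in\Pi$, which holds by hypothesis. For the inductive step, take a non-simple $\beta=\ga\in\PR\setminus\Pi$ and produce a $[\Qg]$-minimal pair $(\al,\be)$ with $\al+\be=\ga$; such a pair exists because $\ga$ lies somewhere in $\wUp_{[\Qg]}$ and the additive property of the twisted/folded AR-quiver (Theorem~\ref{twisted additive property}, together with the characterization of $[\Qg]$-minimal pairs in \eqref{eq: Dorey folded coordinate Cn}) guarantees a decomposition into two roots each strictly smaller than $\ga$ in $\prec_{[\Qg]}$. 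By the induction hypothesis, $V_{[\Qg]}(\al)$ and $V_{[\Qg]}(\be)$ lie in $\mathscr{C}_{[\Qg]}$, hence so does the tensor product $V_{[\Qg]}(\be)\otimes V_{[\Qg]}(\al)$ (after the appropriate spectral shift given by \eqref{eq: Vii0(beta)}). Then I would invoke the Dorey-type statement just proved: the Hom-space $\mathrm{Hom}_{U_q'(C^{(1)}_{n})}\big(V_{[\Qg]}(\be)\otimes V_{[\Qg]}(\al),V_{[\Qg]}(\ga)\big)$ is non-zero, so $V_{[\Qg]}(\ga)$ appears as a quotient (equivalently, a subquotient) of an object of $\mathscr{C}_{[\Qg]}$; by stability under subquotients, $V_{[\Qg]}(\ga)\in\mathscr{C}_{[\Qg]}$. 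This closes the induction, and hence (b)$'$ implies that $\mathscr{C}_{[\Qg]}$ contains $V_{[\Qg]}(\beta)$ for all $\beta\in\PR$, i.e.\ (b).

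The main obstacle, and the one I would spend the most care on, is making sure that every non-simple root $\ga$ genuinely admits a $[\Qg]$-minimal pair $(\al,\be)$ with both summands of strictly smaller height --- in other words, that the induction never stalls. Here I would split into the three regions $\Upsilon^L_{[\Qg]}$, $\Upsilon^C_{[\Qg]}$, $\Upsilon^R_{[\Qg]}$ from \eqref{eq: cond uncovered} and use the explicit labelling from Algorithm~\ref{Alg:label}: for $\ga\in\Upsilon^R_{[\Qg]}\cup\Upsilon^L_{[\Qg]}$ the needed decomposition is inherited from the additive property of $\Gamma_Q$ of type $A_n$ (Theorem~\ref{thm: labeling GammaQ} and the squares in \eqref{Pic:gdist_D_al+be in PR}), while for $\ga=\langle a,b\rangle\in\Upsilon^C_{[\Qg]}$ with $1\le a<b\ne n+1$ one uses $\ga=\langle a,-n-1\rangle+\langle b,n+1\rangle$ exactly as in the proof of Proposition~\ref{prop:rds_2}, both summands being roots of $\Phi^+|_{n+1\rangle}$ that sit higher up on the boundary and hence were handled earlier. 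One also needs that the relevant pair is actually $[\Qg]$-minimal (not merely a decomposition), which is precisely what Remark~\ref{rem: minimal pair pic} and the folded-coordinate description \eqref{eq: Dorey folded coordinate Cn} supply; the Dorey theorem is then applicable verbatim. Finally I would note that the spectral-parameter bookkeeping in \eqref{eq: Vii0(beta)}--\eqref{eq: Dorey C} is automatically consistent because the minimal-pair condition was defined through the same folded coordinates, so no extra twisting by $t\in\ko^\times$ is lost along the induction.
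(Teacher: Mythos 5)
Your argument is correct and is exactly the proof the paper leaves implicit: since $\Pi\subset\Phi^+$ the only content is (b)$'\Rightarrow$(b), and one inducts on height, writing each $\ga\in\PR\setminus\Pi$ as the weight of a $[\Qg]$-minimal pair $(\al,\be)$ (whose existence your region-by-region analysis via $\Upsilon^L_{[\Qg]},\Upsilon^C_{[\Qg]},\Upsilon^R_{[\Qg]}$ and Proposition \ref{prop:rds_2} settles), then using the Dorey theorem to get a nonzero map $V_{[\Qg]}(\be)\otimes V_{[\Qg]}(\al)\to V_{[\Qg]}(\ga)$, which is surjective by simplicity of the fundamental module, so closure under tensor product and subquotients finishes the step. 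This matches the intended deduction from the preceding theorem, so no further comment is needed.
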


\appendix
\section{Triply twisted Dynkin quiver} \label{Appen_A}

In \cite[Appendix]{OS16}, the authors showed that there exist a unique $\vee$-foldable cluster $[\ii_0]$ and a unique $\vee^2$-foldable cluster $[\jj_0]$
for $\vee$ in \eqref{eq: G_2}:
$$ \ii_0 =\prod_{k=0}^{5} (2\ 1)^{k\vee} \quad \text{and} \quad \jj_0 =\prod_{k=0}^{5} (2\ 1)^{2k\vee}.$$

In this appendix, we will introduce a triply twisted Dynkin quivers which can be analogues of
twisted Dynkin quivers in Section \ref{sec: Twisted Dynkin quiver and Reflection}.

\begin{definition}\label{Def:triplytwistedQ}
A {\it $\vee$-triply twisted Dynkin quiver} $Q^{\gets}_{(i,j,k)}$ (resp. {\it $\vee^2$-triply twisted Dynkin quiver} $Q^{\gets}_{(i',j',k')}$) of $D_{4}$ is obtained by giving
an orientation to each edge to following diagrams:
\begin{align*}
 \xymatrix@R=4ex{
 *{\circledcirc}<3pt>\ar@{.}[r]_<{\ \scriptstyle{(i,j,k)} \ \ \ } &*{\circ}<3pt> \ar@{-}[l]^<{\  \ 2}}.  \quad \text{ and } \quad
  \xymatrix@R=4ex{
 *{\circledast}<3pt>\ar@{.}[r]_<{\ \scriptstyle{(i',j',k')} \ \ \  } &*{\circ}<3pt> \ar@{-}[l]^<{\  \ 2}}.
\end{align*}
where $\{i,i^\vee=j,i^{2\vee}=k\}=\{1,3,4\}=\{i',i'^{2\vee}=j',i'^{4\vee}=k'\}$.
\end{definition}

Then we can complete analogues of \eqref{eq: Corres_twisted} for $\vee$ and $\vee^2$ by applying the similar arguments
in Section \ref{sec: Twisted Dynkin quiver and Reflection}:
\begin{itemize}
\item For each triply twisted adapted class $[\ii'_0]$, there exists a unique $Q^{\gets}_{(i,j,k)}$ or
$Q^{\gets}_{(i',j',k')}$, the unique expression $\ii'_0$ in $[\ii'_0]$ is {\it adapted} to the triply twisted Dynkin quiver.
\item For each triply twisted adapted class $[\ii'_0]$, there exists a unique triply twisted Coxeter element
$a \ b \ \vee  = \phi_{Q^{\gets}_{(i,j,k)}} \vee$ or $a \ b \ \vee^2 =\phi_{Q^{\gets}_{(i',j',k')}} \vee^2$ such that
$$[\ii'_0] = \prod_{k=0}^{5} (a\ b)^{k\vee} \text{ or } [\ii'_0] = \prod_{k=0}^{5} (a\ b)^{2k\vee}.$$
\end{itemize}

\begin{align*}
& \raisebox{3.6em}{\xymatrix@C=8ex@R=4ex{
& \{ \prec_{[Q^{\gets}_{(i,j,k)}]} \} \ar@{<->}[dl]_{1-1} \ar@{<->}[d]_{1-1} \ar@{<->}[dr]^{1-1}  \\
\{ [Q^{\gets}_{(i,j,k)}] \} \ar@{<->}[r]^{1-1} & \{ \phi_{Q^{\gets}_{(i,j,k)}} \} \ar@{<->}[r]^{1-1} & \{ Q^{\gets}_{(i,j,k)} \} \\
& \{ \Upsilon_{[Q^{\gets}_{(i,j,k)}]} \} \ar@{<->}[ul]^{1-1}\ar@{<->}[ur]_{1-1} \ar@{<->}[u]^{1-1}}} \text{for $[Q^{\gets}_{(i,j,k)}] \in \lf \ii_0 \rf$} \\
&\raisebox{3.6em}{\xymatrix@C=8ex@R=4ex{
& \{ \prec_{[Q^{\gets}_{(i',j',k')}]} \} \ar@{<->}[dl]_{1-1} \ar@{<->}[d]_{1-1} \ar@{<->}[dr]^{1-1}  \\
\{ [Q^{\gets}_{(i',j',k')}] \} \ar@{<->}[r]^{1-1} & \{ \phi_{Q^{\gets}_{(i',j',k')}} \} \ar@{<->}[r]^{1-1} & \{ Q^{\gets}_{(i',j',k')} \} \\
& \{ \Upsilon_{[Q^{\gets}_{(i',j',k')}]} \} \ar@{<->}[ul]^{1-1}\ar@{<->}[ur]_{1-1} \ar@{<->}[u]^{1-1}}} \text{for $[Q^{\gets}_{(i',j',k')}] \in \lf \jj_0 \rf$}
\end{align*}


\begin{thebibliography}{99}

\bibitem{AK} T. Akasaka and M. Kashiwara,
{\it Finite-dimensional representations of quantum affine algebras},
Publ. Res. Inst. Math. Sci., {\bf33} (1997), 839-867.

\bibitem{ARS} M. Auslander, I. Reiten and S. Smalo, {\it Representation theory of Artin algebras}, Cambridge studies in advanced
mathematics {\bf 36}, Cambridge 1995.

\bibitem{ASS} I. Assem, D. Simson and A. Skowro\'{n}ski, {\it Elements of the representation theory of associative algebras. Vol.1}, London
Math. Soc. Student Texts {\bf 65}, Cambridge 2006.



\bibitem{B99}
R. Bedard, {\it On commutation classes of reduced words in Weyl
groups}, European J. Combin. {\bf 20} (1999), 483-505.

\bibitem{Ber91} Bernard, D., LeClair, {\it A quantum group symmetries and non-local currents in 2D QFT},
Commun. Math. Phys. {\bf 142} (1991), 99-138

\bibitem{Bour}
N.~Bourbaki.
\newblock {\em \'{E}l\'ements de math\'ematique. {F}asc. {XXXIV}. {G}roupes et
  alg\`ebres de {L}ie. {C}hapitres {IV}--{VI}}.
\newblock Actualit\'es Scientifiques et Industrielles, No. 1337. Hermann,


\bibitem{CP96} V. Chari and A. Pressley, {\it Yangians, integrable quantum systems and Dorey's rule}, Comm. Math. Phys. {\bf 181} (1996),
no. 2, 265-302.

\bibitem{Dor91} P.E. Dorey, Root systems and purely elastic S-matrices, Nucl. Phys. {\bf B358}, 654-676 (1991)

\bibitem{FH11} E. Frenkel and D. Hernandez, Langlands duality for finite-dimensional representations of quantum affine algebras, Lett. Math. Phys. {\bf 96} (2011) 217-261.

\bibitem{Gab80} P. Gabriel, {\it Auslander-Reiten sequences and Representation-finite algebras}, Lecture notes in
Math., vol. 831, Springer-Verlag, Berlin and New York, (1980), 1-71.

\bibitem{H10} D. Hernandez, Kirillov-Reshetikhin conjecture: the general case, Int. Math. Res. Not. {\bf 1} (2010) 149-193.

\bibitem{HL11} D. Hernandez and B. Leclerc, {\it Quantum Grothendieck rings and derived Hall algebras},
arXiv:1109.0862v2 [math.QA], to appear in J. Reine Angew. Math.

\bibitem{Kac} V. Kac,
\newblock{\it Infinite dimensional Lie algebras},
\newblock{3rd ed., Cambridge University Press, Cambridge, 1990}.

\bibitem{KKK13b}
S.-J. Kang, M. Kashiwara and M. Kim,  {\it Symmetric quiver Hecke algebras and R-matrices of
quantum affine algebras II}, Duke Math. J. {\bf 164}, (2015), 1549-1602.

\bibitem{Kas02}
M.~Kashiwara, {\it On level zero representations of quantum affine
algebras}, Duke. Math. J. {\bf112} (2002), 117-175.

\bibitem{KO16} M. Kashiwara and S-j. Oh, Categorical relations between Langlands dual quantum affine
algebras: Doubly laced types, in preparation.

\bibitem{KKK13} S.-J. Kang, M. Kashiwara and M. Kim, Symmetric quiver Hecke algebras and R-matrices of quantum
affine algebras, arXiv:1304.0323 [math.RT].

\bibitem{Mc12}
P. McNamara, {\it Finite dimensional representations of Khovanov-Lauda-Rouquier algebras
I: finite type},  arXiv:1207.5860 [math.RT], to appear in J. Reine Angew. Math.

\bibitem{Oh14}
S-j. Oh, {\it The Denominators of normalized R-matrices of
types $A_{2n-1}^{(2)}$, $A_{2n}^{(2)}$, $B_{n}^{(1)}$ and
$D_{n+1}^{(2)}$}, Publ. Res. Inst. Math. Sci. \textbf{51} (2015), 709-744.

\bibitem{Oh14A}
\bysame, {\it Auslander-Reiten quiver of type A and generalized
quantum affine Schur-Weyl duality},  arXiv:1405.3336v3 [math.RT], to appear in Trans. Amer. Math. Soc.

\bibitem{Oh14D}
\bysame, {\it Auslander-Reiten quiver of type D and generalized quantum affine Schur-Weyl duality}, arXiv: 1406.4555v3 [math.RT]

\bibitem{Oh15E}
\bysame, {\it Auslander-Reiten quiver and representation theories
related to KLR-type Schur-Weyl duality}, arXiv:1509.04949 [math.RT].

\bibitem{OS15}
S-j. Oh and  U. Suh, {\it Combinatorial Auslander-Reiten quivers and reduced expressions}, arXiv:1509.04820 [math.RT].

\bibitem{OS16}
S-j. Oh and  U. Suh, {\it Twisted Coxeter elements and folded AR-quivers via Dynkin diagram automorphisms: I}, in preparation.

\bibitem{Spr74}T. A. Springer, {\it Regular elements of finite reflection
groups},  Inv. Math. {\bf 25} (1974), 159-198

\bibitem{R96}
C.M. Ringel, {\it PBW-bases of quantum groups}, J. Reine Angew. Math.
{\bf 470} (1996), 51-88.

\bibitem{R80}
C.M. Ringel, {\it Tame algebras, Proceedings}, ICRA 2, Springer LNM 831, (1980), 137-287.





\end{thebibliography}
\end{document}